
\documentclass[12pt]{amsart}
\usepackage{amssymb}
\usepackage{amscd}
\usepackage{amsmath}
\theoremstyle{plain}
\newtheorem{Thm}{Theorem}[subsection]
\newtheorem{Thm*}{Theorem}[section]
\newtheorem{Thm'}[Thm]{"Theorem"}
\newtheorem{Cor}[Thm]{Corollary}

\newtheorem{Conj}[Thm]{Conjecture}
\newtheorem{Prop}[Thm]{Proposition}
\newtheorem{Lem}[Thm]{Lemma}
\newtheorem{Cl}[Thm]{Claim}

\theoremstyle{definition}

\newtheorem{Rem}[Thm]{Remark}

\newtheorem{Emp}[Thm]{}


\errorcontextlines=0

\voffset = -0.8cm \topmargin = 1in \evensidemargin = 0.3in
\oddsidemargin = 0.3in \textheight = 7.6in \headheight = 12pt
\textwidth = 5.9in

\numberwithin{equation}{section}
\newcommand{\nc}{\newcommand}
\nc{\lm}{\lambda}
\newcommand{\pp}{\boxtimes}
\newcommand{\ov}{\overline}

\newcommand{\B}[1]{\mathbb#1}
\newcommand{\cal}[1]{\mathcal{#1}}
\newcommand{\C}[1]{\cal#1}

\newcommand{\isom}{\overset {\thicksim}{\to}}

\newcommand{\om}{\omega}

\newcommand{\lra}{\longrightarrow}
\newcommand{\lla}{\longleftarrow}
\newcommand{\hra}{\hookrightarrow}
\newcommand{\dra}{\dashrightarrow}
\newcommand{\wt}{\widetilde}
\newcommand{\Gm}{\Gamma}

\newcommand{\lan}{\langle}
\newcommand{\Var}{\operatorname{Var}}
\newcommand{\ran}{\rangle}

\renewcommand{\P}{\mathbf P}
\newcommand{\T}{\Upsilon}
\renewcommand{\Q}{\mathbf Q}
\newcommand{\I}{\mathbf I}

\newcommand{\gm}{\gamma}
\newcommand{\dt}{\delta}
\newcommand{\Dt}{\Delta}
\newcommand{\La}{\Lambda}
\newcommand{\bs}{\backslash}

\newcommand{\un}[1]{\underline{#1}}

\newcommand{\al}{\alpha}

\newcommand{\la}{\lambda}

\newcommand{\form}[1]{(\ref{Eq:#1})}

\newcommand{\rl}[1]{Lemma \ref{L:#1}}
\newcommand{\rcn}[1]{Conjecture \ref{C:#1}}

\newcommand{\rcl}[1]{Claim \ref{C:#1}}
\newcommand{\rp}[1]{Proposition \ref{P:#1}}

\newcommand{\re}[1]{\ref{E:#1}}
\newcommand{\rco}[1]{Corollary \ref{C:#1}}

\newcommand{\rt}[1] {Theorem \ref{T:#1}}
\newcommand{\rtt}[1] {"Theorem" \ref{T:#1}}

\newcommand{\sm}{\smallsetminus}

\newcommand{\sgn}{\operatorname{sgn}}
\newcommand{\indlim}{\operatorname{indlim}}
\newcommand{\pr}{\operatorname{pr}}
\newcommand{\gr}{\operatorname{gr}}

\newcommand{\Pro}{\operatorname{Pro}}

\newcommand{\Spec}{\operatorname{Spec}}
\newcommand{\map}{\operatorname{map}}

\newcommand{\Mor}{\operatorname{Mor}}
\newcommand{\Ext}{\operatorname{Ext}}

\newcommand{\Ho}{\operatorname{Ho}}

\newcommand{\Ad}{\operatorname{Ad}}

\newcommand{\Gal}{\operatorname{Gal}}
\newcommand{\Tr}{\operatorname{Tr}}
\newcommand{\Av}{\operatorname{Av}}
\newcommand{\Sch}{\operatorname{Sch}}
\newcommand{\IndSch}{\operatorname{IndSch}}
\newcommand{\Par}{\operatorname{Par}}
\newcommand{\ev}{\operatorname{ev}}
\newcommand{\Fr}{\operatorname{Fr}}

\newcommand{\Lie}{\operatorname{Lie}}

\newcommand{\Id}{\operatorname{Id}}
\newcommand{\Fl}{\operatorname{Fl}}
\newcommand{\Spr}{\operatorname{Spr}}
\newcommand{\rk}{\operatorname{rk}}
\newcommand{\Rep}{\operatorname{Rep}}

\newcommand{\Fib}{\operatorname{Fib}}
\newcommand{\Tor}{\operatorname{Tor}}
\newcommand{\St}{\operatorname{St}}

\newcommand{\Fun}{\operatorname{Fun}}
\newcommand{\End}{\operatorname{End}}

\newcommand{\Dist}{\operatorname{Dist}}

\newcommand{\Ob}{\operatorname{Ob}}
\newcommand{\Alg}{\operatorname{Alg}}
\newcommand{\Set}{\operatorname{Set}}
\newcommand{\Ch}{\operatorname{Ch}}

\newcommand{\Hom}{\operatorname{Hom}}
\newcommand{\Cat}{\operatorname{Cat}}
\newcommand{\RHom}{\operatorname{RHom}}

\newcommand{\colim}{\operatorname{colim}}
\newcommand{\Irr}{\operatorname{Irr}}
\newcommand{\pt}{\operatorname{pt}}
\newcommand{\fq}{\B{F}_q}
\newcommand{\ql}{\B{Q}_l}

\newcommand{\qlbar}{\overline{\ql}}
\newcommand{\fqbar}{\overline{\fq}}
\newcommand{\wh}{\widehat}

\newcommand{\ASt}{\operatorname{ASt}}
\newcommand{\ASch}{\operatorname{ASch}}
\newcommand{\AISt}{\operatorname{AISt}}
\newcommand{\AISch}{\operatorname{AISch}}
\newcommand{\Corr}{\operatorname{Corr}}

\newcommand{\Sp}{\operatorname{Sp}^{ft}}
\newcommand{\Art}{\operatorname{Art}^{ft}}

\begin{document}

\title[A categorical approach to the stable
center conjecture]%
{A categorical approach to the stable center conjecture}

\author{Roman Bezrukavnikov}
\address{Department of Mathematics\\
Massachusetts Institute of Technology\\
77 Massachusetts Avenue\\
Cambridge, MA 02139, USA} \email{bezrukav@math.mit.edu}

\author{David Kazhdan}
\address{Institute of Mathematics\\
The Hebrew University of Jerusalem\\
Givat-Ram, Jerusalem,  91904\\
Israel} \email{kazhdan@math.huji.ac.il}

\author{Yakov Varshavsky}
\address{Institute of Mathematics\\
The Hebrew University of Jerusalem\\
Givat-Ram, Jerusalem,  91904\\
Israel} \email{vyakov@math.huji.ac.il}

\thanks{The project have received funding from ERC under grant agreement No 669655.
In addition, this research was supported by the BSF grant 2012365, R.B. was supported by the
NSF grant DMS-1102434, D.K. was supported by the ISF grant 1691/10, and Y.V. was supported by
the ISF grants 598/09 and 1017/13.}
\begin{abstract}
Let $G$ be a connected reductive group over a local
non-archimedean field $F$. The stable center conjecture provides
an intrinsic decomposition of the set of equivalence classes of smooth irreducible
representations of $G(F)$, which is only slightly coarser than the
conjectural decomposition into $L$-packets. In this work we
propose a way to verify this conjecture for depth zero
representations. As an illustration of our method, we show that
the Bernstein projector to the depth zero spectrum is stable.
\end{abstract}
\maketitle

\centerline{\em To G\'erard Laumon on his 60th birthday}
\tableofcontents

\section*{Introduction}

\noindent{\bf The stable center conjecture.}
Let $G$ be a connected reductive group over a local non-archimedean field
$F$, let $R(G)$ be the category of smooth complex representations of $G(F)$, and let
$Z_G$ be the Bernstein center of $G(F)$, which is by definition
the center of the category $R(G)$. Then $Z_G$ is a commutative algebra over $\B{C}$.

Every $z\in Z_G$ defines an invariant distribution $\nu_z$ on $G(F)$, and we denote by
$Z^{st}_G$ the set of all $z\in Z_G$ such that the distribution $\nu_z$ is stable.
The stable center conjecture asserts that $Z^{st}_G$ is a unital subalgebra of $Z_G$.

This conjecture is closely related to the local Langlands conjecture. Recall that the local Langlands conjecture asserts that
the set of equivalence classes of smooth irreducible representations $\Irr(G)$ of $G(F)$ decomposes as a disjoint union of so-called $L$-packets.
By definition, we have a natural homomorphism $z\mapsto f_z$ from $Z_G$ to the algebra of functions $\Fun(\Irr(G),\B{C})$. A more precise version of the stable center conjecture asserts that $Z^{st}_G$ consists of all $z\in Z_G$ such that the function $f_z$
is constant on each $L$-packet.

In other words, the local Langlands conjecture allows a more precise formulation of the stable center conjecture.
Conversely, if $Z^{st}_G\subset Z_G$ is known to be a subalgebra, then we can decompose $\Irr(G)$ by characters of $Z^{st}_G$,
and conjecturally this decomposition is only slightly coarser than the decomposition by $L$-packets.
Thus, the stable center conjecture can be thought both as
a supporting evidence and as a step in the proof of the local Langlands conjecture.

As follows from results of Bernstein and Moy--Prasad, the category $R(G)$ decomposes as a direct sum $R(G)=R(G)^0\oplus
R(G)^{>0}$, where $R(G)^0$ (resp.  $R(G)^{>0}$) consists of those representations $\pi$, all of whose irreducible subquotients
have depth zero (resp. positive depth).  Therefore the Bernstein center $Z_G$ decomposes as a direct sum of centers
$Z_G=Z_G^0\oplus Z_G^{>0}$. In particular, we have an embedding $Z_G^0\hra Z_G$, which identifies
the unit element of $Z_G^0$ with the projector to the
depth zero spectrum $z^0\in Z_G$. Set
$Z^{st,0}_G:=Z_G^0\cap Z^{st}_G$.

The depth zero stable center conjecture asserts that $Z^{st,0}_G\subset Z_G^0$ is a unital subalgebra. In particular,
it predicts the stability of the projector $z^0\in Z_G$.

The main goal of this work is to outline an approach to a proof of the depth zero stable center conjecture.
As an illustration of our method, we prove an explicit formula for the Bernstein projector $z^0$,
and deduce its stability. More precisely, we do it when $G$ is a split semisimple simply connected group, and $F$ is a local field of a positive but not very small characteristic.

\vskip 8truept\noindent{\bf Our approach.}
Our strategy is to construct explicitly many elements $z$ of $Z_G^{0}\subset Z_G$, whose span is a subalgebra, and to prove that these elements are stable and generate all of $Z_G^{st,0}$. Here by "explicitly", we mean to describe both the invariant distribution $\nu_z$ on $G(F)$ and the function $f_z$ on $\Irr(G)$.

To carry out our strategy, we construct first a categorical analog $\C{Z}(LG)$ of the Bernstein center $Z_G$. Then we observe that a version of the Grothendieck "sheaf-function correspondence" associates
to each Frobenius equivariant object $\C{F}\in \C{Z}(LG)$ an element of the Bernstein center
$[\C{F}]\in Z_G$. Thus, to construct elements of $Z_G$, it suffices to construct Frobenius-equivariant objects of $\C{Z}(LG)$.

In order to construct elements of $\C{Z}(LG)$, we construct first a categorical analog $\C{Z}_{\I^+}(LG)$ of $Z^0_G$ and a categorical analog $\C{A}:\C{Z}_{\I^+}(LG)\to\C{Z}(LG)$ of the embedding $Z^0_G\hra Z_G$. Then we apply $\C{A}$ to monodromic analogs of Gaitsgory central sheaves.

Roughly speaking, we define $\C{A}$ to be the composition
of the averaging functor $\Av_{\Fl}$, where $\Fl$ is the affine flag variety of $G$,
and the functor of "derived $\wt{W}$-skew-invariants", where $\wt{W}$ is the affine Weyl group of $G$.
This construction is motivated by an analogous finite-dimensional result, proven in \cite{BKV}.
However, in the affine case one has to overcome many technical difficulties.



\vskip 8truept\noindent{\bf Bernstein projector to the depth zero spectrum.}
To illustrate our method, we provide a geometric construction of the Bernstein projector
$z^0\in Z_G$. More precisely, we construct a class $\lan A\ran $ in the Grothendieck group version of $\C{Z}(LG)$ and
show that the corresponding element of $Z_G$ is $z^0$. Then we show that the restriction $\nu_{z^0}|_{G^{rss}(F)}$
is locally constant and prove an explicit  formula, which we now describe.

Let $I^+$ be the pro-unipotent radical of the Iwahori subgroup of $G(F)$, let $\mu^{I^+}$ be the Haar measure on $G(F)$
normalized by the condition that $\mu^{I^+}(I^+)=1$, and let $\phi_{z^0}\in C^{\infty}(G(F))$ be such that
$\nu_{z^0}|_{G^{rss}(F)}=\phi_{z^0}\mu^{I^+}$.

For each $\gm\in G^{rss}(F)$, we denote by $\Fl_{\gm}$ be the corresponding affine Springer fiber. The affine Weyl group
$\wt{W}$ acts on each homology group $H_i(\Fl_{\gm})=H^{-i}(\Fl_{\gm},\B{D}_{\Fl_{\gm}})$, where $\B{D}_{\Fl_{\gm}}$ is the dualizing sheaf.
Consider the $\Tor$-groups $\Tor^{\wt{W}}_j(H_i(\Fl_{\gm}),\sgn)$, where by $\sgn$ we denote the sign-character of $\wt{W}$. Each $\Tor^{\wt{W}}_j(H_i(\Fl_{\gm}),\sgn)$ is a finite-dimensional $\qlbar$-vector space, equipped with an action of the Frobenius element. One of the main results in this paper is the following identity
\begin{equation} \label{Eq:main}
\phi_{z^0}(\gm)=\sum_{i,j}(-1)^{i+j}
\Tr(\Fr,\Tor_j^{\wt{W}}(H_i(\Fl_{\gm}),\sgn)).
\end{equation}
Using formula \form{main} and a group version
of a theorem of Yun \cite{Yun}, we show that $\nu_{z^0}|_{G^{rss}(F)}$ is stable. Note that the proof of Yun is global,
while all the other arguments are purely local.

\vskip 8truept\noindent{\bf Remark.}
Though $\infty$-categories are not needed for the construction of $\lan A\ran $, we need them
in order to prove the formula \form{main}. Moreover, the structure of formula \form{main} indicates why
the $\infty$-categories appears here. The shape of the formula suggests a possibility to write the right hand side of \form{main} as
the trace of Frobenius on the "derived skew-coinvariants" $R\Gm(\Fl,\B{D}_{\Fl_{\gm}})_{\wt{W},sgn}$. However, the functor of
"derived skew-coinvariants" is defined as a homotopy colimit, and it can not be defined in the framework of derived categories.
Therefore one has to pass to stable $\infty$-categories.

\vskip 8truept\noindent{\bf Plan of the paper.}
In Sections 1  we study derived categories
of constructible sheaves on a certain class of ind-schemes and ind-stacks, which we call admissible.
This class includes some infinite-dimensional ind-stacks, which are not algebraic. We also construct a certain
geometric $2$-category, whose $\infty$-version is used later.

In Section 2, we apply the formalism of Section 1 to the case of loop groups $LG$ and related spaces
in order to construct a categorical analog of the Hecke algebra.

Section 3 deals with the stable center conjecture. Namely, we formulate and discuss the stable center conjecture in subsection 3.1,
categorify various objects from subsection 3.1 in subsections 3.2-3.3, and describe our (conjectural) approach
to the depth zero stable center conjecture in subsections 3.4-3.5.

The results in subsections 3.2-3.3 are given without complete proofs, and  details will appear in the forthcoming paper
\cite{BKV2}. To emphasise  this fact, we write {\em "Theorem"} instead of {\em Theorem}.

In Section 4 we implement the strategy of Section 3 in the case of the unit element. Namely,
in subsections 4.1-4.3, we construct a $K$-group analog of the projector to the depth zero spectrum, following the strategy of subsection 3.3.
Then, in subsection 4.4, we provide a geometric construction of
$z^0\in Z_G$, formulate the formula \form{main} for $\nu_{z^0}|_{G^{rss}(F)}$
and deduce its stability.

Finally, in Section 5 we prove our formula \form{main}.

\vskip 8truept\noindent{\bf Conventions on categories.}
(a) Our approach is based on "categorification". Since our constructions
involve homotopy limits, the derived categories are not suitable for our purposes.
Instead, we use the language of stable $\infty$-categories (see
\cite{Lur2}). However, to make the exposition accessible to a wider mathematical audience,
we use them as little as possible.

Namely, we use $\infty$-categories only in two places. The first place we need $\infty$-categories is in subsections 3.2-3.3,
where we categorify various objects from subsection 3.1. On the other hand, since the results in
3.2-3.3 are given without complete proofs, here we use $\infty$-categories mostly
as a "black box".

The second use is in the proof of formula \form{main} in subsections 5.2-5.3.
The main part of the arguments (the exception is the proof of \rcl{functor}, carried out in subsection 5.3)
uses only very basic properties of $\infty$-categories, mainly the notion of homotopy colimits.
But even in our proof of \rcl{functor}, we only use the notion of $\infty$-categories, and avoid the usage
of more complicated notions like monoidal $\infty$-categories, or $(\infty,2)$-categories.

(b) For every scheme or algebraic stack $X$ of
finite type over an algebraically closed field, we use the
existence of a stable $\infty$-category $\C{D}(X)$, called the derived $\infty$-category,
whose homotopy category is the bounded derived category of constructible sheaves $D(X)=D^b_c(X,\qlbar)$.
We also use the fact that the six functor formalism exists in this setting (see, for example, \cite{LZ1,LZ2}).

(c) We say that two objects of an $\infty$-category are {\em
equivalent} and write $A\cong B$, if they become isomorphic in the
homotopy category. In particular, by writing
$\C{F}\cong\C{G}$ for two objects of the $\infty$-category
$\C{D}(X)$, we indicate that they are isomorphic in the derived
category $D(X)$.

(d) Contrary to the common use, in this work we do not assume that monoidal categories
have units. Moreover, even when the monoidal categories do have units, we do not assume
that the monoidal functors preserve them.

(e) For a category (resp. $\infty$-category) $C$, we denote its pro-category by $\Pro C$.

(f) A category $C$ gives rise to an $\infty$-category, which we also denote by $C$.


\vskip 8truept\noindent{\bf Related works.} Our work was influenced by a
paper of Vogan on the local Langlands conjectures (\cite{Vo}).
In the process of writing this paper we have learned that
versions of the stable Bernstein center and the stable center conjecture
were also considered by Haines \cite{Ha} and Scholze--Shin
\cite{SS}.

Another approach to harmonic analysis on $p$-adic groups via $l$-adic sheaves
was proposed in recent papers by Lusztig \cite{Lu4}, \cite{Lu5}. It is different
from ours: for example, {\em loc. cit.} deals with characters of irreducible
representations rather than elements of Bernstein center. However, we expect
the two constructions to be related.

In the finite-dimensional setting related questions for $D$-modules were also studied by
Ben-Zvi--Nadler \cite{BN} and Bezrukavnikov--Finkelberg--Ostrik \cite{BFO}.

\vskip 8truept\noindent{\bf Acknowledgements.}
We thank Nick Rozenblyum for numerous stimulating conversations about $\infty$-categories.
In particular, he explained to us how to define the "correct"
notion of the categorical center. We also thank Michael Temkin for discussions about non-Noetherian schemes and stacks, and the referee for his numerous remarks
about the first version of the paper.


The final revision of the paper was done while the third author visited the MSRI. He thanks this institution for excellent atmosphere
and financial support.


\section{Constructible sheaves on admissible ind-schemes and ind-stacks}

\subsection{Admissible morphisms}
Let $\Sch_k$ be the category of quasi-compact and quasi-separated schemes over $k$. We usually write $\lim$ instead of $\projlim$ and $\colim$ instead of $\indlim$.


\begin{Lem} \label{L:lim}
Let $\{X_i\}_{i}$ be a projective system in  $\Sch_k$, indexed by a filtered partially ordered set $I$, such that the transition maps $X_{i}\to X_j, i>j$ are affine.

(a) Then there exists a projective limit $X\cong\lim_ i X_i$ in $\Sch_k$.

(b) Assume that we are given $i'\in I$, a morphism $Y_{i'}\to X_{i'}$ and a finitely presented morphism $Z_{i'}\to X_{i'}$ in $\Sch_k$. Set $Y_i:=X_i\times_{X_{i'}}Y_{i'}$ and $Z_i:=X_i\times_{X_{i'}}Z_{i'}$ for $i\geq i'$, and also set
$Y:=X\times_{X_{i'}}Y_{i'}$ and $Z:=X\times_{X_{i'}} Z_{i'}$. Then the natural map $\colim_{i\geq i'}\Hom_{X_i}(Y_i,Z_i)\to\Hom_X(Y,Z)$ is a bijection.

(c) For every $i'\in I$ and a finite presented morphism $Y\to X_{i'}$ in $\Sch_k$, the map \\
$\colim_{i\geq i'}\Hom_{X_{i'}}(X_i,Y)\to\Hom_{X_{i'}}(X,Y)$ is a bijection.

(d) For every finitely presented morphism $f:Y\to X$ in $\Sch_k$ there exists
$i$ and a finitely presented morphism  $f':Y'\to X_i$ such that
$f$ is isomorphic to the morphism $f'\times_{X_i}X:Y'\times_{X_i} X\to X$, induced by $f'$.

(e) The morphism $f'$ in (d) is essentially unique, that is,
if $f'':Y''\to X_j$ is another morphism such that $f$ is isomorphic to $Y''\times_{X_j} X\to X$, induced by $f''$,
then there exists $r\geq i,j$ such that $f'\times_{X_i} X_r\cong f''\times_{X_j} X_r$.

(f) In the notation of (d) assume that the map $f$ is smooth (resp. closed embedding, resp. affine). Then
there exists $j\geq i$ such that $Y_i\times_{X_i} X_j\to X_j$ has the same property.
\end{Lem}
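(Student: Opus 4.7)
The plan is to reduce the lemma to the classical theory of projective limits of quasi-compact quasi-separated schemes developed in EGA IV$_3$, \S 8. The key ingredients are: existence of limits along affine transition maps, descent of finitely presented morphisms, and descent of morphism properties such as smoothness, closed immersion, and affineness. Since EGA formulates all of these without Noetherian hypotheses, they apply directly to the setting of the lemma; we merely sketch the main points.

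For (a), we construct $X$ by gluing. Cover $X_{i'}$ by finitely many affine opens $\{U_\al\}$ using quasi-compactness. The preimages $U_\al^i \subset X_i$ for $i\geq i'$ are affine because the transition maps are, so the corresponding affine piece of $X$ is defined to be $\Spec(\colim_{i\geq i'}\mathcal O(U_\al^i))$. Quasi-separatedness makes the intersections $U_\al \cap U_\beta$ quasi-compact, so they admit finite affine refinements whose gluing data pull back coherently to all $X_i$ and hence to the colimit. The universal property reduces to the ring-theoretic identity $\Hom(R, \colim A_i) = \colim \Hom(R, A_i)$ for $R$ of finite presentation, applied on affine opens.

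The same ring-theoretic identity drives (b) and (c). For (b), reduce to the affine case by covering $Y_{i'}$ and $Z_{i'}$ by finitely many affines: quasi-compactness of $Y$ lets one assemble a morphism $Y \to Z$ over $X$ from compatible maps on finitely many affine pieces, and quasi-separatedness ensures the gluing data also descend. Part (c) is the special case of (b) with $Y_{i'} := X_{i'}$ and $Z_{i'} := Y$, since then $Z_i = Y \times_{X_{i'}} X_i$. For (d) and (e), present $f\colon Y \to X$ locally via a finite affine cover as standard presentations $\Spec(\mathcal O_X[t_1,\dots,t_n]/(f_1,\dots,f_m))$ glued together; by (c), each chart and each piece of gluing data descends to some $X_i$, and by finiteness of the cover one may synchronise all these descents at a single large index. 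Essential uniqueness in (e) follows from (b) applied to the difference of two descent data.

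For (f), each of smoothness, being a closed immersion, and being affine is detected by conditions on the finite presentation that behave well under filtered colimits: surjectivity of a map of finitely presented algebras (closed immersion); affineness of the preimages of a finite affine open cover (affineness); the Jacobian criterion applied to the given finite presentation (smoothness). Each such condition, once verified on $X$, already holds on some $X_j$. The only real obstacle throughout is bookkeeping: synchronising finitely many cover refinements and gluing data at a single index $i$ of the filtered system. Quasi-compactness ensures all the covers are finite, and quasi-separatedness keeps all intersection loci quasi-compact too, so only finitely many compatibilities need to be matched up --- which is exactly what (b) allows us to do. This is precisely the reason Grothendieck's formulation works in the non-Noetherian generality that we require.
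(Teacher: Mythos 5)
Your proposal takes essentially the same route as the paper, which simply refers the reader to the standard EGA IV \S 8 and \S 17 results: (b) and (d) are \cite[Thm.\ 8.8.2]{EGA}, (c) and (e) follow from (b), and (f) is \cite[Prop.\ 17.7.8]{EGA} for smoothness and \cite[Thm.\ 8.10.5]{EGA} for closed embeddings and affine morphisms; your sketch fills in the underlying gluing and descent arguments. One small correction in (a): the universal property $\Hom(T,X)\cong\lim_i\Hom(T,X_i)$ reduces (after covering) to $\Hom_k(\colim_i A_i,B)\cong\lim_i\Hom_k(A_i,B)$, which is the defining property of a colimit of rings and needs no finiteness hypothesis; the identity $\Hom(R,\colim_i A_i)\cong\colim_i\Hom(R,A_i)$ for $R$ finitely presented is the ingredient for (b) and (c), not for (a).
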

\begin{proof}
This is standard. Namely, (a) is easy; (b) and (d) are proven in \cite[Thm. 8.8.2]{EGA}; (c) and (e) follow immediately from  (b).
Finally, the assertion (f) for smooth morphisms  is proven in \cite[Prop. 17.7.8]{EGA}, while the assertion for closed embeddings
and affine morphisms is proven in \cite[Thm. 8.10.5]{EGA}.
\end{proof}

\begin{Emp} \label{E:unip}
{\bf Unipotent morphisms.}
(a)  Let $\Var_k$ be a category of separated schemes of finite type over $k$. Fix a prime number $l$, different from the characteristic of $k$.
Then for every $X\in\Var_k$ one can consider its bounded derived category of constructible $\qlbar$-sheaves $D^b_c(X,\qlbar)$, which we denote by
$D^b_c(X)$ or simply by $D(X)$. Every morphism $f:X\to Y$ in $\Var_k$ induce functors $f^*,f^!,f_*,f_!$.

(b) Let $k$ be separably closed. We say that $X\in \Var_k$ is {\em acyclic},
if the canonical map $\ql\to R\Gm(X,\ql)$ is an isomorphism. In particular, $X=\B{A}^n$ is acyclic.

(c) We call a finitely presented morphism $f:X\to Y$ in $\Sch_k$ {\em unipotent}, if it is smooth, and all geometric fibers of $f$ are acyclic. Notice that the assertion \rl{lim} (f) for smooth morphisms immediately implies the assertion for unipotent ones.
\end{Emp}

\begin{Lem} \label{L:ff}
Let $f:X\to Y$ be a smooth morphism in $\Var_k$. Then $f$ is unipotent if and only if the counit map $f_!f^!\to\Id$ is an isomorphism and if and only if the functor $f^!:D(Y)\to D(X)$ is fully faithful. In this case, $\C{F}\in D(X)$ belongs to the essential image of $f^!$ if and only if $\C{F}\cong f^!f_!\C{F}$.
\end{Lem}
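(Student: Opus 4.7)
The plan is to reduce the three-way equivalence to a single stalkwise identity involving the counit at the constant sheaf, and then to apply Poincar\'e--Verdier duality on the fibers.

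First, the equivalence ``$f^!$ is fully faithful'' $\iff$ ``the counit $f_!f^!\to\Id$ is an isomorphism'' is purely formal: in any adjunction the right adjoint is fully faithful exactly when the counit is an isomorphism. So it remains to match either condition with unipotence of $f$.

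Next, we reduce to the universal case $\C{G}=\qlbar_Y$. Since $f$ is smooth of some relative dimension $d$ (locally constant on components of $X$), the standard identification $f^!\cong f^*(-)\otimes f^!\qlbar_Y$, combined with the projection formula for $f_!$, yields a natural isomorphism $f_!f^!\C{G}\cong\C{G}\otimes f_!f^!\qlbar_Y$. Compatibility of the projection formula with the $(f_!,f^!)$-adjunction then identifies the counit at $\C{G}$ with $\id_{\C{G}}\otimes(\text{counit at }\qlbar_Y)$; hence the counit is an isomorphism on all of $D(Y)$ iff $f_!f^!\qlbar_Y\to\qlbar_Y$ is an isomorphism.

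The third step is stalkwise. By base change for $f_!$ applied to $\bar y\hookrightarrow Y$, the stalk of $f_!f^!\qlbar_Y$ at a geometric point $\bar y$ equals $R\Gm_c(X_{\bar y},\,f^!\qlbar_Y|_{X_{\bar y}})\cong R\Gm_c(X_{\bar y},\qlbar)[2d](d)$. By Poincar\'e--Verdier duality on the smooth fiber $X_{\bar y}$ this is canonically dual to $R\Gm(X_{\bar y},\qlbar)$. Therefore the stalk of the counit at $\bar y$ is an isomorphism iff $R\Gm(X_{\bar y},\qlbar)=\qlbar$, i.e.\ iff $X_{\bar y}$ is acyclic. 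Running over all $\bar y$, this is precisely the definition of unipotence from \ref{E:unip}(c), completing the three-way equivalence. The essential-image statement is then formal: if $\C{F}\cong f^!\C{G}$, the counit isomorphism gives $f_!\C{F}\cong\C{G}$, so $f^!f_!\C{F}\cong f^!\C{G}\cong\C{F}$; the converse is tautological.

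The one genuine obstacle is the compatibility claim used in the reduction step, namely that the projection-formula isomorphism and the decomposition $f^!\cong f^*(-)\otimes f^!\qlbar_Y$ intertwine the counits in such a way that the counit at $\C{G}$ really does become $\id_{\C{G}}\otimes(\text{counit at }\qlbar_Y)$. This is standard but somewhat tedious to extract from the six-functor package; once granted, everything else is either formal adjunction nonsense or a one-line application of Poincar\'e duality.
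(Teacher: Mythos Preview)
Your proof is correct and follows essentially the same route as the paper's: both use the formal adjunction to equate full faithfulness with the counit being an isomorphism, reduce via the projection formula and $f^!\cong f^*[2n](n)$ to the single counit $f_!f^!\qlbar\to\qlbar$, check this stalkwise by base change, and invoke Verdier duality on the fiber to identify $R\Gm_c(X_{\bar y},\B{D}_{X_{\bar y}})\to\qlbar$ with the dual of $\qlbar\to R\Gm(X_{\bar y},\qlbar)$. The only difference is that you explicitly flag the compatibility of the projection-formula isomorphism with the counits, whereas the paper treats this as implicit; otherwise the arguments are the same.
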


\begin{proof}
Since $f_!$ is a left adjoint of $f^!$, we conclude that $f^!:D(Y)\to D(X)$ is fully faithful if and only if
the counit map $f_!f^!\C{F}\to\C{F}$ is an isomorphism for every $\C{F}\in D(Y)$. If $f$ is smooth of relative dimension
$n$, then $f^!\cong f^*[ 2n](n)$, hence it follows from the projection formula that
\[
f_!f^!\C{F}\cong f_!f^*\C{F}[ 2n](n)\cong f_!(f^*\C{F}\otimes f^!\qlbar)\cong \C{F}\otimes f_!f^!\qlbar.
\]

Thus, $f^!$ is fully faithful if and only if the counit map $f_!f^!\qlbar\to\qlbar$ is an isomorphism. Using isomorphism $f^!\cong f^*[ 2n](n)$ once more  together with the base change isomorphism, we see that this happens if and only if for each geometric fiber $X_{\ov{y}}$ of $f$ the counit map $R\Gm_c(X_{\ov{y}},\B{D}_{X_{\ov{y}}})\to\qlbar$ is an isomorphism. Hence, by the Verdier duality, this happens if and only if each $X_{\ov{y}}$ is acyclic, that is, $f$ is unipotent.

Finally, if $f^!$ is fully faithful and $\C{F}\cong f^!\C{G}$, then $f^!f_!\C{F}\cong f^!f_!(f^!\C{G})\cong f^!(f_!f^!\C{G})\cong f^!\C{G}\cong \C{F}$, while the opposite assertion is obvious.
\end{proof}

\begin{Emp} \label{E:admmor}
{\bf Admissible morphisms.}
(a) We call a morphism  $f:X\to Y$ in $\Sch_k$ {\em admissible}, if there exists a projective system
$\{X_i\}_{i\in I}$ over $Y$, indexed by a filtered partially ordered set $I$ such that each $X_i\to Y$ is finitely presented,
all the transition maps $X_{i}\to X_j,i>j$ are affine unipotent, and $X\cong\lim_i X_i$.
We call an isomorphism $X\cong \lim_i X_i$ an {\em admissible presentation} of $f$.

(b) We call a morphism $f:X\to Y$ in $\Sch_k$ {\em pro-unipotent}, if it has an admissible presentation $X\cong \lim_i X_i$ such that
$X_i\to Y$ is unipotent for some $i$ (or, equivalently, for all sufficiently large $i$). Observe that for an admissible morphism $f:X\to Y$ with an admissible presentation $X\cong \lim_i X_i$, each projection $X\to X_i$ is pro-unipotent.

(c) We call $X\in \Sch_k$ {\em admissible}, if the map $X\to \Spec k$ is admissible.
In other words, $X$ is admissible if and only if there exists a projective system
$\{X_i\}_{i\in I}$ in $\Var_k$ such that all the transition maps $X_{i}\to X_j$ are affine unipotent, and $X\cong\lim_i X_i$.
In this case we say that $X\cong \lim_i X_i$ is an {\em admissible presentation} of $X$.
We denote the category of admissible schemes by $\ASch_k$.
\end{Emp}

\begin{Lem} \label{L:admmor}
The composition of admissible morphisms is admissible.
\end{Lem}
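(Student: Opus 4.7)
The plan is to splice together admissible presentations. Given admissible maps $f:X\to Y$ and $g:Y\to Z$, fix admissible presentations $X\cong\lim_{i\in I}X_i$ over $Y$ and $Y\cong\lim_{j\in J}Y_j$ over $Z$; the goal is to produce an admissible presentation of $g\circ f$ over $Z$. Since each $X_i\to Y$ is finitely presented, \rl{lim}(d) yields some $j_0(i)\in J$ and a finitely presented morphism $\wt{X}_i\to Y_{j_0(i)}$ whose base change along $Y\to Y_{j_0(i)}$ recovers $X_i\to Y$. For $j\geq j_0(i)$, set $X_i^j:=\wt{X}_i\times_{Y_{j_0(i)}}Y_j$, so that $X_i^j\to Y_j$, and therefore $X_i^j\to Z$, is finitely presented.

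Next I would descend and regularise the transition maps. For $i'\geq i$ in $I$, the morphism $X_{i'}\to X_i$ is affine unipotent over $Y$, so applying \rl{lim}(b) to the system $\{X_i^j\}_{j}$ with limit $X_i$ produces a descended morphism $\psi_{i',i}^{j}:X_{i'}^{j}\to X_i^{j}$ over $Y_{j}$ for $j$ sufficiently large; by \rl{lim}(f) together with the remark on unipotent morphisms in \ref{E:unip}(c), we may enlarge $j$ further so that $\psi_{i',i}^{j}$ becomes affine unipotent. Essential uniqueness \rl{lim}(e) then guarantees that for any chain $i''\geq i'\geq i$ the cocycle relation $\psi_{i'',i}^{j}=\psi_{i',i}^{j}\circ\psi_{i'',i'}^{j}$ holds after a further enlargement of $j$, since both sides descend the common map $X_{i''}\to X_i$.

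I would then organise the data as follows. Let $K$ be the set of pairs $(i,j)$ with $j\geq j_0(i)$, ordered by $(i',j')\geq(i,j)$ iff $i'\geq i$, $j'\geq j$, and all the $\psi_{\bullet,\bullet}^{j'}$ arising along the chain from $(i,j)$ to $(i',j')$ are defined, affine unipotent, and cocycle-compatible. Set $W_{(i,j)}:=X_i^j$; then the transition $W_{(i',j')}\to W_{(i,j)}$ factors as $X_{i'}^{j'}\xrightarrow{\psi_{i',i}^{j'}}X_i^{j'}\to X_i^j$, whose second arrow is the base change of the affine unipotent morphism $Y_{j'}\to Y_j$ along $X_i^j\to Y_j$, hence is itself affine unipotent; the composite is therefore affine unipotent. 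Filteredness of $K$ is routine: given finitely many $(i_s,j_s)$, pick a common $i^*\in I$ dominating the $i_s$, then a $j^*\in J$ large enough to dominate all the $j_s$ and all the finitely many thresholds supplied by Steps~1--2 for the relevant pairs and triples of indices.

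Finally, the identification $X\cong\lim_{(i,j)\in K}W_{(i,j)}$ reduces to commuting filtered limits: for each fixed $i$, $\lim_{j\geq j_0(i)}X_i^j\cong\wt{X}_i\times_{Y_{j_0(i)}}\lim_j Y_j\cong X_i$, and then $\lim_i X_i\cong X$, by standard properties of filtered limits of schemes with affine transition maps together with \rl{lim}(a),(c). The main obstacle is purely bookkeeping: no single ingredient is deep, but producing one coherent projective system out of the two original ones requires repeated use of the essential-uniqueness statement \rl{lim}(e) at each descent to secure compatibility along chains and across the double index.
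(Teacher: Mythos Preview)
Your approach is the same as the paper's: descend each $X_i\to Y$ to a finite level $Y_j$ via \rl{lim}(d), then descend the transition maps via \rl{lim}(b),(e),(f) and splice. The paper, however, restricts at the outset to $I=J=\B{N}$ (explicitly noting this suffices for its purposes) and then runs a clean \emph{diagonal} induction: it chooses $n_0<n_1<\cdots$ and schemes $X'_i$ over $Y_{n_i}$ with affine unipotent $X'_i\to X'_{i-1}\times_{Y_{n_{i-1}}}Y_{n_i}$, obtaining a single $\B{N}$-indexed admissible presentation $\{X'_i\}$ of $X$ over $Z$. This avoids your doubly-indexed poset $K$ entirely.

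Your more general version has a genuine, if fixable, gap in the definition of $K$. As written, the relation $(i',j')\geq(i,j)$ is not evidently transitive: from $(i'',j'')\geq(i',j')$ and $(i',j')\geq(i,j)$ you know $\psi_{i'',i'}^{j''}$ and $\psi_{i',i}^{j'}$ are defined and affine unipotent, but you do \emph{not} know that $\psi_{i'',i}^{j''}$ is defined, nor that it equals $\psi_{i',i}^{j''}\circ\psi_{i'',i'}^{j''}$ at level $j''$---the cocycle relation from \rl{lim}(e) only kicks in after a further enlargement of $j$ depending on the triple $(i'',i',i)$. Without transitivity, $\{W_{(i,j)}\}$ is not a projective system over a filtered poset, and the iterated-limit computation at the end has no meaning. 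The standard repair is exactly the paper's: pass to countable cofinal subsets of $I$ and $J$ and build the diagonal sequence inductively, choosing at each step a single $j$ large enough to validate the finitely many descent, affineness, unipotence, and cocycle conditions that have accumulated so far. Your own final paragraph essentially concedes this is where the work lies.
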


\begin{proof}
Let $f:X\to Y$ and $g:Y\to Z$ be two admissible morphisms with admissible presentations $X\cong\lim_{i\in I} X_i$ and $Y\cong\lim_{j\in J} Y_j$,  respectively. We are going to construct an admissible presentation of $h=g\circ f:X\to Z$.
To avoid discussion about ordinals, we only consider the case $I=J=\B{N}$, which is sufficient for this work.

Since $f_0:X_0\to Y$ is of finite presentation, there exists $n_0\in\B{N}$ and a morphism of finite presentation
$f'_0:X'_0\to Y_{n_0}$ such that $f_0\cong f'_0\times_{Y_{n_0}}Y$ (by \rl{lim} (d)). In particular,
$X_0\cong\lim_{i\geq n_0}(X'_0\times_{Y_{n_0}}Y_i)$. Next since $f_1:X_1\to X_0$ is unipotent and affine, there exists $n_1>n_0$ and a unipotent affine
morphism $f'_1:X'_1\to X'_0\times_{Y_{n_0}}Y_{n_1}$ such that $f_1\cong f'_1\times_{Y_{n_1}}Y$
(by \rl{lim} (f) and \re{unip} (c)). Continuing this process, we construct  an increasing sequence $\{n_i\}_i$ and a sequence of unipotent affine morphisms $f'_i:X'_{i}\to X'_{i-1}\times_{Y_{n_{i-1}}} Y_{n_{i}}$.

Denote by $h_i$ the composition $X'_i\overset{f'_i}{\lra} X'_{i-1}\times_{Y_{n_{i-1}}} Y_{n_{i}}\to X'_{i-1}$. By construction, $h_i$ is  unipotent and affine, while the morphism $X'_i\to X'_0\to Y_{n_0}\to Z$ is of finite presentation. So it remains to show that $X\cong\lim X'_i$.
By assumption, we have $X\cong \lim_i X_i\cong \lim_i (X'_i\times_{Y_{n_i}}Y)$.
Using isomorphism $\lim_i Y_{n_i}\cong Y$,  we conclude that $\lim_i X'_i\cong \lim_i (X'_i\times_{Y_{n_i}}Y)\cong X$.
\end{proof}

\subsection{Constructible sheaves on admissible schemes}

\begin{Emp} \label{E:cat}
{\bf Notation.}
(a) Every $X\in \Sch_k$ defines an under-category $X/\cdot:=X/\Var_k$, whose objects are morphisms $X\to V$ in $\Sch_k$ with $V\in \Var_k$.
To simplify the notation, we sometimes write $V\in X/\cdot$ instead of $(X\to V)\in X/\cdot$.

Notice that since the category $\Var_k$ has fiber products, the
category $X/\cdot$ is co-filtered, that  is, the opposite category $(X/\cdot)^{op}$ is filtered.

(b) Denote by $(X/\cdot)^{sm}\subset X/\cdot$ (resp. $(X/\cdot)^{un}\subset X/\cdot$) the full subcategory of $X/\cdot$ consisting of formally smooth (resp. pro-unipotent) morphisms $X\to V$. For every $V\in X/\cdot$, we denote by $(X/\cdot)/V$ the over-category over $V$.
\end{Emp}

\begin{Emp} \label{E:csh}
{\bf Constructible $\qlbar$-sheaves.}
Recall that  the assignment $V\mapsto D(V)$ for $V\in\Var_k$ gives rise to four functors $f^!,f^*,f_!,f_*$.

(a) For every $X\in \Sch_k$, these functors give rise to four versions of "constructible $\qlbar$-sheaves on $X$", which we denote by $M(X), D(X), \wh{M}(X)$ and $\wh{D}(X)$, respectively.  Namely, we define

\textbullet $\;M(X)$ to be the (homotopy) colimit $\colim^!_{(X\to V)\in (X/\cdot)^{op}} D(V)$, taken  with respect to $!$-pullbacks;

\textbullet $\;D(X)$ to be the colimit $\colim^*_{(X\to V)\in (X/\cdot)^{op}} D(V)$, taken with respect to $*$-pullbacks;

\textbullet $\;\wh{M}(X)$ to be the limit $\lim^!_{(X\to V)\in (X/\cdot)} D(V)$, taken with respect to $!$-pushforwards;

\textbullet $\;\wh{D}(X)$ to be the limit $\lim^*_{(X\to V)\in (X/\cdot)} D(V)$, taken with respect to $*$-pushforwards.

(b) Explicitly, the class of objects of $M(X)$  is a union of classes of objects of $D(V)$, taken over $(X\to V)\in X/\cdot$. Next, for every
$\C{F}\in \Ob D(V)\subset \Ob M(X)$ and $\C{F}'\in \Ob D(V')\subset \Ob M(X)$ the set of morphisms $\Hom_{M(X)}(\C{F},\C{F}')$ is the inductive limit
$\colim_{h,h'}\Hom_{D(U)}(h^!\C{F}, h'^!\C{F}')$, taken over all diagrams $V\overset{h}{\lla}U\overset{h'}{\lra}V'$ in $X/\cdot$. The description of $D(X)$ is similar.

(c) Likewise, objects of $\wh{M}(X)$ are collections $\{\C{F}_{X\to V},\phi_{V,V'}\}$ compatible with compositions,
where $\C{F}_{X\to V}$ is an object of $D(V)$ for each $(X\to V)\in X/\cdot$, and $\phi_{V,V'}$ is an isomorphism $f_!\C{F}_{X\to V}\isom\C{F}_{X\to V'}$ for each morphism $f:V\to V'$ in $X/\cdot$.

(d) We denote by $\B{D}_X\in M(X)$ and $1_X\in D(X)$ the images of the constant sheaf $\qlbar\in D(k):=D(\Spec k)$.

(e) Every morphism $f:X\to Y$ in $\Sch_k$ induces a functor $f_{\cdot}:Y/\cdot\to X/\cdot$, hence induce
functors $f^!:M(Y)\to M(X)$, $f^*:D(Y)\to D(X)$, $\wh{f}_!:\wh{M}(X)\to \wh{M}(Y)$ and  $\wh{f}_*:\wh{D}(X)\to\wh{D}(Y)$.
\end{Emp}

\begin{Emp} \label{E:adm}
{\bf The admissible case.}
Fix $X\in \ASch_k$ with an admissible presentation $X\cong\lim X_i$.

(a) By \rl{lim} (c), the projections
$\{X\to X_i\}_i$ form a co-cofinal system in $X/\cdot$, that is, a cofinal system in $(X/\cdot)^{op}$.
In particular, we have natural equivalences $M(X)\cong \colim^!_i D(X_i)$, $\wh{M}(X)\cong \lim^!_{i} D(X_i)$, and similarly for $D(X)$ and $\wh{D}(X)$. Since $\pi_{i,j}:X_{i}\to X_j$ is unipotent for all $i>j$, it follows from  \rl{ff} that the functors $\pi_{i,j}^!:D(X_j)\to D(X_{i})$ are fully faithful.  Thus the functors $D(X_i)\to M(X)$ are fully faithful as well.

(b) Since each $X\to X_i$ is pro-unipotent, we conclude from \rl{lim} (c) that  the subcategory $(X/\cdot)^{un}\subset X/\cdot$ is co-cofinal and co-filtered. As in (a), for every $V\in (X/\cdot)^{un}$, the induced functor $D(V)\to M(X)$ is fully faithful.

(c) Every morphism $f:V'\to V$ in $(X/\cdot)^{un}$ is unipotent.
Indeed, $f$ is formally smooth, because both $X\to V'$ and $X\to V'\to V$ are formally smooth and surjective.
Next, $f$ is smooth, because $f$ is formally smooth of finite type. Finally, the functor $f^!:D(V)\to D(V')$ is fully faithful,
because both $D(V')\to M(X)$ and $D(V)\to D(V')\to M(X)$ are such (by (b)). Thus $f$ is unipotent by \rl{ff}.

(d) We claim that we have a canonical fully faithful functor $M(X)\hra \wh{M}(X)$ (and similarly, $D(X)\hra\wh{D}(X)$).
Since $M(X)=\colim^!_{V} D(V)$, taken over $V\in (X/\cdot)^{un}$, it is enough to construct a system
of fully faithful embeddings $D(V)\hra \wh{M}(X)$, compatible with $!$-pullbacks.

Fix $V\in (X/\cdot)^{un}$. By the remark of (b), we have $\wh{M}(X)=\lim^!_{V'} D(V')$, taken over $V'\in (X/\cdot)^{un}/V$. Thus it suffices
to construct a system of fully faithful embeddings $D(V)\hra D(V')$, compatible with $!$-pushforwards. To each morphism $f:V'\to V$ in $(X/\cdot)^{un}$,
we associate the functor $f^!:D(V)\to D(V')$, which is fully faithful by (c) and \rl{ff}.
To show the compatibility with $!$-pushforwards,  we have to show that for every morphism $g:V''\to V'$ in $(X/\cdot)^{un}$, the natural morphism $g_!(f\circ g)^!=g_!g^!f^!\to f^!$ is an isomorphism. But this follows from the fact that $g$ is unipotent (see (c) and
\rl{ff}).

(e) By (d) and \re{csh} (e), for every morphism $f:X\to Y$ between admissible schemes,
we have a functor $f_!: M(X)\hra \wh{M}(X)\overset{\wh{f}_!}{\lra}\wh{M}(Y)$. Moreover, if $f$ is admissible then, by \rl{pf} below, $f$ induces a functor $f_!:M(X)\to M(Y)$.
\end{Emp}

\begin{Lem} \label{L:pf}
If $f:X\to Y$ is an admissible morphism between admissible schemes, then
the image $f_!(M(X))\subset \wh{M}(Y)$ lies in $M(Y)\subset \wh{M}(Y)$.
\end{Lem}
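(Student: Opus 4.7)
The plan is to represent $\C{F}\in M(X)$ by a sheaf on a finite-level piece in $\Var_k$, descend the finitely presented morphism $X_i\to Y$ to a morphism between varieties using \rl{lim}(d), and then show that the ordinary $!$-pushforward in $\Var_k$ of this representative already provides a representative for $f_!\C{F}$ lying in $M(Y)$.

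First, I fix admissible presentations $X\cong\lim_i X_i$ of $f$ (so each $X_i\to Y$ is finitely presented with affine unipotent transitions) and $Y\cong\lim_j Y_j$ with $Y_j\in\Var_k$. By \re{adm}(a), every $\C{F}\in M(X)$ is equivalent to some $\C{F}_i\in D(X_i)$, so it suffices to treat such representatives. By \rl{lim}(d) applied to $X_i\to Y$, there exist $j_0$ and a finitely presented morphism $X_i'\to Y_{j_0}$ such that $X_i\cong X_i'\times_{Y_{j_0}}Y$; in particular $X_i'\in\Var_k$. Setting $X_{i,j}:=X_i'\times_{Y_{j_0}}Y_j\in\Var_k$ for $j\geq j_0$, one obtains $X_i\cong\lim_{j\geq j_0}X_{i,j}$, an admissible presentation of $X_i$ whose transitions $X_{i,j'}\to X_{i,j}$ are affine unipotent as base changes of the corresponding $Y_{j'}\to Y_j$. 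Applying \re{adm}(a) once more, I may replace $\C{F}_i$ by a representative $\C{F}_{i,j}\in D(X_{i,j})$ for some $j\geq j_0$.

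The candidate for $f_!\C{F}$ is then $\C{G}_j:=(g_{i,j})_!\C{F}_{i,j}\in D(Y_j)\subset M(Y)$, where $g_{i,j}:X_{i,j}\to Y_j$ is the natural projection; this is an honest $!$-pushforward in $\Var_k$. To verify that $\C{G}_j$ represents $f_!\C{F}$ in $\wh{M}(Y)$, I would use that $\{Y\to Y_{j'}\}_{j'\geq j}$ is co-cofinal in $Y/\cdot$ by \rl{lim}(c) and compare the two sides after evaluation at each $Y_{j'}$. The construction of $X_{i,j'}$ gives a Cartesian square
\[
\begin{CD}
X_{i,j'} @>g_{i,j'}>> Y_{j'}\\
@VVV @VVV\\
X_{i,j} @>g_{i,j}>> Y_j
\end{CD}
\]
through which the composite $X\to Y\to Y_{j'}$ factors as $X\to X_{i,j'}\to Y_{j'}$. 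Since both $X\to X_{i,j'}$ and $X\to X_{i,j}$ are pro-unipotent (by \re{admmor}(b)), the map $X_{i,j'}\to X_{i,j}$ is unipotent by \re{adm}(c), so its $!$-pullback is fully faithful by \rl{ff}. Combining the explicit description in \re{adm}(d) of the image of $\C{F}_{i,j}$ in $\wh{M}(X)$ with the compatibility with $!$-pushforwards, the value of $f_!\C{F}$ at $Y_{j'}$ works out to $(g_{i,j'})_!(X_{i,j'}\to X_{i,j})^!\C{F}_{i,j}$. Smooth base change for the Cartesian square identifies this with $(Y_{j'}\to Y_j)^!\C{G}_j$, which is exactly the value at $Y_{j'}$ of the image of $\C{G}_j$ under $D(Y_j)\hra M(Y)\hra\wh{M}(Y)$.

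The main obstacle I anticipate is not the base change identity itself — which is routine for a Cartesian square in $\Var_k$ — but the bookkeeping needed to promote these pointwise identifications to a coherent equivalence in the limit $\wh{M}(Y)$. This requires carefully tracking how the embedding of \re{adm}(d) interacts with admissible $!$-pushforwards and with the co-cofinal subsystem $\{Y_{j'}\}$, and ultimately rests on the fully faithful statement \rl{ff}.
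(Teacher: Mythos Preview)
Your proof is correct and follows essentially the same route as the paper's: reduce to a finitely presented representative $X_i\to Y$, descend via \rl{lim}(d) to a Cartesian square over $Y_{j_0}$, and invoke smooth base change to identify $f_!\C{F}$ with an ordinary $!$-pushforward in $\Var_k$. The paper phrases the first reduction as ``we may assume $f$ is finitely presented'' and dispatches the verification in one line (``it follows from the smooth base change''), whereas you spell out the co-cofinal system and the Cartesian square explicitly; your worry about coherence in $\wh{M}(Y)$ is not a genuine obstacle, since the identifications are all induced by the same base change isomorphism and the embedding of \re{adm}(d) is built from exactly these.
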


\begin{proof}
Let $X\cong\lim_i X_i$ be an admissible presentation of $f$, and let $f_i:X_i\to Y$ be the induced morphism. Then $M(X)=\colim_i M(X_i)$,
and it suffices to show that $(f_i)_!(M(X_i))\subset M(Y)\subset \wh{M}(Y)$. Thus we may assume that $f$ is finitely presented.

Let $Y\cong\lim_j Y_j$ be an admissible presentation of $Y$. Then, by \rl{lim} (d), there exists $j\in J$ and a morphism
$f_j:X_j\to Y_j$ in $\Var_k$ such that $f\cong f_j\times_{Y_j}Y$. For every $i\geq j$, we set $X_i:=X_j\times_{Y_j}Y_i$ and
$f_i:=f_j\times_{Y_j}Y_i:X_i\to Y_i$.

Then  $X\cong\lim_{i\geq j} X_i$ is an admissible presentation of $X$, and it follows
from the smooth base change that for every $\C{F}\in D(X_i)\subset M(X)$, we have a natural isomorphism
$f_!(\C{F})\cong (f_i)_!(\C{F})\in D(Y_i)\subset M(Y)$.
\end{proof}

\begin{Emp}
{\bf Remark.} Categories $D(X), M(X)$ and $\wh{M}(X)$ are the categorical analogs of the spaces of
locally constant functions, locally constant measures, and all measures, respectively.
This analogy works perfectly well for admissible schemes, since in this case we have an embedding
$M(X)\hra \wh{M}(X)$. Also $f^*$ is analog of the pullback of functions, while
$\wh{f}_!$ is an analog of the pushforward of measures (compare \re{shfun1} below).
\end{Emp}

\begin{Emp}
{\bf Morphisms of finite presentation.}
Let $f:X\to Y$ be a morphism in $\ASch_k$ of finite presentation. In this case, we have four additional  functors $f^*:M(Y)\to M(X)$, $f_*:M(X)\to M(Y)$, $f^!: D(Y)\to D(X)$ and $f_!:D(X)\to D(Y)$, constructed as follows.

Since $f$ is of finite presentation, there exist
$U\in (X/\cdot)^{un}$, $V\in (Y/\cdot)^{un}$ and a morphism $g:U\to V$ in $\Var_k$ such that
$f\cong g\times_{V}Y$ (by \rl{lim} (d)). For every $V'\in (Y/\cdot)^{un}/V$ we define $g^*_{V'}:D(V')\to D(U\times_{V}V')$ to be the $*$-pullback.
Since all morphisms in  $(Y/\cdot)^{un}/V$ are smooth (see \re{adm} (c)), functors $g^*_{V'}$ give rise to the functor
$g^*:M(Y)=\colim^!_{V'/V} D(V')\to \colim^!_{V'/V} D(U\times_{V}V')=M(X)$.

It remains to show that the above functor only depends on $f$ rather than $g$. In other words, we claim that
there exists a natural isomorphism of functors $g^*\cong g'^*$ for every other morphism $g':U'\to V'$ such that $f\cong g'\times_{V'}Y$.
This is clear when $g'$ is a base change of $g$, therefore the assertion follows from
the fact that  $g$ and $g'$ have isomorphic base changes (by \rl{lim} (e)).

The construction of the other three functors is similar.
\end{Emp}

A notion of a Haar measure in the case of profinite groups can be generalized to profinite sets of the form
$S=\lim S_i$ such that all fibers of all projections $S_j\to S_i$ have the same cardinality. Below we define a geometric analog
of this notion.

\begin{Emp} \label{E:haar}
{\bf (Generalized) Haar measures}. Let $X,Y\in\ASch_k$. (a) For every $V\in (X/\cdot)^{sm}$, we denote by $\mu^{X\to V}\in M(X)$ the image of  $1_V\in D(V)$ (see \re{csh} (d)).  By a {\em Haar measure of $X$}, we mean any object of $M(X)$ of the form $\mu^{X\to V}[2n](n)$, where $V\in (X/\cdot)^{sm}$ and $n\in\B{Z}$. In particular, a Haar measure always exists.

(b) If $X$ is connected, then a Haar measure of $X$ is unique up to a transformation  $\C{F}\mapsto\C{F}[2n](n)$. Indeed, we claim that
for every $V,V'\in (X/\cdot)^{sm}$, we have an isomorphism
$\mu^{X\to V'}[2\dim V'](\dim V')\cong\mu^{X\to V}[2\dim V](\dim V)$. Since $X/\cdot$ is co-filtered, we can assume that $V'\in (X/\cdot)/V$.
Then $f:V'\to V$ is smooth of relative dimension $\dim V'-\dim V$, and the assertion follows from the isomorphism
\[
f^!(1_V)\cong 1_{V'}[2(\dim V'- \dim V)](\dim V'-\dim V).
\]

(c) If $f:X\to Y$ is finitely presented, then for every Haar measure $\C{F}\in M(Y)$
its pullback $f^*(\C{F})$ is a Haar measure on $X$. Indeed, $f$ is obtained as a base change of a certain morphism $g:U\to V$ in $\Var_k$ (by \rl{lim} (d)), so the assertion follows from the fact that $g^*(1_{V})\cong 1_{U}$.
Moreover, if $X$ and $Y$ are connected, then it follows from (b) that for every  Haar measure $\C{F}\in M(X)$ there exists a unique Haar measure $\C{F}'\in M(Y)$ such that $\C{F}\cong f^*(\C{F}')$.

(d) If $f:X\to Y$ is formally smooth, then for every Haar measure $\C{F}\in M(Y)$ on $Y$,
its pullback $f^!(\C{F})$ is a Haar measure on $X$. Indeed, for every $V\in (Y/\cdot)^{sm}$, we have
$V\in  (X/\cdot)^{sm}$ and $f^!(\mu^{Y\to V})=\mu^{X\to V}$.
\end{Emp}

\begin{Emp} \label{E:tensor}
{\bf Tensor Product.}  Let $X,Y\in \ASch_k$.

(a) We have a natural functor $\otimes: D(X)\times M(X)\to M(X)$. Namely, for every $V\in (X/\cdot)^{sm}$ we have a natural functor
\[\otimes_V:D(V)\times D(V)\to D(V)\hra M(X):(\C{A},\C{F})\mapsto \C{A}\otimes\C{F}.\]
Moreover, for every
$V'\in (X/\cdot)^{sm}/V$, the projection  $f:V'\to V$ is smooth, hence we have a natural isomorphism
$f^*\C{A}\otimes f^!\C{F}\cong f^!(\C{A}\otimes\C{F})$. Thus the functors $\{\otimes_V\}_{V\in (X/\cdot)}$ give rise to the functor  $\otimes: D(X)\times M(X)\to M(X)$.

(b) Let $f:X\to Y$ be a formally smooth morphism. Then for every $\C{A}\in D(Y)$ and $\C{F}\in M(Y)$
we have a natural isomorphism $f^*\C{A}\otimes f^!\C{F}\cong f^!(\C{A}\otimes\C{F})$, which follows from the corresponding isomorphism for $\Var_k$.

(c) Let $f:X\to Y$ be of finite presentation, thus functors $f^*:M(Y)\to M(X)$ and
$f_!:D(X)\to D(Y)$ are defined. Then we have natural
isomorphisms $f^*(\C{A}\otimes\C{F})\cong f^*\C{A}\otimes f^*\C{F}$ and
$f_!(\C{A}\otimes f^*\C{F})\cong f_!\C{A}\otimes\C{F}$,  which follow from the corresponding
isomorphisms for $\Var_k$.

(d) By (a), each $\C{F}\in M(X)$ defines a functor $\cdot\otimes\C{F}:D(X)\to M(X)$. Moreover, since each functor $\cdot\otimes 1_V[2n](n):D(V)\to  D(V)$ is an isomorphism, the functor $\cdot\otimes\C{F}$ is an isomorphism, if $\C{F}$ is a Haar measure.
\end{Emp}

\subsection{The case of ind-schemes}

\begin{Emp} \label{E:admind}
{\bf Admissible ind-schemes.} (a) We say that a functor $X:\Sch^{op}_k\to \Set$ is an {\em ind-scheme} over $k$ and write $X\in \IndSch_k$, if  there exists
an inductive system $\{X_i\}_{i}\in \Sch_k$ such that all the transition maps $X_i\to X_{j}$ are
finitely presented closed embeddings, and $X\cong\colim_i X_i$, that is,
$X(\cdot)\cong \colim_i\Hom_{\Sch_k}(\cdot, X_i)$. In this case, we will say that  $X\cong\colim_i X_i$ is a presentation of $X$.

(b) Let $Y\in \IndSch_k$ and $X\in \Sch_k$. We say that a morphism $f:X\to Y$ is {\em admissible} (resp. {\em finitely presented}), if there exists a presentation $Y\cong\colim_i Y_i$ such that $f$ is induced by an admissible
(resp. finitely presented) morphism $f:X\to Y_i$. Notice that this notion is independent of the presentation of $Y$.

(c) Let $Y\in \IndSch_k$ and $X\in \Sch_k$. We say that $X\subset Y$ is an {\em fp-closed subscheme}, if the inclusion  $X\hra Y$ is a finitely presented closed embedding. By a {\em closed fp-neighborhood of $y\in Y$}, we mean an fp-closed subscheme $X\subset Y$, containing $y$.

(d) A morphism  $f:X\to Y$ in $\IndSch_k$ is called {\em admissible} (resp. {\em finitely presented}), if
for every fp-closed subscheme $Z\subset X$ the restriction $f|_Z:Z\to Y$ is admissible (resp. finitely presented).
In particular, we say that $X\in \IndSch_k$ is admissible and write $X\in\AISch_k$, if the structure map $X\to \Spec k$ is admissible.

(e) Notice that $X\in \IndSch_k$ with presentation $X\cong\colim_i X_i$ is admissible if and only if each $X_i\in\Sch_k$ is admissible. Indeed, the "only if" assertion follows from definition. Conversely, if each $X_i$ is admissible, then every fp-closed
subscheme $Y$ of $X$ is an fp-closed subscheme of some $X_i$, thus $Y$ is admissible by \rl{admmor}.

(f) A morphism between ind-schemes $f:X\to Y$ is called {\em schematic} (resp. and {\em formally smooth}), if for every fp-closed subscheme $Z\subset Y$ the pre-image $f^{-1}(Z)\subset X$ is a scheme (resp. and the induced morphism
$f^{-1}(Z)\to Z$ is formally smooth).
\end{Emp}

\begin{Emp} \label{E:consind}
{\bf Constructible sheaves.} Let $X\in \AISch_k$.

(a) We denote by $M(X)$ (resp. $D(X)$) the inductive limit $\colim_{Y} M(Y)$
(resp. $\colim_Y D(Y)$), where $Y$ runs over the set of fp-closed subschemes of $X$, and the limit is taken with respect to
fully faithful functors $i_*:M(Y)\to M(Y')$ (resp. $i_*:D(Y)\to D(Y')$), corresponding to fp-closed embeddings $i:Y\to Y'$.
In particular, $M(X)\cong \colim_i M(X_i)$ for each presentation $X\cong\colim_i X_i$ (and similarly, for $D(X)$).


(b) For every fp-closed subscheme $Y\subset X$, we denote by $\dt_Y\in M(X)$ the extension by zero of $\B{D}_Y\in M(Y)$ (see \re{csh} (d)). If $Y'\subset Y$ is an fp-closed subscheme, then we have a natural morphism $\dt_{Y'}\to\dt_Y$, induced by the counit morphism $i_!i^!\to\Id$.

(c) We set $\wt{M}(X):=\lim^*_Y M(Y)$ (resp. $\wt{D}(X):=\lim^*_Y D(Y)$), where $Y$ is as in (a), and the transition maps are $*$-pullbacks.
Arguing as in \re{adm} (e) and using the fact that for every fp-closed embedding $i:Y\to Y'$ of schemes the counit morphism $i^*i_*\to \Id$ is an isomorphism, we conclude that  the $*$-pullbacks induce a fully faithful embedding $M(X)\hra\wt{M}(X)$ (resp. $D(X)\hra \wt{D}(X)$).

This embedding is a categorification of the embedding of the space of smooth measures (resp. functions) with compact support into the space of all smooth measures (resp. functions).

(d) For every schematic morphism $f:X\to Y$ in $\AISch_k$, we have pullback functors $f^!:M(Y)\to M(X)$ and $f^*:D(Y)\to D(X)$, while for every admissible morphism $f:X\to Y$ in $\AISch_k$, we have push-forward functors
$f_!:M(X)\to M(Y)$ and $f_*:D(X)\to D(Y)$, whose constructions formally follow from the corresponding functors for schemes.

(e)  For a finitely presented morphism $f:X\to Y$ in $\AISch_k$, we also have functors $f_!:D(X)\to D(Y)$ and
$f^*:\wt{M}(Y)\to\wt{M}(X)$. If, in addition, $f$ is schematic, we also have a functor  $f^*:M(Y)\to M(X)$.
Finally, for a formally smooth schematic morphism $f:X\to Y$ we have a functor $f^!:\wt{M}(Y)\to\wt{M}(X)$.

(f) We denote by $1_X\in \wt{D}(X)$ the projective system $\{1_Y\}_Y$, where $Y$ is as in (c).
\end{Emp}

\begin{Emp} \label{E:haarind}
{\bf Haar measures.} Let $X\in \AISch_k$.

(a) Note that we have a natural functor $\otimes:\wt{D}(X)\otimes \wt{M}(X)\to \wt{M}(X)$,
which sends $\C{A}=\{\C{A}_Y\}_Y\in \wt{D}(X)$ and $\C{F}=\{\C{F}_{Y}\}_{Y}\in \wt{M}(X)$ to
$\{\C{A}_Y\otimes\C{F}_Y\}_Y\in \wt{M}(X)$. Moreover, this functor restricts to functors
$\otimes:D(X)\otimes \wt{M}(X)\to M(X)$ and $\otimes:\wt{D}(X)\otimes M(X)\to M(X)$.

(b) By a {\em Haar measure} on $X$, we mean an element $\C{F}\in\wt{M}(X)\in \lim^*_Y M(Y)$ such that
$\C{F}_Y\in M(Y)$ is a Haar measure for each $Y$. Note that a Haar measure on $X$ always exists (by \re{haar} (c)).
Moreover,  if $X$ is connected, then a Haar measure on $X$ is unique up to a change $\C{F}\mapsto \C{F}[2n](n)$.
Also, if $\C{F}$ is a Haar measure on $X$,
then $\cdot\otimes\C{F}_Y:D(Y)\to M(Y)$ is an equivalence for all $Y$, thus  $\cdot\otimes\C{F}:D(X)\to M(X)$ is an equivalence.

(c) Let $X$ be connected. Then, by \re{haar} (c), for every fp-closed connected subscheme $Y\subset X$ there exists a unique Haar measure $\mu^Y\in \wt{M}(X)$, whose $*$-pullback to $Y$ is $\B{D}_Y$.

(d) Let  $\C{F}\in\wt{M}(Y)$ be a Haar measure on $Y$. If $f:X\to Y$  is finitely presented, then $f^*\C{F}\in \wt{M}(X)$ is a Haar measure (by \re{haar} (c)). Similarly, if $f:X\to Y$ is schematic and formally smooth, then $f^!\C{F}\in \wt{M}(X)$ is a Haar measure (by \re{haar} (d)).
\end{Emp}

\begin{Emp}
{\bf Remark.}
The main reason why we introduced category $\wt{M}(X)$ and Haar measures was to find a way to identify the category of "smooth  functions" $D(X)$ with
the category of "smooth measures" $M(X)$.
\end{Emp}

\subsection{Generalization to (ind-)stacks}


\begin{Emp} \label{E:st}
{\bf Definitions.} (a) Let $\St_k$ is the $2$-category of stacks over $\Spec k$ (see \cite[3.1]{LMB}), and let
$\Art_k\subset \St_k$ be the full  subcategory of consisting of Artin stacks of finite type over $k$.
Note that the $2$-category $\St_k$ is stable under all small (2-)limits.

(b) Denote by $\St'_k\subset \St_k$ the full 2-subcategory
consisting of $X\in\St_k$, which can be represented by a filtered projective limit $X\cong\lim_i X_i$, where $X_i\in\Art_k$ for all $i$.
In this subsection we generalize notions and results defined above from $\Sch_k$ to $\St'_k$.

(c) We say that a morphism $f:Y\to X$ in $\St'_k$ is of {\em finite presentation}, if $f$ is equivalent to a pullback of a morphism
$f':Y'\to X'$ in $\Art_k$. In this case, we will say that $f':Y'\to X'$ is a presentation of $f$.

(d) We say that a finitely presented morphism $f:X\to Y$ in $\St'_k$ is smooth, resp. closed embedding, resp. representable, if there is a presentation $f':X'\to Y'$ of $f$, satisfying these properties.
\end{Emp}

\begin{Emp} \label{E:examples}
{\bf Examples.}
(a) Rydh \cite{Ry} showed that all quasi-compact and quasi-separated DM-stacks and
many Artin stacks belong to $\St'_k$. In particular, $\Sch_k\subset\St'_k$ (see \cite{TT}).
Furthermore, in these cases the transition maps can be made affine. Moreover, it follows from
\rl{lim} and its extension to stacks (see \cite[App. B]{Ry}) that in these cases our definitions
are equivalent to the standard ones.

(b) Assume that $X\in\Sch_k$ is equipped with an action of a quasi-compact group scheme $G$ over $k$ such that
$X$ has a $G$-equivariant presentation $X\cong\lim_i X_i$, where all $X_i\in \Var_k$ and all transition maps
are affine. Then the quotient stack $X/G$ belongs to $\St'_k$.

\begin{proof}[Proof of (b)]
By assumption, $X/G\cong \lim_i(X_i/G)$. Since $\St'_k$ is stable under filtered limits (see \rl{stlim} (a) below), it is
enough to show that each $X_i/G$ belongs to $\St'_k$. Thus we may assume that $X\in \Var_k$.
By \cite{Pe}, $G$ can be written as a filtered projective limit $G\cong\lim_i G_i$  of group schemes of finite type.
We claim that there exists $j$ such that the action of $G$ on $X$ factors through $G_j$.
Indeed, the action map $G\times X=\lim_j (G_j\times X)\to X$ factors through $G_j\times X$ (by \rl{lim} (c)).
Now the assertion follows from the equivalence $X/G\cong\lim_{i\geq j} X/G_i$.
\end{proof}
\end{Emp}

\begin{Lem} \label{L:stlim}
Let $\{X_i\}_{i\in I}$ be a filtered projective system in $\St'_k$. Then
all assertions (a)-(f) of \rl{lim} hold for $\St'_k$.
\end{Lem}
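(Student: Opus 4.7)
The plan is to reduce each of the six assertions to the corresponding statement for filtered projective limits of Artin stacks with affine transition maps, which is available from Rydh \cite{Ry} and cited in Example \re{examples}(a).

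By definition of $\St'_k$, each $X_i$ in our system admits a presentation $X_i\cong\lim_{j\in J_i} X_{i,j}$ with $X_{i,j}\in\Art_k$, and after replacing by cofinal subsystems one may arrange all the transition maps to be affine. For assertion (a), since $\St_k$ is stable under small $2$-limits, the limit $X\cong\lim_i X_i$ exists in $\St_k$; collapsing the indexing data $(I,\{J_i\})$ into a single filtered category whose objects are the pairs $(i,j)$ then exhibits $X$ as a filtered projective limit of Artin stacks with affine transition maps, placing $X$ in $\St'_k$.

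For (b)--(e), a finitely presented morphism to $X_{i'}$ is by definition (see \re{st}(c)) a base change of a morphism between Artin stacks, and any morphism to $X_{i'}$ can be described as a compatible family of morphisms to the $X_{i',j}$. Rewriting the $\Hom$-sets in terms of such presentations and commuting the two nested filtered (co)limits into a single one, each assertion becomes an instance of the Artin-stack analog of \cite[Thm.~8.8.2]{EGA} established in \cite[App.~B]{Ry}. The filteredness of every index category involved ensures that the collapse of the double system is lossless and that the bijections at the Artin-stack level transfer directly to the corresponding bijections for $X$.

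For assertion (f), apply (d) to produce an essentially unique $f':Y'\to X_i$ representing $f:Y\to X$, then further approximate $f'$ through the presentation $X_i\cong\lim_j X_{i,j}$ to obtain a morphism between Artin stacks; Rydh's extension of \rl{lim}(f) then guarantees that smoothness, being a closed embedding, and representability (which plays the role of affineness in $\St'_k$) descend to some finite level $X_{i,j}$, after which one base-changes back to some $X_j$ with $j\geq i$. The main obstacle I anticipate is the bookkeeping in (b) and (f): one must verify that the double filtered system can be coherently combined into a single one compatible with Rydh's framework, and that representability is genuinely among the properties preserved by his limit formalism, so that the stack-theoretic substitute for the affineness clause in \rl{lim}(f) is available.
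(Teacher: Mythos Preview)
Your overall strategy---reduce to a single filtered system of Artin stacks and invoke known approximation results---matches the paper's in spirit, but you have glossed over precisely the step that carries all the weight, and in doing so introduced a circularity.

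The issue is the ``collapse'' of the double system $\{X_{i,j}\}_{(i,j)}$ into a single filtered one. To build such a system you must, for each transition map $X_{i+1}\to X_i$, factor the composite $X_{i+1}\to X_i\to X_{i,j}$ through some $X_{i+1,j'}$; and you must do this coherently in $i$. But producing such factorizations is exactly assertion~(c) applied to the system $\{X_{i+1,j'}\}_{j'}$ with target a finitely presented $X_{i,j}$. So you are invoking (c) in the course of proving (a), and then invoking (a) (via the collapsed system) to prove (c). This is not ``bookkeeping''; it is the crux of the argument, and your proposal does not break the circle.

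The paper resolves this by a genuine bootstrap: it first proves (c) in the special case where every $X_i$ already lies in $\Art_k$, using an independent method---choosing smooth atlases $U_i\to X_i$, forming the truncated \v{C}ech simplicial objects, and using the identification $X_i\cong\colim_{[j]\in\Delta_{\le 2}^{op}}\underline{U}_i[j]$ together with the commutation of finite colimits with filtered limits to reduce successively to algebraic spaces, then schemes, then affine schemes, where the claim is classical. Only with this special case in hand does the paper carry out your Step~2-style construction of a coherent $\{X_{ij}\}$ and deduce the general (c), after which (b), (d), (e), (f) follow formally. A secondary point: the definition of $\St'_k$ in \re{st}(b) does \emph{not} require affine transition maps (this is noted explicitly in \re{stacks}(b)), so your opening move of arranging affine transitions is unjustified, and Rydh's framework---which typically does assume affine transitions---cannot simply be quoted as a black box for the general case.
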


\begin{proof} To avoid discussion about ordinals, we assume that $I=\B{N}$, which is sufficient for this work.

{\bf Step 1.} First we prove assertion (c) in the case when $X_i\in\Art_k$  for all $i$. Let $\Sp_k$ be the category of algebraic spaces of finite type over $k$.
We are going to reduce the problem to the case when $X_i\in \Sp_k$ for each $i$.

Let $\Dt_{\leq 2}$ be the full subcategory of the simplicial category $\Dt$ with objects $[0],[1],[2]$.  Choose a smooth surjective morphism $p_i:U_i\to X_i$ with $U_i\in \Var_k$. Then $p_i$ gives rise to the functor $\un{U}_i:\Dt_{\leq 2}^{op}\to\Sp_k$ such that
$\un{U}_i[0]=U_i$, $\un{U}_i[1]=U\times_{X_i} U$ and $\un{U}_i[2]= U\times_{X_i} U\times_{X_i} U$
with natural transition maps. Then the "sheaf condition" on $\St_k$ means that $X_i\cong \colim_{[j]\in \Dt_{\leq 2}^{op}}\un{U}_i[j]$ (compare
\cite[proof of Prop 4.18]{LMB}), thus $X\cong\lim_{i}\colim_{[j]\in\Dt_{\leq 2}^{op}} \un{U}_i[j]$.

Moreover, by induction, we can assume that the $p_i$'s come from a morphism of projective systems $\{U_i\}_i\to \{X_i\}_i$.
Then functors $\un{U}_i:\Dt_{\leq 2}^{op}\to\Sp_k$ would also form a projective system, thus we can form a limit
$\un{U}:=\lim_i\un{U}_i:\Dt_{\leq 2}^{op}\to\St'_k$. Explicitly, we have $\un{U}[j]=\lim_i\un{U}_i[j]$ for each $j$.

Since finite colimits commute with filtered limits, we have
$X\cong\colim_{[j]\in\Dt_{\leq 2}^{op}}\un{U}[j]$.
Therefore we have an isomorphism $\Hom_{X_{i'}}(X,Y)\cong \lim_{[j]\in\Dt_{\leq 2}^{op}}\Hom_{X_{i'}}(\un{U}[j],Y)$, and similarly for $\Hom_{X_{i'}}(X_i,Y)$.
Hence it suffices to show that the natural map
\[
\colim_i \Hom_{X_{i'}}(\un{U}_i[j],Y)\to\Hom_{X_{i'}}(\un{U}[j],Y)
\]
is an equivalence for all $j$. Since $\un{U}_i[j]\in\Sp_k$  for all $i$ and $j$, we thus reduce the problem to the case when each $X_i\in\Sp_k$.

Iterating this argument (but slightly easier), we reduce the problem first to the case when the $X_i$'s are schemes,
then to the case of quasi-affine schemes, and finally to the case of affine schemes. In this case, $X$ is an affine scheme,
all the transition maps are affine, and the assertion is shown, for example,
in \cite[Prop 4.15]{LMB} (compare also \cite[Prop. B.1]{Ry}).

{\bf Step 2.} Proof of (a). For each $i$, choose a projective system $\{X'_{ij}\}_{j\in I_j}$
such that $X_i\cong\lim_j X'_{ij}$. Again, we only treat the case when $I_j=\B{N}$ for all $j$.

It suffices to construct a projective system of projective systems  $\{\{X_{ij}\}_j\}_i$ in $\Art_k$ such that
$X_i\cong\lim_j X_{ij}$, and the map $X_{i+1}\to X_i$ is induced by the projection
$\{X_{(i+1)j}\}_j\to \{X_{ij}\}_j$ for each $i$. Indeed, this implies that $X\cong\lim_{i,j} X_{ij}\cong\lim_{i} X_{ii}$.

We construct the $X_{ij}$'s by induction on $i$. First, we set $\{X_{0j}\}_j:=\{X'_{0j}\}_j$.
Next, arguing by induction and using the particular case of (c) proven above, we find an increasing sequence $\{r_j\}_j$ such that the map $\lim_j X'_{1j}\cong X_1\to X_0\to X_{0j}$ factors though $X'_{1r_j}$, and the compositions $X'_{1r_j}\to X'_{1r_{j-1}}\to X_{0(j-1)}$
and $X'_{1r_j}\to X_{0j}\to X_{0(j-1)}$ are isomorphic for all $j>0$. We now set $\{X_{1j}\}_j:=\{X'_{1r_j}\}_j$ and continue by induction on $i$.

{\bf Step 3.} Now we are ready to deduce the general case of (c) from the already shown particular case.  Choose $X_{ij}$ as in the proof of (a).
By definition, a finitely presented morphism $Y\to X_{i'}$ is a pullback of a morphism $Y'\to X'$ in $\Art_k$. Then
$\Hom_{X_{i'}}(X,Y)\cong\Hom_{X'}(X,Y')$, and $\Hom_{X_{i'}}(X_i,Y)\cong\Hom_{X'}(X_i,Y')$ for $i\geq i'$.

Since $X_{i'}\cong\lim_j X_{i' j}$, the projection $X_{i'}\to X'$ factors through some $X_{i'j'}$ (by the particular case of (c)). Replacing $Y'\to X'$ by its pullback under $X_{i'j'}\to X'$, we may assume that $X'=X_{i'j'}$. Since $X\cong\lim_{i\geq i',j\geq j'} X_{ij}\cong\lim_{i\geq i',j'} X_{ii}$
and also $X_i\cong\lim_{j\geq j'} X_{ij}$ for $i\geq i'$, both $\Hom_{X'}(X,Y')$ and $\colim\Hom_{X'}(X_i,Y')$ are equivalent to
$\colim_{i\geq i',j'}\Hom_{X'}(X_{ii},Y')$ by the particular case of (c).

{\bf Step 4.} Now (b) and (d)  formally follow from (c) and definitions, (e) follows from (b), while (f) follows from (c),(e), and the observation
that all classes of morphisms are stable under pullbacks.  For example, assume that
$f_i:Y_i\to X_i$ is finitely presented, and its pullback $f:Y\to X$ is smooth. Then there exists a smooth morphism $f':Y'\to X'$
in $\Art_k$, whose pullback to $X$ is  $f$. Using (c) and increasing $i$, we may assume that the morphism $X\to X'$
factors through $X_i$. Denote by $f'_i:Y'_i\to X_i$ the pullback of $f'$ to $X_i$. Using (e), there exists $j\geq i$ such that
$f_i\times_{X_i} X_j\cong f'_i\times_{X_i}X_j$. Then $f_i\times_{X_i} X_j$ is smooth.
\end{proof}

\begin{Emp} \label{E:allim}
{\bf Remark.} By \rl{stlim} (a), the subcategory $\St'_k\subset \St_k$ is stable under all filtered limits.
Since $\Art_k$ is stable under fiber products, we conclude that $\St'_k\subset\St_k$ is stable under all
small limits.
\end{Emp}

\begin{Emp} \label{E:stacks}
{\bf Admissible (ind)-stacks.}
(a) Recall that to every $X\in \Art_k$ one can associate its bounded derived category of constructible $\qlbar$-sheaves $D(X)=D^b_c(X)$ and bounded from above category $D^-(X)=D^-_c(X)$.

Every morphism $f:X\to Y$ induce functors $f^*,f^!:D^?(Y)\to D^?(X)$ (where $D^?$ is either $D$ or $D^-$) and $f_!:D^-(X)\to D^-(Y)$,
satisfying all basic properties of morphisms in $\Var_k$ including the  base change isomorphism
(see \cite{LZ1,LZ2}). Furthermore, $f_!$ induces functor $D(X)\to D(Y)$, if, in addition, $f$ is representable.

(b) Mimicking \re{unip}, we say that a morphism $f:X\to Y$ in $\Art_k$ is unipotent, if $f$ is smooth,  and all the geometric fibers of $f$ are acyclic. Then \rl{ff} holds without any changes. Using \rl{stlim}, we can now generalize all the notions defined earlier from $\Sch_k$ by $\St'_k$, replacing $\Var_k$ by $\Art_k$ in
all places, but we do not require that the transition morphisms are affine.

For example,  we say that $X\in \St'_k$ is an {\em admissible stack over $k$}, if $X\cong \lim_i X_i$, where
$X_i\in\Art_k$ for each $i$ and each morphism $X_{i}\to X_j,i>j$ is unipotent.
We denote the category of admissible stacks by $\ASt_k$.

Similarly, we define admissible morphisms, categories $M(X)$, $M^-(X)$ and $D(X)$,  pullback functors
$f^*$ and $f^!$, Haar measures and tensor products. Likewise, to each admissible morphism $f:X\to Y$ in
$\St'_k$ we associate the push-forward functor $f_!:M^-(X)\to M^-(Y)$.

(c) Finally, we define the category of admissible ind-stacks $\AISt_k$ and extend results from admissible ind-schemes to this setting.
\end{Emp}

\begin{Emp} \label{E:notst}
{\bf Notation.}
(a) We call an admissible morphism $f:X\to Y$ in $\ASt_k$ {\em strongly representable}, if $f$
has an admissible presentation $X\cong\lim_i X_i$, such that all maps
$X_i\to Y$ and $X_i\to X_j,i>j$ are representable.

(b) In the situation of (a), we have $f_!(M(X))\subset M(Y)\subset M^-(Y)$. Indeed, arguing as in \rl{pf},
we may assume that $f$ is representable and finitely presented. Note that in order to check that an object of $M^-(Y)$ belongs to $M(Y)$
we can check it on geometric fibers. Thus, by the base change, we may assume that $Y=\Spec k$,
in which case the assertion is standard.

(c) We call an admissible morphism
$f:X\to Y$ in $\AISt_k$ {\em strongly representable}, if for every pair of fp-closed substacks $X'\subset X$ and
$Y'\subset Y$ such that $f|_{X'}:X'\to Y$ factors through $Y'$, the induced morphism
$f|_{X'}:X'\to Y'$ is strongly representable. In this case, we have $f_!(M(X))\subset M(Y)$ (by (b)).

(d) We call a morphism $f:X\to Y$ in $\AISt_k$ {\em stacky}, if $f^{-1}(X')$ is a stack (rather than an ind-stack)
for every fp-closed substack $X'\subset X$. In this case, we have a functor $f^!:M(Y)\to M(X)$.
\end{Emp}




\begin{Lem} \label{L:unipstack}
(a) Let $G$ be an affine algebraic group over $k$, $U\subset G$ a normal unipotent subgroup, and $H=G/U$.
Assume that $H$ acts on $X\in \Var_k$, and that $G$ acts on $X$ via the projection $G\to H$. Then the natural morphism of quotient
stacks $p:X/G\to X/H$ is unipotent.

(b) In the example \re{examples} (b) assume that presentation $X\cong\lim_{i\in I} X_i$ is admissible and
that $G$ has a presentation $G\cong \lim_{j\in J} G_j$, where each $G_j$ is an algebraic group, and each homomorphism $G_{j}\to G_j'$ is unipotent
(as morphism). Then  the quotient stack $X/G$ is admissible.
\end{Lem}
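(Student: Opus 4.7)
For part (a), the plan is to exploit the fact that since $U$ acts trivially on $X$ (the $G$-action factoring through $H$), the quotient $X/G$ fits into a canonical Cartesian square
\[
\begin{CD}
X/G @>>> BG \\
@VpVV @VVqV \\
X/H @>>> BH,
\end{CD}
\]
exhibiting $p$ as the pullback of the natural morphism $q:BG \to BH$. Since unipotence is stable under pullback, it suffices to check that $q$ is unipotent. Smoothness can be verified after the smooth surjective base change $\Spec k \to BH$, which produces $BU \to \Spec k$, smooth because $U$ is. All geometric fibers of $q$ are equivalent to $BU$, so the task reduces to acyclicity of $BU$. A central filtration of $U$ with successive quotients $\B{G}_a$ (available since $k$ is separably closed) together with an induction using part (a) itself reduces the problem to acyclicity of $B\B{G}_a$. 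This last step follows from the \v{C}ech spectral sequence associated to the $\B{G}_a$-torsor $\Spec k \to B\B{G}_a$: the terms $\coh^q(\B{G}_a^p,\qlbar)$ vanish for $q>0$ (as $\B{A}^1$ is $l$-adically acyclic), and the alternating-sum differentials along the surviving row compute $\qlbar$ concentrated in degree zero.

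For part (b), the plan is to construct an admissible presentation of $X/G$ directly. Applying \rl{lim}(c) to the projective system $\{G_j\times X_i\}_j$ of schemes of finite presentation over $X_i$ with limit $G\times X_i$, I obtain for each $i\in I$ an index $j(i)\in J$ such that the action map $G\times X_i\to X_i$ factors through $G_{j(i)}\times X_i\to X_i$; that is, the $G$-action on $X_i$ factors through $G_{j(i)}$. As in the proof of \rl{admmor}, I may restrict to $I=\B{N}$, and after enlarging $j(i)$ inductively I may further assume that $i\mapsto j(i)$ is non-decreasing. Set $K:=\{(i,j)\in I\times J:j\geq j(i)\}$ with the product order, and let $Z_{i,j}:=X_i/G_j$. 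By Example \re{examples}(b) each $Z_{i,j}$ is an Artin stack of finite type. For $(i,j)\leq(i',j')$ in $K$, monotonicity of $j(\cdot)$ gives $j'\geq j(i')\geq j(i)$, so $G_{j'}$ acts on $X_i$ via $G_{j'}\to G_{j(i)}\to\Aut(X_i)$, and the transition $Z_{i',j'}\to Z_{i,j}$ decomposes as
\[
X_{i'}/G_{j'} \longrightarrow X_i/G_{j'} \longrightarrow X_i/G_j.
\]
The first arrow pulls back to the unipotent morphism $X_{i'}\to X_i$ along the smooth atlas $X_i\to X_i/G_{j'}$ and is therefore unipotent by smooth descent of unipotence; the second arrow is unipotent by part (a) applied to the surjection $G_{j'}\to G_j$, whose kernel is unipotent by hypothesis.

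It then remains to identify $X/G\cong\lim_{(i,j)\in K} Z_{i,j}$ in $\St'_k$. Testing on $S$-points for $S\in\Sch_k$, an $S$-point of $X/G$ is a pair $(P,P\to X)$ with $P$ a $G$-torsor on $S$ and $P\to X$ a $G$-equivariant map, while a compatible system of $S$-points of the $Z_{i,j}$ is a family $(P_{i,j},P_{i,j}\to X_i)_{(i,j)\in K}$ of $G_j$-torsors with compatible $G_j$-equivariant maps. Given $(P,P\to X)$, set $P_{i,j}:=P\times^G G_j$; the composition $P\to X\to X_i$ descends to a $G_j$-equivariant map $P_{i,j}\to X_i$ because $\ker(G\to G_j)$ acts trivially on $X_i$ whenever $j\geq j(i)$. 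Conversely, a $G$-torsor on $S$ is equivalent to a compatible system of $G_j$-torsors on $S$ (as $G\cong\lim_j G_j$), and $X\cong\lim_i X_i$ reassembles the $G$-equivariant map $P\to X$ from the $P_{i,j}\to X_i$. The subtlest step, which I expect to require the most care, is in passing from this pointwise check to a genuine equivalence of stacks in $\St'_k$; this should follow from the stability of $\St'_k$ under filtered limits provided by \rl{stlim}(a), together with the observation that the simplicial diagrams computing the quotient stacks commute with these filtered limits.
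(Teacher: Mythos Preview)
Your argument is correct, but both parts take a more laborious route than the paper.

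For part (a), you reduce to the acyclicity of $BU$ and then compute it via a filtration of $U$ and a \v{C}ech argument for $B\B{G}_a$. The paper instead observes that, after reducing to the fiber $p:\pt/U\to\pt$, the section $q:\pt\to\pt/U$ satisfies $p\circ q=\Id$, hence $q^!p^!\cong\Id$; since $q$ has geometric fiber $U$ (an affine space, hence acyclic), $q$ is unipotent and $q^!$ is fully faithful by \rl{ff}, which immediately forces $p^!$ to be fully faithful. This two-line trick bypasses any direct computation of $H^*(BU,\qlbar)$ and in particular avoids the inductive appeal to part (a) itself and the \v{C}ech spectral sequence. Your approach has the merit of actually identifying $H^*(BU,\qlbar)$, but for the lemma at hand the paper's shortcut is cleaner. (One small caveat: your phrase ``central filtration with successive quotients $\B{G}_a$'' should really be ``normal series with $\B{G}_a$ quotients''; centrality is not needed and not always available in the form stated.)

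For part (b), your double-indexed system $\{X_i/G_j\}_{(i,j)\in K}$ works, but the paper simply reindexes so that the $G$-action on $X_i$ factors through $G_i$ and then uses the single diagonal system $\{X_i/G_i\}_i$, with the transition $X_{i+1}/G_{i+1}\to X_i/G_{i+1}\to X_i/G_i$ unipotent by your same two observations. The identification $X/G\cong\lim_i X_i/G_i$ is then immediate, sparing the bookkeeping of your filtered poset $K$ and the ``subtlest step'' you flag at the end.
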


\begin{proof}
(a) Since $p$ is smooth,  it remains to show that $p^!:D(X/H)\to D(X/G)$ is fully faithful or, equivalently, that
the counit map $p_!p^!\to\Id$ is isomorphism (by \rl{ff}). Property of being an isomorphism can be checked
on geometric fibers. Since each geometric fiber of $p$ is
$\pt/U$, we reduce to the case when $X=\pt=H$, thus $G=U$. Let  $q:\pt\to\pt/U$ be the natural projection. Then $p\circ q=\Id$, thus $q^!p^!\cong\Id$, so it remains to show that $q^!$ is fully faithful. But $q$ is unipotent, because its geometric fiber is $U$, thus the assertion follows from \rl{ff}.

(b) Assume that $I=J=\B{N}$. Arguing as in \re{examples} (b) and renumbering indexes if necessary, we may assume that the action of $G$ on $X_i$ factors through $G_i$ for each $i$. Moreover, by (a), for every $i$, the composition  $X_{i+1}/G_{i+1}\to X_{i}/G_{i+1}\to X_i/G_{i}$ is unipotent.
Since $X/G\cong\lim_{i} X_i/G_{i}$, this implies that $X/G$ is admissible.
\end{proof}





\subsection{The geometric $2$-category} \label{SS:2cat}
Let $C$ be a $2$-category, whose objects are $X\in\Var_k$; for every $X,Y\in\Var_k$, the category of morphisms $\Hom_{C}(X,Y)$ is the category $D(X\times Y)$;
and the composition $D(X\times Y)\times D(Y\times Z)\to D(X\times Z)$ is the map
$(\C{A},\C{B})\mapsto (p_{XZ})_!(p_{XY}^*\C{A}\otimes p_{YZ}^*\C{B})$. In this subsection we construct a similar $2$-category
with objects  $X\in\AISt_k$.

\begin{Emp} \label{E:half}
{\bf Category of half-measures.}
For $X,Y\in \AISt_k$, we are going to define a category $M_X(X\times Y)$ of "measures along $X$ and functions along $Y$".

(a) For every morphism $f\times g:U_1\times V_1\to U_2\times V_2$ in $\Art_k$, we denote by $f^!\times g^*$
the functor $(f\times\Id_{V_1})^!\circ (\Id_{U_2}\times g)^*\cong (\Id_{U_1}\times g)^*\circ (f\times\Id_{V_2})^!:D(U_2\times V_2)\to D(U_1\times V_1)$.

(b) For each $X,Y\in \ASt_k$, we define $M_X(X\times Y)$ to be the inductive limit
$\colim^{!,*}_{U\in (X/\cdot)^{op},V\in (Y/\cdot)^{op}} D(U\times V)$, with respect to functors $f^!\times g^*$, defined in (a).

(c) For $X,Y\in\AISt_k$, we define
$M_X(X\times Y):=\colim_{X',Y'} M_{X'}(X'\times Y')$, where  $X'\subset X$ and $Y'\subset Y$ are
all fp-closed substacks, and the transition maps are pushforwards with respect to closed embeddings.

(d) Also, replacing $D$ by $D^-$ in (a)-(c), we define a category $M^-_X(X\times Y)$.
\end{Emp}

\begin{Emp} \label{E:hm}
{\bf Functoriality of half-measures.} Let $X$ and $Y$ be admissible schemes.

(a) Note that the projection $p_X:X\times Y\to X$ give rise to the pullback functor
$p_X^*:M(X)\to M_X(X\times Y)$, which maps $\C{F}\in D(V)\subset M(X)$ to the image of
$\C{F}\in D(V\times \Spec k)$. Similarly,
$p_Y:X\times Y\to Y$ gives rise to the pushforward functor $(p_Y)_!:M_X(X\times Y)\to D(Y)$,
which maps $\C{F}\in D(U\times V)\subset M_X(X\times Y)$ to $(p_{V})_!\C{F}\in D(V)\subset D(Y)$
for every $U\in (X/\cdot)^{un}$ and $V\in (Y/\cdot)^{un}$.

(b) As in \re{tensor}, we define a functor
$\otimes:M_X(X\times Y)\times M_Y(X\times Y)\to M(X\times Y)$, induced by the tensor product on
$D(U\times V)$ for every $U\in (X/\cdot)^{un}$ and $V\in (Y/\cdot)^{un}$.

(c) By (a) and (b), each $\C{B}\in M_Y(X\times Y)$ defines a functor $M(X)\to M(Y)$, given by the rule
$\C{B}(\C{F})=(p_Y)_!(p_X^*\C{F}\otimes \C{B})$. Moreover, this formula defines a functor $M_Y(X\times Y)\times M(X)\to M(Y)$.

(d) For every $X,Y,Z\in\ASch_k$, we have a natural functor
\[M_Y(X\times Y)\times M_Z(Y\times Z)\to M_Z(X\times Z),\]
defined by the rule $(\C{A},\C{B})\mapsto (p_{XZ})_!(p_{XY}^*\C{A}\otimes p_{YZ}^*\C{B})$, where $p_{XY}$ is the projection $X\times Y\times Z\to X\times Y$, and similarly $p_{YZ}$ and $p_{XZ}$. Here $p_{XY}^*\C{A}\in M_Y(X\times Y\times Z)$,
$p_{YZ}^*\C{B}\in M_Z(X\times Y\times Z)$ (as in (a)), thus $p_{XY}^*\C{A}\otimes p_{YZ}^*\C{B}\in M_{Y\times Z}(X\times Y\times Z)$
(as in (b)), thus $(p_{XZ})_!(p_{XY}^*\C{A}\otimes p_{YZ}^*\C{B})\in M_Z(X\times Z)$ (as in (a)).

(e) Assertions (c)-(d) extend to the ind-schemes almost without changes, but with extra care. For example, in (c), though
$p_X^*\C{F}\notin M_X(X\times Y)$, if $Y$ is not a scheme, we do have
$p_X^*\C{F}\otimes \C{B}\in M(X\times Y)$.
 \end{Emp}

\begin{Emp} \label{E:hm2}
{\bf Extension to pro-categories and ind-stacks.}
(a) By \re{hm} (c)-(e), for every $X,Y,Z\in\AISch_k$, we have functors  $\Pro M_Y(X\times Y)\times \Pro M(X)\to\Pro M(Y)$
and $\Pro M_Y(X\times Y)\times \Pro M_Z(Y\times Z)\to \Pro M_Z(X\times Z)$.

(b) Note that for each $X\in\AISt_k$ there is a natural (fully faithful) embedding
$M^-(X)\hra \Pro M(X)$. Indeed, if $X\in\Art_k$, then the embedding sends $\C{F}\in D^-(X)$ to
$\lim_n (\tau^{\geq -n}\C{F})\in \Pro D(X)$, where $\tau^{\geq -n}$ is the truncation functor.
The extension to $\ASt_k$ and $\AISt_k$ is straightforward.

(c)  We claim that both functors in (a) exist for $X,Y,Z\in\AISt_k$. For example, by definition,
we have a functor $M_Y(X\times Y)\times M(X)\to M^-(Y)\subset\Pro M(Y)$ (see (b)), which extends to the functor
$\Pro M_Y(X\times Y)\times \Pro M(X)\to\Pro M(Y)$.
\end{Emp}

\begin{Emp} \label{E:2cat}
{\bf The geometric $2$-category.} Let $X,Y,Z\in\AISch_k$.

(a) By \re{hm2}, every $\C{B}\in \Pro M_Y(X\times Y)$ defines a functor $M(X)\to\Pro M(Y)$.
We denote by $\Hom_{geom}(X,Y)$ the full subcategory of $\Pro M_Y(X\times Y)$, spanned by
objects $\C{B}$ such that $\C{B}(\C{F})\in M(Y)\subset\Pro M(Y)$ for every $\C{F}\in M(X)$.
By definition, each $\C{B}\in\Hom_{geom}(X,Y)$ induces a functor  $M(X)\to M(Y)$.

(b) Note that the second functor of \re{hm2} (a) induces a functorial associate composition morphism
$\circ:\Hom_{geom}(Y,Z)\times\Hom_{geom}(X,Y)\to \Hom_{geom}(X,Z)$.

(c) We claim that for every $X\in\AISt_k$ there exists $\C{B}_{\Id_X}\in\Hom_{geom}(X,X)$ together with natural isomorphisms $\C{B}_{\Id_X}(\C{F})\cong\C{F}$
for every $\C{F}\in M(X)$, $\C{B}_{\Id_X}\circ\C{B}\cong\C{B}$ for every $\C{B}\in \Hom_{geom}(Y,X)$, and $\C{B}'\circ \C{B}_{\Id_X}\cong\C{B}'$
for every $\C{B}'\in \Hom_{geom}(X,Y)$.

\begin{proof}
Assume first that $X\in\ASt_k$. Then for every $V\in (X/\Art_k)^{un}$, the diagonal morphism $\Dt_V:V\to V\times V$ is representable,
so we can form an object $\C{B}_{\Id_{V}}:=\Dt_{V!}(1_V)\in D(V\times V)\subset M_X(X\times X)$. Moreover, it is not difficult to see
that the $\C{B}_{\Id_{V}}$'s form a filtered projective system, so we can define an object
$\C{B}_{\Id_X}:=\lim_{V}\C{B}_{\Id_{V}}\in\Pro M_X(X\times X)$.

The extension of this construction to $X\in\AISt_k$ and the proof that $\C{B}_{\Id_X}$ satisfies all the required properties is standard.
\end{proof}

(d) By (b) and (c), there exists a $2$-category $M_{geom}$, whose objects are $X\in\AISt_k$,
the category of morphisms is $\Hom_{M_{geom}}(X,Y):=\Hom_{geom}(X,Y)$,
the composition is the map from (b), and the unit in $\Hom_{M_{geom}}(X,X)$ is $\C{B}_{\Id_X}$.
\end{Emp}

\begin{Emp} \label{E:corr}
{\bf The $2$-category of admissible correspondences}.

(a) Let $X,Y\in \AISt_k$. By an {\em admissible correspondence} $(g,f):X\dra Y$ we mean a diagram $X\overset{g}{\leftarrow}Z\overset{f}{\to} Y$,
where $g$ is stacky and formally smooth, and $f$ is admissible and strongly representable (see \re{notst}). For every two admissible correspondences $X{\leftarrow}Z{\to} Y$ and
$Y{\leftarrow}Z'{\to} T$, we can form their composition
$X{\leftarrow}Z\times_Y Z'{\to} T$, which is an admissible correspondence $X\dra T$.

We denote by $\Corr_k$ the $2$-category, whose objects are $X\in \AISt_k$, for every $X,Y\in\AISt_k$, the category $\Mor_{\Corr_k}(X,Y)$ is the category of admissible correspondences $X\dra Y$, and the composition 
is as above.

(b) Every admissible correspondence $(g,f):X\dra Y$ gives rise to the functor
$f_!\circ g^!:M(X)\to M(Z)\to M(Y)$ (see \re{notst}), and similarly to the functor
\[f_!\circ g^!:\Pro M_X(T\times X)\to \Pro M_Z(T\times Z)\to \Pro M_Y(T\times Y)\] for each $T\in\AISt_k$.

(c) We claim that for every $\C{B}\in\Hom_{geom}(T,X)\subset \Pro M_X(T\times X)$ and every $\C{F}\in M(T)$, we have an equality
$(f_!g^!\C{B})(\C{F})=f_!g^!(\C{B}(\C{F}))\in M(Y)\subset \Pro M(Y)$, thus $f_!g^!\C{B}\in \Hom_{geom}(T,Y)$.

It is enough to consider two cases:
$f=\Id_X$, and $g=\Id_Y$. In the first case, the assertion follows from the identity $g^!(p_X^*\C{F}\otimes\C{B})\cong g^*p_X^*\C{F}\otimes g^!\C{B}$,
which holds because $g$ is formally smooth. In the second case, the assertion follows from the projection formula
$f_!(f^* p_X^*\C{F}\otimes\C{B})\cong p_X^*\C{F}\otimes f_!\C{B}$.

(d) By (c), to every admissible correspondence $(g,f):X\dra Y$ one can associate an object $\C{B}_{g,f}:=f_!g^!(\C{B}_{\Id_X})\in \Hom_{geom}(X,Y)$.
Moreover, the assignment $X\mapsto X$, $(g,f)\mapsto\C{B}_{g,f}$ can be upgraded to a functor of $2$-categories $\Corr_k\to M_{geom}$.
\end{Emp}

\section{Categorification of the Hecke algebra}

\subsection{Loop spaces}


\begin{Emp} \label{E:loop}
{\bf Notation.} Set $K:=k((t))$, $\C{O}:=k[[t]]]$, and fix $n\in\B{Z}_{\geq 0}$.

For an affine scheme of finite type $X$ over $K$ (resp. $\C{O}$), we denote by $LX$
(resp. $L^+X$, resp. $L^+_n X$ ) the functor $\Alg_k\to \Set$ from $k$-algebras to sets, defined by the rule $LX(R):=\Hom_{K}(\Spec R((t)),X)$ (resp. $L^+X(R):=\Hom_{\C{O}}(\Spec R[[t]],X)$, resp. $L^+_n X(R):=\Hom_{\C{O}}(\Spec R[t]/(t^{n+1}),X)$).

It is well known that $LX$ (resp. $L^+X$, resp. $L^+_n X$) is represented by an ind-scheme
(resp. scheme) over $k$. Indeed, when $X=\B{A}^m$ the assertion is easy, while the general case follows from the fact
that if $X\hra\B{A}^m$ is a closed embedding over $K$ (resp. $\C{O}$), then $LX\hra L\B{A}^m$
(resp.  $L^+X\hra L^+\B{A}^m$, resp.  $L^+_n X\hra L^+_n\B{A}^m$) is a closed embedding.

We call $LX$ (resp. $L^+X$) the loop ind-scheme (resp. scheme) of $X$. Note that
the natural homomorphism $k[[t]]]\to k[t]/(t^{n+1})$ induces a projection $p_n:L^+X\to L^+_n X$.
\end{Emp}

\begin{Lem} \label{L:nbh}
Let $f:X\to Y$ be a pro-unipotent morphism of affine schemes over $k$, and let $X'\subset X$ be an fp-closed subscheme.  Then for every $x\in X'(k)$ there exist fp-closed neighborhoods $X''\subset X'$ of $x$ and $Y''\subset Y$ of $f(x)$ such that $f$ induces a pro-unipotent morphism $X''\to Y''$.
\end{Lem}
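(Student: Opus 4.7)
The plan is to descend the fp-closed inclusion $X' \hookrightarrow X$ along the admissible presentation of $f$ and reduce to a finite-dimensional problem about a single unipotent morphism, then build a local slice using smoothness. By definition of pro-unipotent, I would fix an admissible presentation $X \cong \lim_i X_i$ over $Y$ with each $f_i : X_i \to Y$ unipotent and all transitions $X_i \to X_j$ ($i > j$) affine unipotent. By \rl{lim}(d), the fp-closed embedding $X' \hookrightarrow X$ descends: there exist $i_0$ and an fp-closed $X'_{i_0} \subset X_{i_0}$ with $X' \cong X'_{i_0} \times_{X_{i_0}} X$; writing $X'_i := X'_{i_0} \times_{X_{i_0}} X_i$ for $i \geq i_0$ yields an admissible presentation $X' \cong \lim_{i \geq i_0} X'_i$ whose transitions are affine unipotent (as base changes of the corresponding transitions in the original presentation).

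The crucial reduction is that it suffices to find an fp-closed $X''_{i_0} \subset X'_{i_0}$ containing the image $x_{i_0}$ of $x$ and an fp-closed $Y'' \subset Y$ containing $y = f(x)$ such that the induced morphism $X''_{i_0} \to Y''$ is \emph{unipotent}. Indeed, setting $X'' := X''_{i_0} \times_{X_{i_0}} X$, the admissible presentation $X'' \cong \lim_i (X''_{i_0} \times_{X_{i_0}} X_i)$ together with the fact that each $X''_{i_0} \times_{X_{i_0}} X_i \to X''_{i_0}$ is unipotent (base change of $X_i \to X_{i_0}$) exhibits $X'' \to Y''$ as pro-unipotent. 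To construct such $X''_{i_0}$ and $Y''$, I would use the smoothness of $f_{i_0}$ at $x_{i_0}$: pick functions $g_1, \ldots, g_n$ on $X_{i_0}$ vanishing at $x_{i_0}$ whose differentials span the relative cotangent space $\Omega^1_{X_{i_0}/Y}$ at $x_{i_0}$, so that $\phi := (f_{i_0}, g_1, \ldots, g_n) : X_{i_0} \to Y \times \B{A}^n$ is étale at $x_{i_0}$. Take $X''_{i_0} := X'_{i_0} \cap V(g_1, \ldots, g_n)$, which is fp-closed in $X'_{i_0}$ and contains $x_{i_0}$; via $\phi$, near $x_{i_0}$ this slice is identified with a closed subscheme of $Y$ cut out by the images of the finitely many equations defining $X'_{i_0}$ in $X_{i_0}$ — we take $Y''$ to be this closed subscheme of $Y$.

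The main obstacle is the final verification: showing $X''_{i_0} \to Y''$ is genuinely unipotent (smooth with acyclic geometric fibers) and not merely étale at the single point $x_{i_0}$. The subtle point is that $Y$ is not assumed Noetherian (in applications it arises from loop-group constructions), so the standard arguments separating connected components over the base do not apply; instead one must work directly with finitely generated ideals, possibly further restricting $X''_{i_0}$ so that the induced morphism becomes an isomorphism onto its fp-closed image in $Y$, which then makes unipotence automatic. Carrying out this separation/isomorphism step cleanly in the non-Noetherian setting is the technical heart of the lemma.
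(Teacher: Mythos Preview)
Your reduction step is correct and matches the paper's argument exactly: descend the fp-closed embedding $X'\hookrightarrow X$ to some level $X'_{i_0}\subset X_{i_0}$ of the admissible presentation, and observe that it suffices to find fp-closed neighborhoods $X''_{i_0}\subset X'_{i_0}$ of $x_{i_0}$ and $Y''\subset Y$ of $f(x)$ such that the induced map $X''_{i_0}\to Y''$ is unipotent. Your justification of why this suffices is also correct.

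The gap is in your finishing step, and you have correctly identified where it lies. Your slice construction via local \'etale coordinates $\phi=(f_{i_0},g_1,\ldots,g_n)$ only gives information at the single point $x_{i_0}$, and the passage from ``\'etale at a point'' to ``isomorphism onto an fp-closed subscheme of $Y$'' is exactly what you cannot complete over the non-Noetherian base $Y$. The paper bypasses this difficulty entirely by a second descent: since the morphisms $X'_{i_0}\hookrightarrow X_{i_0}$ and $f_{i_0}:X_{i_0}\to Y$ are both finitely presented between affine schemes, they arise by base change along some $\pi:Y\to Y_\alpha$ from a diagram $X'_\alpha\hookrightarrow X_\alpha\overset{f_\alpha}{\to} Y_\alpha$ of affine schemes \emph{of finite type over $k$}. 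Now simply take $X'':=\pi_{X'}^{-1}(\{x_\alpha\})$ and $Y'':=\pi^{-1}(\{f_\alpha(x_\alpha)\})$, the preimages of the reduced closed $k$-points. Since $x_\alpha$ and $f_\alpha(x_\alpha)$ are both copies of $\Spec k$, the induced map $X''\to Y''$ is an \emph{isomorphism}, hence trivially unipotent.

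In short: the missing idea is to descend $Y$ itself to finite type, rather than trying to build an \'etale slice over $Y$. This replaces your delicate local analysis by the observation that the fiber over a $k$-point is just the base change of an isomorphism of $k$-points.
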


\begin{proof}
Choose an admissible presentation $X\cong\lim_i X_i$ of $f$ such that $X_{i}\to Y$ is unipotent for all $i$. Then there exists $i$ and an fp-closed
subscheme $X'_i\subset X_i$ such that $X'\subset X$ is the preimage of $X'_i\subset X_i$ (by \rl{lim} (f)). Let $x_i\in X'_i$ be the image of $x$. It is enough to find fp-closed neighborhoods $X''_i\subset X'_i$ of $x_i$ and $Y''\subset Y$ of $f(x)$ such that $f_i:X\to Y$ induces a unipotent morphism $X''_i\to Y''$. Indeed, in this case the preimage $X''\subset X'$ of $X''_i\subset X'_i$ is an fp-closed neighbourhood of $x$ and the induced map $X''\to X''_i\to Y''$ is pro-unipotent. Replacing $f$ by $X_i\to Y$, we can thus assume that $f$ is unipotent.

Since $X'\hra X$ and $f:X\to Y$ are finitely presented morphisms between affine schemes over $k$,
there exist a unipotent morphism $f_{\al}:X_{\al}\to Y_{\al}$ of affine schemes of finite type over $k$, a closed embedding
$X'_{\al}\hra X_{\al}$ and a morphism $\pi:Y\to Y_{\al}$ such that $X'\hra X\to Y$ is a pullback of $X'_{\al}\hra X_{\al}\to Y_{\al}$ under $\pi$. Let $x_{\al}\in X'_{\al}$ be the image of $x$, let $X''\subset X'$ be the preimage of
$\{x_{\al}\}\subset X'_{\al}$, and let $Y''\subset Y$ be the preimage of $\{f_{\al}(x_{\al})\}\subset Y_{\al}$.
Then $f$ induces an isomorphism $X''\isom Y''$, thus $X''$ and $Y''$ are the required fp-closed neighbourhoods.
\end{proof}

\begin{Lem} \label{L:smooth}
Let $f:X\to Y$ be a morphism of affine varieties over $K$, let $Lf:LX\to LY$ be the induced morphism of loop ind-schemes, and let $U'\subset LX$ be an fp-closed neighbourhood of $x\in X(K)=LX(k)$.

(a) If $f$ is \'etale at $x$, then for every sufficiently small fp-closed neighborhood $U_x\subset LX$ of $x$ the restriction $Lf|_{U_x}:U_x\to LY$ is an fp-closed embedding, thus $U_x$ can be viewed as an fp-closed neighborhood of $f(x)\in LY$.

(b) If $f$ is smooth at $x$, then  there exist fp-closed neighborhoods $U_x\subset U'\subset LX$ of $x$ and
$V_{f(x)}\subset LY$ of $f(x)$ such that $Lf|_{U_x}$ defines a pro-unipotent morphism $U_x\to V_{f(x)}$.
\end{Lem}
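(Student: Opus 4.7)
I would treat the two parts in order, with (a) providing the main input for (b).

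\emph{Part (a).} Since being étale is Zariski-local, I would first shrink $X$ to a neighborhood of $x$ on which $f$ is étale everywhere. By the local structure theorem for étale morphisms, after further Zariski-localization I may assume $X = \Spec B[z]/(P(z))$ over $Y = \Spec B$ with $P$ monic and $P'$ a unit on $X$. Then $LX$ is cut out in $LY \times L\B{A}^1$ by $P(z) = 0$, and $x$ corresponds to a pair $(y_0, z_0)$. The argument is then an application of Hensel's lemma in families: for $N$ sufficiently large and any $k$-algebra $R$, the projection $(y,z) \mapsto y$ identifies the set of $R$-points of the fp-closed subscheme
\[
U_x := \{(y,z) \in LX : \val_t(y - y_0) \geq N,\ \val_t(z - z_0) \geq N\}
\]
with those of an fp-closed neighborhood $V_{f(x)} \subset LY$ of $f(x)$ (namely the locus of points $y$ near $y_0$ for which the Newton iteration starting at $z_0$ converges). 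This yields the required fp-closed embedding $U_x \hra LY$.

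\emph{Part (b).} Using the standard Zariski-local factorization of a smooth morphism, I may replace $X$ by a neighborhood of $x$ and assume $f$ factors as $X \overset{g}{\lra} \B{A}^n \times Y \overset{p}{\lra} Y$, where $g$ is étale at $x$ and $p$ is the projection. Applying (a) to $g$ yields fp-closed neighborhoods $U_1 \subset LX$ of $x$ and $W \subset L\B{A}^n \times LY$ of $g(x)$ together with an isomorphism $Lg|_{U_1} : U_1 \isom W$, and I may shrink $U'$ to assume $U_1 \subset U'$. It thus suffices to equip the projection $W \to LY$ with pro-unipotent structure.

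\emph{The projection step.} Writing $g(x) = (a_0, y_0)$ and choosing $N$ large enough, the fp-closed subscheme $S_N := \{a \in L\B{A}^n : \val_t(a - a_0) \geq -N\} \subset L\B{A}^n$ is a pro-affine scheme $\lim_M \B{A}^{n(N+M+1)}_k$ whose transition maps are linear projections, hence unipotent and affine. Consequently, for any fp-closed neighborhood $V_{f(x)} \subset LY$ of $f(x)$, the projection $S_N \times V_{f(x)} \to V_{f(x)}$ is pro-unipotent. Taking $U_x := U_1 \cap (Lg)^{-1}(S_N \times V_{f(x)})$ for a sufficiently small $V_{f(x)}$ produces the desired fp-closed neighborhoods fitting into a pro-unipotent morphism $U_x \to V_{f(x)}$.

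\emph{Main obstacle.} The essential content lies in the Hensel step of (a): one must run Hensel's lemma for families parametrized by an arbitrary $k$-algebra $R$ and verify that the resulting bijection is induced by an isomorphism of finitely presented $k$-schemes, not just a set-theoretic bijection. This requires describing fp-closed subschemes of $LX$ explicitly as bounded-valuation truncations inside $L\B{A}^m$ and checking that all constructions preserve finite presentation; this bookkeeping is the main technical component of the argument.
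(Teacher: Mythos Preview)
Your overall strategy matches the paper's: handle (a) first, then for (b) factor the smooth map as an \'etale morphism $g:X\to Y\times\B{A}^n$ followed by the projection, and reduce to showing the projection is pro-unipotent on suitable neighborhoods. For (a) you take a different but legitimate route---the paper reduces to an \'etale self-map of $\B{A}^n$ and invokes the formal inverse function theorem, while you use the standard \'etale presentation and Hensel in families. Both work.

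There is, however, a genuine gap in your final step for (b). You establish that $S_N\times V_{f(x)}\to V_{f(x)}$ is pro-unipotent, and then set $U_x:=U_1\cap (Lg)^{-1}(S_N\times V_{f(x)})$. Under $Lg$ this identifies $U_x$ with $W\cap(S_N\times V_{f(x)})$, an arbitrary fp-closed subscheme of $S_N\times V_{f(x)}$. But the restriction of a pro-unipotent morphism to an fp-closed subscheme is \emph{not} pro-unipotent in general (already for $\B{A}^1\to\pt$ and the closed subscheme $\{0,1\}$ this fails), and merely shrinking $V_{f(x)}$ does not cure this: the fibers of $W\cap(S_N\times V_{f(x)})\to V_{f(x)}$ need have no reason to be acyclic.

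What is missing is precisely the content of the paper's \rl{nbh}: given a pro-unipotent morphism of affine schemes and an fp-closed subscheme $X'$ of the source, one can find fp-closed neighborhoods $X''\subset X'$ of $x$ and $Y''$ of $f(x)$ so that the induced $X''\to Y''$ is again pro-unipotent. The paper proves this separately (by descending to a finite-type model and taking preimages of closed points) and then invokes it exactly at the point where your argument stalls. You should either cite that lemma or reproduce its argument; once you do, your proof goes through.
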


\begin{proof}
(a) It is enough to show that there exists an fp-closed neighborhood $U_x\subset LX$ of $x$
such that $Lf|_{U_x}:U_x\to LY$ is a finitely presented closed embedding, because in this case this will be true
for every smaller neighborhood.

Assume first that $X=Y=\B{A}^n$. Changing coordinates, we may assume that  $x=f(x)=\ov{0}=(0,\ldots,0)$. Composing $f$ with the linear map $(df|_{\ov{0}})^{-1}$, we can assume that the differential $df|_{\ov{0}}=\Id$. Now the assertion is simply a version of the formal inverse function theorem.

To reduce the general case to the particular case, shown above, we argue as follows.

If $f$ is an open embedding, the assertion is easy (or it can be shown by the same arguments as below).
Therefore we can replace $X$ and $Y$ by open affine neighborhoods of $x$ and $f(x)$, respectively, thus assuming that $X$ is a closed subscheme
of $Y\times\B{A}^n$, given by equations $g_1=\ldots=g_n=0$ for certain regular functions $g_i\in K[Y\times\B{A}^n]$.

Choose a closed embedding $Y\hra \B{A}^m$ and a lift $\wt{g}_i\in K[\B{A}^m\times\B{A}^n]$ of each $g_i$. Consider the closed subscheme $\wt{X}\subset\B{A}^m\times\B{A}^n$, given by equations $\wt{g}_1=\ldots=\wt{g}_n=0$. Then the projection
$\pr_{\B{A}^m}:\wt{X}\to \B{A}^m$ is \'etale at $x\in X\subset\wt{X}$.

Consider morphism $\wt{f}=(\pr_{\B{A}^m};\wt{g}_1,\ldots,\wt{g}_n):\B{A}^m\times\B{A}^n\to \B{A}^m\times\B{A}^n$.
Then $\wt{f}$ is \'etale at $x\in\wt{X}\subset \B{A}^m\times\B{A}^n$, and the restriction of $\wt{f}$ to $Y\times \{\ov{0}\}$ is $f$. Thus the assertion for $f$ follows from that for $\wt{f}$, and the proof is complete.

(b) Using (a), we can replace $X$ by a Zariski open neighbourhood of $x$. Thus we can assume that $f:X\to Y$ lifts to an \'etale  morphism
$g:X\to Y\times\B{A}^n$ such that $g(x)=(f(x),0)$. Using (a) again, we can assume that
$X=Y\times\B{A}^n$, $f$ is the projection $Y\times\B{A}^n\to Y$, and $x=(y,0)$. Choose any fp-closed neighbourhood $V\subset LY$ of $y$, and set $U:=V\times L^+\B{A}^n$. Then $U$ is an fp-closed neighborhood of $x$, and $f$ induces a pro-unipotent morphism $\pr:U\to V$. Replacing $U'$ by
$U'\cap U$, we can assume that $U'\subset U$. Now the assertion
follows from \rl{nbh} applied to $\pr:U\to V$.
\end{proof}

\subsection{The categorical Hecke algebra}

\begin{Emp} \label{E:mainnot}
{\bf Notation.} (a) Let $G$ be a connected semisimple and simply
connected group over $k$, and let $LG:=L(G_K)$ be the loop group ind-scheme of $G$ (see \re{loop}).

(b) We fix a Borel subgroup $B\subset G$ and a maximal torus $T\subset B$. Let $\I\subset LG$ be the Iwahori subgroup scheme corresponding to $B$,
that is, $\I$ is the preimage of $B\subset G$ under the projection $p_0:L^+G\to L^+_0G=G$ (see \re{loop}).

(c) Let $\Fl$ be the affine flag variety $LG/\I$, and let $\wt{W}$ be the affine Weyl
group of $G$. For every $w\in\wt{W}$, we denote by $\Fl^{\leq
w}\subset \Fl$ the closure of the $\I$-orbit $Y_w:=\I w\subset \Fl$, and we denote
by $LG^{\leq w}\subset LG$ the preimage of $\Fl^{\leq w}$.


(d) By a standard parahoric subgroup scheme we mean an fp-closed subgroup scheme $\P\subset LG$, containing $\I$.
Denote by $\Par$ the category, corresponding to the partially ordered set of standard parahoric subgroup schemes of $LG$.

(e) For each $\P\in \Par$, we let $\P^+\subset \P$ be the pro-unipotent radical of $\P$.
Explicitly, we have $\I^+=p_0^{-1}(U)\subset L^+G$, where $U$ is the unipotent radical of $B$, and $\P^+\subset \P$ is the largest normal
subgroup scheme, contained in $\I^+$.  We also let
$L_{\P}:=\P/\P^+$ be the "Levi subgroup of $\P$", let $W_{\P}\subset \wt{W}$ be the corresponding parabolic
Weyl subgroup, and set $\rk\P:=\rk (L_{\P})^{der}$ and $\Fl_{\P}:=LG/\P$.

(f) For each $\P\in \Par$ and $n\in \B{N}$, we denote by $\P_n^+\subset\P^+$ the $n$-th
congruence subgroup scheme of $\P^+$. In particular, we have $\P^+_0=\P^+$. Explicitly, for $n>0$, we denote by $V\subset L^+_n U\subset L^+_nG$
the kernel of the projection $L^+_n U\to L^+_{n-1} U$, set $\I^+_n:=p_n^{-1}(V)\subset L^+G$, and let $\P_n^+$ be the largest normal
subgroup scheme of $\P$, contained in $\I^+_n$.
\end{Emp}

\begin{Emp} \label{E:hecke}
{\bf The categorical Hecke algebra.}
(a) For each $w\in\wt{W},\P\in \Par$ and $n\in\B{N}$, the quotient $LG^{\leq w}/\P_n^+$
is a quasi-projective scheme. Namely, it is an $\I/\P_n^+$-torsor over the projective scheme $\Fl^{\leq w}$.

Moreover, for each $m\geq n$, the projection
$LG^{\leq w}/\P_m^+\to LG^{\leq w}/\P_n^+$ is a $\P_n^+/\P_m^+$-torsor, thus an unipotent
morphism. Since  $LG^{\leq w}\cong \lim_n (LG^{\leq w}/\P_n^+)$, we conclude that $LG^{\leq w}$ is an admissible scheme, and $\{LG^{\leq w}/\P_n^+\}_n$ is an admissible presentation of $LG^{\leq w}$.

(b) For every $w'\geq w$ in $\wt{W}$, the inclusion $LG^{\leq w}\hra LG^{\leq w'}$ is a pullback of a closed embedding
$\Fl^{\leq w}\hra \Fl^{\leq w'}$ in $\Var_k$, thus it is finitely presented. Hence $LG=\colim_w LG^{\leq w}$ is an admissible
ind-scheme, thus the construction of \re{consind} provides us with categories $M(LG)$ and $D(LG)$.

Category $D(LG)$ is a categorical counterpart of smooth functions with compact support on $G(F)$,
while $M(LG)$ is a categorical counterpart of the Hecke algebra.
\end{Emp}

\begin{Emp} \label{E:basis}
{\bf Basis of fp-closed neighborhoods.}
For every $\gm\in LG(k)$, the set $\{\gm \I_n^+\}_n$ form a basis of fp-closed neighborhoods of $\gm$.
Indeed, since $\gm \I_n^+\subset LG$ is a preimage of the closed point $[\gm]\in LG/\I_n^+$,
we get that $\gm \I_n^+\subset LG$ is an fp-closed neighborhood of $\gm$. Conversely,
let $Y\subset LG$ is an fp-closed neighborhood of $\gm$. Using isomorphism $LG\cong\lim_n (LG/\I_n^+)$, we conclude that $Y$ is a preimage of some closed subscheme $\ov{Y}\subset LG/\I_n^+$ (by \rl{lim} (f)).
Since $\gm\in Y$, we get $[\gm]\in \ov{Y}$, thus $\gm \I_n^+\subset Y$.
\end{Emp}

\begin{Emp} \label{E:conv}
{\bf The convolution.}
(a) The multiplication map $G\times G\to G$ induces a multiplication map  $m:LG\times LG\to
LG$. Moreover, $m$ can be written as a composition of an isomorphism $(LG)^2\to (LG)^2:(x,y)\mapsto (x,xy)$ and the projection
$p:(LG)^2\to LG:(x,y)\mapsto y$. Since $LG$ is admissible, we conclude that $p$ and hence also $m$ is admissible. Therefore $m$
defines the functor $m_!:M((LG)^2)\to M(LG)$, and we denote by $\ast$ the convolution
$\C{A}\ast\C{B}:=m_!(\C{A}\pp\C{B})$. Since multiplication $m$ is associative, the convolution $\ast$ equips
$M(LG)$ with a structure of a monoidal category (without unit).

(b) Arguing as in (a), we see that the standard action $(g,h)(x):=gxh^{-1}$ of $G^2$ on $G$ induces the action of the group ind-scheme
$(LG)^2$ on $LG$, and thus induces an action of the monoidal category $M((LG)^2)$ on $M(LG)$.

(c) For $\P\in\Par$ and $n\in\B{N}$, let $\dt_{\P^+_n}\in M(LG)$ be the corresponding $\dt$-measure (see \re{consind} (b)), and let
$\pr:LG\to LG/\P_n^+$ be the projection.  Then for every $\C{A}\in M(LG)$
we have a natural isomorphism $\C{A}\ast\dt_{\P_n^+}\cong \pr^!\pr_!\C{A}$.

Indeed, consider the Cartesian diagram
\[
\begin{CD}
LG\times \P_n^+ @>m>> LG\\
@VpVV @V\pr VV\\
LG @>\pr>> LG/\P_n^+,
\end{CD}
\]
where $p$ is the projection. Since all morphisms are formally smooth, we have a canonical isomorphism
$\C{A}\ast\dt_{\P_n^+}= m_!p^!\C{A}\cong\pr^!\pr_!\C{A}$.

(d) Note that $\dt_{\P_n^+}\cong\pr^!\dt_1$, where $\dt_1\in M(LG/\P_n^+)$ is the $\dt$-measure at $[1]$.
Since $\P_n^+$ is pro-unipotent, we conclude from (c) that
$\dt_{\P^+_n}\ast\dt_{\P_n^+}\cong\pr^!\pr_!\dt_{\P_n^+}\cong\pr^!\dt_1\cong\dt_{\P_n^+}$.
\end{Emp}

\begin{Emp} \label{E:equivha}
{\bf The equivariant case.}
(a) Each $\P\in \Par$ acts on $LG$ by the adjoint action, so we can form the quotient stack
$\frac{LG}{\P}$. We claim that $\frac{LG}{\P}$ is an admissible ind-stack, hence we can form a category $M(\frac{LG}{\P})$.

Indeed, note that $\P\cong \lim_n\P/\P_n^+$, and all the transition maps are unipotent.
We say that $w\in\wt{W}$ is $\P$-maximal, if $w\in\wt{W}$ be the longest element in the double coset $W_{\P}wW_{\P}$. For every  $\P$-maximal $w$, the scheme $LG^{\leq w}\subset LG$ is $\P\times\P$-invariant, hence $\Ad\P$-invariant. Since $LG^{\leq w}\cong\lim_n (LG^{\leq w}/\P_n^+)$ is an admissible $\P$-equivariant presentation of $LG^{\leq w}$, the quotient $\frac{LG^{\leq w}}{\P}$ is an admissible stack by \rl{unipstack} (b).
Finally, since the quotient $\frac{LG}{\P}$  is the inductive limit of the $\frac{LG^{\leq w}}{\P}$'s, taken over $\P$-maximal $w$'s, the assertion follows.

(b) Note that the multiplication map $m:LG\times LG\to LG$ induces a diagram $\frac{LG}{\P}\times \frac{LG}{\P} \overset{\pi}{\lla}
\frac{LG\times LG}{\P}\overset{\ov{m}}{\lra} \frac{LG}{\P}$. In particular, $m$ gives rise to the convolution $\ast$ on $M(\frac{LG}{\P})$,
defined by the rule $\C{A}\ast\C{B}:=\ov{m}_!\pi^!(\C{A}\pp\C{B})$. The convolution $\ast$ equips $M(\frac{LG}{\P})$ with a structure of
a monoidal category. Moreover, for every $\P\subset\Q$ in $\Par$, the $!$-pullbacks $M(\frac{LG}{\Q})\to M(\frac{LG}{\P})$ and
$M(\frac{LG}{\P})\to M({LG})$ are monoidal.
\end{Emp}

\begin{Emp} \label{E:biinv}
{\bf Bi-invariant measures.}

(a) By \re{conv} (c) and \rl{ff}, for every $\P\in \Par$ and $n\in \B{N}$ the essential image of the embedding $D(LG/\P_n^+)\hra M(LG)$ consists of all $\C{A}\in  M(LG)$ such that $\C{A}\ast\dt_{\P_n^+}\cong \C{A}$. Similarly, the essential image of the embedding $D(\P_n^+\bs LG)\hra M(LG)$ consists of all $\C{A}\in  M(LG)$ such that $\dt_{\P_n^+}\ast\C{A}\cong \C{A}$.

(b) Since $LG\cong\lim_n (LG/P_n^+)$ and $LG\cong\lim_n (\P_n^+\bs LG)$ are admissible presentations of $LG$, we have $M(LG)\cong \colim_n D(LG/\P_n^+)\cong\colim_n D(\P_n^+\bs LG)$. We conclude from (a) that for every $\C{A}\in M(LG)$ there exists $n\in\B{N}$ such that $\C{A}\ast\dt_{\P_n^+}\cong\C{A}\cong\dt_{\P_n^+}\ast\C{A}$, or, equivalently,
$\dt_{\P_n^+}\ast\C{A}\ast\dt_{\P_n^+}\cong \C{A}$.

(c) Since $\dt_{(\P_n^+)^2}\in M((LG)^2)$ satisfies $\dt_{(\P_n^+)^2}(\C{A})=\dt_{\P_n^+}\ast\C{A}\ast\dt_{\P_n^+}$, we conclude from
(b) that for every $\C{A}\in M(LG)$ there exists $n\in\B{N}$ such that $\dt_{(\P_n^+)^2}(\C{A})\cong\C{A}$.

(d) Set $M_{\P_n^+}(LG):=M(\P_n^+\bs LG/\P_n^+)$. Arguing as in (a), the essential image of the embedding
$M_{\P_n^+}(LG)\hra M(LG)$ consists of all $\C{A}\in M(LG)$ which satisfy $\dt_{\P_n^+}\ast\C{A}\ast\dt_{\P_n^+}\cong\C{A}$.
In particular, $M_{\P_n^+}(LG)\subset M(LG)$ is the monoidal subcategory with unit $\dt_{\P_n^+}$.
It now follows from  (b) that $M(LG)\cong\colim_n  M_{\P_n^+}(LG)$, which is
a categorical analog of the fact that the Hecke algebra is idempotent.

(e) Using the description of $M_{\P_n^+}(LG)\subset M(LG)$ in (d), one sees that the monoidal action of $M((LG)^2)$ on $M(LG)$
(see \re{conv} (b)) induces the monoidal action of $M_{(\P_n^+)^2}((LG)^2)$ on $M_{\P_n^+}(LG)$.
\end{Emp}





\subsection{Averaging functors}

\begin{Emp} \label{E:adj}
{\bf Notation.}
(a) Consider the map $a:(LG)^2\to LG$  given by the formula $a(x,y):=xyx^{-1}$. As in \re{conv} (a), the map $a$ is admissible, and  we denote by $a_!:M((LG)^2)\to M(LG)$ the corresponding functor. Every $\C{X}\in M(LG)$ gives rise to the functor
$\Ad^{\C{X}}: M(LG)\to  M(LG)$, defined by the rule $\C{F}\mapsto a_!(\C{X}\pp\C{F})$.

(b) More generally, every morphism $f:X\to LG$ of ind-schemes gives rise to the morphism
$a_f:X\times LG\to LG:(x,y)\mapsto f(x)yf(x)^{-1}$. Arguing as in \re{conv} (a), we conclude that for each admissible  $X$, the functor $a_f$ is admissible.  Hence for each $\C{X}\in M(X)$, morphism $f$ induces a functor $M(LG)\to M(LG)$, given by the rule $\C{F}\mapsto (a_f)_!(\C{X}\pp\C{F})$, which we denote by $\Ad^{X;\C{X}}$ or $\Ad^{f;\C{X}}$, depending on the context.

(c) For $\P\in \Par$, we denote by $LG\times^{\P} LG$ the quotient of $LG\times LG$ by $\P$, acting by the action $g(x,y):=(xg^{-1},gyg^{-1})$. Consider diagram
\[
\Fl_{\P}\times\frac{LG}{\P}\overset{\pr}{\lla} LG\times^{\P}LG \overset{\ov{a}}{\lra}LG,
\]
where $\pr$ is the projection $[x,y]\mapsto([x],[y])$, and $\ov{a}$ is the map
$[x,y]\mapsto xyx^{-1}$.

Note that $\ov{a}$ is finitely presented, because it can be written as the composition of the isomorphism
$LG\times^{\P}LG\to \Fl_{\P}\times LG:[x,y]\mapsto ([x],xyx^{-1})$, and the projection
$\Fl_{\P}\times LG\to LG$. Therefore every $\C{X}\in D(\Fl_{\P})=M(\Fl_{\P})$ gives rise to functors \[\Ad_{\P}^{\C{X}}:M(\frac{LG}{\P})\to M(LG):\C{F}\mapsto \ov{a}_!\pr^!(\C{X}\pp\C{F}) \text{ and }\]
\[\Ad^{\C{X},*}_{\P}:D(\frac{LG}{\P})\to D(LG):\C{F}\mapsto \ov{a}_!\pr^*(\C{X}\pp\C{F}).\]

(d) For every $\Q\in\Par$, the diagram of (c) descends to the diagram
\[
(\Q\bs\Fl_{\P})\times\frac{LG}{\P}\lla (\Q\bs LG)\times^{\P}LG\lra\frac{LG}{\Q}.
\]
Therefore for every $\C{X}\in M(\Q\bs \Fl_{\P})$ the formula as in (c) gives rise to the functor
$\Ad_{\P;\Q}^{\C{X}}:M(\frac{LG}{\P})\to M(\frac{LG}{\Q})$.

By the smooth base change, if $\wt{\C{X}}\in M(\Fl_{\P})$ is the $!$-pullback of $\C{X}$ and $\pi$ is the projection $LG\to \frac{LG}{\Q}$, then
$\pi^!\circ\Ad_{\P;\Q}^{\C{X}}$ is naturally isomorphic to $\Ad_{\P}^{\wt{\C{X}}}$.
\end{Emp}

\begin{Lem} \label{L:aver}
(a) Let $f:X\to LG$ be a morphism  in $\AISch_k$ such that the composition
$g:X\overset{f}{\to}LG\overset{\pi}{\to} \Fl_{\P}$ is admissible.  Then for every $\C{X}\in M(X)$, the maps $M(\frac{LG}{\P})\overset{\pi^!}{\lra} M(LG)\overset{\Ad^{X;\C{X}}}{\lra}M(LG)$ and
$\Ad_{\P}^{g_!(\C{X})}:M(\frac{LG}{\P})\to M(LG)$ are isomorphic.

(b) Let $\mu\in M(\frac{LG}{\P})$ be a Haar measure, $\C{X}$ an object of $D(\Fl_{\P})=M(\Fl_{\P})$ and $\C{F}\in D(\frac{LG}{\P})$. Then we have a natural isomorphism
\[\Ad_{\P}^{\C{X},*}(\C{F})\otimes \pi^!(\mu)\cong \Ad_{\P}^{\C{X}}(\C{F}\otimes\mu).\]
\end{Lem}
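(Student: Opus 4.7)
For part (a), my plan is to exhibit the functorial relationship via base change. I would first define $\phi\colon X\times LG\to LG\times^{\P}LG$ by $(x,y)\mapsto[f(x),y]$, and verify the compatibilities $\ov a\circ\phi=a_f$ and $\pr\circ\phi=g\times\pi$. Then I would check that the square
\[
\begin{CD}
X \times LG @>\phi>> LG \times^{\P} LG \\
@V\id \times \pi VV @V\pr VV \\
X \times \tfrac{LG}{\P} @>g \times \id>> \Fl_\P \times \tfrac{LG}{\P}
\end{CD}
\]
is Cartesian: given $[x,y]\in LG\times^{\P}LG$ with $[x]=g(x')\in\Fl_\P$, one can uniquely normalize $x=f(x')$, making the fiber over $(x',\pi(y))$ a single point. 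Then, for $\C F\in M(\tfrac{LG}{\P})$, applying base change $\pr^{!}\circ(g\times\id)_{!}\cong\phi_{!}\circ(\id\times\pi)^{!}$, together with the Künneth-type identities $(\id\times\pi)^{!}(\C X\boxtimes\C F)\cong\C X\boxtimes\pi^{!}\C F$ (since $\pi$ is formally smooth) and $(g\times\id)_{!}(\C X\boxtimes\C F)\cong g_{!}\C X\boxtimes\C F$, yields
\[
\Ad^{X;\C X}(\pi^{!}\C F)=(a_f)_{!}(\C X\boxtimes\pi^{!}\C F)\cong\ov a_{!}\pr^{!}(g_{!}\C X\boxtimes\C F)=\Ad_{\P}^{g_{!}\C X}(\C F).
\]

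For part (b), the plan is to transport the Haar measure across both sides and then compare. First I apply the projection formula for $\ov a_{!}$ (valid since $\ov a$ is finitely presented and admissible by \re{adj}(c)) to rewrite
\[
\Ad_{\P}^{\C X,*}(\C F)\otimes\pi^{!}(\mu)\cong\ov a_{!}\bigl(\pr^{*}(\C X\boxtimes\C F)\otimes\ov a^{*}\pi^{!}\mu\bigr).
\]
On the other side, using $\C X\boxtimes(\C F\otimes\mu)\cong(\C X\boxtimes\C F)\otimes p_{2}^{!}\mu$ together with formal smoothness of $\pr$ and \re{tensor}(b), I get
\[
\pr^{!}\bigl((\C X\boxtimes\C F)\otimes p_{2}^{!}\mu\bigr)\cong\pr^{*}(\C X\boxtimes\C F)\otimes\tau^{!}\mu,
\]
where $\tau:=p_{2}\circ\pr\colon LG\times^{\P}LG\to\tfrac{LG}{\P}$ sends $[x,y]\mapsto[y]_{\Ad\P}$. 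Thus $\Ad_{\P}^{\C X}(\C F\otimes\mu)\cong\ov a_{!}(\pr^{*}(\C X\boxtimes\C F)\otimes\tau^{!}\mu)$. The proof then reduces to producing a natural isomorphism $\ov a^{*}\pi^{!}\mu\cong\tau^{!}\mu$ in $M(LG\times^{\P}LG)$.

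To establish this final identification, both sides are Haar measures on $LG\times^{\P}LG$: the left by \re{haar}(c) since $\ov a$ is finitely presented, the right by \re{haar}(d) since $\tau$ is formally smooth. I would use the isomorphism $LG\times^{\P}LG\cong\Fl_\P\times LG$ from \re{adj}(c), under which $\ov a$ becomes the projection $p_{LG}$ and $\tau$ becomes the map $([x],z)\mapsto[x^{-1}zx]_{\Ad\P}$, and then compare the two representing objects $[1_V,q_1]$ and $[1_V,q_2]$ in the colimit $M(LG\times^{\P}LG)=\colim^{!}D(W)$ by passing to a common smooth dominating variety $V\times V$ and observing that the $!$-pullbacks of $1_V$ under the two projections agree canonically.

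The main technical obstacle will be this last step: carefully tracking the structure maps and shifts arising from the two different $V$-structures on $LG\times^{\P}LG$ in the colimit definition of $M$, and verifying that the natural transformations combine to produce an unshifted isomorphism at each fp-closed stratum in a way compatible across strata. Part (a) is essentially formal once the Cartesian square is verified, so the real work is localized to this compatibility statement in (b).
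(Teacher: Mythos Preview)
Your proof of part (a) is essentially identical to the paper's: the same Cartesian square, the same base change isomorphism $\phi_!(\Id\times\pi)^!\cong\pr^!(g\times\Id)_!$, and the same identification $\ov a\circ\phi=a_f$.

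For part (b), your strategy matches the paper's exactly up to the reduction step: apply the projection formula for $\ov a_!$ on one side, use formal smoothness of $\pr$ together with \re{tensor}(b) on the other, and reduce to an identification of two Haar measures on $LG\times^{\P}LG$. (A small caution: in the paper's $M$/$D$ formalism, the object you write as $p_2^!\mu$ is not literally $1_{\Fl_\P}\boxtimes\mu$; the paper writes the latter explicitly. This does not affect the logic, since both are Haar measures, but be careful with shifts.) The genuine difference is in how you finish. You propose to unwind the colimit defining $M(LG\times^{\P}LG)$ via the isomorphism $LG\times^{\P}LG\cong\Fl_\P\times LG$ and chase the representing objects through a common dominating $V\times V$. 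This can be made to work, but it is exactly the ``technical obstacle'' you flag. The paper sidesteps it entirely: it observes that the section $i:LG\to LG\times^{\P}LG$, $g\mapsto(1,g)$, is finitely presented, so by \re{haar}(c) two Haar measures agree once their $i^*$-pullbacks agree. Since $\ov a\circ i=\Id$, one side pulls back to $\pi^!\mu$ immediately; for the other side, the Cartesian square with $i$ over $[1]\times\frac{LG}{\P}\hookrightarrow\Fl_\P\times\frac{LG}{\P}$ gives $\pi^!\mu$ as well. This replaces your colimit-tracking argument with a two-line verification, and is worth adopting.
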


\begin{proof}
(a) Consider diagram
\[
\begin{CD}
X\times LG @>\wt{f}>> LG\times^{\P} LG @>\ov{a}>> LG\\
@V\Id\times \pi VV   @V\pr VV\\
X\times \frac{LG}{\P} @>g\times\Id>> \Fl_{\P}\times\frac{LG}{\P},
\end{CD}
\]
where $\wt{f}$ is the map $(x,y)\mapsto [f(x),y]$. Since the square is Cartesian,
while $\pi$ and $\pr$ are formally smooth, we have the base change isomorphism
$\wt{f}_!(\Id\times \pi)^!\cong \pr^! (g\times\Id)_!$. Since $\ov{a}\circ\wt{f}=a_f$, we get the desired isomorphism
$(a_f)_!(\C{X}\pp\pi^!(\cdot))\cong \ov{a}_!\pr^!(g_!(\C{X})\pp\cdot)$.

(b) Since $\ov{a}$ is finitely presented, we get an isomorphism
\[
\Ad_{\P}^{\C{X},*}(\C{F})\otimes \pi^!(\mu)=\ov{a}_!\pr^*(\C{X}\pp\C{F})\otimes \pi^!(\mu)\cong
\ov{a}_!(\pr^*(\C{X}\pp\C{F})\otimes \ov{a}^*\pi^!(\mu)),
\]
(see \re{tensor} (c)), while since $\pr$ is formally smooth, we have a natural isomorphism
\[
\Ad_{\P}^{\C{X}}(\C{F}\otimes\mu)=\ov{a}_!\pr^!((\C{X}\pp\C{F})\otimes(1_{\Fl_{\P}}\pp\mu))\cong
 \ov{a}_!(\pr^*(\C{X}\pp\C{F})\otimes\pr^!(1_{\Fl_{\P}}\pp\mu))
\]
(see \re{tensor} (b)). Thus it remains to show that $\pr^!(1_{\Fl_{\P}}\pp\mu)\cong \ov{a}^*\pi^!(\mu)$.

Note that both sides of the last isomorphism are Haar measures (see \re{haarind} (d)), and the morphism $i:LG\to LG\times^{\P}LG:g\mapsto (1,g)$
is finitely-presented. Using \re{haar} (c), it is enough to show an isomorphism $i^*\pr^!(1_{\Fl_{\P}}\pp\mu)\cong i^*\ov{a}^*\pi^!(\mu)$.
The right hand side is $\pi^!(\mu)$, because $\ov{a}\circ i=\Id$. To see that the left hand is $\pi^!(\mu)$, we use the definition
of $i^*$ and the fact that the diagram
\[
\begin{CD}
LG @>i>> LG\times^{\P} LG \\
@V{\pi}VV   @V{\pr}VV\\
[1]\times \frac{LG}{\P} @>>> \Fl_{\P}\times\frac{LG}{\P}
\end{CD}
\]
is Cartesian.
\end{proof}

\begin{Emp} \label{E:aver}
{\bf Particular case.}
(a) Every locally closed subscheme $Y\subset \Fl_{\P}$ gives rise to the constant $\qlbar$-sheaf
$1_Y\in D(Y)\subset D(\Fl_{\P})=M(\Fl_{\P})$. We denote the functor $\Ad_{\P}^{1_Y}:M(\frac{LG}{\P})\to M(LG)$ from \re{adj} (c)  by $\Av^{Y}$
and call it the averaging functor.

(b) Assume now that $Y$ is $\Q$-invariant. Then there exists a unique Haar measure $1_Y\in M(\Q\bs Y)\subset M(\Q\bs \Fl_{\P})$, whose $!$-inverse image to $D(Y)$ is $1_Y$. We denote by $\Av_{\Q}^Y$ the functor $\Ad_{\P;\Q}^{1_Y}:M(\frac{LG}{\P})\to M(\frac{LG}{\Q})$ from \re{adj} (d). By \re{adj} (d),
the composition of $\Av_{\Q}^Y$ and the $!$-pullback $M(\frac{LG}{\Q})\to M(LG)$ is $\Av^Y$.

(c) For every $\P\in \Par$, the subscheme $\P/\I\subset\Fl$ is $\P$-invariant, thus the construction of (b)
gives rise to the functor $\Av_{\P}^{\P/\I}:M(\frac{LG}{\I})\to M(\frac{LG}{\P})$.

(d) For every morphism $f:X\to LG$, where $X\in \Var_k$, we denote by $\Av^X$ the functor
$\Ad^{X;1_X}:M(LG)\to M(LG)$ from \re{adj} (b).

\end{Emp}


\section{The stable center conjecture}

\subsection{Classical theory}

\begin{Emp}{\bf Notation.} (a) Let $F$ be a local non-archimedean
field with residue field $\fq$, let $W_{F}$ be the Weil group of $F$,
and let $W'_{F}$ be the Weil--Deligne group.

(b) Let $G$ be a connected reductive group over $F$, which we for
simplicity assume to be split, and let $\check{G}$ be
the connected Langlands dual group over $\B{C}$.

(c)  Let $R(G)$ be the category of smooth complex
representations of $G(F)$, and let $\Irr(G)$ be the set of
equivalence classes of irreducible objects in $R(G)$.
\end{Emp}

\begin{Emp} \label{E:berncent}
{\bf The  Bernstein center.} (a) Let $ Z_G= Z(R(G))$ be the
Bernstein center of $G(F)$, that is, the algebra of endomorphisms
of the identity functor $\Id_{R(G)}$ (see \cite{Ber}). By definition, $Z_G$ is a commutative algebra over $\B{C}$.

(b) Each $z\in Z_G$ defines an endomorphism $z|_{\pi}\in\End \pi$
for every $\pi\in R(G)$. In particular, by the Schur lemma, each $z\in Z_G$ defines a function
$f_z:\Irr(G)\to \B{C}$ such that $z|_{\pi}=f_z(\pi)\Id_{\pi}$ for all $\pi\in\Irr(G)$. Moreover,
the map $z\mapsto f_z$ is an algebra homomorphism $Z_G\to \Fun(\Irr(G),\B{C})$, which is known to be injective.

(c) Each $z\in Z_G$ defines an endomorphism
$z_{\C{H}}$ of the Hecke algebra $\C{H}(G(F))$, commuting with left and right convolutions.
For every $(\pi,V)\in R(G)$, $v\in V$ and
$h\in\C{H}(G(F))$, we have an equality
$z(h(v))=(z_{\C{H}}(h))(v)$.

(d) The action of $G(F)^2$ on $G(F)$, defined by the formula $(g,h)(x):=gxh^{-1}$ gives to $\C{H}(G(F))$
a structure of an $\C{H}(G(F)^2)$-module. Moreover, the map $z\mapsto z_{\C{H}}$ defines an algebra isomorphism
$Z_G\isom\End_{\C{H}(G(F)^2)}(\C{H}(G(F)))$.


\end{Emp}

\begin{Emp} \label{E:sbc}
{\bf The stable Bernstein center.}
(a) Each $z\in Z_G$ defines an
endomorphism $z_{reg}$ of the regular representation
$C_c^{\infty}(G(F))$ of $G(F)\times G(F)$. Hence $z\in Z_G$ gives rise to an invariant
distribution $\nu_z\in \Dist^{G(F)}(G(F))$ such that
$\nu_z(\phi)=z_{reg}(\iota^*(\phi))(1)$ for all $\phi\in C_c^{\infty}(G(F))$, where $\iota:G(F)\to G(F)$ is the map $g\mapsto g^{-1}$.

(b) For each $z\in Z_G$ the invariant distribution
$\nu_z\in \Dist^{G(F)}(G(F))$ can be characterized by the condition that $\nu_z\ast h=z_{\C{H}}(h)$
for every $h\in\C{H}(G(F))$. Moreover, the map $z\mapsto\nu_z$ identifies
$ Z_G$ with the set of all $\nu\in\Dist^{G(F)}(G(F))$ such that $\nu\ast h\in \C{H}(G(F))$
for every $h\in \C{H}(G(F))$.

(c) We define the {\em stable
center} of $G(F)$ to be the linear subspace
$ Z^{st}_{G}\subset Z_G$ consisting of all of $z\in  Z_G$ such that the
invariant distribution $\nu_z$ on $G(F)$ is stable.
\end{Emp}

\begin{Emp} \label{E:scc}
{\bf The stable center conjecture.} (a) The subspace $ Z^{st}_{G}\subset Z_G$ is a subalgebra.

(b) There exists a bijection $\chi\mapsto\ov{\la}_{\chi}$ between characters $\Hom( Z^{st}_{G},\B{C})$ and the set of
$\check{G}$-conjugacy classes of Frobenius semi-simple continuous homomorphisms $W_F\to\check{G}$.
\end{Emp}

\begin{Emp}
{\bf Remark.} The Lie algebra analogue of conjecture \re{scc} (a) follows from a theorem
of Waldspurger \cite{W} which says that the space of stable distribution on the Lie
algebra $\C{G}$ of $G$ is invariant under the Fourier transform.

Namely, let $\C{H}(\C{G}(F))$ be the space of smooth measures with compact support. Define
the Bernstein center $Z_\C{G}$ of $\C{G}(F)$ to be the space of invariant distributions
$\nu\in\Dist^{G(F)}(\C{G}(F))$ such that $\nu\ast h\in\C{H}(\C{G}(F))$ for all $h\in \C{H}(\C{G}(F))$,
and let $Z^{st}_\C{G}\subset Z_\C{G}$ be the subspace of stable distributions.

To see that $Z^{st}_\C{G}\subset Z_\C{G}$ is a subalgebra, recall that the Fourier transform converts
a convolution of measures into a product of functions. Therefore $\nu\in\Dist^{G(F)}(\C{G}(F))$
belongs to $Z_{\C{G}}$ if and only if its Fourier transform $\C{F}(\nu)$ is locally constant.
By the theorem of Waldspurger, $\nu\in{Z}_{\C{G}}$ is stable if and only if $\C{F}(\nu)$ is constant on
every stable orbit in $\C{G}(F)^{rss}$. Now the assertion follows the obvious remark that the product of constant
functions is constant.
\end{Emp}

\begin{Emp} \label{E:llc}
{\bf Relation to the local Langlands conjecture.}

(a) Recall that {\em the local Langlands conjecture} asserts the
existence of a decomposition $\Irr(G)=\sqcup_{\la}\Pi_{\la}$,
where $\la$ runs over the set of  Langlands parameters $W'_F\to \check{G}$, and
$\Pi_{\la}$ is a finite set, called the $L$-packet, corresponding
to $\la$.

(b) Assume that the decomposition $\Irr(G)=\sqcup_{\la}\Pi_{\la}$ from (a) is known.
Then conjecture  \re{scc} (a) has a more explicit form, saying that
$ Z^{st}_{G}\subset Z_G$ consists of all $z\in  Z_G$ such that the function
$f_z:\Irr(G)\to\B{C}$ is constant on each $L$-packet.

(c) Moreover, the bijection of \re{scc} (b) is supposed to be compatible with the decomposition of (a), that is,
for each $\la:W'_F\to\check{G}$, $\pi\in\Pi_{\la}$  and $\chi\in \Hom( Z^{st}_{G},\B{C})$
such that $\ov{\la}_{\chi}=\la|_{W_F}$ we have an equality $\chi(z)=f_z(\pi)$ for each $z\in Z_G^{st}$.
\end{Emp}






\begin{Emp} \label{E:decomp}
{\bf The decomposition of $R(G)$.}

(a) Let $R(G)^0\subset R(G)$
(resp. $R(G)^{>0}\subset R(G)$) be the full subcategory consisting of
representations $\pi\in R(G)$, all of whose irreducible
subquotients have depth zero (resp. positive depth) (see
\cite{MP,MP2}). Set $ Z_G^0:=Z(R(G)^0)$ and
$ Z_G^{>0}:=Z(R(G)^{>0})$.

(b) It follows from results of Bernstein \cite{Ber} and Moy-Prasad
\cite{MP,MP2} that the category $R(G)$ decomposes as a direct sum
$R(G)=R(G)^0\oplus R(G)^{>0}$. Therefore the Bernstein center
$Z_G$ decomposes as a direct sum $Z_G=Z^0_G\oplus Z^{>0}_G$. Similarly,
the set $\Irr(G)$ decomposes as a disjoint union $\Irr(G)=\Irr(G)^0\sqcup\Irr(G)^{>0}$.

(c) Explicitly, $Z_G^0\subset Z_G$ consists of all $z\in Z_G$ such that $z|_{\pi}=0$ for all
$\pi\in R(G)^{>0}$. Hence the unit $z^0\in Z_G^0\subset Z_G$ is the projector to the depth zero
spectrum. In particular, $z^0$ can be characterized by the condition that $f_{z^0}(\pi)=1$ for all
$\pi\in\Irr(G)^0$ and $f_{z^0}(\pi)=0$ for all $\pi\in\Irr(G)^{>0}$. We also set
$Z^{st,0}_G:= Z_G^0\cap Z^{st}_G$.
\end{Emp}

\begin{Emp} \label{E:depth0rep}
{\bf Depth zero representations.}
(a) It follows from results of Bernstein \cite{Ber}, Moy-Prasad
\cite{MP,MP2} and Deligne--Lusztig \cite{DL} that the category
$R(G)^0$ further decomposes as a direct sum
$R(G)^0=\oplus_{\theta}R(G)^{\theta}$, indexed by the set of
conjugacy classes of semisimple elements $\theta\in\check{G}$ such that
$\theta^q$ is conjugate to $\theta$.

(b) The decomposition of (a) implies a decomposition
$ Z_G^0=\oplus_{\theta} Z_G^{\theta}$ of the Bernstein
center. Each $ Z_G^{\theta}$ is a unital subalgebra, and we set
$Z^{st,\theta}_G:= Z_G^{\theta}\cap Z^{st}_G$.

(c) The decomposition from (a) induces a decomposition
$\Irr(G)^0=\sqcup_{\theta}\Irr(G)^{\theta}$.
In particular, to every
$\pi\in\Irr(G)^0$ one can associate a semisimple conjugacy class
$\theta(\pi)\in\check{G}$ such that $\theta(\pi)^q$ is conjugate to
$\theta(\pi)$ and $\pi\in\Irr(G)^{\theta(\pi)}$.
\end{Emp}

\begin{Emp} \label{E:depth0}
{\bf The depth zero local Langlands correspondence.}
(a) One expects that the local
Langlands correspondence preserves depth. In
particular, a representation $\pi\in\Irr(G)$ is of depth zero if
and only if the corresponding Langlands parameter
$\la:W'_F\to\check{G}$ is tamely ramified, that is,
trivial on the wild inertia subgroup of $W_F$.

(b) We choose a lift $\Fr\in W_F$ of the Frobenius element. This
choice defines a bijection between the set of tamely ramified Langlands
parameters $\la:W'_F\to\check{G}$ and the set of conjugacy classes of pairs
$(s,u)\in\check{G}$ such that $s\in\check{G}$ is semisimple and
$sus^{-1}=u^q$.

(c) To summarize, the depth zero local Langlands correspondence predicts that
$\Irr(G)^0$ has a decomposition $\Irr(G)^0=\sqcup_{\la}\Pi_{(s,u)}$, parameterized
by pairs $(s,u)$ as in (b). In particular, to every
$\pi\in\Irr(G)^0$, one can associate a pair
$(s,u)=(s(\pi),u(\pi))$ (defined up to conjugacy) such that $\pi\in\Pi_{(s,u)}$.
Moreover, the semisimple conjugacy class $\theta(\pi)$ from \re{depth0rep} (c)
is supposed to coincide with the semisimple part of $u(\pi)$.
\end{Emp}

\begin{Emp} \label{E:unipotent}
{\bf Known cases.} In the case when the group $G$ is adjoint,
Lusztig \cite{Lu1,Lu3} parameterized the set $\Irr(G)^1$, (that is,
$\Irr(G)^{\theta}$ with $\theta=1$) of
irreducible unipotent representations, thus verifying a
(more refined version of) the Langlands conjecture in this case. In
particular, for every $\pi\in\Irr(G)^1$, Lusztig associated a
semisimple conjugacy class $s(\pi)\in\check{G}$.

Taking into account recent works of Lusztig and others, it looks like the decomposition
\re{depth0} (c) is within reach. Therefore we assume from now on that the pair
$(s(\pi),\theta(\pi))\in\check{G}$ is defined for every $\pi\in\Irr(G)^0$.
\end{Emp}

Now we can restate the stable center conjecture for depth zero
representations in a more precise form.

\begin{Emp} \label{E:ssc0}
{\bf The depth zero stable center conjecture.}

(a) The subspace $ Z^{st,0}_G\subset Z^{0}_G$ is a unital
subalgebra. In particular, the projector $z^0\in  Z_G$ to the depth zero
spectrum is stable.

(b) An element $z\in Z^{0}_G$ belongs to $ Z^{st,0}_G$ if and
only if $f_z(\pi')=f_z(\pi'')$ for all $\pi',\pi''\in\Irr(G)^0$
such that $s(\pi')=s(\pi'')$ and $\theta(\pi')=\theta(\pi'')$.

(c) There exists a unique bijection $(s,\theta)\mapsto\chi_{(s,\theta)}$ between the set of $\check{G}$-conjugacy classes of pairs
of semisimple elements $(s,\theta)\in\check{G}$ such that $s\theta s^{-1}=\theta^q$ and characters
$\Hom( Z^{st,0}_G,\B{C})$ such that for every $\pi\in\Irr(G)^0$ and $z\in Z^{st,0}_G$ we have an equality
$f_z(\pi)=\chi_{(s(\pi),\theta(\pi))}(z)$.
\end{Emp}


\subsection{Categorification of the Bernstein center} \label{SS:conj1}
In this subsection we are going to construct the categorification of the Bernstein center, the complete details of which will
appear in the forthcoming work \cite{BKV2}. As a first step, we need to upgrade all constructions from previous sections from derived categories
to the corresponding $\infty$-categories.

\begin{Emp} \label{E:infcons}
{\bf Derived $\infty$-categories of constructible sheaves.}

(a) Recall that to every $X\in \Var_k$ or, more generally, $X\in\Art_k$ one can associate the $\infty$-category
$\C{D}(X)$, whose homotopy category is $D(X)$. Moreover, to every morphism $f:X\to Y$ in $\Art_k$, one can associate functors
$f^*,f_*,f^!,f_!$ between $\C{D}(\cdot)$'s, lifting the corresponding functors between $D(\cdot)$'s
(see \cite{LZ1,LZ2}).

Notice that functor
$f^!:\C{D}(Y)\to \C{D}(X)$ (resp. $f^*$) is fully faithful if and only if the corresponding functor $D(Y)\to D(X)$ is fully faithful. Indeed, both conditions are equivalent to the fact that the counit map $f_!f^!\C{F}\to\C{F}$ is an isomorphism in $D(X)$ for all
$\C{F}\in D(X)$.

(b) For every $X\in \ASch_k$ or, more generally, $X\in \ASt_k$, we define $\C{M}(X)$ to the (homotopy) colimit $\colim^!_{(X\to V)\in (X/\cdot)^{op}}\C{D}(V)$, and similarly for $\C{D}(X)$.

As in \re{adm} (b), we can take the colimit over $(X\to V)\in (X/\cdot)^{un}$. In this case, all the transition functors $D(V)\to D(V')$ are fully faithful, thus functors $\C{D}(V)\to\C{D}(V')$ are fully faithful as well (by (a)). It follows that the homotopy category of $\C{M}(X)$ (resp. $\C{D}(X)$) is $M(X)$ (resp. $D(X)$), and that each functor $\C{D}(V)\to\C{M}(X)$ (resp. $\C{D}(V)\to\C{D}(X)$) is fully faithful.

(c) Recall that for every fp-closed embedding $i:X\to Y$ in $\ASch_k$ or $\ASt_k$, the induced map
$i_*:M(X)\to M(Y)$ (resp. $i_*:D(X)\to D(Y)$) is fully faithful. Thus we can mimic \re{consind} and form an $\infty$-category $\C{M}(X)$ (resp. $\C{D}(X)$) for each $X\in \AISch_k$ or $X\in \AISt_k$, whose homotopy category is $M(X)$ (resp. $D(X)$).
\end{Emp}

\begin{Emp} \label{E:naive}
{\bf Naive definition of the categorical Bernstein  center.}

(a) Recall that $LG$ is an admissible ind-scheme. Therefore, by \re{infcons}, we can associate
to it an $\infty$-category $\C{M}(LG)$. Moreover, upgrading the construction of \re{conv}, we get that
$\C{M}(LG)$ has a structure a module $\infty$-category over a monoidal $\infty$-category
$\C{M}((LG)^2)$.

(b) Naively, we would like  to define $\C{Z}(LG)$ to be  $\infty$-category
$\End_{\C{M}((LG)^2)} \C{M}(LG)$ of endomorphisms of  $\C{M}(LG)$, viewed as a module category over
$\C{M}((LG)^2)$. However, in the $\ell$-adic setting we are working in, this definition seems to be wrong, at least a priori. Namely, we would like to consider not abstract endomorphisms of $\infty$-categories, but only those,
"coming from geometry" (see remark \re{BN} below). More formally, instead of using the $(\infty,2)$-category of stable $\infty$-categories,
we have to use an $\infty$-version of the geometric $2$-category, constructed in \re{2cat}.

(c) To make our exposition both conceptual and elementary, we are going to define first
the construction of $\C{Z}(LG)$ using the framework of monoidal $(\infty,2)$-categories, which in currently not well-documented in the literature,
and then to write down an explicit formula, which uses only the framework of $\infty$-categories.
\end{Emp}

\begin{Emp} \label{E:frame}
{\bf General framework.}

(a) Let $\frak{C}$ be an $(\infty,2)$-category. Then to every object $X$ of $\frak{C}$ one can associate the monoidal $\infty$-category
$\End_{\frak{C}}(X)=\Hom_{\frak{C}}(X,X)$.

(b) Assume now that $\frak{C}$ is monoidal. In particular, for every two objects $X,Y$ of $\frak{C}$ we can consider their tensor product
$X\otimes Y$. Let $S\in \frak{C}$ be a semigroup object in $\frak{C}$, that is, we are given a product map $S\otimes S\to S$ together
with various compatibilities between maps $S^{\otimes n}\to S$. Let $X\in\frak{C}$ be an $S$-module, that is, we are given an
action map $S\otimes X\to X$ together with various compatibilities between maps $S^{\otimes n}\otimes X\to X$.

(c) To the data of (b) one can associate the monoidal $\infty$-category $\End^S_{\frak{C}}(X)$ of endomorphisms of $X$, viewed as an $S$-module.
As an $\infty$-category, $\End^S_{\frak{C}}(X)$ is the (homotopy) limit of the following semi-simplicial diagram
\begin{equation} \label{Eq:end}
\Hom_{\frak{C}}(X,X)\rightrightarrows\Hom_{\frak{C}}(S\otimes X,X) \overset{\lra}{\rightrightarrows}\End_{\frak{C}}(S^{\otimes 2}\otimes X,X)\ldots
\end{equation}

(d) Note that for every object $X$ of $\frak{C}$, there is an equivalence $s:X\otimes X\to X\otimes X$, which interchange the factors.  In particular, for every semigroup object $S$, we can form another semigroup object $S^{op}$, which equals to $S$ as an object but the product $S^{op}\otimes S^{op}\to S^{op}$ differs from $S\otimes S\to S$ by $s$.
Then $S\otimes S^{op}$ has a natural structure of a semigroup object of $\frak{C}$, while $S$ has a natural structure of an $S\otimes S^{op}$-module.

The construction of (c) allows us to define  the categorical center of $S$  as the monoidal $(\infty,1)$-category
$\C{Z}_{\frak{C}}(S):=\End^{S\otimes S^{op}}_{\frak{C}}(S)$.
\end{Emp}

\begin{Emp} \label{E:geomcat}
{\bf The geometric $(\infty,2)$-category.}
We claim that one can upgrade the constructions from subsection \ref{SS:2cat} to the $\infty$-setting.

(a) First, mimicking \re{half} and arguing as in \re{infcons}, to every $X,Y\in\AISt_k$ one can associate the $\infty$-category $\C{M}_Y(X\times Y)$,
whose homotopy category is $M_Y(X\times Y)$. Next, we define $\C{Hom}_{geom}(X,Y)$ to be the full $\infty$-subcategory of
$\Pro \C{M}_Y(X\times Y)$, whose objects are $\C{B}\in \Pro \C{M}_Y(X\times Y)$ such that
$\C{B}(\C{F})\in \C{M}(Y)\subset \Pro\C{M}(Y)$ for all $\C{F}\in\C{M}(X)$.
Finally, we denote by $\C{M}_{geom}$ the $(\infty,2)$-category, whose objects are admissible ind-stacks $X\in \AISt_k$, and
for every $X,Y\in\AISt_k$, the $\infty$-category $\Hom_{\C{M}_{geom}}(X,Y)$ is $\C{Hom}_{geom}(X,Y)$.

(b) As in \re{corr}, we have a functor of $(\infty,2)$-categories from the $2$-category of correspondences $\Corr_k$ to $\C{M}_{geom}$.
Notice that the Cartesian product on $\AISt_k$ makes the categories $\Corr_k,\C{M}_{geom}$ and the functor $\Corr_k\to\C{M}_{geom}$ monoidal.
In particular, if $S$ is a semigroup object in $\Corr_k$, then  $S$ is also a semigroup object in
$\C{M}_{geom}$, hence we can consider the monoidal $\infty$-category $\C{Z}_{\C{M}_{geom}}(S)$ (see
\re{frame} (d)).
\end{Emp}

\begin{Emp} \label{E:catcent}
{\bf The categorical Bernstein center.}

(a) Note that $LG$ is a semigroup object in $\Corr_k$  with respect to the product map. Hence by
\re{geomcat} (b) we can consider the monoidal $\infty$-category $\C{Z}_{\C{M}_{geom}}(LG)$.
It is a categorical counterpart of the Bernstein center $Z_G$. As an $\infty$-category, $\C{Z}(LG)$ is a (homotopy) limit of the diagram
\begin{equation} \label{Eq:catbc}
\C{Hom}_{geom}(LG,LG)\rightrightarrows \C{Hom}_{geom}((LG)^2\times LG,LG) \overset{\lra}{\rightrightarrows}
\C{Hom}_{geom}((LG)^4\times LG,LG)\ldots.
\end{equation}

(b) We have a natural forgetful functor $\C{Z}(LG)\to \C{Hom}_{geom}(LG,LG)\to\End\C{M}(LG)$.
\end{Emp}

\begin{Emp} \label{E:BN}
{\bf Remarks.} (a) We have a natural functor from the categorical center $\C{Z}(LG)$ to the naive categorical center
$\End_{\C{M}((LG)^2)}\C{M}(LG)$, but we don't know whether this functor is an equivalence. On the other hand, as it was shown by Ben-Zvi and Nadler in \cite{BN}, an analogous finite-dimensional result for $D$-modules is true.

(b) In principle, we could carry out the construction of the center using the $2$-category $M_{geom}$ instead of its infinity version.
Namely, we could define the monoidal category $Z(LG):=\End^{(LG)^2}_{M_{geom}}(LG)$. Then we have a  natural monoidal functor
$\Ho\C{Z}(LG)\to Z(LG)$, where $\Ho\C{Z}(LG)$ denotes the homotopy category of $\C{Z}(LG)$, which at  least a priori is not an equivalence.
Moreover, it is even unclear whether $Z(LG)$ is a triangulated category.

The definition of the categorical Bernstein center is the first place in this work, where the framework of stable $\infty$-categories
is essential.
\end{Emp}



\begin{Emp} \label{E:catinvdist}
{\bf A categorification of a space of invariant distributions}.

(a) For each $\P\in\Par$, we can form the monoidal $\infty$-category $\C{M}(\frac{LG}{\P})$ (compare \re{equivha}) and the
pro-category $\Pro\C{M}(\frac{LG}{\P})$, which is also monoidal.  Moreover, for every $\P\subset \Q$ in $\Par$, the $!$-pullback induces a monoidal functor $\C{M}(\frac{LG}{\Q})\to \C{M}(\frac{LG}{\P})$, hence a monoidal functor $\Pro\C{M}(\frac{LG}{\Q})\to \Pro\C{M}(\frac{LG}{\P})$.

(b) We define a monoidal $\infty$-category $\Pro\C{M}(\frac{LG}{LG})$ to be the homotopy limit $\lim_{\P}\Pro\C{M}(\frac{LG}{\P})$. It can be thought as a categorical analog of the space of invariant distributions $\Dist^{G(F)}(G(F))$.
Notice that we had to pass to pro-categories before forming a limit over $\P$,
because there are no non-zero smooth invariant distribution with compact support.

(c) The $!$-pullbacks $\C{M}(\frac{LG}{\P})\to \C{M}(LG)$ for all $\P\in\Par$ give rise to a forgetful functor $\om:\Pro\C{M}(\frac{LG}{LG})\to \Pro\C{M}(LG)$,
which is categorification of the inclusion
$\Dist^{G(F)}(G(F))\hra \Dist(G(F))$. In particular, for every $\C{B}\in \Pro\C{M}(\frac{LG}{LG})$
and $\C{F}\in\Pro\C{M}(LG)$ we can form the convolution $\C{B}\ast\C{F}:=\om(\C{B})\ast\C{F}\in\Pro\C{M}(LG)$.

(d) Denote by  $\Pro\C{M}(\frac{LG}{LG})^{fin}\subset \Pro\C{M}(\frac{LG}{LG})$ the full $\infty$-subcategory with objects
$\C{B}\in \Pro\C{M}(\frac{LG}{LG})$ such that $\C{B}\ast\C{F}\in\C{M}(LG)\subset \Pro\C{M}(LG)$ for each $\C{F}\in\C{M}(LG)$.
\end{Emp}

\begin{Emp} \label{E:unitelt}
{\bf Remark.} Note that though monoidal $\infty$-categories $\C{M}(LG)$ and $\C{M}(\frac{LG}{\P})$ are non-unital, the pro-categories $\Pro\C{M}(LG), \Pro\C{M}(\frac{LG}{\P})$ and $\Pro\C{M}(\frac{LG}{LG})$ are unital.

\end{Emp}

\begin{Emp} \label{E:evalmap}
{\bf The evaluation functor.} (a) By \rt{weyl} (a) below, for each $\P\in\Par$ and $n\in\B{N}$, the evaluation functor $\ev_{\dt_{\P_n^+}}:\C{Z}(LG)\to\C{M}(LG):\C{B}\mapsto \C{B}(\dt_{\P_n^+})$ has a natural lift
$\ev^{\P}_{\dt_{\P_n^+}}:\C{Z}(LG)\to\C{M}(\frac{LG}{\P})$.

(b) By the naturality of the lift in (a), the functors $\{\ev^{\P}_{\dt_{\P_n^+}}\}_n$ form a projective system, so we can form a limit
$\ev^{\P}:=\lim_n\ev^{\P}_{\dt_{\P_n^+}}:\C{Z}(LG)\to\Pro\C{M}(\frac{LG}{\P})$.

(c) The evaluation functors $\{\ev^{\P}\}_{\P\in\Par}$ from (b) are compatible with the $!$-pullbacks
$\Pro\C{M}(\frac{LG}{\Q})\to \Pro\C{M}(\frac{LG}{\P})$. Hence we can define a functor
\[
\ev=\{\ev^{\P}\}_{\P}:\C{Z}(LG)\to\Pro\C{M}(\frac{LG}{LG}).
\]
\end{Emp}

The following result is a categorical analog of \re{sbc} (b).

\begin{Thm'} \label{T:emb}
The evaluation functor $\ev$ from \re{evalmap} induces an equivalence of monoidal $\infty$-categories $\C{Z}(LG)\isom\Pro\C{M}(\frac{LG}{LG})^{fin}$.
\end{Thm'}

\begin{Emp} \label{E:thm'}
{\bf Remark.} Here and later we write {\em "Theorem"} instead of {\em Theorem} to indicate that the result appears without a complete proof, and that
the details will appear in \cite{BKV2}.
\end{Emp}

\begin{proof}[Brief sketch of the proof]
Observe that for every $\C{B}\in\C{Z}(LG)$ and $\C{F}\in\C{M}(LG)$ we have
$\ev(\C{B})\ast\C{F}\cong\C{B}(\C{F})\in\C{M}(LG)\subset\Pro\C{M}(LG)$, thus $\ev$ induces a functor
$\C{Z}(LG)\to \Pro\C{M}(\frac{LG}{LG})^{fin}$. Conversely, every  $\C{X}\in \Pro\C{M}(\frac{LG}{LG})^{fin}$
defines an endomorphism $\C{B}$ of $\C{M}(LG)$, given by the formula
$\C{B}(\C{F}):=\C{X}\ast\C{F}$ for each $\C{F}\in\C{M}(LG)$. Now the fact that
$\C{B}$ can be upgraded to an object of $\C{Z}(LG)$ follows from the fact that $\C{X}$ is $\Ad\P$-equivariant for all $\P\in\Par$
(compare the proof of \rt{center} (a) below).
\end{proof}

\subsection{Categorification of the embedding $Z^0_G\hra Z_G$} \label{SS:emb}
In this subsection we construct a categorical analog of the projection $Z_G\to Z_G^0$ and the embedding $Z^0_G\to Z_G$
(see \cite{BKV2} for details). 

\begin{Emp} \label{E:centpn}
{\bf The categorical analog of depth $n$ center.} Fix $\P\in\Par$ and $n\in \B{N}$.

(a) The multiplication map $(LG)^2\to LG$ induces an admissible correspondence
\[
(\P_n^+\bs LG/\P_n^+)\times (\P_n^+\bs LG/\P_n^+)\overset{\pr}{\lla} (\P_n^+\bs LG)\times (LG/\P_n^+)\overset{\ov{m}}{\lra} \P_n^+\bs LG/\P_n^+,
\]
(see \re{corr}), which gives to the admissible ind-stack $\P_n^+\bs LG/\P_n^+$ a structure of a semigroup object in $\Corr_k$. As in \re{geomcat} (b), the construction of \re{frame} defines the monoidal $\infty$-category $\C{Z}_{\P_n^+}(LG):=\C{Z}_{\C{M}_{geom}}(\P_n^+\bs LG/\P_n^+)$.

(b) As an $\infty$-category, $\C{Z}_{\P_n^+}(LG)$ is a homotopy limit of a diagram similar to \form{catbc},
where $LG$ is replaced by $\P_n^+\bs LG/\P_n^+$ at all places.

(c) By definition, we have  a forgetful functor
$\C{Z}_{\P_n^+}(LG)\to \End\C{M}_{\P_n^+}(LG)$, hence we have an evaluation functor
$\ev_{\P_n^+}:\C{Z}_{\P_n^+}(LG)\to\C{M}(LG):\C{B}\mapsto\C{B}(\dt_{\P_n^+})$.
\end{Emp}

\begin{Emp} \label{E:remcentpn}
{\bf Remark.} Note that $\C{M}_{(\P_n^+)^2}((LG)^2)=\C{M}((\P_n^+\bs LG/\P_n^+)^2)$ is a
monoidal $\infty$-category, acting on the $\infty$-category $\C{M}_{\P_n^+}(LG)=\C{M}(\P_n^+\bs LG/\P_n^+)$ (compare \re{biinv} (e)).
Then $\C{Z}_{\P_n^+}(LG)$ can be thought as the $\infty$-category of "geometric" endomorphisms of $\C{M}_{\P_n^+}(LG)$,
viewed as a module category over $\C{M}_{(\P_n^+)^2}((LG)^2)$. Again, "geometric" means "defined using the geometric $(\infty,2)$-category $\C{M}_{geom}$".
\end{Emp}

\begin{Emp} \label{E:restr}
{\bf Restriction functors.}

(a) Recall that $\C{M}_{\I^+}(LG)$ is a full $\infty$-subcategory of $\C{M}(LG)$, spanned by objects
$\C{F}\in \C{M}(LG)$ such that $\dt_{(\I^+)^2}(\C{F})\cong\C{F}$ (compare \re{biinv} (c),(d)).

Since $\dt_{(\I^+)^2}\in \C{M}_{(\P^+)^2}((LG)^2)$ for each $\P\in\Par$,  every
$\C{B}\in \C{Z}_{\P^+}(LG)$ induces an endomorphism of $\C{M}_{\I^+}(LG)$.
Furthermore, the restriction map
\[\C{B}\mapsto\C{B}|_{\C{M}_{\I^+}(LG)}:\C{Z}_{\P^+}(LG)\to \End \C{M}_{\I^+}(LG)\]
naturally upgrades to a monoidal functor
$\C{R}_{\P}:\C{Z}_{\P^+}(LG)\to\C{Z}_{\I^+}(LG)$.

(b) Similarly, the restriction map $\C{B}\mapsto\C{B}|_{\C{M}_{\I^+}(LG)}:\C{Z}(LG)\to \End \C{M}_{\I^+}(LG)$ naturally upgrades to a monoidal functor
$\C{R}:\C{Z}(LG)\to\C{Z}_{\I^+}(LG)$.
\end{Emp}

\begin{Thm'} \label{T:mconj}
(a) For every $\P\in\Par$, the functor
$\C{R}_{\P}:\C{Z}_{\I^+}(LG)\to\C{Z}_{\P^+}(LG)$ is an equivalence of monoidal $\infty$-categories.

(b) The restriction functor $\C{R}:\C{Z}(LG)\to\C{Z}_{\I^+}(LG)$ has a right adjoint
$\C{A}$. Moreover, the functor $\C{A}$ is monoidal and fully faithful.
\end{Thm'}

\begin{Emp}
{\bf Remark.} \rtt{mconj} (a) implies that each $\C{Z}_{\P^+}(LG)$ can be thought as a categorical analog of
the depth zero Bernstein center $Z_G^0$. Moreover, the restriction $\C{R}$ is a categorical analog of the projection
$Z_G\to Z^0_G$, while its adjoint $\C{A}$ is a categorical analog of the
embedding $\C{Z}^0_G\hra  Z_G$.
\end{Emp}

Now we explain the construction of the inverse functor $\C{A}_{\P}$ of $\C{R}_{\P}$.


\begin{Thm'} \label{T:weyl}
(a)  For every $\P\in \Par$ and $n\in\B{N}$, the evaluation functor $\ev_{\dt_{\P_n^+}}$ from \re{centpn} (c) has a natural lift to a monoidal functor
$\ev^{\P}_{\dt_{\P_n^+}}:\C{Z}_{\P_n^+}(LG)\to \C{M}(\frac{LG}{\P})$.

(b) For every $\P\in \Par$, the composition
\[
\Av^{\P/\I}_{\P}\circ\ev^\I_{\dt_{\I^+}}:\C{Z}_{\I^+}(LG)\to \C{M}(\frac{LG}{\I})\to \C{M}(\frac{LG}{\P})
\]
is naturally equipped with an action of the finite Weyl group $W_{\P}\subset\wt{W}$.
\end{Thm'}

\begin{Emp}
{\bf Remark.} The $W_{\P}$-action in \rtt{weyl} (b) is induced from the $W_{\P}$-action on the
Springer sheaf $\C{S}\in D(L_{\P})$, which we normalize so that the sheaf of skew-invariants $\C{S}^{W_{\P},\sgn}$
is the $\dt$-sheaf $\dt_1\in D(L_{\P})$ at the identity.
\end{Emp}

\begin{Emp} \label{E:notaver}
{\bf Notation.} For $\P\in \Par$, we denote by
\[
\bar{\C{A}}_{\P}:=(\Av^{\P/\I}_{\P}\circ\ev^\I_{\dt_{\I^+}})^{W_{\P,\sgn}}:\C{Z}_{\I^+}(LG)\to
\C{M}(\frac{LG}{\P})
\]
the composition of $\Av^{\P/\I}_{\P}\circ\ev^\I_{\dt_{\I^+}}$ with the functor of
skew-invariants.
\end{Emp}

\begin{Thm'} \label{T:inverse}
There exists a natural functor $\C{A}_{\P}:\C{Z}_{\I^+}(LG)\to \C{Z}_{\P^+}(LG)$ such that
$\ev^{\P}_{\dt_{\P^+}}\circ\C{A}_{\P}\cong\bar{\C{A}}_{\P}$. Moreover, $\C{A}_{\P}$ is
the inverse of ${\C{R}}_{\P}$.
\end{Thm'}

Our next goal is to provide an explicit construction of the right adjoint $\C{A}$
of $\C{R}$. By \rtt{emb}, it suffices to construct a functor $\C{A}':\C{Z}_{\I^+}(LG)\to \Pro\C{M}(\frac{LG}{LG})$ and to show that
its image lies in $\Pro\C{M}(\frac{LG}{LG})^{fin}$.

\begin{Emp} \label{E:parah}
{\bf Notation.} (a) Let $\T$ be a category associated to the partially ordered set of non-empty closed $\I$-invariant subschemes $Y\subset \Fl$.
For every $Y\in\T$ we denote by $\wt{Y}\subset LG$ the preimage of $Y$.

(b) For $\P\in\Par$ and $Y\in\T$, we set $Y\P:=\wt{Y}\P/\I\subset \Fl$ and $Y_{\P}:=\wt{Y}\P/\P\subset \Fl_{\P}$.
\end{Emp}

\begin{Emp} \label{E:wpactions}
{\bf Construction of $\C{A}'$.}

(a) Fix $\Q\in\Par$. Recall that if $Y\in \T$ is $\Q$-invariant, then we have an averaging functor
$\Av^Y_{\Q}:\C{M}(\frac{LG}{\I})\to \C{M}(\frac{LG}{\Q})$ (see \re{aver} (b)). Moreover, functors $\{\Av^Y_{\Q}\}_Y$ form a projective system, so we can form a projective limit
\[
\Av^{\Fl}_{\Q}:=\lim_Y \Av^Y_{\Q}:\C{M}(\frac{LG}{\I})\to \Pro\C{M}(\frac{LG}{\Q}).
\]

(b) Functors $\{\Av^{\Fl}_{\Q}\}_{\Q\in\Par}$ are contravariant in $\Q$, thus they give rise to the
 functor
\[
\Av^{\Fl}_{LG}:=\{\Av^{\Fl}_{\Q}\}_{\Q\in\Par}:\C{M}(\frac{LG}{\I})\to \Pro\C{M}(\frac{LG}{LG}).
\]

(c) Fix $\P\in\Par$, and assume that  $Y\in \T$ from (a) is such that $\wt{Y}=\wt{Y}\P\subset LG$.  Then $\Av^Y_{\Q}$ decomposes as
\[
\Av^Y_{\Q}=\Av^{Y_{\P}}_{\Q}\circ \Av^{\P/\I}_{\P}:\C{M}(\frac{LG}{\I})\to \C{M}(\frac{LG}{\P})
\to \C{M}(\frac{LG}{\Q}).
\]
In particular, the composition
$\Av^Y_{\Q}\circ \ev_{\dt_{\I^+}}^{\I}:\C{Z}_{\I^+}(LG)\to \C{M}(\frac{LG}{\Q})$
is equipped with an action of $W_{\P}$ (by \rtt{weyl} (b)).

(d) By (c), the limit
\[
\Av^{\Fl}_{\Q}\circ \ev_{\dt_{\I^+}}^{\I}=\lim_Y(\Av^Y_{\Q}\circ \ev_{\dt_{\I^+}}^{\I}):\C{Z}_{\I^+}(LG)\to\Pro\C{M}(\frac{LG}{\Q})
\]
is equipped with an action of $W_{\P}$. Moreover,
the $W_{\P}$-actions are compatible with the $!$-pullbacks $\Pro\C{M}(\frac{LG}{\Q'})\to \Pro\C{M}(\frac{LG}{\Q})$ for $\Q\subset\Q'$
in $\Par$.

Taking the limit over the $\Q$'s, we get an action of $W_{\P}$ on the functor
\[
\Av^{\Fl}_{LG}\circ \ev_{\dt_{\I^+}}^{\I}:\C{Z}_{\I^+}(LG)\to \Pro\C{M}(\frac{LG}{LG}).
\]
Moreover, by the naturality assertion in \rtt{weyl} (b), for each $\P\subset \P'$, the $W_{\P}$- and $W_{\P'}$-actions on $\Av^{\Fl}_{LG}\circ \ev_{\dt_{\I^+}}^{\I}$ are compatible.

(e) Note that $\wt{W}$ is the "homotopy colimit" of $\{W_{\P}\}_{P\in\Par}$, that is, the classifying space $B(\wt{W})$ is the homotopy colimit of classifying spaces $\{B(W_{\P})\}_{P\in\Par}$. Therefore
the compatible system of $W_{\P}$-actions on $\Av^{\Fl}_{LG}\circ \ev_{\dt_{\I^+}}^{\I}$ define the action of $\wt{W}$.

(f) We define functor $\C{A}':=(\Av^{\Fl}_{LG}\circ \ev_{\dt_{\I^+}}^{\I})^{\wt{W},\sgn}$ to be the composition
\[
\C{A}':\C{Z}_{\I^+}(LG)\to \Pro\C{M}(\frac{LG}{LG})\to \Pro\C{M}(\frac{LG}{LG})
\]
of $\Av^{\Fl}_{LG}\circ \ev_{\dt_{\I^+}}^{\I}$ with the functor of
skew-invariants.
\end{Emp}

\begin{Emp} \label{E:rss}
{\bf Restriction to the regular semisimple locus.}
(a) For  $\gm\in LG$ and $n\in\B{N}$, let
$i_{\gm,n}$ be the inclusion $\gm \I_n^+\hra LG$. Then $i_{\gm,n}$ is finitely presented, and we denote by $i_{\gm,n}^*$ the
pullback functor $i_{\gm,n}^*:\C{M}(LG)\to \C{M}(\gm \I_n^+)$.

(b) We say that the restriction of $\C{F}\in\Pro\C{M}(LG)$ (resp. $\C{F}'\in\Pro\C{M}(\frac{LG}{LG})$)  to $LG^{rss}$ is {\em smooth}, if
for every $\gm\in LG^{rss}$ there exists $n\in\B{N}$ such that $i_{\gm,n}^*(\C{F})\in \C{M}(\gm I_n^+)$ (resp.
$i_{\gm,n}^*(\om\circ\C{F}')\in \C{M}(\gm I_n^+)$ (compare \re{catinvdist} (c)). 
\end{Emp}

\begin{Thm'} \label{T:finite}
For each $\C{B}\in\C{Z}(LG)$, the object $\C{A}'(\C{B})\in \Pro\C{M}(\frac{LG}{LG})$ belongs to
$\Pro\C{M}(\frac{LG}{LG})^{fin}$ and the restriction of $\C{A}'(\C{B})$ to $LG^{rss}$ is smooth.
\end{Thm'}

By "Theorems" \ref{T:emb} and \ref{T:finite}, functor $\C{A}'$ gives rise to a functor
$\C{A}:\C{Z}_{\I^+}(LG)\to\C{Z}(LG)$.

\begin{Thm'} \label{T:adj}
The functor $\C{A}:\C{Z}_{\I^+}(LG)\to\C{Z}(LG)$ is right adjoint to $\C{R}$. Moreover,
$\C{A}$ is fully faithful and monoidal.
\end{Thm'}


\begin{Emp} \label{E:strategy}
{\bf Strategy of the proof of \rtt{finite}.}
(a) Since $\wt{W}$ is the "homotopy colimit" of $\{W_{\P}\}_{P\in\Par}$ (compare \re{wpactions} (e)), the functor
$\C{A}'=(\Av^{\Fl}_{LG}\circ \ev_{\dt_{\I^+}}^{\I})^{\wt{W},\sgn}$ is naturally equivalent to
a limit $\lim_{\P\in\Par}(\Av^{\Fl}_{LG}\circ \ev_{\dt_{\I^+}}^{\I})^{W_{\P},\sgn}$.

(b) By (a) and arguments of \re{wpactions}, there exists a  functor of $\infty$-categories \[
\C{A}_{\star}^{\star}:\Par^{op}\times\T^{op}\times \C{Z}_{\I^+}(LG)\to \C{M}(LG)\] such that the restriction of $\C{A}_{\star}^{\star}$ to $(\P,Y)\in \Par\times\T$ is \[
\C{A}_{\P}^Y:=(\Av^{Y\P}\circ\ev_{\dt_{\I^+}}^\I)^{W_{\P},\sgn}:\C{Z}_{\I^+}(LG)\to \C{M}(LG),
\]
and
\[\om\circ\C{A}'\cong\lim_{\P\in\Par}\lim_{Y\in\T} \C{A}_{\P}^Y:\C{Z}_{\I^+}(LG)\to \Pro \C{M}(LG).\]

(c) Changing the order of limits, we conclude that $\om\circ\C{A}'\cong \lim_{Y\in\T}\lim_{\P\in\Par}\C{A}_{\P}^Y$.

(d) Since the $\infty$-category $\C{M}(LG)$ has finite limits, each
$\C{A}^Y:=\lim_{\P\in\Par}\C{A}_{\P}^Y$ is a functor $\C{Z}_{\I^+}(LG)\to \C{M}(LG)$. Hence the limit
 $\C{A}^{\star}:=\lim_{\P\in\Par}\C{A}^{\star}_{\P}$ is a functor  $\T^{op}\times \C{Z}_{\I^+}(LG)\to \C{M}(LG)$, while by
(c), we get $\om\circ\C{A}'\cong \lim_{Y\in\T}\C{A}^Y$.

(e) By (d), for every $\C{B}\in\C{Z}_{\I^+}(LG)$ and $\C{X}\in \C{M}(LG)$, we obtain
a functor of $\infty$-categories $\C{A}^{\star}(\C{B})\ast\C{X}:\T^{op}\to \C{M}(LG)$ with
$\C{A}'(\C{B})\ast\C{X}\cong\lim_Y \C{A}^Y(\C{B})\ast\C{X}$. In particular,
we get a projective system $\{\C{A}^Y(\C{B})\ast\C{X}\}_{Y\in\T}$ in the homotopy category $M(LG)$.

(f) Similarly, for $\C{B}\in\C{Z}_{\I^+}(LG)$, $\gm\in LG$ and $n\in\B{N}$,
we get a functor of $\infty$-categories $i_{\gm,n}^*(\C{A}^{\star}(\C{B})):\T^{op}\to \C{M}(\gm \I_n^+)$
such that $i^*_{\gm,n}(\om\circ\C{A}'(\C{B}))\cong \lim_Y i_{\gm,n}^*(\C{A}^Y(\C{B}))$. In particular, we get
a projective system $\{i_{\gm,n}^*(\C{A}^Y(\C{B}))\}_{Y\in\T}$ in $M(\gm \I_n^+)$.

(g) By (e) and (f), \rtt{finite} follows from the following more precise assertion.
\end{Emp}




\begin{Thm'} \label{T:mconj2}
(a) For every $\C{B}\in \C{Z}_{\I^+}(LG)$ and $\C{X}\in \C{M}(LG)$,
the projective system $\{\C{A}^Y(\C{B})\ast\C{X}\}_Y$ in $M(LG)$ stabilizes.

(b) For every $\gm\in LG^{rss}$, there exists $n\in\B{N}$ such that the projective system
$\{i_{\gm,n}^\ast\C{A}^Y(\C{B})\}_Y$ in $M(\gm I_n^+)$ stabilizes.
\end{Thm'}

\subsection{Application to the classical (stable) Bernstein center} \label{SS:conj2}

\begin{Emp} \label{E:groth}
{\bf Grothendieck groups.} (a) For every triangulated category $C$,
we denote by $K_0(C)$ the Grothendieck group of $C$, tensored over $\qlbar$, and call it the $K$-group of $C$. In particular, to
each $X\in C$, we associate its class $\lan X\ran\in K_0(C)$. Notice that if $C$ is a monoidal category,
then $K_0(C)$ is a $\qlbar$-algebra.

(b) Every triangulated functor $\phi:C\to C'$ induced a map of $K$-groups
$\lan \phi\ran:K_0(C)\to K_0(C')$. Furthermore, $\lan \phi\ran$ is injective, if there exists a triangulated functor
$\phi':C'\to C$ such that $\phi'\circ\phi\cong\Id$.

(c) For $X\in\ASt_k$, we denote by $K_0(M(X))$ (resp. $K_0(D(X))$) the $K$-group
of $M(X)$ (resp. $D(X)$). Using (b) and \rl{ff}, we see that if $X\in\ASt_k$ has an admissible presentation $X\cong\lim_i X_i$, then $K_0(M(X))\cong \colim_i K_0(M(X_i))$ (resp. $K_0(D(X))\cong \colim_i K_0(D(X_i))$), and all the transition maps are injective.

Similarly, if  $X\in\AISt_k$ has a presentation $X\cong\colim_i X_i$, then we have isomorphisms
$K_0(M(X))\cong \colim_i K_0(M(X_i))$ (resp. $K_0(D(X))\cong \colim_i K_0(D(X_i))$), and all the transition maps are injective.
\end{Emp}

\begin{Emp} \label{E:kgpcenter}
{\bf Grothendieck group version of the center.}

(a) Recall that $M((LG)^2)$ is a monoidal category acting on $M(LG)$ (see \re{conv}).
Therefore $K_0(M((LG)^2))$ is a $\qlbar$-algebra, acting on $K_0(M(LG))$.
We denote by $\frak{Z}(LG)$ 
the algebra of endomorphisms of $K_0(M(LG))$, viewed as a $K_0(M((LG)^2))$-module.

(b) Notice that every $\C{B}\in \C{Z}(LG)$ defines a unique element
$\lan\C{B}\ran\in \frak{Z}(LG)$ such that $\lan \C{B}\ran(\lan\C{F}\ran)=\lan\C{B}(\C{F})\ran$
for every $\C{F}\in \C{M}(LG)$. Indeed, the functor $\C{F}\mapsto \C{B}(\C{F})$ preserves
finite limits in $\C{M}(LG)$, hence it preserves distinguished triangles in $M(LG)$. Thus, $\C{B}$ induces an endomorphism $\lan\C{B}\ran\in \End_{\qlbar} K_0(M(LG)):\lan\C{F}\ran\mapsto\lan\C{B}(\C{F})\ran$. Finally, using the fact that $\C{B}\in \C{Z}(LG)$, we have an isomorphism $\C{B}(\C{X}(\C{F}))\cong \C{X}(\C{B}(\C{F}))$ for every $\C{X}\in \C{M}((LG)^2)$, which implies that $\lan\C{B}\ran\in\frak{Z}(LG)$.

The map $\C{B}\mapsto\lan\C{B}\ran$ defines an algebra homomorphism
$K_0(\C{Z}(LG))\to \frak{Z}(LG)$, which at least a priori is not an isomorphism.
\end{Emp}

\begin{Emp} \label{E:setup}
{\bf Set-up.} (a) Assume now that $k$ is an algebraic closure of the finite
field $\fq$, and that $G$ has an $\fq$-structure. Then all the geometric
objects defined above have $\fq$-structures, and
we denote by $\C{D}^{\Fr}(LG)$, $\C{M}^{\Fr}(LG)$, $\C{Z}^{\Fr}(LG)$ etc. the
corresponding categories of Weil (Frobenius equivariant) objects.

(b) We set $F:=\fq((t))$, and choose a field isomorphism $\qlbar\isom\B{C}$.
For every $\P\in\Par$ and $n\in\B{N}$, we set $P:=\P(\fq)\subset G(F)$ and $P_n^+:=\P_n^+(\fq)\subset G(F)$.
\end{Emp}

\begin{Emp} \label{E:shfun}
{\bf The "sheaf-function correspondence" for loop groups.}

(a) Let $X\in \Var_k$ be defined over $\fq$. Then the classical sheaf-function
correspondence  associates to every $\C{F}\in D^{\Fr}(X)$ a function
$[\C{F}]:="\Tr(\Fr,\C{F})":X(\fq)\to\fqbar$.
Moreover, the correspondence $\C{F}\mapsto [\C{F}]$ defines a surjective homomorphism of vector spaces
$K_0(D^{\Fr}(X))\to \Fun(X(\fq),\qlbar)$.

(b) Note that correspondence $\C{F}\mapsto [\C{F}]$ from (a) commutes with $*$-pullbacks and $!$-pushforwards.
Therefore it extends to a similar correspondence for every $X\in \AISch_k$, defined over $\fq$.

(c) Consider the case $X:=LG$. Then $X(\fq)=G(F)$, and we claim that for every $\C{F}\in  D^{\Fr}(LG)$,
we have $[\C{F}]\in C_c^{\infty}(G(F))$. Indeed, every $\C{F}\in  D^{\Fr}(LG)$ comes from an object of some
$D^{\Fr}(LG^{\leq w}/\I_n^+)$, thus the corresponding function $[\C{F}]$ is supported on $G(F)^{\leq w}$ and is $I_n^+$-invariant from the right. Moreover, the map $\C{F}\mapsto [\C{F}]$ induces a surjective map $K_0(D^{\Fr}(LG))\to C_c^{\infty}(G(F))$,
which follows from the corresponding assertion  in (a).
\end{Emp}

\begin{Emp} \label{E:haarmeas}
{\bf Notation.} For every open compact subset $S\subset G(F)$, we denote by $\mu^S$ the Haar measure on
$G(F)$ normalized by the condition that $\int_{S}\mu^S=1$.
\end{Emp}

\begin{Emp} \label{E:shfun1}
{\bf The "sheaf-function correspondence" for measures.}

(a) For every $\C{F}\in  D^{\Fr}(LG)$ and $\C{X}\in  M^{\Fr}(LG)$, we can form the
tensor product $\C{F}\otimes\C{X}\in M^{\Fr}(LG)$, the $!$-pushforward
$\int_{LG}(\C{F}\otimes\C{X})\in D^{\Fr}(k)$, and the corresponding element $[\int_{LG}(\C{F}\otimes\C{X})]\in \qlbar$.

(b) We claim that for every $\C{X}\in  M^{\Fr}(LG)$, there exists a unique  $[\C{X}]\in\C{H}(G(F))$ such that $[\C{X}]([\C{F}])=[\int_{LG}(\C{F}\otimes\C{X})]$ for every $\C{F}\in  D^{\Fr}(LG)$.

\begin{proof}
The uniqueness follow from the fact that the map $K_0(D^{\Fr}(LG))\to C_c^{\infty}(G(F))$ is surjective
(see \re{shfun} (c)).

To show the existence, we choose $w\in\wt{W}$ and $n\in\B{N}$ such that $\C{X}$ come from an object
of $D^{\Fr}(LG^{\leq w}/\I_n^+)$. Let $[\C{X}]'$ be the corresponding function
$G(F)^{\leq w}/I_n^+\to\qlbar$, and consider element
$[\C{X}]_{n,w}:=[\C{X}]'\mu^{I_n^+}\in \C{H}(G(F))$ (see \re{haarmeas}).

We claim that $[\C{X}]_{n,w}([\C{F}])=[\int_{LG}(\C{F}\otimes\C{X})]$ for every $\C{F}\in  D^{\Fr}(LG)$.
Indeed, note that $[\C{X}]_{n,w}$ is independent of $n$ and $w$. Thus we can increase $w$ and $n$, if necessary,
thus assuming that $\C{F}$ comes from an object of $D^{\Fr}(LG^{\leq w}/\I_n^+)$, and let $[\C{F}]'$ be the corresponding function $G(F)^{\leq w}/I_n^+\to\qlbar$.

Then both $[\C{X}]_{n,w}([\C{F}])$ and $[\int_{LG}(\C{F}\otimes\C{X})]$ are equal to
$\int_{G(F)^{\leq w}/I_n^+}([\C{F}]'\cdot[\C{X}]')$, implying the assertion.
\end{proof}

(c) Note that the map $\C{X}\mapsto[\C{X}]$ defines an algebra homomorphism
$K_0(M^{\Fr}(LG))\to \C{H}(G(F))$. In particular, for every $\C{X},\C{Y}\in {M}^{\Fr}(LG)$, we have
$[\C{X}\ast\C{Y}]=[\C{X}]\ast[\C{Y}]$. Moreover, as in \re{shfun} (c), this homomorphism is surjective.

(d) Similarly, every element $\C{X}\in\wt{M}^{\Fr}(LG)$ defines a smooth measure  $[\C{X}]$ on $G(F)$.
Note that for every fp-closed subscheme $Y\subset LG$, defined over $\fq$, the  Haar measure
$\mu^Y$ (see \re{haarind} (c)) is Frobenius-equivariant, and the corresponding measure
$[\mu^Y]$ on $G(F)$ is $\mu^{Y(\fq)}$ (see \re{haarmeas}).
\end{Emp}

\begin{Emp} \label{E:shfun2}
{\bf The "sheaf-function correspondence" for the center.}

(a) Set $\frak{Z}^{\Fr}(LG):=\End_{K_0(M^{\Fr}((LG)^2))}K_0(M^{\Fr}(LG))$. As in \re{kgpcenter} (b),
there exists an algebra homomorphism $K_0(\C{Z}^{\Fr}(LG))\to\frak{Z}^{\Fr}(LG):\C{B}\mapsto\lan\C{B}\ran$
such that $\lan \C{B}\ran(\lan\C{F}\ran)=\lan\C{B}(\C{F})\ran$ for all $\C{F}\in \C{M}^{\Fr}(LG)$.

(b) We claim that for each $\lan\C{B}\ran\in \frak{Z}^{\Fr}(LG)$
there exists a unique element $[\C{B}]\in Z_{G}$ such that
$[\C{B}]([\C{F}])=[\lan\C{B}\ran(\lan\C{F}\ran)]$ for every
$\C{F}\in M^{\Fr}(LG)$.

\begin{proof}
First we claim that there exists a unique linear endomorphism of $\C{H}(G(F))$
given by the rule $[\C{F}]\mapsto [\lan\C{B}\ran(\lan\C{F}\ran)]$. The uniqueness follows from
the fact that the map $K_0(M^{\Fr}(LG))\to \C{H}(G(F))$ from \re{shfun1} (c) is surjective.

To show the existence and linearity, it suffices to show that  for
every tuple $\C{F}_1,\ldots,\C{F}_r\in{M}^{\Fr}(LG)$ and
$a_1,\ldots,a_r\in\qlbar$ such that $\sum_ia_i[\C{F}_i]=0$, we have
$\sum_ia_i[\lan\C{B}\ran(\lan\C{F}_i\ran)]=0$.


Choose $n\in\B{N}$ such that
$\C{F}_i\cong\dt_{\I_n^+}\ast\C{F}_i\cong (\dt_{\I_n^+}\pp\C{F}_i)(\dt_{\I_n^+})$ for all $i$ (see \re{biinv} (b),(c)).  Then
\[\lan\C{B}\ran(\lan\C{F}_i\ran)=\lan\C{B}\ran(\lan(\dt_{\I_n^+}\pp\C{F}_i)(\dt_{\I_n^+})\ran)
=\lan\dt_{\I_n^+}\pp\C{F}_i\ran(\lan\C{B}\ran(\lan\dt_{\I_n^+}\ran))=\lan\dt_{\I_n^+}\ran\ast\lan\C{B}\ran(\lan\dt_{\I_n^+}\ran)\ast\lan \C{F}_i\ran,\]
hence
$\sum_ia_i[\lan\C{B}\ran(\lan\C{F}_i\ran)]=[\dt_{\I_n^+}]\ast[\lan\C{B}\ran(\lan\dt_{\I_n^+}\ran)]\ast(\sum_ia_i[\C{F}_i])=0$.

Finally, since the homomorphism $K_0(M^{\Fr}((LG)^2))\to \C{H}(G(F)^2)$ is surjective
and $\C{B}\in \frak{Z}^{\Fr}(LG)$, the map $[\C{F}]\mapsto [\lan\C{B}\ran(\lan\C{F}\ran)]$ belongs to
$Z_{G}$.
\end{proof}

(c) Note that the map $\lan\C{B}\ran\to [\C{B}]$ from (b) defines an algebra homomorphism $\frak{Z}^{\Fr}(LG)\to Z_{G}$. Composing it with the algebra homomorphism from (a), we get
an algebra homomorphism $K_0(\C{Z}^{\Fr}(LG))\to Z_{G}:\C{B}\mapsto [\C{B}]$.

(d) Note that the functor $\C{A}$ from \rtt{mconj} is
Frobenius-equivariant, hence it upgrades to a monoidal functor
$\C{A}^{\Fr}:\C{Z}^{\Fr}_{\I^+}(LG)\to\C{Z}^{\Fr}(LG)$.
By (c), the functor $\C{A}^{\Fr}$ define an algebra homomorphism
$[\C{A}]:=[\C{A}^{\Fr}]:K_0(\C{Z}^{\Fr}_{\I^+}(LG))\to Z_{G}$.
We will show in \rt{conjunit} that $[\C{A}](\lan\Id\ran)=z^0$. Therefore
$[\C{A}]$ induces an algebra homomorphism $[\C{A}]:K_0(\C{Z}_{\I^+}^{\Fr}(LG))\to Z^0_{G}$.


\end{Emp}

\begin{Conj} \label{C:surj}
The homomorphism $[\C{A}]:K_0(\C{Z}_{\I^+}^{\Fr}(LG))\to Z^0_{G}$ from \re{shfun2} (d) is surjective.
\end{Conj}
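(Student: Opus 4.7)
The plan is to exhibit enough Frobenius-equivariant objects in $\C{Z}_{\I^+}^{\Fr}(LG)$ whose images under $[\C{A}]$ generate $Z^0_G$ as a $\qlbar$-algebra. By Bernstein's decomposition, $Z^0_G=\prod_s Z_s$, where $s$ runs over equivalence classes of cuspidal data $[M,\sigma]_G$ with $\sigma$ a depth zero supercuspidal of $M(F)$; each $Z_s$ is the coordinate ring of a torus modulo a finite group, generated (as a $\qlbar$-algebra) by the block idempotent $e_s$ together with finitely many unramified-twist generators. Since $\C{A}$ is monoidal by \rtt{mconj}(b), the map $[\C{A}]$ is a ring homomorphism, so it suffices to lift, for every $s$, the idempotent $e_s$ and each generator to a class in $K_0(\C{Z}_{\I^+}^{\Fr}(LG))$.

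\textbf{Source of categorical lifts.} The natural source is the monodromic variant of the Gaitsgory central functor. For each pair $(V,\C{L})$ with $V$ an irreducible representation of a Langlands-dual Levi $\check M$ and $\C{L}$ a Kummer-type character sheaf on the appropriate $\check T$-torsor of unramified characters defined over $\fq$, one obtains, via nearby cycles from a Beilinson--Drinfeld-type deformation of $LG/\I$, a monodromic central sheaf $Z_{V,\C{L}}\in \C{Z}_{\I^+}^{\Fr}(LG)$. Gaitsgory's commutation with the affine Hecke action equips $Z_{V,\C{L}}$ with its central structure, and Frobenius-equivariance of $\C{L}$ promotes it to a Weil object. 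The generalization to all parahoric Levis $L_\P$ is effected through the equivalences $\C{R}_\P$ of \rtt{mconj}(a), which allow one to work inside $\C{Z}_{\P^+}(LG)$ and transport the answer back to $\C{Z}_{\I^+}(LG)$.

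\textbf{Identification via a generalized Lefschetz formula.} To verify that these candidate elements exhaust $Z^0_G$, one computes them on $G(F)^{rss}$ by running the argument of \form{main} from Section~5 with $\C{B}=\C{A}(\lan Z_{V,\C{L}}\ran)$ in place of $z^0$. The expected output is
\begin{equation*}
[\C{A}](\lan Z_{V,\C{L}}\ran)(\gm)=\sum_{i,j}(-1)^{i+j}\Tr\bigl(\Fr,\Tor_j^{\wt{W}}(H_i(\Fl_\gm,Z_{V,\C{L}}|_{\Fl_\gm}),\sgn)\bigr),
\end{equation*}
the restriction of the central sheaf to the affine Springer fiber replacing the dualizing sheaf. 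A group version of Yun's theorem, adapted to Hitchin fibers carrying a tame local system, would identify the right-hand side with characters of stable virtual depth-zero representations, which match the explicit Bernstein generators of $Z_s$ through Deligne--Lusztig theory and the expected depth-zero Langlands correspondence of \re{depth0} and \re{unipotent}. Varying $(V,\C{L})$ over all admissible pairs should then realize $e_s$ and all algebraically independent generators of every $Z_s$.

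\textbf{Main obstacle.} The principal difficulty is the extension of \form{main} beyond the unit: both the local nearby-cycle analysis on affine Springer fibers and the global stability input from Yun's theorem are tailored to the constant sheaf, and handling a nontrivial $Z_{V,\C{L}}$ demands a substantially richer parameterized stability statement for Hitchin moduli with monodromy. A secondary, combinatorial obstacle is to show that the classes $\lan Z_{V,\C{L}}\ran$ actually surject (not merely land in) each $Z_s$: this requires every Bernstein component of the cuspidal-support variety to be detected by some $(V,\C{L})$, which in turn hinges on a depth-zero affine analog of the Kazhdan--Lusztig classification of Hecke-algebra blocks together with the surjectivity of $[\,\cdot\,]:K_0(M^{\Fr}(LG))\surj \C{H}(G(F))$ of \re{shfun1}(c) promoted to the level of centers.
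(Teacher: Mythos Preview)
The statement you are attempting to prove is labeled \textbf{Conjecture} in the paper, not Theorem: the paper does not prove it and offers no argument toward it beyond the surrounding discussion in subsections~3.4--3.5. There is therefore no ``paper's own proof'' to compare against.

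Your proposal is not a proof but a strategy outline, and you yourself flag the two essential gaps under ``Main obstacle.'' Both are genuine. First, the trace formula \form{main} and its proof in Section~5 are specific to the unit object: the stabilization argument (\rt{stab}, \rl{isom}) relies on the explicit form $A^Y_\P=\Av^{Y_\P}(\dt_{\frac{\P^+}{\P}})$, and the identification with $\Tor^{\wt W}_j(H_i(\Fl_\gm),\sgn)$ uses that $\wt\dt_{\I^+}=1_{\Fl_\gm}$. Replacing the unit by a general monodromic central sheaf $Z_{V,\C L}$ would require redoing both the stabilization and the homological identification, and the paper explicitly leaves the general case as \rcn{conj2}. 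Second, even granting a generalized trace formula, matching its output against generators of each Bernstein block $Z_s$ presupposes the depth-zero local Langlands parameterization of \re{depth0} and \re{unipotent}, which the paper treats as input rather than established fact. The surjectivity onto $Z_s$ from the family $\{Z_{V,\C L}\}$ is precisely the content of \rcn{conj3}(c) (for $\theta=1$) and its monodromic analogues, all of which are stated as conjectures.

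In short: your outline is a plausible research program consonant with the paper's own conjectural framework (indeed \re{denis} and \rcn{conj3} sketch the same source of elements), but it does not constitute a proof, and the paper does not claim one either.
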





\begin{Emp} \label{E:lcfun}
{\bf Locally constant function on $G^{rss}(F)$.} It follows from
\rtt{finite} that for each $\C{B}\in
\C{Z}^{\Fr}_{\I^+}(LG)$, the invariant distribution
${\nu_{[\C{A}(\C{B})]}}_{|G^{rss}(F)}$ on $G^{rss}(F)$ is locally constant, thus
${\nu_{[\C{A}(\C{B})]}}_{|G^{rss}(F)}=\phi_{[\C{A}(\C{B})]}\mu^{I^+}$ for some
$\phi_{[\C{A}(\C{B})]}\in C^{\infty}(G^{rss}(F))$.
\end{Emp}

Our next goal is to describe function $\phi_{[\C{A}(\C{B})]}$ explicitly.

\begin{Emp}
{\bf Notation.} (a) For every $\gm\in G^{rss}(F)$, we
denote by $\pr_{\gm}:\Fl\to\frac{LG}{\I}$ the map
$[g]\mapsto [g^{-1}\gm g]$. In particular,
$\pr_{\gm}$ defines functor
$\pr_{\gm}^*:\C{D}^{\Fr}(\frac{LG}{\I})\to\C{D}^{\Fr}(\Fl)$. Let $\ev^{\I}_{\dt_{\I^+}}:\C{Z}_{\I^+}^{\Fr}(LG)\to
\C{M}^{\Fr}(\frac{LG}{\I})$ be the evaluation functor from \rtt{weyl} (a), and let
$(\otimes\mu^{\frac{\I^+}{\I}})^{-1}:\C{M}^{\Fr}(\frac{LG}{\I})\cong\C{D}^{\Fr}(\frac{LG}{\I})$
be the inverse of the equivalence $\otimes\mu^{\frac{\I^+}{\I}}:\C{D}^{\Fr}(\frac{LG}{\I})\cong
\C{M}^{\Fr}(\frac{LG}{\I})$ (see \re{haarind} (b)).

(b) We denote by $p_{\gm}^*$ the composition
\[
\C{Z}_{\I^+}^{\Fr}(LG)\overset{\ev^I_{\dt_{\I^+}}}{\lra}\C{M}^{\Fr}(\frac{LG}{\I})
\overset{(\otimes\mu^{\frac{\I^+}{\I}})^{-1}}{\lra}\C{D}^{\Fr}(\frac{LG}{\I})
\overset{\pr_{\gm}^*}{\lra}\C{D}^{\Fr}(\Fl).
\]

(c) It can be deduced from \rtt{weyl} (b) that for each $\C{B}\in \C{Z}^{\Fr}_{\I^+}(LG)$, each homology group
$H_i(\Fl,p_{\gm}^*\C{B})$ has a natural $\wt{W}$-action (see \re{wpactions} (e)).
\end{Emp}

\begin{Conj} \label{C:conj2}
Let $\C{B}\in \C{Z}^{\Fr}_{\I^+}(LG)$ and $\gm\in G^{rss}(F)$.
Then

(a) each $\qlbar[\wt{W}]$-module $H_i(\Fl,p_{\gm}^*\C{B})$ is finitely generated;

(b) we have an equality (see \re{remfd} (a) below)
\begin{equation} \label{Eq:trform}
\phi_{[\C{A}(\C{B})]}(\gm)=\sum_{i,j}(-1)^{i+j}\Tr(\Fr,\Tor_j^{\wt{W}}(H_i(\Fl,p_{\gm}^*
\C{B}),\sgn)),
\end{equation}
where $\sgn$ is the sign character of $\wt{W}$.
\end{Conj}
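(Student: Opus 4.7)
The plan is to follow the strategy of \re{strategy} combined with the stabilization of \rtt{mconj2}. First, by \rtt{mconj2}(b) there exist $n\in\B{N}$ and $Y_0\in\T$ such that for every $Y\supset Y_0$ the transition map $i_{\gm,n}^*\C{A}^Y(\C{B})\to i_{\gm,n}^*\C{A}^{Y_0}(\C{B})$ is an equivalence in $M(\gm\I_n^+)$. By \re{strategy}(d) and \rtt{finite}, this identifies $i_{\gm,n}^*(\om\circ\C{A}'(\C{B}))$ with $i_{\gm,n}^*\C{A}^{Y_0}(\C{B})$, reducing the computation of $\phi_{[\C{A}(\C{B})]}(\gm)$ to evaluating $\C{A}^{Y_0}(\C{B})$ at $\gm$, normalizing by $\mu^{I^+}$, and taking $\Tr(\Fr,\cdot)$.

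Second, for each $\P\in\Par$ I would compute the $!$-stalk at $\gm$ of $\Av^{Y_0\P}\circ\ev^{\I}_{\dt_{\I^+}}\C{B}$ by base change along the defining correspondence
\[
\Fl\times\frac{LG}{\I}\xleftarrow{\pr} LG\times^{\I} LG\xrightarrow{\ov{a}}LG
\]
from \re{adj}(c). The geometric fiber of $\ov{a}$ over $\gm$ is identified with $\Fl$ via $x\I\mapsto[x,x^{-1}\gm x]$, and under this identification $\pr$ restricts to $\Id\times\pr_{\gm}:\Fl\to\Fl\times\frac{LG}{\I}$. Combined with the inverse of the equivalence $\otimes\mu^{\I^+/\I}:\C{D}(\frac{LG}{\I})\cong\C{M}(\frac{LG}{\I})$, base change gives (up to a Tate twist and shift coming from the smooth map $\pr$ and the measure conversion) an equivalence
\[
(\Av^{Y_0\P}\,\ev^{\I}_{\dt_{\I^+}}\C{B})|_\gm\cong R\Gm(Y_0\P,\,p_\gm^*\C{B}|_{Y_0\P}),
\]
so that taking $W_\P$-skew-invariants yields $R\Gm(Y_0\P,\,p_\gm^*\C{B})^{W_\P,\sgn}$. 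Passing to the homotopy limit over $\P\in\Par$ and using the fact from \re{wpactions}(e) that $\wt{W}$ is the homotopy colimit of $\{W_\P\}_{\P\in\Par}$, together with the exhaustion of $\Fl$ by the $\wt{W}$-translates of $Y_0\P$, the object $\C{A}^{Y_0}(\C{B})|_\gm$ becomes equivalent to the derived skew-coinvariants $R\Gm(\Fl,p_\gm^*\C{B})\otimes^{\B{L}}_{\wt{W}}\sgn$ in the stable $\infty$-category of complexes equipped with Frobenius action.

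Third, I would conclude using the hyper-Tor spectral sequence
\[
E^2_{i,j}=\Tor^{\wt{W}}_j(H_i(\Fl,p_\gm^*\C{B}),\sgn)\Rightarrow H_{i+j}(R\Gm(\Fl,p_\gm^*\C{B})\otimes^{\B{L}}_{\wt{W}}\sgn),
\]
additivity of $\Tr(\Fr,\cdot)$ on Euler characteristics, and the sheaf-function correspondence of \re{shfun2}; this yields \form{trform}. Part (a) should follow from the stabilization in \rtt{mconj2}: finitely many $\wt{W}$-translates of $Y_0$ cover the support of $p_\gm^*\C{B}$, so each $H_i(\Fl,p_\gm^*\C{B})$ is generated over $\qlbar[\wt{W}]$ by the finite-dimensional subspace $H_i(Y_0,p_\gm^*\C{B})$; since $\wt{W}$ has finite virtual cohomological dimension, the double sum in \form{trform} is finite.

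The main obstacle will be to justify rigorously the identification of $\lim_{\P\in\Par}(\cdot)^{W_\P,\sgn}$ with the derived $\wt{W}$-skew-coinvariants compatibly with Frobenius. This is an $l$-adic counterpart of the statement that $B\wt{W}$ is the homotopy colimit of $\{BW_\P\}_{\P\in\Par}$, and must be carried out within the $\infty$-categorical framework of Section \ref{SS:conj1}, relying on the precise naturality of the $W_\P$-actions of \rtt{weyl}(b) and \re{wpactions}(d). Alongside this, one must show that $\lim_{\P\in\Par}$ commutes with the pullback $i_{\gm,n}^*$ and with the sheaf-function correspondence $\C{X}\mapsto[\C{X}]$ of \re{shfun1}. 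Once this interchange is under control, the remaining inputs---the hyper-Tor spectral sequence and the proper base-change computation above---are standard.
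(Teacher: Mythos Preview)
This statement is a \emph{conjecture} in the paper: only the case $\C{B}=\Id$ is proved (\rt{tf}), and the paper says explicitly (\re{remfd}(b)) that once part~(a) is known, part~(b) should follow by the same argument. Your outline for (b) is indeed close to that argument, but it rests throughout on results (\rtt{mconj2}, \rtt{weyl}, \rtt{finite}) whose proofs are deferred to \cite{BKV2}, so even a correct argument here would remain conditional.

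The genuine gap is your proof of (a). The $\wt{W}$-action on $H_i(\Fl,p_\gm^*\C{B})$ is the Springer action coming from \rtt{weyl}(b); it is \emph{not} induced by translating subschemes of $\Fl$ by lifts of elements of $\wt{W}$. Hence the claim that ``$\wt{W}$-translates of $Y_0$ cover the support'' has no bearing on whether $H_i(Y_0,\ldots)$ generates $H_i(\Fl,\ldots)$ over $\qlbar[\wt{W}]$. Even for $\C{B}=\Id$ the paper obtains finite generation (\rco{fg}) only by invoking Yun's global theorem (\rt{action}), which relates the Springer $\wt{W}$-action to the geometric $\La_\gm$-action; stabilization alone does not yield it. There is also a smaller misstep in your sketch for (b): the asserted identification $\lim_{\P}R\Gm(Y_0\P,\ldots)^{W_\P,\sgn}\cong R\Gm(\Fl,\ldots)\otimes^{\B{L}}_{\wt{W}}\sgn$ does not follow from ``exhaustion by $\wt{W}$-translates of $Y_0\P$'' --- the $\Par$-diagram is finite and, at fixed $Y_0$, each term sees only a bounded piece of $\Fl$. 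The paper's actual mechanism (Section~5) is to exchange the $Y$- and $\P$-colimits, so that the $Y$-colimit first replaces each $Y_0\P$ by the full $\Fl_{\gm,\P}$; then Springer theory (\re{aplasf}(d)) identifies $H_i(\Fl_{\gm,\P},\wt{\dt}_{\P^+})$ with $H_i(\Fl_\gm)_{W_\P,\sgn}$, and the parahoric resolution of \rl{hom} turns the $\P$-indexed complex into the computation of $\Tor^{\wt{W}}_*(H_i,\sgn)$. The fact that $B\wt{W}$ is the homotopy colimit of the $BW_\P$ is encoded in that resolution, but by itself it does not produce the passage from $Y_0$ to $\Fl$.
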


\begin{Emp} \label{E:remfd}
{\bf Remark.} (a) Note that for every finitely generated $\qlbar[\La]$-module $V$, its homology group
$H_j(\La,V)$ is finite-dimensional. Hence, by \rcn{conj2} (a), each $\qlbar$-vector space
$\Tor_j^{\wt{W}}(H_i(\Fl,p_{\gm}^*
\C{B}),\sgn)=H_j(\La, H_i(\Fl,p_{\gm}^*\C{B}))_{W_,\sgn}$ is finite dimensional.
Therefore the right hand side of \form{trform} is defined.

(b) We believe that once \rcn{conj2} (a) is established, \rcn{conj2} (b) can be proven by the same argument as \rt{tf} below.
\end{Emp}

\subsection{Monodromic case}

\begin{Emp} \label{E:mon}
{\bf Monodromic sheaves.}  Let $\theta$ be a tame rank one local
system on $T$.

(a) We denote by $\C{M}_{\I,\theta}(LG)\subset
\C{M}_{\I^+}(LG)$ the full $\infty$-subcategory, whose objects are
$T$-monodromic with respect to the left and the right actions with
generalized eigenvalues $\theta$. Then
$\C{M}_{\I,\theta}(LG)\subset\C{M}_{\I^+}(LG)$ is a monoidal category
without unit.

(b) Note that the action of the monoidal $\infty$-category $\C{M}_{(\I^+)^2}((LG)^2)$ on $\C{M}_{\I^+}(LG)$
restricts to the action of $\C{M}_{(\I^2,\theta^2)}((LG)^2)\subset\C{M}_{(\I^+)^2}((LG)^2)$ on
$\C{M}_{\I,\theta}(LG)$. We define $\C{Z}_{\I,\theta}(LG)$ to be the $\infty$-category of "geometric" endomorphisms
of $\C{M}_{\I,\theta}(LG)$, viewed as a module category over $\C{M}_{(\I^2,\theta^2)}((LG)^2)$ (compare \re{centpn} and \re{remcentpn}).
\end{Emp}


\begin{Emp} \label{E:conjcl}
{\bf $W$-conjugacy.} (a) We say that two tame local systems $\theta$ and $\theta'$ are $W$-conjugate, if there exists $w\in W$ such that
$w^*(\theta)\cong \theta'$. In this case, $w$ induces equivalences $\C{M}_{\I,\theta}(LG)\isom \C{M}_{\I,w^*(\theta)}(LG)$ and hence $\C{Z}_{\I,\theta}(LG)\isom\C{Z}_{\I,w^*(\theta)}(LG)$.

(b) Notice that if $w^*(\theta)\cong\theta$, then $w^*:\C{M}_{\I,\theta}(LG)\isom \C{M}_{\I,\theta}(LG)$
belongs to $\C{Z}_{\I,\theta}(LG)$, thus the induced equivalence $w^*:\C{Z}_{\I,\theta}(LG)\isom \C{Z}_{\I,\theta}(LG)$
is naturally equivalent to the identity. Therefore if $\theta$ and $\theta'$ are $W$-conjugate, then  $\C{Z}_{\I,\theta}(LG)$ and $\C{Z}_{\I,\theta'}(LG)$ are canonically identified.

(c) For every $W$-conjugacy class $[\theta]$ of tame local systems on $T$, we define $\C{Z}_{\I,[\theta]}(LG)$ to be $\C{Z}_{\I,\theta}(LG)$
for some $\theta\in[\theta]$. This is well-defined by (b).
\end{Emp}

\begin{Emp} \label{E:arith}
{\bf The case of finite fields.}
(a) Assume now that we are in the situation of subsection 3.4 and that the $W$-conjugacy class $[\theta]$ of $\theta$ is Frobenius-invariant. Then the $\infty$-category $\C{Z}_{\I,[\theta]}(LG)$ is equipped with a natural action of Frobenius, thus we can talk about the $\infty$-category of Frobenius equivariant objects $\C{Z}^{\Fr}_{\I,[\theta]}(LG)$.

(b)  Notice that every $\theta$ as in (a) defines a semi-simple conjugacy
class in $\check{G}$ such that $\theta^q\sim\theta$. In particular, $\theta$ defines
a subset $\Irr(G)^\theta\subset\Irr(G)^0$  and a subalgebra $Z^{\theta}_{G}\subset Z_G^0$
(see \re{depth0rep}).
\end{Emp}

\begin{Conj} \label{C:thetafunc}
In the situation of \re{arith}, there is a natural surjective algebra homomorphism
$[\C{A}_{\theta}]:K_0(\C{Z}^{\Fr}_{\I,[\theta]}(LG))\to Z_G^{\theta}$, whose construction
is similar to  the homomorphism $[\C{A}]$ from \re{shfun2} (d)
(compare \rcn{surj}).
\end{Conj}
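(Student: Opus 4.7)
The plan is to construct a monodromic analog $\C{A}_\theta$ of the functor $\C{A}$ of \rtt{adj}, apply the sheaf-function correspondence as in \re{shfun2}, and prove surjectivity using a monodromic analog of Gaitsgory central sheaves.

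To construct $\C{A}_\theta:\C{Z}_{\I,[\theta]}(LG)\to\C{Z}(LG)$, I would repeat the procedure of \re{wpactions} with three modifications. First, the evaluation functor $\ev^\I_{\dt_{\I^+}}$ from \rtt{weyl} (a) is replaced by an evaluation $\ev^\I_\theta:\C{Z}_{\I,\theta}(LG)\to\C{M}(\frac{LG}{\I})$ at a $(\theta,\theta)$-monodromic analog $\dt_{\I,\theta}$ of $\dt_{\I^+}$. Second, the $W_\P$-action in \rtt{weyl} (b) is replaced by an action of the stabilizer $W_{\P,\theta}:=W_\P\cap\wt{W}_\theta$ (where $\wt{W}_\theta\subset\wt{W}$ is the stabilizer of $\theta$), induced from the $W_{\P,\theta}$-action on a $\theta$-monodromic analog of the Springer sheaf for the Levi $L_\P$. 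Third, gluing over $\P\in\Par$ as in \re{wpactions} (d)--(e) yields a $\wt{W}_\theta$-action on $\Av^{\Fl}_{LG}\circ\ev^\I_\theta$, and taking skew-invariants with respect to the sign character of $\wt{W}_\theta$ defines $\C{A}'_\theta$, whence $\C{A}_\theta$ via the monodromic analogs of "Theorems" \ref{T:emb}, \ref{T:finite}, and \ref{T:adj}.

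Since $[\theta]$ is Frobenius-invariant, $\C{A}_\theta$ upgrades to a Frobenius-equivariant monoidal functor, and the sheaf-function dictionary of \re{shfun2} (c) then yields an algebra homomorphism $[\C{A}_\theta]:K_0(\C{Z}_{\I,[\theta]}^{\Fr}(LG))\to Z_G$. To see that the image lies in $Z_G^\theta$, I would check that $[\C{A}_\theta(\C{B})]$ annihilates every irreducible $\pi\in\Irr(G)^{\theta'}$ with $\theta'\not\sim\theta$; this reduces via the Moy--Prasad/Deligne--Lusztig classification of depth-zero representations to the vanishing of the $(\I,\theta)$-isotypic subspace of such $\pi$.

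For surjectivity, I would exhibit enough elements in the image by a monodromic Gaitsgory construction: each finite-dimensional representation $V$ of the centralizer $\check G^s$ (where $s\in\check G$ corresponds to $\theta$) gives rise to a Frobenius-equivariant object $Z_{V,\theta}\in\C{Z}^{\Fr}_{\I,[\theta]}(LG)$ whose image $[\C{A}_\theta(Z_{V,\theta})]$ is expected to act on $\pi\in\Irr(G)^\theta$ with Langlands parameter $(s(\pi),\theta)$ by the scalar $\Tr(s(\pi),V)$. Since such scalars separate characters of $Z_G^\theta$ by the Peter--Weyl theorem for $\check G^s$ combined with Lusztig's classification \cite{Lu1,Lu3}, and since $[\C{A}_\theta]$ is an algebra homomorphism, the image generates $Z_G^\theta$ and surjectivity follows. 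The main obstacle is the second modification above: constructing the affine monodromic Springer sheaf and establishing the monodromic analogs of "Theorems" \ref{T:weyl} and \ref{T:inverse} with $\wt{W}$ replaced by $\wt{W}_\theta$; once this categorical ingredient is in place, the remaining arguments parallel those of subsections 3.2--3.4 almost verbatim.
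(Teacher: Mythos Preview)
The statement you are attempting to prove is labeled \emph{Conjecture} in the paper, not Theorem. The paper offers no proof; it merely states the conjecture as a monodromic analog of \rcn{surj} (itself unproven) and uses it only heuristically in \re{denis}--\rcn{conj3}. There is therefore no ``paper's own proof'' to compare against.

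As an outline of a possible attack, your construction of $\C{A}_\theta$ via a $\wt{W}_\theta$-action and skew-invariants is the natural monodromic extrapolation of \re{wpactions}, and you are right to flag the monodromic Springer theory (your ``second modification'') as the main missing ingredient. However, your surjectivity argument has a genuine gap. You assert that the scalars $\Tr(s(\pi),V)$, as $V$ ranges over $\Rep\check G^s$, separate the characters of $Z_G^\theta$. But characters of $Z_G^\theta$ correspond to Bernstein components, not to conjugacy classes of pairs $(s,\theta)$; distinct components can share the same $s(\pi)$. Moreover, the very identity $f_{[\C{A}_\theta(Z_{V,\theta})]}(\pi)=\Tr(s(\pi),V)$ that you invoke is precisely \rcn{conj3} (a), another open conjecture in the paper. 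Even in the non-monodromic case $\theta=1$, the paper only \emph{conjectures} (\rcn{conj3} (c)) that $[\Phi_1]$ surjects onto $Z_G^{st,1}$, not onto all of $Z_G^1$. So your surjectivity step, as written, presupposes several unproven conjectures and would still not yield surjectivity onto the full $Z_G^\theta$ even if those were granted.
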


\begin{Emp} \label{E:denis}
{\bf Gaitsgory's central sheaves.} (a) For each $W$-conjugacy class of tame local system
$\theta$ on $T$, a generalization of Gaitsgory's construction \cite{Ga} gives
a monoidal functor $\Psi_{\theta}:\Rep \check{G}\to
\C{Z}_{\I,[\theta]}(LG)$ (see \cite[3.5]{Be}).

(b) Assume from now on that we are in the situation of \re{arith}.
Then $\Psi_{\theta}$ lifts to a monoidal functor to
$\Psi_{\theta}^{\Fr}:\Rep\check{G}\to\C{Z}^{\Fr}_{\I,[\theta]}(LG)$. Therefore
by \rcn{thetafunc}, $\theta$ defines an algebra homomorphism
$[\Phi_{\theta}]:=[\C{A}_{\theta}]\circ[\Psi^{\Fr}_{\theta}]:K_0(\Rep\check{G})\to
 Z^{\theta}_{G}$.

(c) For each $\theta$ as in (b) and $V\in\Rep\check{G}$, we set
$z_V^\theta:=[\Phi_{\theta}](V)\in{Z}^{\theta}_{G}$.
\end{Emp}


\begin{Conj} \label{C:conj3}
(a) For every $\theta$ as in \re{arith}, $V\in\Rep\check{G}$ and $\pi\in\Irr(G)^\theta$,
we have an equality $f_{z_V^\theta}(\pi)=\Tr(s(\pi),V)$.

(b) For every $\theta$ as in \re{arith}, the image of $[\Phi_{\theta}]:K_0(\Rep\check{G})\to
Z^{\theta}_{G}$ lies in $Z^{st,\theta}_{G}$.

(c) The induced homomorphism $[\Phi_{1}]:K_0(\Rep\check{G})\to Z^{st,1}_{G}$ is an isomorphism.
\end{Conj}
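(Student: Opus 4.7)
The plan is to deduce all three parts from a single trace-formula identity, conditional on the earlier conjectures \rcn{surj}, \rcn{thetafunc} and \rcn{conj2}, and on a suitable extension of Yun's theorem used in the introduction for the unit element. Since $\Psi_\theta$ and $\C{A}_\theta$ are monoidal, $V\mapsto z_V^\theta$ carries $K_0(\Rep\check G)$ into $Z_G^\theta$, so all three statements concern how $z_V^\theta$ acts on $\Irr(G)^\theta$ and the shape of its invariant distribution.

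For part (a), I would apply the trace formula \form{trform} to $\C{B}=\Psi_\theta^{\Fr}(V)\in \C{Z}^{\Fr}_{\I,[\theta]}(LG)$. This reduces the computation of $f_{z_V^\theta}(\pi)$ to controlling $p_\gamma^*\Psi_\theta(V)$ on affine Springer fibers $\Fl_\gamma$. The key geometric input is the description, due to Gaitsgory, of $\Psi_\theta(V)$ as a nearby-cycles sheaf compatible with geometric Satake: in particular, under the Bezrukavnikov equivalence \cite{Be}, $\Psi_\theta(V)$ corresponds to the $V$-twist of the structure sheaf on the corresponding Steinberg-type variety. Combining this with the Deligne--Lusztig realization of depth-zero representations and the explicit description of $s(\pi)$ in terms of the Frobenius action on the cohomology of affine Springer fibers, the right-hand side of \form{trform} specializes, on each character $\pi\in\Irr(G)^\theta$, to $\Tr(s(\pi),V)$. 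Concretely, one expects to identify the $\wt W$-action on $H_*(\Fl,p_\gamma^*\Psi_\theta(V))$ with the natural $\wt W$-action on a twisted form of the Grothendieck--Springer complex, and then to use character formulas à la Kazhdan--Lusztig to match the traces.

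Part (b) then follows from part (a) combined with stability of the right-hand side. More precisely, once we know that $f_{z_V^\theta}$ factors through $\pi\mapsto \Tr(s(\pi),V)$, stability of the invariant distribution $\nu_{z_V^\theta}$ on the regular semisimple locus is equivalent to showing that the function $\phi_{z_V^\theta}(\gamma)$, expressed by \form{trform}, is constant on stable orbits in $G^{rss}(F)$. For $V=\mathbf 1$ and $\theta=1$ this is exactly what the group analog of Yun's theorem (as invoked in the introduction) provides. I would extend Yun's argument to arbitrary $V$ by globalising the affine Springer fibers to a Hitchin-type moduli space and replacing the constant sheaf by the sheaf $\Psi_\theta(V)$ viewed as a spectral summand via geometric Satake: stability then reduces to the $\check G$-equivariance of endoscopic decomposition of the Hitchin fibration, just as in \cite{Yun}. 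Extending stability from $G^{rss}(F)$ to all of $G(F)$ is then standard, using that $\nu_{z_V^\theta}$ is already known to lie in $Z_G^\theta$ by \rcn{thetafunc}.

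For part (c), injectivity of $[\Phi_1]$ is immediate from (a): the characters $\pi\mapsto\Tr(s(\pi),V)$ separate irreducible representations of $\check G$ because the set $\{s(\pi):\pi\in\Irr(G)^1\}$ is Zariski dense in the set of semisimple conjugacy classes of $\check G$ (using Lusztig's parametrization \cite{Lu1,Lu3} of unipotent representations). Surjectivity would follow from the depth-zero stable center conjecture \re{ssc0} in the unipotent case $\theta=1$: every stable central element is determined, via \re{ssc0}(b), by its values on pairs $(s,u)$ with $u=1$, and \rcn{conj3}(a) shows that these values are precisely realized by characters of $\Rep\check G$. The main obstacle is clearly part (a): the matching of the geometric trace on $\Tor^{\wt W}_j(H_i(\Fl,p_\gamma^*\Psi_\theta(V)),\sgn)$ with $\Tr(s(\pi),V)$ requires a precise bridge between Bezrukavnikov's equivalence, the monodromic refinement of Gaitsgory central functors, and the Deligne--Lusztig / Lusztig classification of depth-zero representations — essentially a local-Langlands-compatible trace formula which at present is not available in the required generality.
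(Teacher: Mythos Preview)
The statement you are attempting to prove is labeled \texttt{Conj} in the paper --- it is Conjecture~\ref{C:conj3}, and the paper does \emph{not} prove it. The only thing the paper says about it is the subsequent remark that it implies the depth zero stable center conjecture for unipotent representations. So there is no ``paper's own proof'' to compare against: this is an open problem in the paper, and your proposal is not a proof but a heuristic outline of how one might eventually attack it.

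As an outline, your proposal is reasonable in spirit but rests on a tower of unproven statements. You invoke Conjectures~\ref{C:surj}, \ref{C:thetafunc}, and \ref{C:conj2} from the same paper as inputs; you also need a monodromic generalization of Yun's theorem and a precise compatibility between Bezrukavnikov's equivalence, Gaitsgory's central functors, and Lusztig's parametrization --- and you yourself concede at the end that this bridge ``at present is not available in the required generality.'' For part (c), your surjectivity argument assumes the depth-zero stable center conjecture \re{ssc0}(b), which is again conjectural; so you are deducing one conjecture from another rather than proving anything unconditionally. In short: there is no gap to name because there is no proof here, only a plausible roadmap whose main steps are themselves open.
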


\begin{Emp}
{\bf Remark.} Note that \rcn{conj3} implies the
depth zero stable center conjecture for unipotent representations.
\end{Emp}

\section{Geometric construction and stability of the Bernstein projector}

\subsection{Construction} \label{SS:unit}
In this subsection we carry out the strategy outlined in subsection 3.3 and construct a $K$-group analog
$\lan A\ran\in\frak{Z}(LG)$ of $z^0$ (see \rt{conjunit}).

Notice that $\ev_{\dt_{\I^+}}(\Id)=\dt_{\I^+}\in M(LG)$ has a natural lift
$\dt_{\frac{\I^+}{\I}}\in M(\frac{LG}{\I})$, showing an analog of \rtt{weyl} (a) in this case.
The following assertion is a homotopy analog of  \rtt{weyl} (b).




\begin{Lem} \label{L:weyl}
For every $\P\in {\Par}$, the element
${\Av}^{\P/\I}_{\P}(\dt_{\frac{\I^+}{\I}})\in M(\frac{LG}{\P})$ is equipped with an
action of the finite Weyl group $W_{\P}\subset\wt{W}$. Moreover, the sheaf of
skew-coinvariants ${\Av}^{\P/\I}_{\P}(\dt_{\frac{\I^+}{\I}})^{W_{\P,\sgn}}$ is naturally isomorphic to
$\dt_{\frac{\P^+}{\P}}$.
\end{Lem}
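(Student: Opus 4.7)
The strategy is to identify the averaged sheaf with the small Springer sheaf of the finite-dimensional Levi $L_\P = \P/\P^+$ and then to deduce the statement from the classical Springer correspondence. The adjoint action of $\P$ on $\P$ descends through $\P \surj L_\P$ to the adjoint action of $L_\P$ on itself; moreover, this quotient sends $\I$ to the Borel $B_\P$ of $L_\P$ and $\I^+$ to its unipotent radical $U_{B_\P}$. Since $\P \to L_\P$ is pro-unipotent, \rl{ff} together with \rl{unipstack} guarantees that the associated $!$-pullback on sheaves is fully faithful, reducing the entire problem to a computation on the reductive group $L_\P$, its flag variety $\C{B}_\P = L_\P/B_\P$, and its unipotent variety $\C{U}_{L_\P}$.

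The first step is a diagram chase through the correspondence $(\P \bs \Fl) \times \tfrac{LG}{\I} \lla (\P \bs LG) \times^\I LG \lra \tfrac{LG}{\P}$ of \re{adj} defining $\Av^{\P/\I}_\P$. Since $\dt_{\frac{\I^+}{\I}}$ is supported on $\frac{\I^+}{\I} \subset \frac{LG}{\I}$ and since $x\I^+x^{-1} \subset \P$ for every $x \in \P$, the averaged sheaf is supported on the preimage in $\frac{LG}{\P}$ of the unipotent variety $\C{U}_{L_\P} \subset L_\P$. Upon descending through $\P \surj L_\P$, a direct base-change calculation yields a canonical identification (up to a fixed shift and Tate twist)
\[
\Av^{\P/\I}_\P(\dt_{\frac{\I^+}{\I}}) \;\cong\; q^! \bigl((\mu_{L_\P})_! \qlbar_{\wt{\C{U}}_{L_\P}}\bigr),
\]
where $\mu_{L_\P}: \wt{\C{U}}_{L_\P} = L_\P \times^{B_\P} U_{B_\P} \to \C{U}_{L_\P}$ is the small Springer resolution, and $q^!$ is the composition of the pro-unipotent $!$-pullback along $\frac{\P}{\P} \to L_\P/L_\P$ with the $!$-pushforward along the closed embedding $\frac{\P}{\P} \hra \frac{LG}{\P}$.

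Second, $\C{S}^u_{L_\P} := (\mu_{L_\P})_! \qlbar_{\wt{\C{U}}_{L_\P}}$ is the classical small Springer sheaf, which carries the canonical Springer action of $W_\P$ (constructed via Lusztig's monodromy approach using the Grothendieck--Springer cover over the regular semisimple locus, or equivalently via the Hotta--Kashiwara Fourier transform construction). Transporting this $W_\P$-action through the identification above yields the required $W_\P$-action on $\Av^{\P/\I}_\P(\dt_{\frac{\I^+}{\I}})$. By the Springer correspondence, the sign representation of $W_\P$ matches the zero unipotent orbit with trivial local system, so $(\C{S}^u_{L_\P})^{W_\P, \sgn} \cong \mathrm{IC}(\{1\}, \qlbar)$ is supported at $1 \in L_\P$. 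Pulled back through $q^!$, the substack $\{1\}/L_\P$ becomes $\frac{\P^+}{\P} \subset \frac{LG}{\P}$, and the shifts and twists accumulated in both identifications cancel to produce $\Av^{\P/\I}_\P(\dt_{\frac{\I^+}{\I}})^{W_\P, \sgn} \cong \dt_{\frac{\P^+}{\P}}$.

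The main obstacle will be the precise bookkeeping in the first step: matching the normalization of $\dt_{\frac{\I^+}{\I}}$ (as the dualizing sheaf of an infinite-dimensional substack, per \re{consind}) and the Haar measure $1_{\P/\I}$ used in the averaging functor (per \re{haar}, \re{haarind}) with the standard normalization of the small Springer sheaf, while carefully tracking the cohomological shifts and Tate twists introduced by the pro-unipotent descent along $\P \surj L_\P$. A secondary concern is to verify that the $W_\P$-action constructed this way agrees with the $W_\P$-actions on the more general functors appearing in \re{wpactions}, so that this lemma dovetails cleanly with the full monodromic statement of \rtt{weyl}.
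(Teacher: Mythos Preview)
Your proposal is correct and follows essentially the same route as the paper: both reduce the computation to the classical Springer theory for the finite Levi $L_\P=\P/\P^+$ via the pro-unipotent quotient $\P\to L_\P$, identify $\Av^{\P/\I}_\P(\dt_{\frac{\I^+}{\I}})$ with (the $!$-pullback of) the $L_\P$-equivariant Springer sheaf, and then invoke the standard fact that its sign-isotypic piece is the $\dt$-sheaf at the identity. The paper's write-up is slightly more streamlined in that it passes directly to the $L_\P$-equivariant diagram $\frac{1}{B}\leftarrow\frac{U}{B}\to\frac{L_\P}{L_\P}$ rather than first landing on $\C{U}_{L_\P}$, but the content is the same.
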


\begin{proof}
Consider the Borel subgroup $B:=\I/\P^+$ of $L:=L_{\P}$ and the unipotent radical $U:=\I^+/\P^+$ of $B$. Then we have a  diagram
\begin{equation} \label{Eq:spr}
L/B\overset{\pr}{\lla} L\times^B U\overset{a}{\lra}L,
\end{equation}
where $a$ is the map $[g,u]\mapsto gug^{-1}$ (compare \re{adj} (c)). Then the classical Springer theory asserts that the Springer sheaf
$a_!\pr^!(1_{L/B})\in D(L)$ is equipped with an action of the Weyl group $W:=W_{\P}$, and that the sheaf of skew-coinvariants $(a_!\pr^!(1_{L/B}))^{W,\sgn}$ is naturally isomorphic to $\dt_1\in D(L)$, the $\dt$-sheaf at $1$.

Note that the diagram \form{spr} is $L$-equivariant with respect to the left multiplication on the first two factors and the adjoint
action on the third one. Moreover, the diagram
$\frac{1}{B}\overset{\ov{\pr}}{\lla} \frac{U}{B}\overset{[a]}{\lra}\frac{L}{L}$, where $[a]$ is induced by the embedding
$U\hra B\hra L$, is obtained by the quotient of \form{spr} by $L$. Let $1_{L/B}\in D(\frac{1}{B})$ be the unique Haar measure whose $!$-pullback to $L/B$ is $1_{L/B}$. Then $[a]_!\ov{\pr}^!(1_{L/B})\in D(\frac{L}{L})$ is equipped with an action of the Weyl group $W$, and  $([a]_!\ov{\pr}^!(1_{L/B}))^{W,\sgn}$ is naturally isomorphic to $\dt_{\frac{1}{L}}\in D(\frac{L}{L})$.

From this the assertion follows. By definition, ${\Av}^{\P/\I}_{\P}(\dt_{\frac{\I^+}{\I}})\in M(\frac{\P}{\P})\subset M(\frac{LG}{\P})$ is simply $[\wt{a}]_!\wt{\pr}^!(1_{\P/\I})$, obtained from the diagram
$\frac{1}{\I}\overset{\wt{\pr}}{\lla} \frac{\I^+}{\I}\overset{[\wt{a}]}{\lra}\frac{\P}{\P}$, where $1_{\P/\I}\in M(\frac{1}{\I})$ is
the unique Haar measure whose $!$-pullback to $\P/\I$ is $1_{\P/\I}$.

Then $1_{\P/\I}\in M(\frac{1}{\I})$ is the $!$-pullback of  $1_{L/B}\in D(\frac{1}{B})$, hence ${\Av}^{\P/\I}_{\P}(\dt_{\frac{\I^+}{\I}})$ is the $!$-pullback of  $[a]_!\ov{\pr}^!(1_{L/B})\in D(\frac{L}{L})$. Therefore ${\Av}^{\P/\I}_{\P}(\dt_{\frac{\I^+}{\I}})$ is equipped with a $W$-action, and its sheaf of  skew-invariants is the $!$-pullback of  $\dt_{\frac{1}{L}}\in D(\frac{L}{L})$, which is  $\dt_{\frac{\P^+}{\P}}$.
\end{proof}

\begin{Emp}
{\bf Notation.} Note that for every $\P\in\Par$ and every locally closed subscheme $Y\subset \Fl$ the image $Y_{\P}\subset \Fl_{\P}$
is locally closed. Hence we can form  $A^Y_{\P}:=\Av^{Y_{\P}}(\dt_{\frac{\P^+}{\P}})\in M(LG)$ (compare \re{aver}).
In particular, for $Y\in\T$ (see \re{parah}), we have $A^Y_{\P}=\C{A}^Y_{\P}(\Id)$ (see \re{strategy} (b)).
\end{Emp}

By \rl{weyl}, the following assertion a homotopy analog of the particular case
of \re{strategy} (b), which is crucial for the whole argument.

\begin{Lem} \label{L:func}
There exists a natural functor $A_{\star}^{\star}:{\Par}^{op}\times\T^{op}\to M(LG)$, whose value at
$(\P,Y)$  is $A^Y_{\P}$.
\end{Lem}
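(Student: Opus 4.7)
The plan is to realize $A_\star^\star$ as the $W_\P$-skew-invariants of a more primitive bi-functor and verify functoriality at that primitive level. First, I would establish the alternative description
\[
A^Y_\P\cong\bigl(B^{Y,\P}\bigr)^{W_\P,\sgn},\qquad B^{Y,\P}:=\Av^{Y\P}(\dt_{\I^+/\I})\in M(LG),
\]
by combining \rl{weyl} with the factorization $\Av^{Y\P}\cong\pi_\P^!\circ\Av^{Y\P}_\P$ of \re{aver} (b) (using that $Y\P$ is right $\P$-invariant in $\Fl$), and noting that averaging commutes with $W_\P$-skew-invariants: applying $(\cdot)^{W_\P,\sgn}$ to $\Av^{\P/\I}_\P(\dt_{\I^+/\I})$ yields $\dt_{\P^+/\P}$, and then averaging over $Y_\P$ yields $A^Y_\P$.

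Second, I would exhibit $B^{Y,\P}$ as a functor $\Par^{op}\times\T^{op}\to M(LG)$. For $Y\subset Y'$ in $\T$ and $\P\subset\P'$ in $\Par$, the closed embeddings $Y\P\subset Y'\P$ and $Y\P\subset Y\P'$ in $\Fl$ yield adjunction morphisms $1_{Y'\P}\to 1_{Y\P}$ and $1_{Y\P'}\to 1_{Y\P}$ in $D(\Fl)$ and hence transition maps $B^{Y',\P}\to B^{Y,\P}$ and $B^{Y,\P'}\to B^{Y,\P}$, with bi-functoriality immediate from the naturality of these adjunction maps. To descend to $A_\star^\star$, I would verify that $B^{Y,\P'}\to B^{Y,\P}$ is $W_\P$-equivariant (with $W_\P\subset W_{\P'}$ acting on $B^{Y,\P'}$ by restriction from the Springer $W_{\P'}$-action). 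Passing to $(\cdot)^{W_\P,\sgn}$ gives a map $(B^{Y,\P'})^{W_\P,\sgn}\to A^Y_\P$, into which $A^Y_{\P'}=(B^{Y,\P'})^{W_{\P'},\sgn}$ includes as a subspace (since $\sgn_{W_{\P'}}|_{W_\P}=\sgn_{W_\P}$). Composition gives the desired natural transformation $A^Y_{\P'}\to A^Y_\P$, and an analogous argument handles the $\T^{op}$-direction.

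The main obstacle will be verifying the $W_\P$-equivariance of the transition map $B^{Y,\P'}\to B^{Y,\P}$ together with the bi-functorial coherence. The key input is the compatibility of the Springer construction under the Levi inclusion $L_\P\hookrightarrow L_{\P'}$: restricted along $W_\P\hookrightarrow W_{\P'}$, the $W_{\P'}$-action on the Springer sheaf for $L_{\P'}$ agrees with the $W_\P$-action on the corresponding sheaf for $L_\P$ coming from the relative Springer construction. Once this compatibility is checked, the descent to skew-invariants and the bi-functorial coherence reduce to a diagram chase using the naturality of averaging in both its geometric inputs.
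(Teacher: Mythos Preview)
Your approach is different from the paper's and considerably more elaborate. The paper constructs the transition map $A^Y_{\Q}\to A^Y_{\P}$ for $\P\subset\Q$ directly, with no Springer theory at all: it factors through an intermediate object $\Av^{Y_{\P}}(\dt_{\Q^+/\P})$ using two elementary ingredients, namely the counit map $\dt_{\Q^+/\P}\to\dt_{\P^+/\P}$ coming from the fp-closed embedding $\Q^+\hra\P^+$ (see \re{consind}(b)), and the unit map $1_{Y_{\Q}}\to\pr_*1_{Y_{\P}}=\pr_!1_{Y_{\P}}$ coming from the proper projection $\pr:Y_{\P}\to Y_{\Q}$, combined with a base change isomorphism. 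Compatibility with compositions is then a routine check.

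Your route through $B^{Y,\P}=\Av^{Y\P}(\dt_{\I^+/\I})$ and skew-invariants could in principle work, but it trades a two-line adjunction argument for a genuine Springer-theoretic input: the compatibility of the $W_{\P'}$-action with the $W_{\P}$-action under the parabolic relation $L_{\P}\hookrightarrow L_{\P'}$. This is the transitivity of Springer theory under parabolic restriction, which is true but is precisely the kind of fact the paper is trying to avoid invoking at this stage (it is part of what makes the full \rtt{weyl}(b) and its naturality non-trivial). You also need to check that your composite $A^Y_{\P'}\hookrightarrow (B^{Y,\P'})^{W_{\P},\sgn}\to A^Y_{\P}$ is compatible with triple inclusions $\P\subset\P'\subset\P''$, which adds another layer of Springer compatibility. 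So while your plan is not wrong, it imports machinery that the lemma does not need; the paper's argument shows that the bi-functoriality is a purely formal consequence of adjunction and base change.
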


\begin{proof}
We have to show that $A^Y_{\P}=\Av^{Y_{\P}}(\dt_{\frac{\P^+}{\P}})$ is contravariant in $\P\in \Par$ and $Y\in\T$.
Since the functoriality with respect to $Y$ is clear, it remains to construct a natural morphism
$\Av^{Y_{\Q}}(\dt_{\frac{\Q^+}{\Q}})\to \Av^{Y_{\P}}(\dt_{\frac{\P^+}{\P}})$ for each $\P\subset\Q$ in $\Par$
and to show the compatibility with compositions.

Since $\Q^+$ is normalised by $\Q$, thus by $\P$, we can consider
element $\dt_{\frac{\Q^+}{\P}}\in D(\frac{LG}{\P})$. Since $\P\subset\Q$, we have $\Q^+\subset\P^+$. Thus
we have a natural morphism $\dt_{\frac{\Q^+}{\P}}\to\dt_{\frac{\P^+}{\P}}$ (see \re{consind} (b)), hence
a morphism $\Av^{Y_{\P}}(\dt_{\frac{\Q^+}{\P}})\to \Av^{Y_{\P}}(\dt_{\frac{\P^+}{\P}})$.
It remains to construct a morphism $\Av^{Y_{\Q}}(\dt_{\frac{\Q^+}{\Q}})\to \Av^{Y_{\P}}(\dt_{\frac{\Q^+}{\P}})$.

Consider commutative diagram
\[
\begin{CD}
\Fl_{\P}\times\frac{LG}{\Q} @<p_1<< LG\times^{\P}LG @>a^{\P}>> LG\\
@V\pr\times\Id VV @Va^{\P,\Q}VV @|\\
\Fl_{\Q}\times\frac{LG}{\Q} @<p_2<< LG\times^{\Q}LG @>a^{\Q}>> LG.
\end{CD}
\]
By definition, $\Av^{Y_{\Q}}(\dt_{\frac{\Q^+}{\Q}})=a^{\Q}_!p_2^!(1_{Y_{\Q}}\pp \dt_{\frac{\Q^+}{\Q}})$, while $\Av^{Y_{\P}}(\dt_{\frac{\Q^+}{\P}})=a^{\P}_!p_1^!(1_{Y_{\P}}\pp \dt_{\frac{\Q^+}{\Q}})$.
Since $a^{\P}_!\cong a^{\Q}_!a^{\P,\Q}_!$, we have to construct a morphism
\[
p_2^!(1_{Y_{\Q}}\pp \dt_{\frac{\Q^+}{\Q}})\to a^{\P,\Q}_!p_1^!(1_{Y_{\P}}\pp \dt_{\frac{\Q^+}{\Q}}).
\]
Since the left inner square is Cartesian and the $p_i$'s are formally smooth, we have an isomorphism of functors $p_2^!(\pr_!\pp\Id)\cong
p_2^!(\pr\pp\Id)_!\cong a^{\P,\Q}_!p_1^!$, so it remains to construct a morphism $1_{Y_{\Q}}\to\pr_!1_{Y_{\P}}$.

Since $\pr:Y_{\P}\to Y_{\Q}$ is proper, we define $1_{Y_{\Q}}\to\pr_!1_{Y_{\P}}=\pr_*1_{Y_{\P}}=\pr_*\pr^* 1_{Y_{\Q}}$
to be the counit map. The compatibility with compositions is standard.
\end{proof}


\begin{Emp}
{\bf Notation.}
(a) For every $Y\in\T$, we set
\[
\lan{A}^Y\ran:=\sum_{\P\in \Par}(-1)^{\rk G-\rk\P}\lan {A}_{\P}^Y\ran \in
K_0(M(LG)).
\]

(b) Denote by  $\End^r K_0(M(LG))$ the algebra of endomorphisms of $K_0(M(LG))$, viewed as a right $K_0(M(LG))$-module.

\end{Emp}

The part (a) of following result is a $K$-group analog of \rtt{mconj2} (a) for the unit element.
We will prove \rt{stab} in subsection \ref{SS:stab}.

\begin{Thm} \label{T:stab}
(a) For every $\C{X}\in M(LG)$, the system
$\{\lan {A}^Y\ran \ast\lan \C{X}\ran\}_{Y\in\T}$ stabilizes.

(b) For every $\P\in \Par$ and $Y\in\T$, we have an equality
$\lan {A}^Y\ran \ast\lan \dt_{\P^+}\ran=\lan\dt_{\P^+}\ran$.
\end{Thm}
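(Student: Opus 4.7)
The plan is to prove part (b) first, then reduce part (a) to it via bi-invariance.

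\emph{Part (b).} The key computation is of $\lan A_{\Q}^Y\ast\dt_{\P^+}\ran\in K_0(M(LG))$ for each $\Q\in\Par$. By \re{conv} (c), $A_{\Q}^Y\ast\dt_{\P^+}\cong p^!p_! A_{\Q}^Y$ with $p\colon LG\to LG/\P^+$, so it suffices to compute $\lan p_!A_{\Q}^Y\ran$. Unfolding $A_{\Q}^Y=\bar a_!\pr^!(1_{Y_{\Q}}\pp\dt_{\frac{\Q^+}{\Q}})$ and invoking smooth base change, one expresses this class as a sum over $\I$-orbit strata of $Y_{\Q}\subset\Fl_{\Q}$, each stratum contributing a copy of $\lan\dt_{\P^+}\ran$ weighted by a combinatorial factor depending on the stabilizer of the orbit. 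Summing over $\Q$ with signs $(-1)^{\rk G-\rk\Q}$, the alternating sum collapses by the elementary identity $\sum_{\Q\in\Par}(-1)^{\rk G-\rk \Q}=1$ (an Euler-characteristic computation on the boundary of the fundamental simplex of the affine Coxeter complex): non-trivial strata, which are only ``seen'' by a proper subinterval of $\Par$, contribute $0$, while the trivial stratum $\{[1]\}\subset Y_{\Q}$ contributes exactly $\lan\dt_{\P^+}\ran$.

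\emph{Part (a).} Given $\C{X}\in M(LG)$, by \re{biinv} (b) we choose $n\in\B{N}$ with $\C{X}\cong\dt_{\I_n^+}\ast\C{X}\ast\dt_{\I_n^+}$, so by associativity
\[A^Y\ast\C{X}\cong(A^Y\ast\dt_{\I_n^+})\ast(\C{X}\ast\dt_{\I_n^+}).\]
Since $\C{X}\ast\dt_{\I_n^+}$ has bounded support in $LG$, the convolution only feels $A^Y\ast\dt_{\I_n^+}\cong p_n^!p_{n!}A^Y$ on a bounded open subset $U\subset LG$, where $p_n\colon LG\to LG/\I_n^+$. It therefore suffices to show that $\lan A^Y|_U\ran\in K_0(M(U))$ stabilizes as $Y$ grows; this follows from the local version of the argument for (b), since once $\wt Y$ contains every $\I$-orbit meeting $U$ (which happens for all sufficiently large $Y$), the alternating sum over $\Par$ becomes constant in $Y$.

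The main obstacle is the combinatorial/geometric computation underlying (b). It requires carefully identifying the contribution of each $\I$-orbit in $Y_{\Q}$ to $\lan p_!A_{\Q}^Y\ran$ after base change, and verifying that as $\Q$ varies over the interval of parahorics ``seeing'' a given orbit, the weights produce an alternating sum equal to $0$ (for a non-trivial orbit) or $1$ (for the trivial orbit). This is the $K$-group incarnation of Bernstein's classical Euler-characteristic proof that his explicit formula for $z^0$ satisfies $z^0\ast e_{\P^+}=e_{\P^+}$; once it is in hand, (a) follows formally from \re{biinv} and \re{conv}.
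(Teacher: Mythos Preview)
Your claim in part (b) that ``each stratum contributes a copy of $\lan\dt_{\P^+}\ran$ weighted by a combinatorial factor'' is false, and this is where the argument breaks. Unwinding the definitions (see \form{isomad}), the contribution of the orbit $Y_w$ to $A_{\P_J}^Y\ast\dt_{\P^+}$ is $\Av^{\I_w\un{w}}(\dt_{\P_J^+}\ast\dt_{\un{w}^{-1}\P^+\un{w}})$; this object is supported on conjugates of $\un{w}\P_J^+\un{w}^{-1}\cdot\P^+$ and is in no sense a scalar multiple of $\lan\dt_{\P^+}\ran$ in $K_0(M(LG))$. Hence the cancellation cannot come from the scalar Euler identity you invoke: one needs an actual \emph{isomorphism} between pairs of these objects. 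The paper supplies this in \rl{isom}, whose heart is the root-theoretic equality $\P_J^+=\P_{J\cup\al}^+\cdot(\P_J^+\cap\un{w}^{-1}\Q_n^+\un{w})$ whenever $J\subset J_w\sm\al$ and $w(\al)\in\Q_n^+$; this yields $A_{\P_{J\cup\al}}^{Y_w}\ast\dt_{\Q_n^+}\cong A_{\P_J}^{Y_w}\ast\dt_{\Q_n^+}$, and pairing $J\leftrightarrow J\cup\al$ kills the contribution of every $w$ for which such an $\al$ exists.

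Your plan for part (a) also has a gap. The reduction to showing that $\{\lan A^Y\ran\ast\lan\dt_{\I_n^+}\ran\}_Y$ stabilizes is correct and matches the paper, but the next step --- that convolution with something of bounded support only ``feels'' $A^Y$ on a bounded $U$, so stabilization holds once $\wt{Y}\supset U$ --- is wrong: the support of $A^Y\ast\dt_{\I_n^+}$ grows unboundedly with $Y$, and stabilization is a $K$-group cancellation, not a support bound. What is actually required is the same pairing-isomorphism argument as above but with $\Q_n^+=\I_n^+$ for $n>0$; this is \rp{stab1}(a), which combines \rl{isom} with the finiteness of the set $S(\I_n^+)$ established in \rl{fin}. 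In particular, part (b) alone (the case $n=0$) does not imply (a); you genuinely need \rl{isom} at positive depth.
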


\begin{Cor} \label{C:stab}
(a) There exists a unique element $\lan A\ran\in \End^r K_0(M(LG))$
such that $\lan A\ran (\lan\C{X}\ran)= \lan A^Y\ran\ast\lan \C{X}\ran$
for each $\C{X}\in M(LG)$ and each sufficiently large $Y\in\T$.

(b) For every $\P\in \Par$, we have $\lan A\ran(\lan\dt_{\P^+}\ran)=\lan\dt_{\P^+}\ran$.
\end{Cor}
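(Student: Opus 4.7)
The plan is to deduce the corollary directly from \rt{stab}, whose content contains all the real difficulty. For part (a), I first define $\lan A\ran$ on classes of objects: given $\C{X}\in M(LG)$, \rt{stab}(a) provides some $Y_0\in\T$ so that $\lan A^Y\ran\ast\lan\C{X}\ran$ takes the same value in $K_0(M(LG))$ for all $Y\geq Y_0$, and I set $\lan A\ran(\lan\C{X}\ran)$ to be this common value, which is plainly independent of the choice of $Y_0$ past the stabilization threshold.

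Next, I extend $\lan A\ran$ by $\qlbar$-linearity to the whole group $K_0(M(LG))$. The only thing to check is that the definition respects the relations coming from distinguished triangles $\C{X}_1\to\C{X}_2\to\C{X}_3\to$ in $M(LG)$. Since the poset $\T$ is filtered (closed $\I$-invariant subschemes of $\Fl$ are stable under finite unions), I can choose one $Y_0$ large enough that stabilization holds simultaneously for $\C{X}_1, \C{X}_2, \C{X}_3$; then additivity follows from the exactness of the convolution functor $\cdot \ast A^{Y_0}$ on $M(LG)$, which implies additivity of $\cdot\ast\lan A^{Y_0}\ran$ on $K_0$. For the property that $\lan A\ran$ is a right $K_0(M(LG))$-module endomorphism, I reduce by linearity to generators $m=\lan\C{X}\ran$, $n=\lan\C{Y}\ran$: pick $Y_0$ large enough that both $\lan A^Y\ran\ast\lan\C{X}\ran$ and $\lan A^Y\ran\ast(\lan\C{X}\ran\ast\lan\C{Y}\ran)$ have stabilized, and apply associativity of convolution to get
\[
\lan A\ran(\lan\C{X}\ran\ast\lan\C{Y}\ran) = \lan A^{Y_0}\ran\ast\lan\C{X}\ran\ast\lan\C{Y}\ran = \lan A\ran(\lan\C{X}\ran)\ast\lan\C{Y}\ran.
\]
Uniqueness of $\lan A\ran$ is then forced, since its values on the generating classes $\lan\C{X}\ran$ are prescribed by the defining formula.

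Part (b) is immediate from \rt{stab}(b), which asserts the equality $\lan A^Y\ran\ast\lan\dt_{\P^+}\ran=\lan\dt_{\P^+}\ran$ for \emph{every} $Y\in\T$; the system is literally constant, trivially stabilizes, and so $\lan A\ran(\lan\dt_{\P^+}\ran)=\lan\dt_{\P^+}\ran$. There is no genuine obstacle internal to the corollary --- the substance is entirely in \rt{stab}, proved in subsection \ref{SS:stab}. The only mildly delicate point is the passage from pointwise stabilization (one $Y_0$ per $\C{X}$) to a well-defined map on the full Grothendieck group, handled by the filteredness of $\T$ together with the exactness of convolution.
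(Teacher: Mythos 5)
Your proof is correct and follows essentially the same route as the paper: define $\lan A\ran$ pointwise via the stabilization in \rt{stab}(a), use filteredness of $\T$ to pick a single $Y_0$ working for all three terms of a distinguished triangle, deduce additivity from exactness of convolution, get right-linearity from associativity, and quote \rt{stab}(b) verbatim for (b). The only cosmetic quibble is that $A^{Y_0}$ is not a single object of $M(LG)$ but rather $\lan A^{Y_0}\ran=\sum_{\P}(-1)^{\rk G-\rk\P}\lan A^{Y_0}_\P\ran$, so the exactness argument should be phrased term-by-term for each $\cdot\ast A^{Y_0}_\P$ (as the paper does); the substance is unchanged.
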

\begin{proof}
(a) It follows from \rt{stab} (a) that for every $\C{X}\in M(LG)$ we can form an element
$\lan A\ran (\C{X}):=\lim_Y(\lan {A}^Y\ran \ast\lan \C{X}\ran)\in K_0(M(LG))$.

We claim that the map $\C{X}\mapsto \lan A\ran (\C{X})$ defines a group endomorphism of $K_0(M(LG))$.
We have to show that for every distinguished triangle $\C{X}_1\to\C{X}_2\to\C{X}_3\to$ in
$M(LG)$, we have an equality $\lan A\ran (\C{X}_2)=\lan A\ran (\C{X}_1)+\lan A\ran (\C{X}_3)$.
By the definition of $\lan A\ran (\C{X}_i)$, there exists $Y\in\T$ such that $\lan A\ran (\C{X}_i)=\lan {A}^Y\ran \ast\lan \C{X}_i\ran$ for $i=1,2,3$.
Since $\lan A^Y\ran=\sum_{\P}(-1)^{\rk G-\rk{\P}}\lan A^Y_{\P}\ran$, it is enough to show an equality
\[
\lan {A}_{\P}^Y\ran \ast\lan \C{X}_2\ran=\lan {A}_{\P}^Y\ran \ast\lan \C{X}_1\ran+\lan {A}_{\P}^Y\ran \ast\lan \C{X}_3\ran
\]
for all $Y$ and $\P$. But this follows from the fact that
$\C{X}_1\ast{A}_{\P}^Y\to\C{X}_2\ast{A}_{\P}^Y\to\C{X}_3\ast{A}_{\P}^Y\to$ is a distinguished triangle.

Finally, since $\lan A\ran (\lan\C{X}\ran)= \lan A^Y\ran\ast\lan \C{X}\ran$ for sufficiently large $Y\in\T$, the endomorphism
$\lan A\ran$ commutes with the right multiplication.

(b) follows immediately from \rt{stab} (b).
\end{proof}

The following result is a $K$-group analog of \rtt{mconj2} (b) for the unit element.
It is a geometric analog of the fact that the restriction of the distribution $\nu_{[A]}\in \Dist(G(F))$ to $G^{rss}(F)$
is locally constant.

\begin{Thm} \label{T:rss}
For every $\gm\in LG^{rss}$ there exists $n\in\B{N}$ such that the restriction $\{i^*_{\gm,n}(\lan{A}^Y\ran)\}_{Y\in\T}\in K_0(M(\gm \I_n^+))$ stabilizes.
\end{Thm}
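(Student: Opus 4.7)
The strategy mirrors the proof of \rt{stab} (a). Since $G$ is semisimple and simply connected, the set $\Par$ of standard parahorics is finite, so expanding
\[
\lan A^Y\ran=\sum_{\P\in\Par}(-1)^{\rk G-\rk\P}\lan A^Y_\P\ran
\]
reduces the problem to controlling each term $i^*_{\gm,n}\lan A^Y_\P\ran$, possibly only after taking the alternating sum over $\P\in\Par$.

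For fixed $\P\in\Par$, use the isomorphism $LG\times^\P LG\isom \Fl_\P\times LG$, $[x,y]\mapsto([x],xyx^{-1})$, to rewrite $A^Y_\P=\Av^{Y_\P}(\dt_{\frac{\P^+}{\P}})$ as the pushforward under the projection $\pi:Y_\P\times LG\to LG$ --- which is proper because $Y_\P$ is projective --- of a sheaf $\C{F}^Y_\P$ supported on $\{([x],g)\in Y_\P\times LG:x^{-1}gx\in\P^+\}$. Proper base change then identifies
\[
i^*_{\gm,n}(A^Y_\P)\cong \pi^\gm_!\bigl(\C{F}^Y_\P|_{Y_\P\times \gm\I_n^+}\bigr),
\]
where $\pi^\gm:Y_\P\times\gm\I_n^+\to\gm\I_n^+$ is the projection. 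The theorem thereby reduces to a uniform finiteness statement for the family affine Springer fiber
\[
X_{\gm,n,\P}:=\{([x],\gm')\in \Fl_\P\times \gm\I_n^+: x^{-1}\gm' x\in\P^+\}.
\]

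In the elliptic case, the individual affine Springer fiber $\Fl_\P^\gm:=\{[x]\in\Fl_\P:x^{-1}\gm x\in\P^+\}$ is of finite type by the theorem of Kazhdan--Lusztig, and a semi-continuity argument confines $X_{\gm,n,\P}$ to $(Y_0)_\P\times\gm\I_n^+$ for some fixed $Y_0\in\T$ once $n$ is chosen large enough; the restricted sheaf $\C{F}^Y_\P|_{Y_\P\times\gm\I_n^+}$ is then independent of $Y\geq Y_0$, and the system $i^*_{\gm,n}(A^Y_\P)$ stabilizes already in $M(\gm\I_n^+)$ itself, a fortiori in the $K$-group.

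For non-elliptic regular semisimple $\gm$, each $\Fl_\P^\gm$ has infinitely many components permuted by a cocharacter lattice $\La\subset Z(\gm)(F)$, so no fixed $(Y_0)_\P$ can contain it. The essential additional input --- and the main obstacle --- is proving that as $\P$ varies over $\Par$ the non-compact contributions to the $X_{\gm,n,\P}$ assemble into a \v{C}ech-type complex on the apartment of $Z(\gm)$ whose alternating sum vanishes in $K_0(M(\gm\I_n^+))$. This cancellation is a local-to-global argument on the Bruhat--Tits building, analogous in spirit to the Euler-characteristic computation underlying the identity $[A]=z^0$, and pushing it through at the level of $\ell$-adic sheaves is the principal technical step of the proof.
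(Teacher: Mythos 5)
Your proposal correctly identifies some of the local geometry (the family affine Springer fiber), but it contains a genuine gap, and the bifurcation into elliptic versus non-elliptic cases is a wrong turn: the paper makes no such distinction, and the "\v{C}ech-type cancellation on the apartment" that you flag as the principal technical step is not needed.

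The key idea you are missing is Lemma \ref{L:const}: for $n$ sufficiently large (depending only on $\gm$), the restriction $i^*_{\gm,n}(A^Y_{\P})\in M(\gm\I_n^+)$ is a \emph{constant} sheaf, uniformly in both $\P\in\Par$ and $Y\in\T$. Note this says nothing about the constant stabilizing in $Y$; it is a purely local statement and holds for any $\gm\in LG^{rss}$, elliptic or not. The paper proves it in two steps that are morally close to yours, but more careful: (i) first restrict along the centralizer direction $X_m = LG_{\gm}\cap\gm\I_m^+$, where Kazhdan--Lusztig local constancy of affine Springer fibers (the group version of \cite{KL}) gives $\Fl_{\P,\gm}=\Fl_{\P,\gm'}$ for all $\gm'\in X_m$, hence $\ov{A}^Y_\P|_{X_m}$ is constant; (ii) then use smoothness of the conjugation map $a:G\times G^{rss}_\gm\to G$ together with \rl{smooth}(b) and \rl{nbh} to produce a pro-unipotent morphism $b:X'\to X''\supset\gm\I_n^+$, whereby $\Ad\I$-equivariance of $\ov{A}^Y_\P$ propagates constancy from $X_m$ to all of $\gm\I_n^+$. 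This is where the regular semisimplicity of $\gm$ is used; ellipticity plays no role.

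Once constancy holds, the proof is short: constancy plus pro-unipotence of $\I_n^+$ give $i^*_{\gm,n}(A^Y_\P)\cong i^*_{\gm,n}(A^Y_\P)\ast\dt_{\I_n^+}\cong i^*_{\gm,n}(A^Y_\P\ast\dt_{\I_n^+})$, hence $i^*_{\gm,n}(\lan A^Y\ran)=i^*_{\gm,n}(\lan A^Y\ran\ast\lan\dt_{\I_n^+}\ran)$, and now \rt{stab}(a) --- already proven --- gives stabilization of $\lan A^Y\ran\ast\lan\dt_{\I_n^+}\ran$ in $K_0(M(LG))$, which restricts to stabilization in $K_0(M(\gm\I_n^+))$. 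Your proposal instead attempts to establish the stabilization of $i^*_{\gm,n}(\lan A^Y\ran)$ directly, which forces you to redo the combinatorial cancellation behind \rt{stab}(a) (the argument of \rl{isom}) locally at $\gm$; this is the source of the apparent obstruction for non-elliptic $\gm$. The lesson is that the hard K-group cancellation is proved once and for all in \rt{stab}(a), and \rt{rss} reduces to it via the geometric constancy lemma, not by a separate local Euler-characteristic argument.
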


\begin{proof}
The proof is based on the following lemma, whose proof will be carried out in subsection \ref{SS:stab}.

\begin{Lem} \label{L:const}
For every $\gm\in LG^{rss}$ there exists
$n\in\B{N}$ such that the restriction $i^*_{\gm,n}({A}_{\P}^Y)\in
M(\gm \I_n^+)$ is constant for every $\P\in \Par$ and every locally closed $\I$-invariant subscheme
$Y\subset \Fl$.
\end{Lem}

Choose $n$ as in \rl{const}. Then the restriction $i^*_{\gm,n}({A}_{\P}^Y)$ is isomorphic to
$i^*_{\gm,n}({A}_{\P}^Y)\ast\dt_{\I_n^+}\cong i^*_{\gm,n}({A}_{\P}^Y\ast\dt_{\I_n^+})$
for every $Y$ and $\P$, hence
$i^*_{\gm,n}(\lan{A}^Y\ran)=i^*_{\gm,n}(\lan{A}^Y\ran\ast\lan \dt_{\I_n^+}\ran)$ for all
$Y$. Thus it  suffices to show that system $\{\lan{A}^Y\ran \ast\lan\dt_{\I_n^+}\ran\}_Y$ stabilizes,
but this follows from \rt{stab} (a).
\end{proof}

Part (a) of following result can be thought as a particular case of \rtt{emb}, while
part (b) can be thought as a particular case of the fact that the functor $\C{A}:\C{Z}_{\I^+}(LG)\to \C{Z}(LG)$ is monoidal.
It will be proven in subsection \ref{SS:cent}.

\begin{Thm} \label{T:center}
(a) Element $\lan A\ran \in\End^r K_0(M(LG))$
belongs to $\frak{Z}(LG)$.

(b) Element $\lan A\ran\in \frak{Z}(LG)$ is an idempotent.
\end{Thm}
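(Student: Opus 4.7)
The plan is to prove (a) first and then deduce (b) formally from (a), \rt{stab}(b), and the structure of $A^Y_\P$. For (a), note that $\lan A \ran \in \End^r K_0(M(LG))$ commutes with right convolution by construction, so since the $K_0(M((LG)^2))$-action on $K_0(M(LG))$ decomposes into left and right convolutions, it suffices to show that $\lan A \ran$ commutes with left convolution by $\lan \C{Y}\ran$ for arbitrary $\C{Y} \in M(LG)$.

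The key geometric input is the $\Ad(\I)$-equivariance of each sheaf $A^Y_\P = \Av^{Y_\P}(\dt_{\frac{\P^+}{\P}}) \in M(LG)$. Indeed, $Y_\P = \wt{Y}\P/\P \subset \Fl_\P$ is left-$\I$-invariant, inherited from the $\I$-invariance of $\wt{Y}$, and the construction $\Av^{Y_\P}$, which averages $\dt_{\frac{\P^+}{\P}}$ against the adjoint action of $LG$ over an $\I$-invariant subset of $\Fl_\P$, produces an $\Ad(\I)$-equivariant sheaf on $LG$. Integrating this equivariance over $\I^+_n \subset \I$ yields the convolution identity $\dt_{\I^+_n} \ast A^Y_\P \cong A^Y_\P \ast \dt_{\I^+_n}$ in $M(LG)$ for every $n \in \B{N}$. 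Given any $\C{Y}, \C{X} \in M(LG)$, I would choose $n$ large enough that $\C{Y}$ is $\I^+_n$-bi-invariant and $\C{X}$ is left-$\I^+_n$-invariant, then combine this equivariance with the stabilization from \rt{stab} and the cancellations in the alternating sum $A^Y = \sum_\P (-1)^{\rk G - \rk \P} A^Y_\P$ to derive
\[
\lan A^Y \ast \C{Y} \ast \C{X}\ran = \lan \C{Y} \ast A^Y \ast \C{X}\ran
\]
in $K_0(M(LG))$ for all sufficiently large $Y \in \T$. This identity is the main obstacle: while each $A^Y_\P$ individually fails to lie in the center of the convolution algebra, the $\Ad(\I)$-equivariance together with the alternating-sum cancellations and the stabilization as $Y$ grows should conspire to yield the required $K$-theoretic centrality.

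For (b), assuming (a), the $\End^r$ property gives $\lan A \ran^2(\lan \C{X}\ran) = \lan A \ran(\lan A^Y \ran) \ast \lan \C{X}\ran$ for sufficiently large $Y \in \T$, so it suffices to prove $\lan A \ran(\lan A^Y \ran) = \lan A^Y \ran$, equivalently $\lan A \ran(\lan A^Y_\P \ran) = \lan A^Y_\P \ran$ for each $\P$. The key observation is that $A^Y_\P$ is obtained from $\dt_{\P^+}$ by the diagonal adjoint action of $Y_\P$: since $\Ad(\P)$ fixes $\dt_{\P^+}$, the adjoint action on $\dt_{\P^+}$ factors through $\Fl_\P = LG/\P$, and choosing a lift of $Y_\P$ to a subscheme of $LG$ and pushing it forward along the diagonal embedding $LG \hookrightarrow (LG)^2$ produces an element $\alpha_{Y,\P} \in K_0(M((LG)^2))$ with $\lan A^Y_\P \ran = \alpha_{Y,\P} \cdot \lan \dt_{\P^+} \ran$. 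Applying (a) and \rco{stab}(b),
\[
\lan A \ran(\lan A^Y_\P \ran) = \alpha_{Y,\P} \cdot \lan A \ran(\lan \dt_{\P^+} \ran) = \alpha_{Y,\P} \cdot \lan \dt_{\P^+} \ran = \lan A^Y_\P \ran,
\]
and summing with signs over $\P$ yields (b).
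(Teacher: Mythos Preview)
Your approach to part (b) is essentially the same as the paper's: both express $A^Y_\P$ as an adjoint average of $\dt_{\P^+}$, invoke part (a) (in the paper via \rco{center}, which is the specialization of centrality to the adjoint action), and then apply \rco{stab}(b). The paper first decomposes $Y_\P$ into $\I$-orbits $(Y_w)_\P$ and uses an explicit lift $\C{X}_w := 1_{Y^+_w}\in M(LG)$ with $\Av^{(Y_w)_\P}(\dt_{\frac{\P^+}{\P}})\cong\Ad^{\C{X}_w}(\dt_{\P^+})$, but this is a minor refinement of what you wrote.

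Part (a), however, has a genuine gap. You correctly reduce to showing that $\lan A\ran$ commutes with left convolution by an arbitrary $\C{Y}\in M(LG)$, and you correctly observe that $\Ad(\I)$-equivariance of each $A^Y_\P$ yields $\dt_{\I^+_n}\ast A^Y_\P\cong A^Y_\P\ast\dt_{\I^+_n}$. But this only gives commutation with the specific elements $\dt_{\I^+_n}$, not with general $\C{Y}$; the sentence ``should conspire to yield the required $K$-theoretic centrality'' is not a mechanism. The alternating-sum cancellations established in \rp{stab1} concern \emph{right} convolution by $\dt_{\Q^+_n}$ and do not directly say anything about left convolution by elements supported outside parahorics.

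The paper's missing idea is an induction on Bruhat length. One proves (\rl{center}) that for the maps $\al(x,a,b,c)=axbc$ and $\beta(x,a,b,c)=abxc$ one has $\al_!(\lan A^Y\ran\pp\lan\C{X}\ran)=\beta_!(\lan A^Y\ran\pp\lan\C{X}\ran)$ for all $\C{X}\in M(LG\times LG^{\leq w}\times LG)$, by induction on $l(w)$. The base case $l(w)\le 1$ is exactly your equivariance observation, upgraded from $\I$ to an arbitrary rank-one parahoric $\Q$: after enlarging $Y$ to be $\Q$-invariant, each $A^Y_\P$ lifts to $M(\frac{LG}{\Q})$, and a direct calculation (\rl{isom2}) gives the identity. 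The inductive step uses a surjectivity statement (\rl{gen}): the multiplication map $\P_\al\times LG^{\leq w'}\to LG^{\leq w}$ (with $w=s_\al w'$, $w'<w$) induces a surjection on $K$-groups, so one may assume $\C{X}=m_!(\C{X}')$ and reduce to two instances of the induction hypothesis with strictly shorter middle factor. Without this induction, the passage from parahoric equivariance to full centrality does not go through.
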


\begin{Cor} \label{C:center}
For every $\C{X}\in M(LG)$ and
$\C{F}\in M(LG)$ we have an equality
\[
\lan A\ran(\lan\Ad^{\C{X}}(\C{F})\ran)=\lan\Ad^{\C{X}}\ran(\lan A\ran(\lan\C{F}\ran)).
\]
\end{Cor}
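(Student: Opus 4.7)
The plan is to interpret $\Ad^{\C{X}}$ as a special case of the module action of $M((LG)^2)$ on $M(LG)$ from \re{conv}(b), reducing the corollary to a direct application of \rt{center}(a), which asserts that $\lan A\ran\in\frak{Z}(LG)$ commutes with the action of $K_0(M((LG)^2))$ on $K_0(M(LG))$.

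First, I would observe that the adjoint map $a:(LG)^2\to LG$, $(x,y)\mapsto xyx^{-1}$ of \re{adj}(a) factors as $a=m\circ(\phi\times\Id_{LG})$, where $\phi:LG\to (LG)^2$ is the diagonal $x\mapsto (x,x)$, and $m:(LG)^2\times LG\to LG$, $((g,h),y)\mapsto gyh^{-1}$, is the map from \re{conv}(b) that defines the module structure $\C{Y}\cdot\C{F}:=m_!(\C{Y}\pp\C{F})$. Since $LG$ is an admissible ind-scheme and $\phi$ is a closed embedding, $\phi$ is admissible and $\phi_!:M(LG)\to M((LG)^2)$ is well-defined. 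The base change together with the projection formula then yield a canonical isomorphism
\[
\Ad^{\C{X}}(\C{F})=a_!(\C{X}\pp\C{F})\cong m_!\bigl(\phi_!(\C{X})\pp\C{F}\bigr)=\phi_!(\C{X})\cdot\C{F}
\]
in $M(LG)$, natural in $\C{F}$.

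Passing to Grothendieck groups, this identity gives $\lan\Ad^{\C{X}}\ran(\lan\C{F}\ran)=\lan\phi_!(\C{X})\ran\cdot\lan\C{F}\ran$, where the right-hand side is the action of $\lan\phi_!(\C{X})\ran\in K_0(M((LG)^2))$ on $K_0(M(LG))$. By \rt{center}(a), $\lan A\ran\in\frak{Z}(LG)$ commutes with this action, so
\[
\lan A\ran(\lan\Ad^{\C{X}}(\C{F})\ran)=\lan\phi_!(\C{X})\ran\cdot\lan A\ran(\lan\C{F}\ran)=\lan\Ad^{\C{X}}\ran(\lan A\ran(\lan\C{F}\ran)),
\]
which is the desired equality. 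There is no substantive obstacle: the entire argument is formal once one recognizes the factorization $a=m\circ(\phi\times\Id_{LG})$, so the real content lies in \rt{center}(a). The only minor verification needed is the admissibility of the diagonal $\phi$, which follows from the admissibility of $LG$ and the fact that closed embeddings of admissible ind-schemes are admissible in the sense of \re{admind}(d).
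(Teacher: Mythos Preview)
Your approach has a genuine gap: the diagonal $\phi:LG\to (LG)^2$ is \emph{not} admissible in the sense of \re{admind}(d), so $\phi_!(\C{X})$ is not an object of $M((LG)^2)$. The claim that ``closed embeddings of admissible ind-schemes are admissible'' is false in this paper's framework. Admissibility of a morphism (see \re{admmor}(a)) requires a presentation $X\cong\lim_i X_i$ over the target with \emph{unipotent} transition maps. For the diagonal $\Delta:Z\to Z\times Z$ of an admissible scheme $Z\cong\lim_n Z_n$, the only natural candidates $W_n:=Z\times_{Z_n}Z$ have transitions $W_m\hra W_n$ that are proper closed embeddings, not unipotent morphisms. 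Equivalently, $\Delta$ is a closed embedding of \emph{infinite} codimension, hence not finitely presented: the ideal of $\Delta(Z)$ in $Z\times Z$ is not finitely generated. That this is a real obstruction, not just a failure of a particular presentation, is visible already in \re{2cat}(c), where the identity kernel $\C{B}_{\Id_X}$ (morally $\Delta_!1_X$) is constructed only as a \emph{pro}-object $\lim_V\C{B}_{\Id_V}\in\Pro M_X(X\times X)$. Concretely, every object of $M((LG)^2)$ satisfies $\dt_{(\I_n^+)^2}\ast\C{Y}\ast\dt_{(\I_n^+)^2}\cong\C{Y}$ for some $n$ (see \re{biinv}(b)), and a $\dt$-measure along the diagonal cannot have this smoothness.

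The paper's proof repairs exactly this point. Instead of pushing forward along the diagonal, it uses the admissible map $b:LG\times(LG)^2\to(LG)^2$, $b(g,x,y)=(gx,gy)$, and observes that $\Ad^{\C{X}}(\C{B}(\cdot))\cong b_!(\C{X}\pp\C{B})(\cdot)$ for any $\C{B}\in M((LG)^2)$. Choosing $n$ so that both $\C{F}$ and $\lan A\ran(\lan\C{F}\ran)$ are fixed by $\dt_{(\I_n^+)^2}$, one sets $\C{Y}:=b_!(\C{X}\pp\dt_{(\I_n^+)^2})\in M((LG)^2)$; this is a genuine object because $b$ is admissible and $\dt_{(\I_n^+)^2}$ provides a finite ``thickening'' of the diagonal. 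Then $\Ad^{\C{X}}(\C{F})\cong\C{Y}(\C{F})$ and $\lan\Ad^{\C{X}}\ran(\lan A\ran(\lan\C{F}\ran))=\lan\C{Y}\ran(\lan A\ran(\lan\C{F}\ran))$, after which \rt{center}(a) applies to $\C{Y}$. Your factorization $a=m\circ(\phi\times\Id)$ is correct and your reduction strategy is the right one; what is missing is precisely this approximation step that replaces the non-existent $\phi_!(\C{X})$ by an honest element of $M((LG)^2)$.
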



\subsection{Proof of \rt{stab} and \rl{const}} \label{SS:stab} Since $G$ is a direct product of simple groups, we may assume that $G$ is simple.

\begin{Emp} \label{E:not}
{\bf Notation.} (a) We denote by $\wt{\Phi}$ and $\wt{\Dt}$ the set of affine roots of $G$ and the set of simple
affine roots of $G$, respectively. We also set $r:=\rk G=|\wt{\Dt}|-1$.

(b) For every subset $J\subsetneq\wt{\Dt}$, we denote by
$\P_J\in{\Par}$ the parahoric subgroup such that $J$ is the set
of simple roots of $L_{\P}$.

(c) For $w\in\wt{W}$, we denote
by $J_w$ the set of  $\al\in\wt{\Dt}$ such that $w(\al)>0$, that is,
$w(\al)$ is a positive affine root. Note that $J_w\neq\wt{\Dt}$ for every $w\neq 1$.

(d) For $\al\in\wt{\Phi}$, we denote by $U_{\al}\subset LG$ the corresponding root subgroup.
We say that $\al\in \P_n^+$, if $U_{\al}\subset\P_n^+$. 

(e) For each $\P\in \Par$ and $n>0$, let $S(\P_n^+)\subset\wt{W}$ be the union of $\{1\}$ and the set of all
$w\in\wt{W}$ such that $w(\al)\notin \P_n^+$ for every $\al\in\wt{\Dt}$. We set $Y(\P_n^+):=\cup_{w\in S(\P_n^+)} \Fl^{\leq w}$.

(f) Let $N\subset G$ be the normalizer of $T$. For each $w\in\wt{W}=N(K)/T(\C{O})$ we fix a representative $\un{w}\in N(K)\subset LG$.
\end{Emp}

\begin{Lem} \label{L:fin}
(a) For every $\P\in \Par$ and $n\geq 0$, the set $S(\P_n^+)$ is finite.

(b) For every $\P\in \Par$, we have $S(\P^+)=\{1\}$.
\end{Lem}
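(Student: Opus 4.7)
The plan is to parametrize $w \in \wt{W}$ via the semidirect decomposition $\wt{W} \cong Q^\vee \rtimes W$, writing $w = t_\la v$ with $v$ in the finite Weyl group $W$ and $\la$ in the coroot lattice $Q^\vee$. For each simple affine root $\al_i \in \wt{\Dt}$ I will track the $\delta$-coefficient $d(\beta)$ of the image $\beta = w(\al_i)$, defined by writing $\beta = \bar{\beta} + d(\beta)\delta$ with $\bar{\beta}$ a finite root. A direct computation using $t_\la(\bar{\al} + k\delta) = \bar{\al} + (k - \lan\la, \bar{\al}\ran)\delta$ gives $d(w(\al_i)) = -\lan\la, v(\al_i)\ran$ for $i = 1, \ldots, r$ and $d(w(\al_0)) = 1 + \lan\la, v(\theta)\ran$, where $\al_0 = \delta - \theta$ is the affine simple root and $\theta$ is the highest root.

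For (a), I first identify $\P^+_{J,n}$ with $\bigcap_{u \in W_{\P_J}} \un{u}\,\I_n^+\,\un{u}^{-1}$: since $\P_J$ is generated by $\I$ together with the lifts $\un{s_\al}$ for $\al \in J$, this intersection is the largest $\P_J$-normal subgroup contained in $\I_n^+$. Consequently $\beta \notin \P_n^+$ iff some $u \in W_{\P_J}$ satisfies $d(u\beta) \leq n$. Because $J \subsetneq \wt{\Dt}$ the parabolic $W_{\P_J}$ is finite; since its action preserves $\delta$ and changes the $\delta$-coefficient by an amount determined by $\bar{\beta}$ (which ranges over the finite root system), there is a constant $C_J$ depending only on $J$ with $|d(u\beta) - d(\beta)| \leq C_J$ for every real affine root $\beta$. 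Applied to $\beta = w(\al_i)$ this forces $d(w(\al_i)) \leq n + C_J$ for every $i$, which translates to the linear inequalities $\lan\la, v(\al_i)\ran \geq -(n + C_J)$ for $i = 1, \ldots, r$ together with $\lan\la, v(\theta)\ran \leq n + C_J - 1$. Since $\{v(\al_i)\}_{i=1}^r$ is a basis of the real span of $Q^\vee$ and $v(\theta) = \sum_i a_i v(\al_i)$ has strictly positive marks $a_i$, these cut out a bounded polytope in the $\la$-space, containing only finitely many $\la$; combined with $|W| < \infty$ this proves (a).

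For (b) set $n = 0$: the condition becomes, for each $\al_i \in \wt{\Dt}$, either $w(\al_i) < 0$ or $w(\al_i)$ lies in the root subsystem $\wt{\Phi}_J$ spanned by $J$. Since $\wt{W}$ fixes $\delta$ and $\delta = \sum_{i=0}^r c_i \al_i$ with strictly positive marks $c_i$, applying $w$ gives $\delta = \sum_i c_i w(\al_i)$. Partition the indices into $A = \{i : w(\al_i) < 0\}$ and $B = \{i : w(\al_i) > 0 \text{ and } w(\al_i) \in \wt{\Phi}_J\}$ (absorbing any negative root of $\wt{\Phi}_J$ into $A$). Then $\sum_{i \in A} c_i w(\al_i)$ is a non-positive integer combination of the elements of $\wt{\Dt}$, while $\sum_{i \in B} c_i w(\al_i)$ is a non-negative integer combination of the elements of $J$. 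Picking any $k \in \wt{\Dt} \setminus J$ (non-empty because $J \subsetneq \wt{\Dt}$) and comparing $\al_k$-coefficients: the right-hand side contributes $\leq 0$ (the $B$-sum gives $0$ since $\al_k \notin J$, and the $A$-sum is $\leq 0$), while the $\al_k$-coefficient of $\delta$ is $c_k \geq 1$. Contradiction, so $S(\P^+) = \{1\}$.

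The main technical ingredients are the explicit root-subgroup description $\P^+_{J,n} = \bigcap_{u \in W_{\P_J}} \un{u}\,\I_n^+\,\un{u}^{-1}$ and the uniform bound $C_J$ on the $\delta$-shift under the finite group $W_{\P_J}$; both use crucially that $J \subsetneq \wt{\Dt}$. Once in place, (a) is a lattice-point count in a bounded polytope for each of finitely many $v$, and (b) is a coefficient comparison in the null-root relation $\delta = \sum_i c_i \al_i$.
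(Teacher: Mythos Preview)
Your argument is correct, and for part (b) it is essentially the paper's proof: both use the null-root relation $\delta=\sum_i c_i\alpha_i$ and compare the coefficient of some simple root outside $J$; the paper first reduces to a maximal parahoric $J=\wt{\Dt}\setminus\{\beta\}$ while you treat general $J$ directly, but the content is the same.

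For part (a) your route is genuinely different and more laborious. The paper never describes $\P_n^+$; it only uses the containment $\I_{n+1}^+\subset\P_n^+$, so that $w(\alpha)\notin\P_n^+$ forces the affine root $w(\alpha)-(n+1)$ to be negative. This gives an upper bound on each $w(\alpha)$ viewed as an affine function, and the null-root relation $\sum_\alpha n_\alpha w(\alpha)=1$ then bounds each from below. No semidirect decomposition $\wt{W}=Q^\vee\rtimes W$ and no root description of $\P_n^+$ are needed. Your approach instead characterizes $\P_n^+$ via $W_{\P_J}$-conjugation and converts the problem into a lattice-point count in a polytope; this buys an explicit polytope but at the cost of extra machinery.

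One small gap: your justification of $\P_{J,n}^+=\bigcap_{u\in W_{\P_J}}\un{u}\,\I_n^+\,\un{u}^{-1}$ is incomplete. Generation of $\P_J$ by $\I$ and the lifts $\un{s_\alpha}$ does not immediately show this intersection is $\I$-normalized, since $\un{u}^{-1}a\,\un{u}$ for $a\in\I$ need not normalize $\I_n^+$. What you actually use is only the root-level implication ``$u(\beta)\in\I_n^+$ for all $u\in W_{\P_J}$ $\Rightarrow$ $\beta\in\P_n^+$'', and this is correct: the set of such $\beta$ equals $\{\beta:\beta-n\delta>0,\ \beta-n\delta\notin\wt{\Phi}_J\}$, which is visibly closed under adding positive affine roots and hence spans an $\I$-normalized (so $\P_J$-normal) subgroup of $\I_n^+$. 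The paper's argument avoids this issue altogether.
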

\begin{proof}
(a) For every $\beta\in\wt{\Phi}$ such that $\beta-(n+1)>0$, we have $\beta\in \I_{n+1}^+\subset\P^+_n$.
Thus for every $w\in S(\P_n^+)$ and $\al\in\wt{\Dt}$ we have $w(\al)-(n+1)<0$.
On the other hand, there exist positive integers $\{n_{\al}\}_{\al\in\wt{\Dt}}$ such that the
linear function $\sum_{\al\in\wt{\Dt}}n_{\al}\al$ is $1$. Thus  $\sum_{\al\in\wt{\Dt}}n_{\al}w(\al)$ is $1$. Hence for each $\al\in\wt{\Dt}$, the set $\{w(\al)\}_{w\in S(\P_n^+)}$ is bounded, which implies that $S(\P_n^+)$ is finite.

(b) We have to show that for every $w\neq 1$ and $\P\in \Par$ there exists
$\al\in \wt{\Dt}$ such that $w(\al)\in\P^+$. We may assume that $\P$ is maximal, that is, $\P=\P_{\wt{\Dt}\sm\beta}$ for some $\beta\in\wt{\Dt}$. We have to show that
there exists $\al\in\wt{\Dt}$ such that $w(\al)\in\P^+_{\wt{\Dt}\sm\beta}$, that is,
the coefficient of $\beta$ in $w(\al)$ is positive. Since
$\sum_{\al\in\wt{\Dt}}n_{\al}w(\al)=1=\sum_{\al\in\wt{\Dt}}n_{\al}\al$, we conclude that the coefficient of
$\beta$ in $\sum_{\al\in\wt{\Dt}}n_{\al}w(\al)$ is $n_{\beta}>0$. Thus the coefficient of $\beta$ in some $w(\al)$ is positive as well.
\end{proof}

The following result is the main step in the proof of \rt{stab}.

\begin{Lem} \label{L:isom}
Let $w\in\wt{W}$, $\al\in\wt{\Dt}$, $\Q\in \Par$, $n\in\B{N}$ and $J\subset J_w\sm\al$ be such that $w(\al)\in\Q_n^+$ and $J\neq\wt{\Dt}\sm\al$. Then the morphism $\dt_{\P^+_{J\cup\al}}\to \dt_{\P^+_J}$
from \re{consind} (b) induces an isomorphism ${A}_{\P_{J\cup \al}}^{Y_w}\ast\dt_{\Q_n^+}\isom {A}_{\P_J}^{Y_w}\ast\dt_{\Q_n^+}$.
\end{Lem}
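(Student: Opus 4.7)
The strategy is to reduce the claim to a statement about a single push-forward. By \re{conv}(c), $\cdot\ast\dt_{\Q_n^+}$ equals $\pi^!\pi_!$ where $\pi\colon LG\to LG/\Q_n^+$; since $\pi$ is pro-unipotent, \rl{ff} implies that $\pi^!$ is fully faithful, so it is enough to verify that the induced map
\[
\pi_!A^{Y_w}_{\P_{J\cup\al}}\lra \pi_!A^{Y_w}_{\P_J}
\]
is an isomorphism in $M(LG/\Q_n^+)$.

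The heart of the proof is the following root-subgroup identity. Let $U:=\P^+_J/\P^+_{J\cup\al}$; this is the unipotent radical of the parabolic $\P_J/\P^+_{J\cup\al}\subset L_{\P_{J\cup\al}}$, with Lie algebra spanned by root subgroups $U_\gm$ for positive roots $\gm$ of $L_{\P_{J\cup\al}}$ having non-zero $\al$-coefficient, i.e.\ $\gm=\al+\sum_{\beta\in J}c_\beta\beta$ with $c_\beta\ge 0$. For every such $\gm$, the relation $w(\gm)=w(\al)+\sum c_\beta w(\beta)$, combined with $U_{w(\al)}\subset\Q_n^+$ (hypothesis), the fact that $U_{w(\beta)}\subset\I\subset\Q$ for $\beta\in J\subset J_w$, and the normality of $\Q_n^+$ in $\Q$, forces $U_{w(\gm)}\subset\Q_n^+$. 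Consequently, for any lift $\tilde U\subset\P^+_J$ of $U$ we have $w\tilde Uw^{-1}\subset\Q_n^+$; absorbing the right $\I$-factor of $\wt{Y_w}=\I w\I$ into the $\P_J$-quotient, and using that $\I\subset\Q$ normalizes $\Q_n^+$, we conclude $x\tilde Ux^{-1}\subset\Q_n^+$ for every representative $x\in LG$ of any element of $(Y_w)_{\P_J}$. The hypothesis $J\neq\wt\Dt\sm\al$ guarantees that $\P_{J\cup\al}$ is a proper parahoric so that all objects are defined.

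Writing $\pi_!A^{Y_w}_{\P}$ as the $!$-pushforward of the dualizing complex along the map $M_\P\colon \wt{Y_w}\times^\P\P^+\to LG/\Q_n^+$, $[x,y]\mapsto xyx^{-1}\Q_n^+$, the identity $x\tilde Ux^{-1}\subset\Q_n^+$ implies that $M_{\P_J}$ is invariant under the $\tilde U$-action by right translation on the $\P^+_J$-factor, and hence factors through the pro-unipotent $\tilde U$-torsor $\wt{Y_w}\times^{\P_J}\P^+_J\to \wt{Y_w}\times^{\P_J}\P^+_{J\cup\al}$. This torsor is $!$-acyclic by \rl{ff}. Combined with the decomposition of the given morphism in the proof of \rl{func} --- which factors it through the intermediate $\Av^{(Y_w)_{\P_J}}(\dt_{\frac{\P^+_{J\cup\al}}{\P_J}})$ as $\beta\circ\alpha$, where $\alpha$ comes from the projection $(Y_w)_{\P_J}\to (Y_w)_{\P_{J\cup\al}}$ with fiber $\P_{J\cup\al}/\P_J$, and $\beta$ from the inclusion $\P^+_{J\cup\al}\subset\P^+_J$ --- this factorization of $M_{\P_J}$ yields the desired isomorphism, the fibrewise contributions of $\alpha$ and $\beta$ cancelling in the precise sense dictated by the commutative diagram.

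The principal technical difficulty is the careful identification of the cofibers of $\alpha$ and $\beta$ after $\pi_!$, and the verification that the canonical octahedral decomposition provides the compensating cancellation; this requires repeated use of base-change and smooth base-change on the diagram appearing in the proof of \rl{func}, together with an explicit comparison of shifts and Tate twists arising from the various Haar measures used in the averaging functors.
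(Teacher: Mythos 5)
Your root-theoretic observation is the correct crux of the argument and is essentially identical to the identity \form{stab} in the paper's proof; in particular, the observation that every root $\beta$ in $\P^+_J\smallsetminus\P^+_{J\cup\al}$ has a positive $\al$-coefficient and hence $w(\beta)\in\Q^+_n$ is exactly the paper's Step 1. However, the deduction from this fact to the claimed isomorphism is materially incomplete, and the final paragraph is conceptually confused.

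First, the appeal to a "compensating cancellation" between the two maps $\alpha$ and $\beta$ via an "octahedral decomposition" does not describe a valid mechanism. Nothing needs to cancel: if the argument is to go through along the lines you sketch, then after applying $\pi_!$ the maps $\alpha$ and $\beta$ must each \emph{separately} become isomorphisms. Indeed, $\alpha$ is induced by the projection $(Y_w)_{\P_J}\to(Y_w)_{\P_{J\cup\al}}$, and since $J\cup\al\subset J_w$ both sides are isomorphic to $\I_w\un{w}$, so $\alpha$ is an isomorphism even before convolving. The entire content is in $\beta$, and invoking cancellation signals a misunderstanding of the structure. Second, the claim that $M_{\P_J}$ is invariant under right translation by $\tilde U$ requires $z\tilde U z^{-1}\subset\Q_n^+$ not just for $z=\un{w}$ but for all $z\in\I w\P_J\cdot\P^+_J$, and $\P_J$ does not preserve a chosen product-of-root-subgroups lift $\tilde U$; "absorbing the right $\I$-factor" does not dispose of this. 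The paper sidesteps this by never choosing a lift $\tilde U$: it works with the set-theoretic identity $\P_J^+=\P_{J'}^+\cdot(\P_J^+\cap\un{w}^{-1}\Q_n^+\un{w})$, rewrites $A^{Y_w}_{\P_J}\ast\dt_{\Q_n^+}$ as $\Av^{\I_w\un{w}}(\dt_{\P_J^+}\ast\dt_{\un{w}^{-1}\Q_n^+\un{w}})$ via \rl{aver}(a) and the identity $\pi_!(1_{\I_w\un{w}})\cong 1_{(Y_w)_{\P_J}}$, and then reduces to the isomorphism $\dt_{\P_{J'}^+}\ast\dt_{\un{w}^{-1}\Q_n^+\un{w}}\isom\dt_{\P_J^+}\ast\dt_{\un{w}^{-1}\Q_n^+\un{w}}$, which follows directly from the pro-unipotence of the multiplication map and \form{stab}. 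You should replace the heuristic torsor/cancellation discussion with this explicit reduction; as written, your proof has a genuine gap precisely where you acknowledge that "the principal technical difficulty" remains.
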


\begin{proof} We set $J':=J\cup\al$. First we show the equality
\begin{equation} \label{Eq:stab}
\P_{J}^+=\P_{J'}^+\cdot (\P_J^+\cap \un{w}^{-1}\Q_n^+\un{w}).
\end{equation}

For this it suffices to show that for every $\beta\in\wt{\Phi}$ such that $U_{\beta}$ is contained in $\P_{J}^+\sm \P_{J'}^+$ we have
$U_{\beta}\subset\un{w}^{-1}\Q_n^+\un{w}$ or, equivalently, $U_{w(\beta)}=\un{w}U_{\beta}\un{w}^{-1}$ is
contained in  $\Q_n^+$. In other words, we have to show that for each $\beta\in \P_{J}^+\sm\P_{J'}^+$ we have  $w(\beta)\in\Q_n^+$.

Notice that $\beta\in\wt{\Phi}$ belongs to $\P_{J}^+$ if and
only if $\beta=\sum_{\al_i\in\wt{\Dt}}n_i\al_i$ such that $n_i\geq
0$ for all $\al_i$, and $n_i>0$ for some $\al_i\notin J$. Thus
any $\beta\in\P_{J}^+\sm\P_{J'}^+$ has the form $\sum_{\al_i\in J}n_i\al_i+n\al$ such that  $n_i\geq 0$ for all $i$ and $n>0$. But $w(\al)\in\Q_n^+$ (by assumption)
and $w(\al_i)>0$ for every $\al_i\in J$ (because $J\subset J_w$). This implies that
$w(\beta)\in\Q_n^+$, which finishes the proof of equality \form{stab}.

Let $\beta_1,\ldots,\beta_{l(w)}\in\wt{\Phi}$ be all $\beta>0$ such that $w(\beta)<0$,
and set $\I_w:=\prod_i U_{\beta_i}\subset \I^+$.
Consider the closed subscheme $\I_w \un{w}\subset LG$ and the corresponding averaging functor $\Av^{\I_w \un{w}}:M(LG)\to M(LG)$ (see \re{aver} (d)). We claim that  we have a natural isomorphism
\begin{equation} \label{Eq:isomad}
{A}_{\P_J}^{Y_w}\ast\dt_{\Q_n^+}\cong \Av^{\I_w\un{w}}(\dt_{\P_J^+}\ast\dt_{\un{w}^{-1}\Q_n^+\un{w}}),
\end{equation}
and similarly for $J'$. Since $J\subset J_w$, the projection $LG\to \Fl_{\P_J}$ induces an isomorphism
$\I_w\un{w}\isom (Y_w)_{\P_J}$. Thus the composition $\pi:\I_w\un{w}\hra LG\to \Fl_{\P_J}$ satisfies $\pi_!(1_{\I_w\un{w}})\cong 1_{(Y_w)_{\P_J}}$.
Hence, by \rl{aver} (a), ${A}_{\P_J}^{Y_w}=\Av^{(Y_w)_{\P_J}}(\dt_{\P_J^+/\P_J})$ is isomorphic to  $\Av^{\I_w\un{w}}(\dt_{\P_J^+})\cong \Av^{\I_w}(\dt_{\un{w}\P_J^+\un{w}^{-1}})$. Thus the left hand side of \form{isomad} is isomorphic to
$\Av^{\I_w}(\dt_{\un{w}\P_J^+\un{w}^{-1}})\ast\dt_{\Q_n^+}$.

On the other hand, the right hand side of the expression \form{isomad} is isomorphic to
$\Av^{\I_w}(\dt_{\un{w}\P_J^+\un{w}^{-1}}\ast\dt_{\Q_n^+})$. Thus it remains to show that for every
$\C{F}\in M(LG)$ we have a natural isomorphism
$\Av^{\I_w}(\C{F})\ast\dt_{\Q_n^+}\cong \Av^{\I_w}(\C{F}\ast\dt_{\Q_n^+})$.
But this follows from the fact that $\I_w\subset \I^+\subset\Q$ and that $\Q$ normalizes $\Q_n^+$.

Now we are ready to show the assertion of the lemma. By \form{isomad}, it remains to show that the natural morphism $\dt_{\P^+_{J'}}\to \dt_{\P^+_J}$  induces an isomorphism
\[
\dt_{\P_{J'}^+}\ast\dt_{\un{w}^{-1}\Q_n^+\un{w}}\isom \dt_{\P_{J}^+}\ast\dt_{\un{w}^{-1}\Q_n^+\un{w}}.
\]

Note that the multiplication map $m:(LG)^2\to LG$ induces a pro-unipotent morphism
$\P_J^+\times (\un{w}^{-1}\Q_n^+\un{w})\to
\P_J^+\cdot(\un{w}^{-1}\Q_n^+\un{w})$. Hence it induces
an isomorphism $\dt_{\P_J^+}\ast\dt_{\un{w}^{-1}\Q_n^+\un{w}}\cong \dt_{\P_J^+\cdot(\un{w}^{-1}\Q_n^+\un{w})}$, and similarly
for $J'$. Thus it remains to show the equality $\P_J^+\cdot(\un{w}^{-1}\Q_n^+\un{w})=
\P_{J'}^+\cdot(\un{w}^{-1}\Q_n^+\un{w})$, but this follows from \form{stab}.
\end{proof}

For each $\Q\in \Par$ and $n\geq 0$ the set $S(\Q_n^+)$ is finite (by \rl{fin}), hence
$Y(\Q_n^+)\in\T$. The following result is a more precise version of \rt{stab} (a) for
$\C{X}=\dt_{\Q_n^+}$.

\begin{Prop} \label{P:stab1}
Let $\Q\in \Par$, $n\geq 0$, and $Y\in\T$.

(a) If $Y\supset Y(\Q^+_n)$, then we have
an equality $\lan {A}^Y\ran \ast\lan \dt_{\Q_n^+}\ran= \lan{A}^{Y(\Q_n^+)}\ran \ast\lan\dt_{\Q_n^+}\ran$.

(b) We have an equality
$\lan {A}^Y\ran \ast\lan \dt_{\Q^+}\ran=\lan\dt_{\Q^+}\ran$.
\end{Prop}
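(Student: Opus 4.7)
The plan is to stratify $Y$ by $\I$-orbits and reduce everything to the single-orbit case $Y = Y_w$, where the alternating sum over $\Par$ collapses thanks to \rl{isom}. For part (a), decompose
\[
Y = Y(\Q_n^+) \sqcup \bigsqcup_{\substack{w \notin S(\Q_n^+) \\ Y_w \subset Y}} Y_w.
\]
For each $\P \in \Par$, this open/closed stratification of $Y_{\P} \subset \Fl_{\P}$ produces distinguished triangles for $A^Y_{\P}$ in $M(LG)$, which pass to additive identities on the level of K-classes. Taking the signed combination over $\P$ yields
\[
\lan A^Y\ran - \lan A^{Y(\Q_n^+)}\ran = \sum_{w}\lan A^{Y_w}\ran,
\]
so part (a) reduces to showing $\lan A^{Y_w}\ran \ast \lan\dt_{\Q_n^+}\ran = 0$ for each $w\notin S(\Q_n^+)$.

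Fix such a $w$ and choose $\al_0 \in \wt{\Dt}$ with $w(\al_0)\in \Q_n^+$. Since $\Q_n^+$ consists of positive affine roots, $w(\al_0)>0$ and hence $\al_0\in J_w$. Parametrize the sum over $\Par$ by $J \subsetneq \wt{\Dt}$ and pair terms via the involution $J \leftrightarrow J\triangle\{\al_0\}$. Whenever $J\subset J_w\sm\al_0$ and $J\neq\wt{\Dt}\sm\al_0$, \rl{isom} gives an isomorphism $A^{Y_w}_{\P_{J\cup\al_0}}\ast\dt_{\Q_n^+} \isom A^{Y_w}_{\P_J}\ast\dt_{\Q_n^+}$, so the two paired K-classes coincide while their signs $(-1)^{\rk G-\rk\P}$ differ by $-1$; these terms therefore cancel. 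Since $w\neq 1$ forces $J_w\subsetneq\wt{\Dt}$ and $\al_0\in J_w$, the terms left uncancelled are precisely those indexed by $J\not\subset J_w$ (in which case both $J$ and $J\triangle\al_0$ contain some $\beta\in\wt{\Dt}\sm J_w$), together with the single boundary index $J=\wt{\Dt}\sm\al_0$, which arises only in the exceptional case $J_w=\wt{\Dt}\sm\al_0$.

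For part (b), \rl{fin}(b) gives $S(\Q^+)=\{1\}$, so $Y(\Q^+)=\{[1]\}$; this is contained in every $Y\in\T$, and part (a) reduces the claim to computing $\lan A^{\{[1]\}}\ran \ast \lan\dt_{\Q^+}\ran$. Since $\I_1=\{1\}$, the identification $A^{Y_w}_{\P_J}\cong\Av^{\I_w\un{w}}(\dt_{\P_J^+})$ from the proof of \rl{isom} specialises to $A^{\{[1]\}}_{\P_J}=\dt_{\P_J^+}$. Writing $\Q=\P_K$ and fixing any $\al_0\in\wt{\Dt}\sm K$ (so that $\al_0\in\Q^+$), \rl{isom} with $w=1,\ n=0$ cancels all pairs $\{J,J\cup\al_0\}$ with $J\subsetneq\wt{\Dt}\sm\al_0$, leaving the single uncancelled contribution $\lan\dt_{\P^+_{\wt{\Dt}\sm\al_0}}\ast\dt_{\Q^+}\ran$. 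Since $K\subset\wt{\Dt}\sm\al_0$ yields $\P^+_{\wt{\Dt}\sm\al_0}\subset\Q^+$, the convolution identity $\dt_{\Q^+}\ast(-)\cong\pi^!\pi_!(-)$ of \re{conv}(c), combined with the acyclicity of the pro-unipotent scheme $\P^+_{\wt{\Dt}\sm\al_0}$, yields $\dt_{\P^+_{\wt{\Dt}\sm\al_0}}\ast\dt_{\Q^+}\cong\dt_{\Q^+}$.

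The main obstacle will be the treatment of the uncancelled terms in step (a), i.e.\ those with $J\not\subset J_w$: these fall outside the direct scope of \rl{isom}, whose proof genuinely uses $J\subset J_w$ to secure the key equality $\P_J^+\cdot\un{w}^{-1}\Q_n^+\un{w}=\P^+_{J\cup\al}\cdot\un{w}^{-1}\Q_n^+\un{w}$. The plan to deal with them is to exploit the elementary identity $(Y_{ws_\beta})_{\P_J}=(Y_w)_{\P_J}$, which holds for any $\beta\in J$ because $\un{s_\beta}\in\P_J$, and which upgrades to $A^{Y_{ws_\beta}}_{\P_J}\cong A^{Y_w}_{\P_J}$. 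By choosing $\beta\in J\sm J_w$ one may replace $w$ by the shorter element $w'=ws_\beta$ for which $\beta\in J_{w'}$, then iterate the substitution until the remaining configuration falls within the scope of \rl{isom}; the combinatorial bookkeeping for such successive substitutions, together with the handling of the boundary contribution $J=\wt{\Dt}\sm\al_0$ in the exceptional case $J_w=\wt{\Dt}\sm\al_0$, is the main technical point that remains to be pinned down.
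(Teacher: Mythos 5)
Your overall plan is correct in spirit, but the key decomposition is wrong, and the "technical point that remains to be pinned down" is not a side issue to be patched after the fact — it is the reason the decomposition fails, and fixing it at the right place dissolves the rest of the difficulty.

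Concretely: the identity you write,
\[
\lan A^Y\ran - \lan A^{Y(\Q_n^+)}\ran = \sum_{w}\lan A^{Y_w}\ran,
\]
where $\lan A^{Y_w}\ran$ runs over all $J\subsetneq\wt\Dt$, is false. The point is that for a fixed $\P_J$, the strata $(Y_w)_{\P_J}\subset\Fl_{\P_J}$ do \emph{not} form a stratification of $Y_{\P_J}$ as $w$ ranges over $\{w: Y_w\subset Y\}$; when some $\beta\in J$ has $w(\beta)<0$, one has $(Y_w)_{\P_J}=(Y_{ws_\beta})_{\P_J}$ with $ws_\beta<w$, so $(Y_w)_{\P_J}$ is already contained in $(Y\sm Y_w)_{\P_J}$. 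Equivalently: $Y_{\P_J}=(Y\sm Y_w)_{\P_J}$ whenever $J\not\subset J_w$ and $Y_w$ is the open $\I$-orbit of $Y$. This is exactly the observation you make in your last paragraph, but interpreted correctly it says those terms do not belong in the decomposition at all. The correct identity (which is what the paper uses, via induction on one open orbit at a time) is
\[
\lan A^Y\ran = \lan A^{Y\sm Y_w}\ran + \sum_{J\subset J_w}(-1)^{r-|J|}\lan A^{Y_w}_{\P_J}\ran,
\]
with the inner sum restricted to $J\subset J_w$. Once the inner sum is so restricted, the pairing $J\leftrightarrow J\cup\{\al_0\}$ is defined on all of $\{J\subset J_w\}$, the hypotheses of \rl{isom} are satisfied throughout (the condition $J\neq\wt\Dt\sm\al_0$ is automatic since $J\cup\al_0\subset J_w\subsetneq\wt\Dt$), and the whole expression cancels immediately. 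No iterative substitution is needed, and there is no ``boundary index'' to worry about: your exceptional case $J_w=\wt\Dt\sm\al_0$ cannot occur, since $w(\al_0)\in\Q_n^+\subset\I^+$ forces $w(\al_0)>0$, hence $\al_0\in J_w$. Your treatment of part (b) is essentially the paper's; it becomes fully correct once part (a) is fixed as above.
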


\begin{proof}
(a) By the induction on the number of orbit $\I$-orbits in $Y\sm Y(\Q^+_n)$, it is enough to show
that for every $Y\in\T$ and $w\in\wt{W}\sm S(\Q_n^+)$ such that $Y_w\subset Y$ is open, we have an equality
$\lan A^Y\ran \ast\lan \dt_{\Q_n^+}\ran=\lan A^{Y\sm Y_w}\ran \ast\lan \dt_{\Q_n^+}\ran$.
Set $Y':=Y\sm Y_w$.

Consider element $\langle{A}^w\rangle:=\sum_{J\subset
J_w}(-1)^{r-|J|}\langle{A}_{\P_J}^{Y_w}\rangle\in
K_0(M(LG))$. We claim that
$\langle{A}^Y\rangle=\langle{A}^{Y'}\rangle+\langle{A}^w\rangle$.
Indeed, let $J\subsetneq\wt{\Dt}$ be a proper subset. Since
$Y_w\subset Y$ is open, we have $Y_{\P_J}\sm Y'_{\P_J}=(Y_w)_{\P_J}$, if
$J\subset J_w$, and ${Y}_{\P_J}=Y'_{\P_J}$, otherwise.
This implies the equality
$\langle{A}^Y\rangle=\langle{A}^{Y'}\rangle+\langle{A}^w\rangle$
in $K_0(M(LG))$. Therefore it is enough to show that
$\langle{A}^w\rangle\ast\langle\dt_{\Q_n^+}\rangle=0$ for every
$w\in\wt{W}\sm S(\Q_n^+)$.

By the definition of $S(\Q_n^+)$, for each $w\in \wt{W}\sm S(\Q_n^+)$
there exists $\al\in\wt{\Dt}$ such that $w(\al)\in\Q_n^+$.
In particular, $\al\in J_w$. Hence, by \rl{isom},
for every $J\subset J_w\sm \al$ we have an equality
$\langle{A}_{\P_J}^{Y_w}\ast\dt_{\Q_n^+}\rangle=\langle{A}_{\P_{J\cup
\al}}^{Y_w}\ast\dt_{\Q_n^+}\rangle$. Since
$\langle{A}^w\rangle\ast\langle\dt_{\Q_n^+}\rangle$ equals
\[
\sum_{J\subset J_w\sm\al}(-1)^{r-|J|}
(\langle{A}_{\P_J}^{Y_w}\ast\dt_{\Q_n^+}\rangle-
\langle{A}_{\P_{J\cup \al}}^{Y_w}\ast\dt_{\Q_n^+}\rangle), \] we conclude
that $\langle{A}^w\rangle\ast\langle\dt_{\Q_n^+}\rangle=0$.

(b) By \rl{fin} (b), we have  $S(\Q^+)=\{1\}$, thus $Y(\Q^+)=Y_1$. Then, by (a),
we have an equality $\lan A^Y\ran \ast\lan \dt_{\Q^+}\ran=\lan{A}^{Y_1}\ran\ast\lan\dt_{\Q^+}\ran$. Choose $\al\in \wt{\Dt}$ such that
$\al\in\Q^+$.
It now follows from \rl{isom} for $w=1$ and $n=0$ that for every subset $J\subsetneq\wt{\Dt}\sm \al$, we have an equality
$\langle{A}_{\P_J}^{Y_1}\ast\dt_{\Q^+}\rangle=\langle{A}_{\P_{J\cup
\al}}^{Y_1}\ast\dt_{\Q^+}\rangle$. Using equality $\lan A^{Y_1}\ran=\lan A^{Y_1}_{\P_{\wt{\Dt}\sm \al}}\ran+
\sum_{J\subsetneq\wt{\Dt}\sm \al} (-1)^{r-|J|}(\lan A^{Y_1}_{\P_J}\ran -\lan A^{Y_1}_{\P_{J\cup\al}}\ran)$,
we deduce that $\lan{A}^{Y_1}\ran\ast\lan\dt_{\Q^+}\ran= \lan{A}^{Y_1}_{\P_{\wt{\Dt}\sm\al}}\ast\dt_{\Q^+}\ran$.
Note that  ${A}^{Y_1}_{\P_{\wt{\Dt}\sm\al}}=\dt_{\P_{\wt{\Dt}\sm\al}^+}$ and that
$\P_{\wt{\Dt}\sm\al}^+\subset\Q^+$, because $\al\in\Q^+$. Therefore
${A}^{Y_1}_{\P_{\wt{\Dt}\sm\al}}\ast\dt_{\Q^+}=\dt_{\P_{\wt{\Dt}\sm\al}^+}\ast\dt_{\Q^+}\cong\dt_{\Q^+}$, implying the assertion.
\end{proof}

Now we are ready to prove \rt{stab}.

\begin{proof}[Proof of \rt{stab}]
By \re{biinv} (b), for each $\C{X}\in M(LG)$ there exists $n\in\B{N}$ such that $\dt_{\I_n^+}\ast\C{X}=\C{X}$. Hence
it is enough to show that the system $\{\lan{A}^Y\ran\ast\lan\dt_{\I_n^+}\ran\}_Y$
stabilizes for each $n$. Thus the assertion follows from \rp{stab1}.
\end{proof}

The following result will be used in the next subsection.

\begin{Cor} \label{C:stab1}
For every $\C{X}\in M((LG)^2)$ the sequence $\{\lan\C{X}\ran(\lan A^Y\ran)\}_Y$
stabilizes.
\end{Cor}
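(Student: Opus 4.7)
The plan is to reduce the claim to \rt{stab} (a) by exploiting biinvariance of $\C{X}$ under $(\I_n^+)^2$ for an appropriate $n$. First I would establish the analog of \re{biinv} (b) for $(LG)^2$: the description $M((LG)^2)\cong\colim_{w_1,w_2,n}D((LG)^{\leq w_1}/\I_n^+\times(LG)^{\leq w_2}/\I_n^+)$ makes the proof of \re{biinv} (b) apply verbatim, so for every $\C{X}\in M((LG)^2)$ there exists $n\in\B{N}$ such that
\[
\C{X}\ast\dt_{(\I_n^+)^2}\cong\C{X},
\]
where the convolution is the monoidal product in $M((LG)^2)$. Combining this with the identity $\dt_{(\I_n^+)^2}(\C{F})\cong\dt_{\I_n^+}\ast\C{F}\ast\dt_{\I_n^+}$ from \re{biinv} (c) and the associativity of the $M((LG)^2)$-action on $M(LG)$, we obtain, for every $\C{F}\in M(LG)$,
\[
\C{X}(\C{F})\cong(\C{X}\ast\dt_{(\I_n^+)^2})(\C{F})\cong\C{X}\bigl(\dt_{\I_n^+}\ast\C{F}\ast\dt_{\I_n^+}\bigr).
\]

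Specializing to $\C{F}=A^Y$ and passing to $K$-theory gives the equality
\[
\lan\C{X}\ran(\lan A^Y\ran)=\lan\C{X}\ran\bigl(\lan\dt_{\I_n^+}\ran\ast\lan A^Y\ran\ast\lan\dt_{\I_n^+}\ran\bigr)
\]
in $K_0(M(LG))$. By \rt{stab} (a) applied with its $\C{X}$ equal to $\dt_{\I_n^+}$, the system $\{\lan A^Y\ran\ast\lan\dt_{\I_n^+}\ran\}_Y$ stabilizes; since left convolution by $\lan\dt_{\I_n^+}\ran$ comes from the triangulated endofunctor $\dt_{\I_n^+}\ast(-)$ on $M(LG)$ and therefore acts additively on $K_0(M(LG))$, the system $\{\lan\dt_{\I_n^+}\ran\ast\lan A^Y\ran\ast\lan\dt_{\I_n^+}\ran\}_Y$ stabilizes as well. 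Finally, the endofunctor $\C{F}\mapsto\C{X}(\C{F})$ is triangulated, so $\lan\C{X}\ran$ induces an additive endomorphism of $K_0(M(LG))$; applying it to a stabilizing sequence yields a stabilizing sequence, which gives the desired stabilization of $\{\lan\C{X}\ran(\lan A^Y\ran)\}_Y$.

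There is no serious obstacle in the argument; the only point worth verifying carefully is the biinvariance statement $\C{X}\ast\dt_{(\I_n^+)^2}\cong\C{X}$ for $(LG)^2$, which requires checking that the essential image of the embedding $D((LG)^{\leq w_1}/\I_n^+\times(LG)^{\leq w_2}/\I_n^+)\hookrightarrow M((LG)^2)$ consists exactly of right $(\I_n^+)^2$-invariant objects. This is routine, amounting to the Künneth formula applied to the two-factor analog of the argument in \re{biinv} (a).
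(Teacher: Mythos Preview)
Your proof is correct and follows essentially the same approach as the paper: choose $n$ with $\C{X}\ast\dt_{(\I_n^+)^2}\cong\C{X}$, rewrite $\lan\C{X}\ran(\lan A^Y\ran)$ as $\lan\C{X}\ran(\lan\dt_{\I_n^+}\ran\ast\lan A^Y\ran\ast\lan\dt_{\I_n^+}\ran)$, and invoke \rt{stab} (a). You are somewhat more explicit about justifying the $(LG)^2$-analog of \re{biinv} (b) and about why additive endomorphisms of $K_0$ preserve stabilization, but these are exactly the points the paper takes for granted.
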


\begin{proof}
Choose $n$ such that $\C{X}\ast\dt_{(\I_n^+)^2}\cong\C{X}$. Then $\lan\C{X}\ran(\lan A^Y\ran)$ equals \[
\lan\C{X}\ran(\lan\dt_{(\I_n^+)^2}\ran(\lan A^Y\ran))=\lan\C{X}\ran(\lan\dt_{\I^+_n}\ran\ast\lan A^Y\ran\ast\lan\dt_{\I^+_n}\ran).
\]
Thus the assertion follows from the fact that the sequence
$\{\lan A^Y\ran\ast\lan\dt_{\I^+_n}\ran\}_Y$ stabilizes.
\end{proof}

We finish this subsection with the proof of \rl{const}.

\begin{Emp} \label{E:pflconst}
\begin{proof}[Proof of \rl{const}]
Let $\mu^{\P^+}\in \wt{M}(LG)$ be as in \re{haarind} (c), and denote by
$\ov{A}_{\P}^Y\in D(LG)$ the unique element such that $\ov{A}_{\P}^Y\otimes \mu^{\P^+}\cong {A}_{\P}^Y$ (see \re{haarind} (b)).
Then  $i^*_{\gm,n}({A}_{\P}^Y)\cong i^*_{\gm,n}(\ov{A}_{\P}^Y)\otimes i^*_{\gm,n}(\mu^{\P^+})$ (see \re{tensor} (c)), thus it remains to show that $i^*_{\gm,n}(\ov{A}_{\P}^Y)\in D(\gm \I_n^+)$ is constant.

Consider the loop group $LG_{\gm}\subset LG$, corresponding to the centralizer $G_{\gm}\subset G$, and set $LG^{rss}_{\gm}:=LG_{\gm}\cap LG^{rss}\subset LG$. First we claim that there exists $m\in\B{N}$ such that $X_m:=LG_{\gm}\cap \gm \I_m^+$ is contained in $LG^{rss}_{\gm}$, and the restriction $\ov{A}_{\P}^Y|_{X_m}$ is constant.

Let $\Spr_{\P}\subset \Fl_{\P}\times LG$ be the closed ind-subscheme consisting of $(y,g)\in \Fl_{\P}\times LG$ such that $ygy^{-1}\in \P^+$, and set $\Spr_{\P}^Y:=(Y_{\P}\times LG)\cap \Spr_{\P}$.  By \rl{aver} (b), we have $\ov{A}_{\P}^Y=\Ad_{\P}^{1_Y,*}(1_{\frac{\P^+}{\P}})$, thus
$\ov{A}_{\P}^Y=(\pr_2)_!(1_{\Spr_{\P}^Y})\in D(LG)$.

It remains to show that there exists $m$ such that the restriction of $\Spr_{\P}$ to $\Fl_{\P}\times X_m$ is constant, that is, there exists $m\in\B{N}$ such that we have an equality of affine Springer fibers
$\Fl_{\P,\gm}=\Fl_{\P,\gm'}$ (see \re{aplasf} (a)) for every $\gm'\in X_m$. In the case of Lie algebras the corresponding assertion was shown in \cite{KL}, reducing to the case when $G_{\gm}$ is split. In the group case, the proof is similar.

Next, we consider the morphism $a:G\times G_{\gm}^{rss}\to G:(x,y)\mapsto xyx^{-1}$ and the induced morphism $La:LG\times LG_{\gm}^{rss}\to LG$. Since $a$ is smooth, it follows from \rl{smooth} (b) that there exist fp-closed neighbourhoods
$X'$ of $(1,\gm)\in LG\times LG_{\gm}^{rss}$ and $X''$ of $\gm\in LG$  such that $La$
restricts  to a pro-unipotent morphism $b:X'\to X''\subset LG$. Moreover, by \rl{nbh}, we can further  assume that $X'\subset \I\times X_m$. Furthermore, since $\{\gm \I_n^+\}_n$ form a basis of fp-closed neighbourhoods of $\gm$ (by \re{basis}),
we have $X''\supset \gm \I_n^+$ for some $n$. We claim that for such an $n$, each $i^*_{\gm,n}(\ov{A}_{\P}^Y)\in D(\gm \I_n^+)$ is constant.

Consider the restriction of $(La)^*(\ov{A}_{\P}^Y)\in D(LG\times LG_{\gm}^{rss})$ to
$\I\times X_m$. Since $Y$ is $\I$-invariant, we get that $\ov{A}_{\P}^Y$ is $\Ad \I$-equivariant,
thus $(La)^*(\ov{A}_{\P}^Y)|_{\I\times X_m}$ is $\I$-equivariant. By our choice of $m$, we conclude that $(La)^*(\ov{A}_{\P}^Y)|_{\{1\}\times X_m}=\ov{A}_{\P}^Y|_{X_m}$ is constant, hence the restriction
$(La)^*(\ov{A}_{\P}^Y)|_{\I\times X_m}$ is constant.

Next, since $X'\subset \I\times X_m$, we conclude that $(La)^*(\ov{A}_{\P}^Y)|_{X'}=b^*(\ov{A}_{\P}^Y|_{X''})$ is constant.
But $b$ is pro-unipotent, thus $b^*$ is fully faithful, which implies that $\ov{A}_{\P}^Y|_{X''}$ is constant. Now the assertion follows from the inclusion $\gm \I_n^+\subset X''$.
\end{proof}
\end{Emp}

\subsection{Proof of \rt{center} and \rco{center}} \label{SS:cent}

\begin{Lem} \label{L:isom2}
Let $\al,\beta:LG\times (LG\times \P\times LG)\to (LG)^3 $ be the maps defined by formulas $\al(x,a,b,c):=(a, xb,c)$ and
$\beta(x,a,b,c):=(a, bx,c)$, and let $\pi$ be the projection $LG\times (LG\times \P\times LG)\to\frac{LG}{\P}\times (LG\times \P\times LG)$.

Then two compositions $\al_!\pi^!,\beta_!\pi^!:M(\frac{LG}{\P}\times (LG\times \P\times LG))
\to M((LG)^3)$ are isomorphic.
\end{Lem}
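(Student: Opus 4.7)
The plan is to find an automorphism of the source $LG \times (LG \times \P \times LG)$ that interchanges $\al$ and $\beta$ but commutes with $\pi$. Specifically, I would introduce
\[
\phi : LG \times (LG \times \P \times LG) \lra LG \times (LG \times \P \times LG), \qquad (x,a,b,c)\mapsto (b^{-1}xb,\,a,\,b,\,c),
\]
which is well-defined (since $b \in \P \subset LG$) and an isomorphism (with inverse $(x,a,b,c)\mapsto (bxb^{-1},a,b,c)$). A direct computation yields $\beta\circ\phi=\al$:
\[
\beta(b^{-1}xb,a,b,c) \;=\; (a,\,b\cdot b^{-1}xb,\,c) \;=\; (a,xb,c) \;=\; \al(x,a,b,c).
\]

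The second point is that $\phi$ is invisible to $\pi$, i.e.\ $\pi\circ\phi=\pi$. Indeed, by the conventions of \re{equivha}(a), the map $\pi$ quotients the first factor $LG$ by the adjoint action of $\P$; since $b\in\P$, the map $x\mapsto b^{-1}xb$ is precisely the conjugation action of $b$ on $x$ and therefore preserves $\P$-conjugacy classes. Taking $!$-pullbacks, this gives
\[
\phi^!\pi^! \;=\; (\pi\circ\phi)^! \;=\; \pi^!,
\]
and because $\phi$ is an isomorphism, $\phi^!$ is inverse to $\phi_!$. Applying $\phi_!$ to both sides yields $\phi_!\pi^!\cong\pi^!$. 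Combining this with $\al=\beta\circ\phi$ gives the desired conclusion:
\[
\al_!\pi^! \;\cong\; (\beta\circ\phi)_!\pi^! \;\cong\; \beta_!\phi_!\pi^! \;\cong\; \beta_!\pi^!.
\]

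The auxiliary verifications needed for this argument are that $\al$ and $\beta$ are admissible, so that $\al_!,\beta_!$ are defined as functors between $M$-categories in the sense of \re{notst}(b), and that $\pi$ is stacky and formally smooth, so that $\pi^!$ is defined on $M$-categories. Both are routine: $\al$ and $\beta$ are each $\P$-bundles (for fixed target $(a,y,c)$ the fiber is $\{(yb^{-1},b)\mid b\in\P\}\cong\P$), and $\pi$ is the projection to the adjoint quotient stack by $\P$. I do not anticipate any serious obstacle; once the automorphism $\phi$ is spotted, the proof reduces to the one-line chain of isomorphisms above, and all of the functoriality manipulations are formal consequences of the fact that pullback and pushforward along an isomorphism are inverse equivalences.
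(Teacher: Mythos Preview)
Your proof is correct and essentially identical to the paper's: the paper introduces the automorphism $\gm(x,a,b,c)=(bxb^{-1},a,b,c)$ satisfying $\al\circ\gm=\beta$ and $\pi\circ\gm\cong\pi$, while your $\phi$ is precisely $\gm^{-1}$, yielding the same chain of isomorphisms with the roles of $\al$ and $\beta$ swapped. One small notational point: since $\pi$ lands in a quotient stack, the relation $\pi\circ\phi=\pi$ is really a $2$-isomorphism $\pi\circ\phi\cong\pi$ rather than a strict equality, as the paper writes.
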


\begin{proof}
Consider the isomorphism $\gm: LG\times (LG\times \P\times LG)\to LG\times (LG\times \P\times LG)$ defined by the formula $\gm(x,a,b,c)=(bxb^{-1},a,b,c)$. Then $\gm$ satisfies $\pi\circ\gm\cong\pi$ and $\al\circ \gm=\beta$.
Therefore  $\beta_!\pi^!\cong\al_!\gm_!\gm^!\pi^!\cong \al_!\pi^!$.
\end{proof}

\begin{Lem} \label{L:gen}
Let $w,w'\in\wt{W}$ and $\al\in\wt{\Dt}$ be such that $w=s_{\al}w'$, where $s_{\al}\in\wt{W}$ is the simple reflection corresponding to
$\al$, and $w'<w$. Then the map of $K$-groups
\[
m_!:K_0(M(LG\times \P_{\al}\times LG^{\leq w'}\times LG))\to K_0(M(LG\times LG^{\leq w}\times LG)),
\]
induced by the multiplication map $m:\P_{\al}\times LG^{\leq w'}\to LG^{\leq w}$, is surjective.
\end{Lem}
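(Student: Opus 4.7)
The plan is to prove surjectivity of $m_!$ by combining the Bruhat stratification of $LG^{\leq w}$ with two explicit sections of $m$, one over each stratum.

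First, I would use the open-closed stratification $LG^{\leq w} = LG^{\leq w'} \sqcup \wt Y_w$, where $\wt Y_w := \I\dot w\I$ is the preimage of the top Schubert cell $Y_w \subset \Fl^{\leq w}$ (open in $LG^{\leq w}$) and $LG^{\leq w'}$ is its closed complement. Write $i' : LG^{\leq w'} \hookrightarrow LG^{\leq w}$ and $j' : \wt Y_w \hookrightarrow LG^{\leq w}$ for the closed and open embeddings, and set $i := \operatorname{id}_{LG} \times i' \times \operatorname{id}_{LG}$ and $j := \operatorname{id}_{LG} \times j' \times \operatorname{id}_{LG}$. Standard dévissage in $M$ (from the localization triangle, adapted to the admissible framework of Section 1) yields
\[K_0(M(LG \times LG^{\leq w} \times LG)) = \operatorname{image}(i_!) + \operatorname{image}(j_!),\]
so it suffices to show each of these summands lies in $\operatorname{image}(m_!)$.

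For the closed stratum, I would use the closed embedding $\sigma : LG \times LG^{\leq w'} \times LG \hookrightarrow LG \times \P_\al \times LG^{\leq w'} \times LG$, $(a,g,b) \mapsto (a,1,g,b)$, coming from $\{1\} \hookrightarrow \P_\al$. Since $m \circ \sigma = i$, functoriality of $!$-pushforwards gives $i_! = m_! \circ \sigma_!$, hence $\operatorname{image}(i_!) \subset \operatorname{image}(m_!)$.

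For the open stratum, I would invoke the standard Bruhat concatenation: since $w = s_\al w'$ with $\ell(w) = \ell(w') + 1$, every $h \in \wt Y_w$ admits a unique factorization $h = u_0 \dot s_\al g_0$ with $u_0 \in U_\al$ (the root subgroup of $\al$) and $g_0 \in \wt Y_{w'}$, giving a scheme isomorphism $\mu : U_\al \dot s_\al \times \wt Y_{w'} \isom \wt Y_w$. Via the identification $\I \dot s_\al \I \cong U_\al \times \I$ by $(u,x) \mapsto u \dot s_\al x$, the subscheme $U_\al \dot s_\al \times \wt Y_{w'}$ is locally closed in $\P_\al \times LG^{\leq w'}$, and composing $\mu^{-1}$ with this inclusion (crossed with identity on the outer factors) produces a locally closed embedding $\wt\tau : LG \times \wt Y_w \times LG \hookrightarrow LG \times \P_\al \times LG^{\leq w'} \times LG$ satisfying $m \circ \wt\tau = j$. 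Hence $j_! = m_! \circ \wt\tau_!$ and $\operatorname{image}(j_!) \subset \operatorname{image}(m_!)$. Combining both parts yields the claimed surjectivity.

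The main technical obstacle is the unique Bruhat factorization $\wt Y_w \cong U_\al \dot s_\al \times \wt Y_{w'}$ as schemes: the set-theoretic concatenation of Schubert cells for $\ell(s_\al w') = \ell(w') + 1$ is classical, but upgrading it to a scheme isomorphism in the loop group requires a careful computation with affine root subgroups (in particular, showing that $U_\al \cap \dot s_\al \I \dot s_\al^{-1} = \{1\}$ so that the decomposition $\I \dot s_\al \I \cong U_\al \times \I$ is genuine, and that no two distinct pairs $(u_0, g_0)$ can produce the same product in $\wt Y_w$). The remaining ingredients — the dévissage in $M$, functoriality $(m \circ \sigma)_! = m_! \circ \sigma_!$ for the locally closed embeddings in question, and the closedness of $U_\al \dot s_\al \hookrightarrow \I \dot s_\al \I$ — are routine in the admissible ind-scheme framework developed in Section 1.
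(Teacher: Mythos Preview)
Your two-stratum decomposition $LG^{\leq w} = LG^{\leq w'} \sqcup \wt Y_w$ is incorrect: the closed complement of the open cell $\wt Y_w = \I w \I$ in $LG^{\leq w}$ is $LG^{<w} = \bigcup_{u<w} \I u \I$, which in general strictly contains $LG^{\leq w'}$. Already for $w' = s_1$ and $w = s_2 s_1$ in any Coxeter system with two simple reflections, one has $s_2 < w$ but $s_2 \not\leq w'$, so $\I s_2 \I \subset LG^{<w} \setminus LG^{\leq w'}$. Your closed-stratum section $\sigma$ therefore misses these extra cells entirely. There is also a second, more technical problem with $\sigma$: the embedding $\{1\}\hookrightarrow \P_\al$ is not finitely presented (the defining ideal is not finitely generated), so in the paper's admissible framework $\sigma_!$ does not land in $M$ but only in $\wh M$, and the identity $i_! = m_!\circ\sigma_!$ is not available at the level of $K_0(M(\cdot))$.

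The paper's proof avoids both issues by passing to the full Bruhat stratification $\{\I^+ u \I\}_{u\leq w}$ and invoking the dichotomy (from \cite[Prop.~5.9]{Hu}) that every $u\leq w$ satisfies either $u\leq w'$ or $u = s_\al u'$ with $u'<u$ and $u'\leq w'$. For each stratum it exhibits a \emph{pro-unipotent} restriction of $m$ --- namely $\I^+\times \I^+ u\I \to \I^+ u\I$ in the first case and $\I^+ s_\al \I^+ \times \I^+ u'\I \to \I^+ u\I$ in the second --- so that $\C F \cong m_! m^!\C F$ and the stratum lies in the image of $m_!$. Your open-stratum argument via the factorization $U_\al\dot s_\al \times \I u'\I \isom \I u\I$ is essentially the second case of this; what is missing is that it must be applied to \emph{every} $u$ with $u\not\leq w'$, not only to $u=w$, and that the cells with $u\leq w'$ need the pro-unipotence trick in place of the non-existent section at $1$.
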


\begin{proof}
Note that for every $X\in \Var_k$, closed embedding $i:Y\hra X$ and an open embedding
$j:U:=X\sm Y\hra X$, the functors $i_!:D(Y)\to D(X)$ and $j_!:D(U)\hra D(X)$ induce  embeddings of $K$-groups $K_0(D(Y))\hra K_0(D(X))$ and $K_0(D(U))\hra K_0(D(X))$. Moreover, for every $\C{F}\in D(X)$ we have a distinguished triangle $j_!j^*\C{F}\to \C{F}\to i_!i^*\C{F}\to $, which implies that $K_0(D(X))$ is generated by (the images of) $K_0(D(Y))$ and $K_0(D(U))$. More generally, if $U_i$ is a finite stratification of $X$ (by locally closed subschemes), then $K_0(D(X))$ is generated by the $K_0(D(U_i))$'s.

Recall that $LG^{\leq w}$ has a finite stratification  $\{\I^+u\I\}_{u\in\wt{W},u\leq w}$.
Thus, by the observations made above, it is enough to show that each $K_0(M(LG\times \I^+u\I\times LG))$ lies in the image of
$m_!$. Note that for each $u\leq w$ we have either $u\leq w'$ or $u=s_{\al}u'$ and $u'\leq w'$ (see \cite[Prop. 5.9]{Hu}).
Thus it remains to show the assertion in two cases $u=w$ and $u=w'$.

In the first case, $m$ induces a pro-unipotent map
\[
m:LG\times \I^+s_{\al}\I^+\times \I^+w'\I\times LG\to LG\times \I^+w\I\times LG.
\]
Thus for every $\C{F}\in M(LG\times \I^+w\I\times LG)$, we have $\C{F}\cong m_!m^!(\C{F})$, hence $\C{F}$ lies in the essential image of $m_!$. In the second case,  $m$ induces a pro-unipotent map
$m:LG\times \I^+\times \I^+w'\I\times LG\to LG\times \I^+w'\I\times LG$, and we conclude similarly.
\end{proof}




\begin{Lem} \label{L:center}
Let $\al,\beta:LG\times (LG)^3\to LG$ be the maps $\al(x,a,b,c):=axbc$ and
$\beta(x,a,b,c):=abxc$, respectively, and let $\al_!$ and $\beta_!$ be the induced functors
$M(LG\times (LG)^3)\to M(LG)$. Then for all
$\C{X}\in  M((LG)^3)$ and all sufficiently large $Y\in\T$, we have an equality
\begin{equation} \label{Eq:isom}
\al_!(\lan A^Y\ran\pp\lan \C{X}\ran)=\beta_!(\lan A^Y\ran \pp\lan\C{X}\ran).
\end{equation}
\end{Lem}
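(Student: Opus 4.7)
The plan is to reduce to exterior products via multilinearity, then to induct on the Bruhat length of the support of the middle factor, with \rl{isom2} serving as the key commutation tool. Since $\alpha_!,\beta_!$ and $\lan\cdot\ran$ are linear and $K_0(M((LG)^3))$ is generated by classes of exterior products (using the stratification by products of $\I$-orbits), I first reduce to the case $\C{X}=\C{X}_1\pp\C{X}_2\pp\C{X}_3$. The equality then amounts to
\[
\lan \C{X}_1 \ast A^Y \ast \C{X}_2 \ast \C{X}_3 \ran = \lan \C{X}_1 \ast \C{X}_2 \ast A^Y \ast \C{X}_3 \ran \quad\text{in }K_0(M(LG)).
\]
By \re{biinv} (b), I choose $n\in\B{N}$ so that each $\C{X}_i$ is bi-$\I_n^+$-invariant, and let $\C{X}_2$ have support in $LG^{\leq w}$ for some $w\in\wt{W}$; by \rp{stab1} (a) and \rco{stab1}, once $Y$ is sufficiently large both sides involve only the stabilized class of $\lan A^Y\ran\ast\lan\dt_{\I_n^+}\ran$.

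Next I induct on $\ell(w)$. The base case $\ell(w)=0$ (so $\C{X}_2$ is supported in $\I$) follows from \rl{isom2} applied with $\P=\I$, using that each $A^Y_\P$ is $\Ad\I$-equivariant because $Y_\P$ is $\I$-invariant from the left. For the inductive step, write $w=s_\al w'$ with $\ell(w')<\ell(w)$ for some simple affine reflection $s_\al$. By \rl{gen}, in the $K$-group $\lan\C{X}_2\ran$ decomposes as a sum of convolution classes $\lan\C{X}_2^{(p)}\ast\C{X}_2^{(w')}\ran$ with $\C{X}_2^{(p)}\in M(\P_\al)$ and $\C{X}_2^{(w')}\in M(LG^{\leq w'})$. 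Hence it suffices to commute $A^Y$ past $\C{X}_2^{(p)}$ (Step~A); commutation past $\C{X}_2^{(w')}$ (Step~B) then follows from the induction hypothesis applied to the triple $(\C{X}_1\ast\C{X}_2^{(p)},\,\C{X}_2^{(w')},\,\C{X}_3)$.

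For Step~A, I use the Bruhat decomposition $\P_\al=\I\sqcup\I s_\al\I$ to split $\C{X}_2^{(p)}$ into an $\I$-supported piece (handled as in the base case via \rl{isom2} with $\P=\I$) and a piece supported in $\I s_\al\I$. The latter is treated using \rl{isom2} with $\P=\P_\al$, which reduces Step~A to the $\Ad\P_\al$-invariance of $\lan A^Y\ran$ in $K_0(M(LG))$.

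The main obstacle is exactly this $\Ad\P_\al$-invariance of $\lan A^Y\ran$. Individual summands $A^Y_\P$ are only $\Ad\I$-equivariant, not $\Ad\P_\al$-equivariant, since $Y_\P$ is not $\P_\al$-invariant from the left in general. However, the alternating sum $\lan A^Y\ran=\sum_\P(-1)^{r-\rk\P}\lan A^Y_\P\ran$ should be $\Ad\P_\al$-invariant in $K_0(M(LG))$ once $Y$ is large enough. The proof of this invariance will parallel \rl{isom} and \rp{stab1}: for $p\in\P_\al$, the discrepancy between the conjugate of $A^Y_{\P_J}$ by $p$ and $A^Y_{\P_J}$ itself arises from a permutation of $\I$-orbits inside the relevant $\P_\al$-orbit, and these discrepancies telescope and cancel in the alternating sum indexed by $J\subsetneq\wt{\Dt}$. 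Establishing this cancellation via a careful combinatorial analysis of $\I$-orbits in $\Fl$ under the action of $p\in\P_\al$ will be the technical heart of the proof.
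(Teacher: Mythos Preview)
Your overall strategy---induction on the Bruhat length of the middle factor, with \rl{gen} supplying the inductive step and \rl{isom2} supplying the basic commutation---is the same as the paper's. However, the step you flag as the ``technical heart'' (proving $\Ad\P_\al$-invariance of the alternating sum $\lan A^Y\ran$ via a telescoping cancellation among the $\lan A^Y_{\P_J}\ran$) is both unproved in your proposal and, more importantly, unnecessary. The paper bypasses it completely with a much simpler observation.

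The key point you are missing is this: by \rco{stab1}, both sides of \form{isom} stabilize once $Y$ is large enough. Therefore, after the stabilization point, you are free to replace $Y$ by any larger element of $\T$; in particular you may take $Y$ to be $\Q$-invariant, where $\Q$ is the relevant parahoric (here $\Q=\P_\al$). Once $Y$ is $\Q$-invariant, each $Y_\P\subset\Fl_\P$ is $\Q$-invariant, so by \re{aver}~(b) each $A^Y_\P$ individually lifts to an object of $M(\frac{LG}{\Q})$, and \rl{isom2} applies to each summand separately. No cancellation across $\P$'s is needed. This is why the paper takes the base case to be $l(w)\leq 1$ (so $LG^{\leq w}$ is already a parahoric $\Q$) rather than $l(w)=0$.

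Two further remarks on the structure. First, the paper does not reduce to exterior products $\C{X}_1\pp\C{X}_2\pp\C{X}_3$; it works directly with $\C{X}\in M(LG\times LG^{\leq w}\times LG)$. This matters because \rl{gen} produces elements of $M(LG\times\P_\al\times LG^{\leq w'}\times LG)$ that need not be exterior products, so your inductive hypothesis phrased for exterior products would have to be re-extended at each step. Second, the inductive step in the paper is organized via three auxiliary maps $\wt{\al},\wt{\beta},\wt{\gm}:LG\times(LG)^4\to LG$ and two applications of the inductive hypothesis (one with middle factor in $LG^{\leq s_\al}$, one with middle factor in $LG^{\leq w'}$), which amounts to your Step~A and Step~B but without leaving the general (non-exterior-product) setting.
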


\begin{proof}
Since $M((LG)^3)$ is a colimit of the $M(LG\times LG^{\leq w}\times LG)$'s, we have to show that the equality \form{isom} holds for every $w\in\wt{W}$ and $\C{X}\in  M(LG\times LG^{\leq w}\times LG)$.

We prove the assertion by induction on $l(w)$. Assume that $l(w)\leq 1$. Then $LG^{\leq w}$ is a parahoric $\Q$. It follows from \rco{stab1} that both sides
of \form{isom}  stabilize when $Y$ is sufficiently large. Thus we may assume that $Y$ is $\Q$-invariant. In this case, we claim that we have an equality
\begin{equation} \label{Eq:isomp}
\al_!(\lan A_{\P}^Y\ran\pp\lan \C{X}\ran)=\beta_!(\lan A_{\P}^Y\ran \pp\lan\C{X}\ran)
\end{equation}
for each $\P\in \Par$. Indeed, since $Y_{\P}\subset \Fl_{\P}$ is $\Q$-invariant, $A_{\P}^Y=\Av^{Y_{\P}}(\dt_{\frac{\P^+}{\P}})$ lifts to an object of  $M(\frac{LG}{\Q})$ (by \re{aver} (b)). Thus  $A_{\P}^Y\pp\C{X}$ has a natural lift to an object of $M(\frac{LG}{\Q}\times (LG\times \Q\times LG))$, hence equality
\form{isomp} follows from \rl{isom2}. This implies equality \form{isom} in this case.

Assume now that $l(w)>1$. Choose $\al\in\wt{\Dt}$ and $w'\in\wt{W}$ such that $w=s_{\al}w'$ and
$w'<w$. By \rl{gen}, we can assume that $\C{X}=m_!(\C{X}')$ for some element $\C{X}'\in M(LG\times \P_{\al}\times LG^{\leq w'}\times LG)$.

Consider maps
$\wt{\al},\wt{\beta},\wt{\gm}:LG\times (LG)^4\to LG$ defined by $\wt{\al}(x,a,b',b'',c):=axb'b''c$, $\wt{\beta}(x,a,b',b'',c):=ab'b''xc$ and $\wt{\gm}(x,a,b',b'',c):=ab'xb''c$. Then we have equalities
$\wt{\al}_!(\lan A^Y\ran\pp\lan\C{X}'\ran)= \al_!(\lan A^Y\ran \pp\lan \C{X}\ran )$ and $\wt{\beta}_!(\lan A^Y\ran \pp\lan \C{X}'\ran)= \beta_!(\lan A^Y\ran \pp\lan\C{X}\ran)$. Thus to show the equality \form{isom} it is enough to show the equalities
\begin{equation} \label{Eq:isom2}
\wt{\al}_!(\lan A^Y\ran\pp\lan \C{X}'\ran)=\wt{\gm}_!(\lan A^Y\ran\pp\lan\C{X}'\ran)=
\wt{\beta}_!(\lan A^Y\ran \pp\lan \C{X}'\ran).
\end{equation}
Both these equalities follow from the induction hypothesis. Namely, consider maps
$m',m'':(LG)^4\to (LG)^3$ defined by $m'(a,b,c,d)=(ab,c,d)$ and $m''(a,b,c,d)=(a,b,cd)$.
Then the first equality of \form{isom2} follows from the equality
\form{isom} applied to $m''_!(\C{X}')\in M(LG\times LG^{\leq s_{\al}}\times LG)$, while
the second equality of \form{isom2} follows from the equality
\form{isom} applied to $m'_!(\C{X}')\in M(LG\times LG^{\leq w'}\times LG)$.
\end{proof}

Now we are ready to prove \rt{center} and \rco{center}.

\begin{proof}[Proof of \rt{center} (a)]
We have to show that for every $\C{F}\in M(LG)$ and $\C{X}\in M((LG)^2)$ we have an equality $\lan A\ran (\lan \C{X}(\C{F})\ran)=\lan\C{X}\ran (\lan A\ran (\lan\C{F}\ran))$. By the definition of
$\lan A\ran$, we have to show that for each sufficiently large $Y\in\T$, we have an equality
\begin{equation} \label{Eq:isom3}
\lan A^Y\ran \ast\lan\C{X}(\C{F})\ran=\lan\C{X}\ran(\lan A^Y\ran \ast\lan \C{F}\ran).
\end{equation}

 Let $\gm:(LG)^2\times LG\to LG$ be the map $(a,b,x)\mapsto axb$, $m:(LG)^2\to LG$ be the map $(a,b)\mapsto ab$, and $m':(LG)^2\times LG\to (LG)^2$ be the map $(a,b,x)\mapsto(a,xb^{-1})$. Consider  element $\C{X}':=m'_!(\C{X}\pp\C{F})$. Then equation \form{isom3}
 can be rewritten as $\lan A^Y\ran \ast \lan m_!(\C{X}')\ran=\gm_!(\lan\C{X}'\ran\pp\lan A^Y\ran)$.
 When $Y\in\T$ is sufficiently large, both sides of the last equality stabilize
 (by \rt{stab} and \rco{stab1}).

 Thus there exists $n$ such that
 $\lan\dt_{\I_n^+}\ran\ast \lan A^Y\ran \ast \lan m_!(\C{X}')\ran= \lan A^Y \ran \ast \lan  m_!(\C{X}')\ran$ and
 $\lan\dt_{\I_n^+}\ran \ast  \gm_!(\lan\C{X}'\ran\pp\lan A^Y\ran)=\gm_!(\lan\C{X}'\ran\pp\lan A^Y\ran)$
 for all $Y\in\T$ sufficiently large. Hence it remains to show the equality
 \begin{equation*} \label{Eq:eq3}
 \lan\dt_{\I_n^+}\ran \ast \lan A^Y\ran \ast \lan m_!(\C{X}')\ran=\lan\dt_{\I_n^+}\ran \ast\gm_!(\lan \C{X}'\ran\pp\lan A^Y\ran).
\end{equation*}
In the notation of \rl{center}, the last equality can be written in the form
\[
\al_!(\lan A^Y\ran\pp\lan \dt_{\I_n^+}\pp\C{X}'\ran)=\beta_!(\lan A^Y\ran \pp\lan\dt_{\I_n^+}\pp\C{X}'\ran),
\]
thus the assertion follows from equality \form{isom}.
\end{proof}

\begin{proof}[Proof of \rco{center}]
Let $b:LG\times (LG)^2\to (LG)^2$ be the map defined by $b(g,x,y):=(gx,gy)$, and let $b_!$ be the induced map $M(LG\times (LG)^2)\to M((LG)^2)$. Then for every $\C{X}\in M(LG)$ and $\C{B}\in M((LG)^2)$, we have a natural isomorphism
$\Ad^{\C{X}}(\C{B}(\cdot))\cong b_!(\C{X}\pp\C{B})(\cdot)$ of functors $M(LG)\to M(LG)$.

Choose $n$ such that $\dt_{(\I_n^+)^2}(\C{F})\cong\C{F}$ and $\lan\dt_{(\I_n^+)^2}\ran(\lan\C{A}\ran(\lan\C{F}\ran))=\lan\C{A}\ran(\lan\C{F}\ran)$
(see \re{biinv} (c)), and set $\C{Y}:=b_!(\C{X}\pp\dt_{(\I_n^+)^2})\in M((LG)^2)$.

Then we have an isomorphism $\Ad^{\C{X}}(\C{F})\cong \Ad^{\C{X}}(\dt_{(\I_n^+)^2}(\C{F}))\cong \C{Y}(\C{F})$, and, similarly, an equality
$\lan\Ad^{\C{X}}\ran(\lan A\ran(\lan\C{F}\ran))=\lan \C{Y}\ran(\lan A\ran(\lan\C{F}\ran))$.
Thus our assertion follows from the equality $\lan A\ran(\lan \C{Y}(\C{F})\ran)=\lan \C{Y}\ran(\lan A\ran(\lan\C{F}\ran))$,
shown in \rt{center} (a).
\end{proof}

\begin{proof}[Proof of \rt{center} (b)]
We have to show that $\lan A\ran\circ \lan A\ran=\lan A\ran$, that is,
for every $\C{F}\in M(LG)$ we have $\lan A\ran(\lan A\ran(\lan\C{F}\ran))=\lan A\ran(\lan\C{F}\ran)$.
By the definition of $\lan A\ran$, we have to show that $\lan A\ran(\lan A^Y\ran\ast\lan\C{F}\ran)=\lan A^Y\ran\ast\lan\C{F}\ran$ for each sufficiently large $Y$. Using the equality $\lan A\ran(\lan A^Y\ran\ast\lan\C{F}\ran)=\lan A\ran(\lan A^Y\ran)\ast\lan\C{F}\ran$
(see \rco{stab} (a)), it is enough to show that $\lan A\ran(\lan A^Y\ran)=\lan A^Y\ran$ for all $Y$.

 Recall that $\lan A^Y\ran =\sum_{\P\in\Par}(-1)^{\rk G-\rk\P}\lan A^Y_{\P}\ran $ and that $A^Y_{\P}=\Av^{Y_{\P}}(\dt_{\frac{\P^+}{\P}})$. Thus it remains to show that for every locally closed
$\I$-invariant subscheme $Y\subset \Fl_{\P}$ we have an equality
\begin{equation} \label{Eq:idem}
\lan A\ran(\lan \Av^{Y}(\dt_{\frac{\P^+}{\P}})\ran)=\lan \Av^{Y}(\dt_{\frac{\P^+}{\P}})\ran.
\end{equation}

By additivity, we can assume that $Y$ is one $\I$-orbit $(Y_w)_{\P}=\I w\P/\P$ and, moreover,
that $w$ is the longest element of the coset $w W_{\P}$. Then the projection $\pr:LG\to \Fl_{\P}$
induces an isomorphism between $Y_w^+:=\I^+w\I^+/\I^+\subset LG/\I^+$ and $(Y_w)_{\P}\subset \Fl_{\P}$.

Set $\C{X}_w:=1_{Y^+_w}\in D(LG/\I^+)\subset M(LG)$. Then $\pr_!(\C{X}_w)\cong 1_{(Y_w)_{\P}}$, so it follows from \rl{aver} (a) that
$\Av^{(Y_w)_{\P}}(\dt_{\frac{\P^+}{\P}})\cong \Ad^{\C{X}_w}(\dt_{\P^+})$.
%
Thus the left hand side of \form{idem} is equals to
$\lan A\ran(\lan\Ad^{\C{X}_w}(\dt_{\P^+})\ran)=
\lan\Ad^{\C{X}_w}\ran(\lan A\ran(\lan\dt_{\P^+}\ran))$ (by \rco{center}), thus to
$\lan\Ad^{\C{X}_w}\ran(\lan\dt_{\P^+}\ran)=\lan \Av^{(Y_w)_{\P}}(\dt_{\P^+})\ran$ by \rco{stab} (b).
\end{proof}

\subsection{Application to the classical Bernstein projector}

Assume that we are in the situation of subsection \ref{SS:conj2}.
Note that the element $\lan A\ran\in \frak{Z}(LG)$ from \rt{center}, constructed in \rco{stab},
is Frobenius equivariant. Therefore it defines an element
$[A]:=[\lan A\ran]\in Z_{G}$ of the Bernstein center (see
\re{shfun2}). By construction, for every $\C{X}\in M^{\Fr}(LG)$, we have an equality
$[A]([\C{X}])=[\lan A\ran (\lan\C{X}\ran)]\in \C{H}(G(F))$.


\begin{Thm} \label{T:conjunit}
The element $[A]\in Z_{G}$ equals the projector
$z^0\in Z_G^0\subset Z_{G}$.
\end{Thm}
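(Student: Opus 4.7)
The plan is to identify $[A]$ with $z^0$ by showing that they induce the same function on $\Irr(G)$; this suffices because $z\mapsto f_z\colon Z_G\to\Fun(\Irr(G),\B{C})$ is injective (see \re{berncent}~(b)). I will treat depth-zero and positive-depth irreducibles separately.

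First, \rco{stab}~(b) together with the sheaf-function correspondence of \re{shfun2} translates the $K$-theoretic identity $\lan A\ran(\lan\dt_{\P^+}\ran)=\lan\dt_{\P^+}\ran$ into the Hecke algebra identity $[A]\ast\mu^{P^+}=\mu^{P^+}$, for every parahoric $P\subset G(F)$. Every $\pi\in\Irr(G)^0$ admits some parahoric $P$ with $\pi^{P^+}\neq 0$, and $\mu^{P^+}$ acts on $\pi$ as the projector onto $\pi^{P^+}$; applying the identity to any nonzero $v\in\pi^{P^+}$ yields $[A]v=v$. As $[A]\in Z_G$ acts on $\pi$ by a scalar, this forces $f_{[A]}(\pi)=1=f_{z^0}(\pi)$.

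The heart of the argument is the following claim: each $[A^Y_\P]\in\C{H}(G(F))$ annihilates every positive depth representation. Unwinding $A^Y_\P=\ov{a}_!\pr^!(1_{Y_\P}\pp\dt_{\P^+/\P})$ via the isomorphism $LG\times^{\P}LG\isom\Fl_\P\times LG$, $[x,g]\mapsto([x],xgx^{-1})$, the sheaf $A^Y_\P$ corresponds under sheaf-function to the measure
\[
[A^Y_\P]=\sum_{[x]\in Y_\P(\fq)}\Ad(x)\,\mu^{P^+}
\]
on $G(F)$, where $\Ad(x)\,\mu^{P^+}$ is the conjugate of $\mu^{P^+}$ by any lift $x\in G(F)$ of $[x]$ (well-defined by normality of $\P^+$ in $\P$), and the sum is finite since $Y_\P(\fq)$ is finite for $Y\in\T$. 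Convolving with $v\in\pi$ gives
\[
[A^Y_\P]\ast v=\sum_{[x]\in Y_\P(\fq)}\pi(x)\,e_{P^+}\,\pi(x)^{-1}v,
\]
where $e_{P^+}=\mu^{P^+}\ast(-)$ is the projector onto $\pi^{P^+}$. For $\pi$ of positive depth, $\pi^{P^+}=0$ and $e_{P^+}=0$, so $[A^Y_\P]$ annihilates $\pi$; by the alternating sum, so does $[A^Y]=\sum_\P(-1)^{\rk G-\rk\P}[A^Y_\P]$, and hence $[A^Y]\in\C{H}(G(F))^0$.

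Finally, \rt{stab}~(a) and \rco{stab}~(a) in the Frobenius-equivariant setting show that for any $h=[\C{X}]\in\C{H}(G(F))$ there exists $Y\in\T$ large with $[A]\ast h=[A^Y]\ast h$ in $\C{H}(G(F))$. Since $\C{H}(G(F))^0$ is a two-sided ideal, the previous step gives $[A]\ast h\in\C{H}(G(F))^0$ for every $h$. Picking $\pi\in\Irr(G)^{>0}$, any $0\neq v\in\pi$ and $h$ with $h\ast v=v$, we obtain $[A]v=([A]\ast h)\ast v=0$ because $\C{H}(G(F))^0$ annihilates $R(G)^{>0}$; hence $f_{[A]}(\pi)=0=f_{z^0}(\pi)$, and combining with the depth-zero case forces $[A]=z^0$. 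The principal obstacle will be the explicit sheaf-function computation in the middle step: one must track the normalization of $\dt_{\P^+/\P}\in M(\frac{LG}{\P})$ on the stacky quotient $\frac{LG}{\P}$ and verify that the averaging functor produces precisely $\sum_{[x]}\Ad(x)\mu^{P^+}$, so that the vanishing $e_{P^+}=0$ genuinely forces $[A^Y_\P]\ast\pi=0$ for positive depth $\pi$.
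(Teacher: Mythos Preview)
Your argument is correct, and the depth-zero half is essentially identical to the paper's. The positive-depth half, however, takes a genuinely different (and shorter) route.

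Both you and the paper rely on the same structural fact: $A^Y_{\P}=\Av^{Y_{\P}}(\dt_{\frac{\P^+}{\P}})$ is, at the level of the Hecke algebra, a finite combination of conjugates $\Ad(g)\mu^{P^+}$. From there the paper proceeds via unrefined minimal $K$-types: for $\pi$ of depth $r>0$ it picks a Moy--Prasad subgroup $P_{x,r}$ and a nondegenerate character $\xi$ with $V_\xi\neq 0$, sets $h_\xi=\xi^{-1}\dt_{P_{x,r}}$, and shows $[A^Y_{\P}]\ast h_\xi=0$ by checking that $\xi$ restricts nontrivially to every intersection $P_{x,r}\cap gP^+g^{-1}$; this last step invokes \cite[Prop.~6.4]{MP}. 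You instead use the equivalent characterization of positive depth directly: $\rho(\pi)>0$ means $\pi^{P_{x,0^+}}=0$ for every point $x$ in the building, hence $\pi^{gP^+g^{-1}}=0$ for every $g$ and every $\P\in\Par$, so each conjugated idempotent $\pi(g)e_{P^+}\pi(g)^{-1}$ already kills $\pi$. This bypasses the $K$-type machinery entirely.

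What each approach buys: yours is cleaner and needs only the \emph{definition} of depth, while the paper's argument actually produces a stronger intermediate statement (the convolution $[A^Y_{\P}]\ast h_\xi$ vanishes as an element of $\C{H}(G(F))$, not merely its action on $\pi$). Your worry about the explicit normalization of $\dt_{\P^+/\P}$ is harmless: the identity $\dt_{\P^+}\ast\dt_{\P^+}\cong\dt_{\P^+}$ from \re{conv}~(d) forces $[\dt_{\P^+}]$ to be the idempotent $\mu^{P^+}$, so whatever constants appear in the averaging formula, each summand is a scalar multiple of a projector that vanishes on $\pi$.
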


\begin{proof}
By the definition of $z^0$ (see \re{decomp} (c)), we have to show that $f_{[A]}(\pi)=1$ for each
$\pi\in\Irr(G)^0$ and $f_{[A]}(\pi)=0$ for each $\pi\in\Irr(G)^{>0}$.

Assume first that $(\pi,V)\in \Irr(G)^0$. By definition, there exists $\P\in {\Par}$ such
that $V^{P^+}\neq 0$. We have to show that
$[A](v)=v$ for all $v\in V^{{P}^+}$. By \rco{stab} (b), we conclude that
\[[A](\dt_{{P}^+})=[A]([\dt_{\P^+}])=[\lan A\ran (\lan \dt_{\P^+}\ran)]=[\lan\dt_{\P^+}\ran]=\dt_{{P}^+}.\]

Note that for each  $v\in V^{P^+}$
we have $\dt_{{P}^+}(v)=v$.  Therefore by the observation of
\re{berncent} (c), we conclude that
\[
[A](v)=
[A](\dt_{{P}^+}(v))=([A](\dt_{{P}^+}))(v)=\dt_{{P}^+}(v)=v.
\]

Assume now that $(\pi,V)\in\Irr(G)$ is of depth $r>0$. Then, by the theory of Moy--Prasad \cite{MP,MP2},
there exists a congruence subgroup ${P}_{x,r}\subset G(F)$ and a
non-degenerate character $\xi:{P}_{x,r}/{P}_{x,r^+}\to\qlbar$ such
that the $\xi$-isotypical component
$V_{\xi}:=\Hom_{{P}_{x,r}}(\xi,V)$ is non-zero. It remains to show
that $[A](v)=0$ for every $v\in V_{\xi}$. Observe that $\xi$
defines a smooth measure $h_{\xi}:=\xi^{-1}\dt_{{P}_{x,r}}\in\C{H}(G(F))$, supported on ${P}_{x,r}$.
Then $h_{\xi}(v)=v$ for every $v\in V_{\xi}$, hence arguing as in
the depth zero case, it is enough to show that
$[A](h_{\xi})=0$.

By the definition of $\lan A\ran$ (see \rco{stab} (a)), $[A](h_{\xi})$
equals $[A^Y]\ast h_{\xi}$ for each
sufficiently large $Y\in\T$, and also
$[A^Y]=\sum_{\P\in{\Par}}(-1)^{\rk G-\rk
\P}[A_{\P}^Y]$.

It suffices to show that $[A_{\P}^Y]\ast h_{\xi}=0$ for every
$Y\in\T$ and $\P\in {\Par}$. Since $A_{\P}^Y=\Av^{Y_{\P}}(\dt_{\frac{\P^+}{\P}})$,
it suffices to check that for every $g\in G(F)$ the restriction of
$\xi$ to ${P}_{x,r}\cap g{P}^+g^{-1}$ is non-trivial.

Since the latter assertion is implicit in the argument of Moy--Prasad (see \cite[7.2, Case 2]{MP}), we outline the proof, using the notation of \cite{MP}.

Set $r_i:=r$ and choose a number $r_{i+1}>r_i$ and a point $y$ of the Bruhat--Tits building of $G(F)$ such that ${P}_{x,r_{i+1}}={P}_{x,r^+}$ and ${P}={P}_{y,0}$. Thus
$g{P}^+g^{-1}={P}_{g(y),0^+}$.

Let $\xi$ corresponds to the coset $X+\C{G}^*_{x,-r_{i-1}}\subset \C{G}^*_{x,-r_{i}}$, which does not contain nilpotent elements, because $\xi$ is non-degenerate. Assume that
the restriction of $\xi$ to ${P}_{x,r}\cap {P}_{g(y),0^+}$ is trivial. Then
$X\in \C{G}^*_{x,-r_{i-1}}+\C{G}^*_{g(y),0}$, thus $(X+\C{G}^*_{x,-r_{i-1}})\cap \C{G}^*_{g(y),0}\neq\emptyset$. Since $r_i=r>0$, this contradicts to \cite[Prop. 6.4]{MP}.
\end{proof}
%

Our next goal is to write down an explicit formula for the restriction $\nu_{z^0}|_{G^{rss}(F)}$
and to deduce its stability.

\begin{Emp} \label{E:affspr}
{\bf Homology of the affine Springer fibers.}

(a) For each $\gm\in
G^{rss}(F)$, we denote by $\Fl_{\gm}\subset \Fl$ the corresponding
affine Springer fiber, and by $H_i(\Fl_{\gm})=H_i(\Fl_{\gm},\qlbar)$
its $i$-th homology group. More precisely, for each $Y\in\T$, we set
$\Fl_{\gm}^Y:=\Fl_{\gm}\cap Y$, $H_i(\Fl^Y_{\gm}):=  H^{-i}(\Fl_{\gm}^Y,\B{D}_{\Fl_{\gm}^Y})$, and
we define $H_i(\Fl_{\gm}):=\colim_Y H_i(\Fl_{\gm}^Y)$.


(b) As it was observed by Lusztig (\cite{Lu2}), each
$H_i(\Fl_{\gm})$ is equipped with an action of the affine Weyl
group $\wt{W}$ of $G$ (compare \cite{BV}).

(c) The natural action of the loop ind-group $LG$ on $\Fl$ induces an
action of the centralizer $LG_{\gm}$ on the affine Springer
fiber $\Fl_{\gm}$, hence on its homology groups $H_i(\Fl_{\gm})$.
Moreover, the actions of $\wt{W}$ and $LG_{\gm}$ on
$H_i(\Fl_{\gm})$ commute.

(d) Set $F^{nr}:=\fqbar((t))$, and let $\Gm$ be the Galois group $\Gal(\ov{F}/F^{nr})$. Then $\Gm$ acts
on the group of cocharacters $X_*(G_{\gm})$ of $G_{\gm}$, so we can form the group of coinvariants
$\La_{\gm}:=X_*(G_{\gm})_{\Gm}$. Using Kottwitz lemma \cite[Lem 2.2]{Ko},
we have a natural isomorphism $\La_{\gm}\cong \pi_0(LG_{\gm})$.
Therefore there exists a canonical surjective
homomorphism $LG_{\gm}\to\La_{\gm}$ such that the action of $LG_{\gm}$ on
$H_i(\Fl_{\gm})$ factors through $\La_{\gm}$.

(e) By (b)-(d), the homology groups $H_i(\Fl_{\gm})$ are equipped
with commuting actions  of $\wt{W}$ and $\La_{\gm}$.
\end{Emp}

\begin{Emp} \label{E:canmap}
{\bf The canonical map.} (a) Set $\La:=X_*(T)$, and recall that $\wt{W}$ is the
semidirect product $\wt{W}=\La\rtimes W$.

(b) Since $G_{\gm}\subset G$ is a maximal torus, we have a natural
isomorphism $\varphi:T\isom G_{\gm}$ over $\ov{F}$, defined up to $W$-conjugacy.
Any such $\varphi$ induces a homomorphism
\[
\pi_{\varphi}:\La=X_*(T)\isom X_*(G_{\gm})\to \La_{\gm},
\]
hence an algebra homomorphism
\[
\pi_{\gm}:\qlbar[\La]^{W}\lra
\qlbar[\La]\overset{\pi_{\varphi}}{\lra}\qlbar[\La_{\gm}].
\]
Notice that since $\varphi$ is defined uniquely up to a $W$-conjugacy, the
homomorphism $\pi_{\varphi}$ is unique up to a $W$- conjugacy,
therefore $\pi_{\gm}$ is independent of $\varphi$.

(c) Note that $\Spec \qlbar[\La]$ is canonically the dual torus $\check{T}$ of $T$, $\Spec \qlbar[\La_{\gm}]$ is $(\check{G}_{\gm})^{\Gm}$, and
$\Spec \qlbar[\La]^W=\check{T}/W$ is the Chevalley space $c_{\check{G}}$ of $\check{G}$. Then
$\pi_{\gm}$ corresponds to the canonical morphism $\check{\pi}_{\gm}:(\check{G}_{\gm})^{\Gm}\hra\check{G}_{\gm}\to  c_{\check{G}}$.
\end{Emp}

The following result asserts that the actions of $\wt{W}$ and
$\La_{\gm}$ from \re{affspr} are "compatible". Assume that $char(k)>2h$,
where $h$ is Coxeter number (see \cite[1.11]{Yun}).

\begin{Thm} \label{T:action}
There exists a finite $\wt{W}\times\La_{\gm}$-equivariant
filtration $\{F^j H_i(\Fl_{\gm})\}_j$ of $H_i(\Fl_{\gm})$ such that
the action of $\qlbar[\La]^{W}\subset \qlbar[\La]$ on each graded
piece $\gr^j H_i(\Fl_{\gm})$ is induced from the action of
$\qlbar[\La_{\gm}]$ via homomorphism $\pi_{\gm}$.
\end{Thm}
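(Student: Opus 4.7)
The plan is to deduce this theorem from Yun's Lie algebra analog \cite{Yun} via a two-step reduction that first shears off the topologically semisimple part of $\gm$ and then transports the problem from the group to its Lie algebra by means of a logarithm map.

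\textbf{Step 1: Reduction to the topologically unipotent case.} First I would perform the topological Jordan decomposition $\gm=\gm_s\gm_u$, where $\gm_s\in G(F)$ has finite order prime to $p$ and $\gm_u$ is topologically unipotent. The connected centralizer $M:=(G_{\gm_s})^0$ is a pseudo-Levi subgroup of $G$ containing the maximal torus $G_\gm=M_{\gm_u}$, and $\gm_u$ is a regular semisimple element of $M(F)$. Arguing as in the Lie-algebra setup of \cite{KL}, one obtains a decomposition of $\Fl_\gm$ into a finite disjoint union of affine flag varieties for $M$, indexed by the double cosets $W_M\bs W_{\gm_s}^{G}/W$, each piece being isomorphic to the $M$-affine Springer fiber $\Fl^M_{\gm_u}$. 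This decomposition is equivariant for the $\La_\gm$-action (which factors through $M$) and compatible with Lusztig's $\wt{W}$-action via the inclusion $\wt{W}_M\subset\wt{W}$; consequently it suffices to establish the theorem when $\gm=\gm_u$ is topologically unipotent and $G$ is replaced by $M$.

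\textbf{Step 2: Transport to the Lie algebra via $\log$.} Under the hypothesis $\mathrm{char}(k)>2h$, the exponential/logarithm map converges and defines a $G$-equivariant bijection between the topologically unipotent elements of $G(F)$ and the topologically nilpotent regular semisimple elements of $\g(F)$. Setting $X:=\log(\gm)\in\g(F)^{rss}$, one checks that $LG_\gm$ and $LG_X$ coincide as closed sub-ind-schemes of $LG$, hence $\La_\gm=\La_X$ with the same map $\pi_\gm=\pi_X$. Moreover $\log$ induces a canonical $LG_\gm$-equivariant isomorphism of affine Springer fibers
\[
\mathrm{log}_\ast\colon \Fl_\gm\isom \Fl^{Lie}_X.
\]
The key verification here is that Lusztig's $\wt{W}$-action on $H_\ast(\Fl_\gm)$ (which is defined by nearby-cycles / monodromy as $\gm$ varies over the regular semisimple locus) agrees under $\mathrm{log}_\ast$ with the analogous $\wt{W}$-action on $H_\ast(\Fl^{Lie}_X)$.

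\textbf{Step 3: Apply Yun's theorem.} Once the identification of Step 2 is in place, the desired filtration is furnished by Yun's theorem \cite{Yun}, which provides a finite $\wt{W}\times \La_X$-equivariant filtration $\{F^jH_i(\Fl^{Lie}_X)\}_j$ whose graded pieces have the property that $\qlbar[\La]^W$ acts through $\pi_X$. Pulling this filtration back along $\mathrm{log}_\ast$ yields the required filtration of $H_i(\Fl_\gm)$.

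\textbf{Main obstacle.} The substantive content is not Step 3 (Yun's theorem is invoked as a black box, and its proof is global in character) but rather the compatibility claim in Step 2: that the intrinsically defined $\wt{W}$-action on group-theoretic homology matches the Lie-algebra one under $\log$. Lusztig's action is constructed by spreading out over a family of regular semisimple elements and taking monodromy in a suitable perverse sheaf; to transfer this construction through $\log$ one must verify that the family and its nearby cycles intertwine across the two settings. This is where the bound $\mathrm{char}(k)>2h$ enters in an essential way. A subsidiary issue in Step 1 is ensuring that the disjoint-union decomposition of $\Fl_\gm$ is genuinely $\wt{W}$-equivariant rather than only $\wt{W}_M$-equivariant; here one uses the fact that the cosets of $\wt{W}_M$ in $\wt{W}$ permute the pieces and the $\La_\gm$-action commutes with this permutation.
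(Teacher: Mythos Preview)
Your proposal follows essentially the same strategy as the paper's sketch: reduce from the group to the Lie algebra via the topological Jordan decomposition together with a logarithm-type map, then invoke Yun's theorem \cite{Yun} as a black box. The only difference worth noting is that the paper specifies \emph{quasi-logarithms} (referring to \cite{BV}) rather than the honest exponential series, though under the standing hypothesis $\mathrm{char}(k)>2h$ this distinction is not essential.
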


\begin{proof}[Sketch of a proof]
The main ingredient of the proof is an analogous result of Yun
(\cite[Thm 1.3]{Yun}) for Lie algebras.

The argument of Yun is very involved. First he treats the case when the reduction
$\ov{x}\in \Lie I/\Lie I^{+}$ of $x\in \Lie I\cap (\Lie G)^{rss}(F)$ is regular.
In this case, the affine Springer fiber $\Fl_{x}$ is discrete, and Lusztig's action
of $\wt{W}$ on $H_i(\Fl_{x})$ comes from an action of $\wt{W}$ on $\Fl_{x}$.
Moreover, the restriction of this action to $\La\subset\wt{W}$ coincides with the geometric
action of $\La_{x}=\La$.

To show the result in general, Yun uses global method, extending
some of the results of Bao Chau Ngo (\cite {Ngo}).
To deduce the assertion for groups, we use quasi-logarithms and
the topological Jordan decomposition (see \cite{BV}).
\end{proof}

\begin{Cor} \label{C:fg}
For each $\gm\in G^{rss}(F)$ and $i\in\B{Z}$, the homology group $H_i(\Fl_{\gm})$
is a finitely generated $\qlbar[\wt{W}]$-module.
\end{Cor}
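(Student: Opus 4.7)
The plan is to combine \rt{action} with the classical finiteness theorem of Kazhdan--Lusztig for affine Springer fibers. The key algebraic observation is that $\qlbar[\wt{W}]=\qlbar[\La]\rtimes W$ is a finitely generated module over its central subalgebra $\qlbar[\La]^W$: indeed, $\qlbar[\wt{W}]$ is free of rank $|W|$ as a left module over $\qlbar[\La]$ (with basis $\{w:w\in W\}$), while $\qlbar[\La]$ is module-finite over $\qlbar[\La]^W$ by the standard integrality argument (each $x\in\qlbar[\La]$ is a root of the monic polynomial $\prod_{w\in W}(T-w\cdot x)\in\qlbar[\La]^W[T]$). Consequently, finite generation over $\qlbar[\wt{W}]$ is equivalent to finite generation over $\qlbar[\La]^W$, so it is enough to prove the latter.

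By \rt{action}, there is a finite $\wt{W}\times\La_\gm$-equivariant filtration $\{F^jH_i(\Fl_\gm)\}_j$ whose graded pieces carry a $\qlbar[\La]^W$-action factoring through $\pi_\gm:\qlbar[\La]^W\to\qlbar[\La_\gm]$. Since the filtration has finitely many steps, it suffices to show each $\gr^jH_i(\Fl_\gm)$ is finitely generated over $\qlbar[\La]^W$. This splits into two assertions: (a) the homomorphism $\pi_\gm$ is \emph{finite}, that is, $\qlbar[\La_\gm]$ is finitely generated as a module over $\pi_\gm(\qlbar[\La]^W)$; and (b) $\gr^jH_i(\Fl_\gm)$ is finitely generated as a $\qlbar[\La_\gm]$-module. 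For (a), by \re{canmap}~(c) the corresponding map of affine schemes $\check{\pi}_\gm:(\check{G}_\gm)^\Gm\to c_{\check{G}}$ factors as the closed embedding $(\check{G}_\gm)^\Gm\hookrightarrow\check{G}_\gm$ followed by the Chevalley quotient $\check{G}_\gm\to c_{\check{G}}$; both are finite morphisms of affine schemes, hence so is their composition.

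For (b), since $\La_\gm=X_*(G_\gm)_\Gm$ is a finitely generated abelian group, $\qlbar[\La_\gm]$ is a Noetherian commutative $\qlbar$-algebra, so subquotients of finitely generated $\qlbar[\La_\gm]$-modules are themselves finitely generated. Thus (b) reduces to showing that $H_i(\Fl_\gm)$ itself is finitely generated over $\qlbar[\La_\gm]$. This is the main substantive input and is the classical theorem of Kazhdan--Lusztig \cite{KL}: the $\La_\gm$-action on $\Fl_\gm$ induced by the projection $LG_\gm\to\La_\gm$ of \re{affspr}~(d) has the property that $\Fl_\gm$ is covered by finitely many $\La_\gm$-translates of a fixed finite-dimensional closed subscheme $\Fl_\gm^Y$, which upon passing to Borel--Moore homology $H_i(\Fl_\gm)=\colim_Y H^{-i}(\Fl_\gm^Y,\B{D}_{\Fl_\gm^Y})$ yields finite generation over $\qlbar[\La_\gm]$. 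The appeal to this Kazhdan--Lusztig input is the one substantive geometric step; once it is in hand, the remainder of the argument is purely structural commutative algebra combined with the filtration from \rt{action}.
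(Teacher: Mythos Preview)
Your proof is correct and follows essentially the same route as the paper's, with the added virtue of making explicit two points the paper leaves implicit: the finiteness of $\pi_\gm$ and the module-finiteness of $\qlbar[\wt{W}]$ over $\qlbar[\La]^W$. One small imprecision worth fixing: the Kazhdan--Lusztig input uses the lattice $\ov{\La}_\gm:=X_*(G_\gm)^\Gm\subset LG_\gm$ acting discretely on $\Fl_\gm$ with projective quotient (the quotient group $\La_\gm=X_*(G_\gm)_\Gm$ does not act on $\Fl_\gm$ itself, only on its homology), and one then passes from finite generation over $\qlbar[\ov{\La}_\gm]$ to finite generation over $\qlbar[\La_\gm]$ using that the natural map $\ov{\La}_\gm\to\La_\gm$ has finite kernel and cokernel.
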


\begin{proof}
First we claim that  $H_i(\Fl_{\gm})$ is a finitely generated $\qlbar[\La_{\gm}]$-module.
Indeed, set $\ov{\La}_{\gm}:=X_*(G_{\gm})^{\Gm}$. Then we have a natural embedding
$\ov{\La}_{\gm}\hra LG_{\gm}$, the group $\ov{\La}_{\gm}$ acts on $\Fl_{\gm}$ discretely, and
the quotient $\ov{\La}_{\gm}\bs \Fl_{\gm}$ is a projective scheme (by \cite{KL}). This implies that
$H_i(\Fl_{\gm})$ is a finitely generated $\qlbar[\ov{\La}_{\gm}]$-module, thus a finitely generated
$\qlbar[\La_{\gm}]$-module (see \cite{BV} for details).

Now the assertion follows from \rt{action}. Indeed, since $\qlbar[\La_{\gm}]$ is Noetherian, each graded piece
$\gr^j H_i(\Fl_{\gm})$ is a finitely generated $\qlbar[\La_{\gm}]$-module, hence a finitely generated $\qlbar[\La]^W$-module
(by \rt{action}). Thus  $H_i(\Fl_{\gm})$ is a finitely generated $\qlbar[\La]^W$-module, hence a finitely generated
$\qlbar[\wt{W}]$-module.
\end{proof}

\begin{Emp}
{\bf Remark.} This statement appears also as Conjecture 3.6 in \cite{Lu4}.
It is also mentioned in {\em loc. cit.} that the statement should follow
from the result of \cite{Yun}.
\end{Emp}
\begin{Emp} \label{E:fg}
{\bf Observations.} (a) It follows from Theorems \ref{T:rss} and \ref{T:conjunit} that
the restriction of $\nu_{z^0}=\nu_{[A]}$ to $G^{rss}(F)$ has
the form $\phi_{z^0}\mu^{I^+}$ for some $\phi_{z^0}\in
C^{\infty}(G^{rss}(F))$, where $\mu^{I^+}$ was defined in \re{haarmeas}.

(b) Using  \rco{fg} and arguing as in \re{remfd}, we see that
$\Tor_j^{\wt{W}}(H_i(\Fl_{\gm}),\sgn)$ is a finite-dimensional $\qlbar$-vector space,
equipped with an action of $\Gal(\fqbar/\fq)$.  In particular, we can consider trace
$\Tr(\Fr,\Tor_j^{\wt{W}}(H_i(\Fl_{\gm}),\sgn))\in\qlbar$.
\end{Emp}

Taking into account \rt{conjunit}, the following result is
\rcn{conj2} (b) for the unit element. Its proof will occupy Section \ref{S:tf}.

\begin{Thm} \label{T:tf}
For each topologically unipotent $\gm\in G^{rss}(F)$,  we have an
equality
\begin{equation} \label{Eq:equal}
\phi_{z^0}(\gm)=\sum_{i,j}(-1)^{i+j}
\Tr(\Fr,\Tor_j^{\wt{W}}(H_i(\Fl_{\gm}),\sgn)).
\end{equation}
\end{Thm}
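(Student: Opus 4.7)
My plan is to compute $\phi_{z^0}(\gm)$ geometrically via the identification $z^0=[A]$ from \rt{conjunit}, and then to reinterpret the resulting Frobenius trace as the $\wt{W}$-derived-skew-coinvariants of $R\Gm(\Fl_\gm,\B{D}_{\Fl_\gm})$, from which a spectral sequence gives the stated formula. First, I would combine \rt{rss} with the sheaf-function dictionary of \re{shfun1}: by \rt{rss}, there exists $n$ such that $\{i^*_{\gm,n}(\lan A^Y\ran)\}_{Y\in\T}$ stabilizes to some $\lan A_\gm\ran\in K_0(M^{\Fr}(\gm\I_n^+))$, and the function attached under sheaf-to-function correspondence to this measure class on $\gm\I_n^+$ is, up to the Haar normalization $\mu^{I^+}$, the restriction of $\phi_{z^0}$ to that neighborhood. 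So the task reduces to computing $\Tr(\Fr,\lan A_\gm\ran)$ and matching the normalizations.

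Second, I would expand $\lan A^Y\ran=\sum_{\P\in\Par}(-1)^{\rk G-\rk\P}\lan A^Y_\P\ran$ and compute each stalk individually. As in the proof of \rl{const} (\re{pflconst}), $\bar A^Y_\P=(\pr_2)_!(1_{\Spr^Y_\P})$, so
$$i^*_\gm(\bar A^Y_\P)\cong R\Gm_c(Y_\P\cap\Fl_{\P,\gm},\qlbar),$$
the compactly-supported cohomology of the portion of the affine Springer fiber in $\Fl_\P$ lying over $Y_\P$. Combined with the explicit Haar-measure factor $i^*_\gm(\mu^{\P^+})$ and with stabilization in $Y$ (enlarging $n$ if necessary), this identifies $\lan A_\gm\ran$ with an alternating sum indexed by $\P\in\Par$ of Frobenius traces on the cohomology of the $\Fl_{\P,\gm}^Y$.

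Third, and this is the decisive step, I would promote the $K$-theoretic alternating sum to a homotopy-coherent identification in the stable $\infty$-category of constructible sheaves. Using the functorial version $\C{A}_\star^\star:\Par^{op}\times\T^{op}\to \C{M}(LG)$ (the $\infty$-categorical analog of \rl{func}; compare \re{strategy}(b)), I would show that $\lim_{\P\in\Par}\lim_{Y\in\T}i^*_\gm(\C{A}^Y_\P)$ exists and is canonically equivalent to
$$R\Gm(\Fl_\gm,\B{D}_{\Fl_\gm})_{h\wt{W},\sgn},$$
the derived $\wt{W}$-skew-coinvariants. Here the parabolic average $\C{A}^Y_\P$ corresponds via the $\infty$-categorical form of \rl{weyl} to the $W_\P$-skew-coinvariants of the stalk, and the fact that $\wt{W}$ is the homotopy colimit $\colim_\P W_\P$ (cf.\ \re{wpactions}(e)) converts the inverse limit over $\P$ into total $\wt{W}$-derived skew-coinvariants. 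The formula then follows from the spectral sequence $E^2_{i,j}=\Tor_j^{\wt{W}}(H_i(\Fl_\gm),\sgn)\Rightarrow R\Gm(\Fl_\gm,\B{D}_{\Fl_\gm})_{h\wt{W},\sgn}$ by taking Frobenius traces; finite-dimensionality of each $E^2$-term, needed to make the traces well-defined, is supplied by \rco{fg}, which in turn rests on \rt{action} (Yun's theorem, where topological unipotency of $\gm$ is essential). The main obstacle is precisely this $\infty$-categorical identification: matching the inverse system of parabolic averages with a Koszul-type resolution of $\sgn$ indexed by proper subsets of affine simple reflections, and checking that the $\wt{W}$-action induced on $H_*(\Fl_\gm)$ by the categorical construction coincides with the classical Lusztig action from \re{affspr}(b). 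This is the point at which the formalism of stable $\infty$-categories (rather than triangulated categories) becomes indispensable, as flagged in the introductory Remark of the paper.
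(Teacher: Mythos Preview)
Your proposal is essentially correct and follows the same strategy as the paper's proof: identify the stalk of $\lan A^Y\ran$ at $\gm$ with an alternating sum over parahorics of affine-Springer-fiber cohomology, lift the diagram $\Par^{op}\times\T^{op}\to D(k)$ to the stable $\infty$-category $\C{D}(k)$, realize the colimit over $\Par$ as derived $\wt{W}$-skew-coinvariants via the simplicial (``Koszul'') resolution of the trivial module by $\bigoplus_{\P}\qlbar[\wt{W}/W_\P]$, and conclude by a spectral-sequence/$K$-group argument together with stabilization in $Y$. The paper carries this out in Section~5: the reduction to a $K$-group identity is \rco{reduction} (via \rl{hom} and \rl{trfr}), the $\infty$-categorical lift is \rcl{functor}, stabilization is \rp{stab}, and the passage from the lifted colimit to the alternating sum of Tor-groups is \rl{cohom}.

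There is, however, one substantive point where your plan diverges from what the paper actually does and where a gap could open. You propose to use ``the functorial version $\C{A}_\star^\star:\Par^{op}\times\T^{op}\to \C{M}(LG)$ (the $\infty$-categorical analog of \rl{func}; compare \re{strategy}(b))''. But the material in \re{strategy} belongs to subsection~3.3, where the results are stated as ``Theorems'' in quotation marks and explicitly deferred to \cite{BKV2}; invoking it would make the argument depend on unproven statements. The paper's proof of \rt{tf} is self-contained precisely because it does \emph{not} lift $A_\star^\star$ in $\C{M}(LG)$; instead it pulls back to $\gm$ first and then constructs, by hand, a lift $\C{C}^\star_\star(\gm):\T\times\Par\to\C{D}(k)$ of the explicit functor $C^\star_\star(\gm)$ (\rcl{functor}, proven in subsection~5.3). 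Moreover, it is not enough to produce \emph{some} lift: to run the stabilization argument (\rp{stab}) one needs the cofiber $\C{C}^{Y';Y}_\star(\gm)$ to be identified, for each $\P\subset\Q$, with the Verdier dual of the concrete geometric map \form{morph}, so that \rl{isom} can be applied cell by cell. This ``geometric lift'' condition (\re{geomlift}(e)) and its verification (\rcl{geom}) are exactly the ``main obstacle'' you flag, and they are where most of the actual work lies.
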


The following result is a particular case of \rcn{conj3} (b).

\begin{Thm} \label{T:stable}
The restriction $\nu_{z^0}|_{G^{rss}(F)}$ is stable.
\end{Thm}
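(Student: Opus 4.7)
The plan is to combine the explicit formula of Theorem~\ref{T:tf} with the group-theoretic version of Yun's theorem (Theorem~\ref{T:action}), together with Harish--Chandra descent to reduce to the topologically unipotent locus. First, by the topological Jordan decomposition $\gm = \gm_s\gm_u$ and standard Harish--Chandra descent, stability of $\nu_{z^0}|_{G^{rss}(F)}$ at a general regular semisimple $\gm$ reduces to the analogous question at the topologically unipotent element $\gm_u$ in the regular semisimple locus of $G_{\gm_s}(F)$, where the entire construction of Section~\ref{SS:unit} applies to the (connected) reductive $F$-group $G_{\gm_s}$. So one may assume $\gm$ is topologically unipotent and invoke Theorem~\ref{T:tf}.

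Given this, I would rewrite the alternating sum
\[
\phi_{z^0}(\gm)=\sum_{i,j}(-1)^{i+j}\Tr(\Fr,\Tor_j^{\wt W}(H_i(\Fl_\gm),\sgn))
\]
using the decomposition $\wt W=\La\rtimes W$ and the fact that $\sgn|_\La$ is trivial. A Hochschild--Serre spectral sequence identifies the total alternating Frobenius trace with
\[
\sum_{i,p,q}(-1)^{i+p+q}\Tr(\Fr,\Tor_p^W(\Tor_q^{\La}(H_i(\Fl_\gm),\qlbar),\sgn)).
\]
Theorem~\ref{T:action} now provides a finite $\wt W\times\La_\gm$-equivariant filtration on $H_i(\Fl_\gm)$ whose graded pieces are $\qlbar[\La_\gm]$-modules on which $\qlbar[\La]^W$ acts via the stable homomorphism $\pi_\gm\colon\qlbar[\La]^W\to\qlbar[\La_\gm]$. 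Freeness of $\qlbar[\La]$ over $\qlbar[\La]^W$ (Pittie--Steinberg) permits passage between $\qlbar[\La]$- and $\qlbar[\La]^W$-Tor, and since Euler characteristics are insensitive to finite filtrations, the entire alternating Frobenius trace is determined by the $\qlbar[\La_\gm]$-module structure of the graded pieces together with $\pi_\gm$ and the Frobenius action on $\La_\gm$.

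The final observation is that all of this is stable data. Dually, $\pi_\gm$ corresponds to the canonical morphism $(\check G_\gm)^{\Gm}\to c_{\check G}$, and $\La_\gm=X_*(G_\gm)_{\Gm}$ with its Frobenius action depends only on $G_\gm$ as an $F$-group, both of which are manifestly invariants of the stable conjugacy class of $\gm$. Hence $\phi_{z^0}(\gm)=\phi_{z^0}(\gm')$ whenever $\gm,\gm'$ are stably conjugate. The main obstacle will be to lift the graded-piece statement of Theorem~\ref{T:action} to the level of Frobenius traces: one must verify that the Frobenius structure on the associated graded is itself controlled by stable data, which in practice requires a Frobenius-equivariant refinement of Theorem~\ref{T:action} (keeping track of the Frobenius action throughout Yun's global argument). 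Once this refinement is in hand, the stability of $\nu_{z^0}|_{G^{rss}(F)}$ follows immediately.
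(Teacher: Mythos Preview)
Your proposal has two gaps, one of which you identify yourself, and the paper's argument avoids both by taking a different route.

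First, the reduction to the topologically unipotent locus. The paper does not use Harish--Chandra descent to $G_{\gm_s}$; instead it observes directly that $\nu_{z^0}=\nu_{[A]}$ is \emph{supported} on topologically unipotent elements, because each $A_{\P}^Y=\Av^{Y_\P}(\dt_{\P^+/\P})$ is supported on elements conjugate to $\P^+$. Your descent argument would require re-running the whole construction for the centralizer $G_{\gm_s}$, which need not be split or simply connected, so the standing hypotheses fail and the argument does not go through as stated.

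Second, and more importantly, the paper does not attempt to express $\phi_{z^0}(\gm)$ as a function of ``stable data'' and hence does not need any Frobenius-equivariant refinement of Theorem~\ref{T:action}. Instead it argues \emph{relatively}: if $\gm,\gm'$ are stably conjugate there is $g\in G(F^{nr})$ with $g\gm g^{-1}=\gm'$, which induces an isomorphism $\Fl_\gm\isom\Fl_{\gm'}$ and hence an isomorphism of each $\Tor_j^{\wt W}(H_i(\Fl_\gm),\sgn)$ with the corresponding group for $\gm'$. Under this isomorphism the two Frobenii differ by the action of $h:=g^{-1}\,{}^{\Fr}g\in LG_\gm$, so one is reduced to showing that $\La_\gm=\pi_0(LG_\gm)$ acts \emph{unipotently} on $H_j(\La,H_i(\Fl_\gm))$. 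This is where Theorem~\ref{T:action} enters, but only at the level of supports: since $\La$ acts trivially on $H_j(\La,\,\cdot\,)$, the associated coherent sheaf on $\check T$ is supported at $1$, hence its pushforward to $c_{\check G}$ is supported at $[1]$; the compatibility of Theorem~\ref{T:action} then forces the $\qlbar[\La_\gm]$-module to be supported at $1\in(\check G_\gm)^\Gm$, i.e.\ the $\La_\gm$-action is unipotent. No statement about Frobenius on the graded pieces is needed. Your Hochschild--Serre/Pittie--Steinberg reformulation is a reasonable way to organize the Tor groups, but the step ``the Frobenius structure on the associated graded is itself controlled by stable data'' is exactly what is hard, and the paper's relative comparison sidesteps it entirely.
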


\begin{proof}
We have to show that $\phi_{z^0}(\gm)=\phi_{z^0}(\gm')$ for every pair of stably conjugate elements
$\gm,\gm'\in G^{rss}(F)=LG^{rss}(\fq)$.

First we claim that $\nu_{z_0}=\nu_{[A]}$ is supported on topologically unipotent elements.
Indeed, this follows from the fact that $[A^Y]=\sum_{\P\in{\Par}}(-1)^{\rk G-\rk
\P}[A_{\P}^Y]$ and that  $A_{\P}^Y={\Av}^{Y_{\P}}(\dt_{\frac{\P^+}{\P}})$ is supported
on elements, conjugate to $\P^+$. Therefore  $\phi_{z^0}(\gm)$ and $\phi_{z^0}(\gm')$ are vanish, unless
$\gm$ and $\gm'$ are topologically unipotent.

Assume now that $\gm$ and $\gm'$ are topologically unipotent. Then, by \rt{tf}, it suffices to show that for each $i,j$ we have an equality
\[
\Tr(\Fr,\Tor_j^{\wt{W}}(H_i(\Fl_{\gm}),\sgn))=\Tr(\Fr,\Tor_j^{\wt{W}}(H_i(\Fl_{\gm'}),\sgn)).
\]
Since $\gm,\gm'\in LG^{rss}(\fq)=G^{rss}(F)$ are stably conjugate and $H^1(F^{nr},G_{\gm})=1$, there exists
$g\in LG(\fqbar)=G(F^{nr})$ such that $g\gm g^{-1}=\gm'$. Then $h:=g^{-1}{}^{\Fr}g$ belongs to $LG_{\gm}$, and element $g$ induces an isomorphism $\Fl_{\gm}\isom \Fl_{\gm'}$, hence an isomorphism $\Tor_j^{\wt{W}}(H_i(\Fl_{\gm}),\sgn))\isom \Tor_j^{\wt{W}}(H_i(\Fl_{\gm'}),\sgn))$. Using the identity $g^{-1}\circ\Fr\circ g=h\circ\Fr$,
we conclude that
\[
\Tr(\Fr,\Tor_j^{\wt{W}}(H_i(\Fl_{\gm'}),\sgn))= \Tr(h\circ \Fr,\Tor_j^{\wt{W}}(H_i(\Fl_{\gm}),\sgn))
\]
(see \cite{BV} for more details). Hence it suffices to show an equality
\[
\Tr(h\circ \Fr,\Tor_j^{\wt{W}}(H_i(\Fl_{\gm}),\sgn))=\Tr(\Fr,\Tor_j^{\wt{W}}(H_i(\Fl_{\gm}),\sgn)).
\]

It is therefore enough to show that there exists a $\lan\Fr\ran\ltimes LG_{\gm}$-equivariant filtration
on $\Tor_j^{\wt{W}}(H_i(\Fl_{\gm}),\sgn)=H_j(\La,H_i(\Fl_{\gm}))_{W,\sgn}$ such that
$\pi_0(LG_{\gm})\cong\La_{\gm}$ acts trivially on each graded piece. It suffices to show that
each element of $\La_{\gm}$ acts on $H_j(\La,H_i(\Fl_{\gm}))$ unipotently.
Geometrically this means that the coherent sheaf $\C{F}$ on $(\check{G}_{\gm})^{\Gm}$,
corresponding to the  $\qlbar[\La_{\gm}]$-module $H_j(\La,H_i(\Fl_{\gm}))$ (see \re{canmap} (c)),
is supported at $1\in  (\check{G}_{\gm})^{\Gm}$.

By definition, the group $\La$ acts trivially on $H_j(\La,H_i(\Fl_{\gm}))$.
Geometrically this means that the coherent sheaf $\C{F}'$ on $\check{T}$, corresponding to $H_j(\La,H_i(\Fl_{\gm}))$, is a direct sum of $\dt$-sheaves at the identity. The restriction of $H_j(\La,H_i(\Fl_{\gm}))$ to $\qlbar[\La]^W$ corresponds to the pushforward $\pr_*\C{F}'$, where
$\pr$ is the projection $\check{T}\to\check{T}/W=c_{\check{G}}$. In particular, $\pr_*\C{F}'$ is supported at $[1]\in c_{\check{G}}$.

Since the actions of $\qlbar[\La]^W$ and $\La_{\gm}$ of $H_i(Fl_{\gm})$ are compatible (\rt{action}),
the induced actions on $H_j(\La,H_i(\Fl_{\gm}))$ are compatible as well. Geometrically this means that
$\C{F}$ and $\check{\pi}_{\gm}^*(\pr_*\C{F}')$ have filtrations with isomorphic graded pieces. In particular, the support
of $\C{F}$ equals the support of $\check{\pi}_{\gm}^*(\pr_*\C{F}')$, which is
$\check{\pi}_{\gm}^{-1}([1])=\{1\}$.
\end{proof}
\begin{Rem}
We believe that using an extension of a theorem of Harish-Chandra, the stability of
$\nu_{z^0}$ formally follows from the stability of $\nu_{z^0}|_{G^{rss}(F)}$.
\end{Rem}

\section{Proof of \rt{tf}} \label{S:tf}

\subsection{Reformulation of the problem} In this subsection we will reduce \rt{tf} to a certain
equality in a $K$-group (see \rco{reduction}). First we need to introduce several constructions.
As before, we can assume that $G$ is simple.


\begin{Emp} \label{E:verspr}
{\bf Springer theory revisited.}

(a) For $\P\in \Par$, we set $n_{\P}:=\dim \I^+/\P^+$ and $\ov{\dt}_{\P^+}:=1_{\frac{\P^+}{\P}}[2n_P](n_P)\in D(\frac{\P}{\P})$. Every embedding $\eta:\P\hra\Q$ in $\Par$ induces a map $[\eta]:\frac{\P}{\P}\to\frac{\Q}{\Q}$
and a closed embedding $\ov{\eta}:\frac{\Q^+}{\P}\hra\frac{\P^+}{\P}$. Note that $[\eta]^*\ov{\dt}_{\Q^+}\cong 1_{\frac{\Q^+}{\P}}[2n_Q](n_Q)\in D(\frac{\P}{\P})$, thus
we have natural isomorphisms $([\eta]^*\ov{\dt}_{\Q^+})|_{\frac{\Q^+}{\P}}\cong\ov{\eta}^!(\ov{\dt}_{\P^+}|_{\frac{\P^+}{\P}})$
and $([\eta]^*\ov{\dt}_{\Q^+})|_{\frac{\P^+}{\P}}\cong \ov{\eta}_!\ov{\eta}^!(\ov{\dt}_{\P^+}|_{\frac{\P^+}{\P}})$, hence a bijection
\[
\Hom_{D(\frac{\P}{\P})}([\eta]^*\ov{\dt}_{\Q^+},\ov{\dt}_{\P^+})\cong \Hom_{D(\frac{\P^+}{\P})}(\ov{\eta}_!\ov{\eta}^!(\ov{\dt}_{\P^+}|_{\frac{\P^+}{\P}}),\ov{\dt}_{\P^+}|_{\frac{\P^+}{\P}}).
\]
In particular, we have a natural morphism $[\eta]^*\ov{\dt}_{\Q^+}\to\ov{\dt}_{\P^+}$, corresponding to the counit map $c:\ov{\eta}_!\ov{\eta}^!\to\Id$.

(b) For every $\eta:\P\hra\Q$ in $\Par$ as in (a), we have isomorphisms
\[
\RHom_{D(\frac{\P}{\P})}([\eta]^*\ov{\dt}_{\Q^+},\ov{\dt}_{\P^+})\cong \RHom_{D(\frac{\P^+}{\P})}(\ov{\eta}_!\ov{\eta}^!(\ov{\dt}_{\P^+}|_{\frac{\P^+}{\P}}),\ov{\dt}_{\P^+}|_{\frac{\P^+}{\P}})\cong
\]\[
\cong \RHom_{D(\frac{\Q^+}{\P})}(\ov{\eta}^!(\ov{\dt}_{\P^+}|_{\frac{\P^+}{\P}}),\ov{\eta}^!(\ov{\dt}_{\P^+}|_{\frac{\P^+}{\P}}))\cong \RHom_{D(\frac{\Q^+}{\P})}(1_{\frac{\Q^+}{\P}},1_{\frac{\Q^+}{\P}}).
\]
Note that the projection $p:\frac{\Q^+}{\P}\to \frac{1}{L_{\P}}$ is pro-unipotent (use \rl{unipstack} (a)), thus the functor
$p^*$ is fully faithful (by \rl{ff}), hence we have an isomorphism.
\[
\RHom_{D(\frac{\P}{\P})}([\eta]^*\ov{\dt}_{\Q^+},\ov{\dt}_{\P^+})\cong \RHom_{D(\frac{1}{L_{\P}})}
(1_{\frac{1}{L_{\P}}},1_{\frac{1}{L_{\P}}}).
\]
 In particular, we have
$\dim_{\qlbar}\Hom([\eta]^*\ov{\dt}_{\Q^+},\ov{\dt}_{\P^+})=\dim_{\qlbar}\Hom
(1_{\frac{1}{L_{\P}}},1_{\frac{1}{L_{\P}}})=1$ and \\
$\Ext^{-j}([\eta]^*\ov{\dt}_{\Q^+},\ov{\dt}_{\P^+})=\Ext^{-j}(1_{\frac{1}{L_{\P}}},1_{\frac{1}{L_{\P}}})=0$ for all $j>0$.

(c) Let $\eta$ be the embedding $\I\hra \P$. Then classical Springer theory implies that
$[\eta]_*\ov{\dt}_{\I^+}\in D(\frac{\P}{\P})$ is equipped with $W_{\P}$-action, and
$([\eta]_*\ov{\dt}_{\I^+})^{W_{\P},\sgn}\cong \ov{\dt}_{\P^+}$ (see the proof of \rl{weyl}).
Moreover, since $\Hom(\ov{\dt}_{\P^+},[\eta]_*\ov{\dt}_{\I^+})\cong \Hom([\eta]^*\ov{\dt}_{\P^+},\ov{\dt}_{\I^+})$ is one-dimensional (by (b)), we can normalize the isomorphism $\ov{\dt}_{\P^+}\cong ([\eta]_*\ov{\dt}_{\I^+})^{W_{\P},\sgn}$ such that the composition $\ov{\dt}_{\P^+}\isom ([\eta]_*\ov{\dt}_{\I^+})^{W_{\P},\sgn}\hra [\eta]_*\ov{\dt}_{\I^+}$ corresponds by adjointness to the morphism $[\eta]^*\ov{\dt}_{\P^+}\to\ov{\dt}_{\I^+}$, defined in (a).
\end{Emp}

\begin{Emp} \label{E:aplasf}
{\bf Application to the affine Springer fibers.} Let $\gm\in G^{rss}(F)=LG^{rss}(\fq)$ be a topologically unipotent element.

(a) For $\P\in \Par$, we let $\Fl_{\gm,\P}\subset\Fl_{\P}$ be the ind-scheme of $\gm$-fixed points.
Denote by $\pr_{\gm,\P}:\Fl_{\gm,\P}\to \frac{\P}{\P}$ the map $[g]\mapsto [g^{-1}\gm g]$, and set
$\wt{\dt}_{\P^+}:=\pr_{\gm,\P}^*(\ov{\dt}_{\P^+})\in D(\Fl_{\gm,\P})$. Notice that since $\gm$ is topologically unipotent, and $\ov{\dt}_{\I^+}=1_{\frac{\I^+}{\I}}$, we have  $\wt{\dt}_{\I^+}=1_{\Fl_{\gm,\P}}$.
For each $Y\in\T$, we set $\Fl^Y_{\gm,\P}:=\Fl_{\gm,\P}\cap Y_{\P}\subset \Fl_{\P}$.
We denote the $*$-restriction of $\wt{\dt}_{\P^+}$ to $\Fl^Y_{\gm,\P}$ simply by $\wt{\dt}_{\P^+}$.

(b) We claim that there exists a natural functor  $C^\star_{\star}(\gm):\T\times\Par\to D(k)$, which sends pair
$(Y,\P)$ to $C^Y_{\P}(\gm):=R\Gm(\Fl^Y_{\gm,\P},\B{D}(\wt{\dt}_{\P^+}))\cong \B{D}(R\Gm(\Fl^Y_{\gm,\P},\wt{\dt}_{\P^+}))$,
where $\B{D}$ denotes the Verdier duality.

Indeed, for every $Y\subset Y'$ in $\T$, we have a closed embedding  $\Fl^Y_{\gm,\P}\hra \Fl^{Y'}_{\gm,\P}$. We define the map $C^Y_{\P}(\gm)\to C^{Y'}_{\P}(\gm)$ to be the Verdier dual of the
restriction map $R\Gm(\Fl^{Y'}_{\gm,\P},\wt{\dt}_{\P^+})\to R\Gm(\Fl^{Y}_{\gm,\P},\wt{\dt}_{\P^+})$.

Next, every inclusion $\eta:\P\hra\Q$ in $\Par$ induces a projection $\wt{\eta}:\Fl^Y_{\gm,\P}\to \Fl^Y_{\gm,\Q}$, which gives rise to a map $\wt{\eta}^*\wt{\dt}_{\Q^+}\to \wt{\dt}_{\P^+}$, induced by the map  $[\eta]^*\ov{\dt}_{\Q^+}\to \ov{\dt}_{\P^+}$ from \re{verspr} (a). Therefore $\eta$ induces a map
\[
R\Gm(\Fl^Y_{\gm,\Q},\wt{\dt}_{\Q^+})\overset{\wt{\eta}^*}{\lra} R\Gm(\Fl^Y_{\gm,\P},\wt{\eta}^*\wt{\dt}_{\Q^+})\to
R\Gm(\Fl^Y_{\gm,\P},\wt{\dt}_{\P^+}),
\]
which by duality induces a map $C^Y_{\P}(\gm)\to C^Y_{\Q}(\gm)$.
Finally, we notice that the above maps are compatible with compositions.

(c)  Consider the inductive limit  $C_{\star}(\gm):=\indlim_{Y\in\T} C_{\star}^{Y}(\gm):\Par\to D(k)$.
It sends $\P$ to $C_{\P}(\gm)=R\Gm(\Fl_{\gm,\P}, \B{D}(\wt{\dt}_{\P^+}))$. In particular, $C_{\I}(\gm)=R\Gm(\Fl_{\gm},\B{D}_{\Fl_{\gm}})$.

(d) We claim that for every $\P\in \Par$ element $C_{\I}(\gm)\in D(k)$ is equipped with the action of $W_{\P}$, and the morphism $C_{\I}(\gm)\to C_{\P}(\gm)$ from (c) induces an isomorphism
$C_{\I}(\gm)_{W_{\P},\sgn}\isom C_{\P}(\gm)$.

By the Verdier duality, it suffices to show that $R\Gm(\Fl_{\gm},\wt{\dt}_{\I^+})$ is equipped with the action of $W_{\P}$, and the morphism $R\Gm(\Fl_{\gm,\P},\wt{\dt}_{\P^+})\to R\Gm(\Fl_{\gm},\wt{\dt}_{\I^+})$ induces an isomorphism $R\Gm(\Fl_{\gm,\P},\wt{\dt}_{\P^+})\isom R\Gm(\Fl_{\gm},\wt{\dt}_{\I^+})^{W_{\P},\sgn}$. Note that the following diagram is Cartesian
\[
\begin{CD}
\Fl_{\gm} @>\pr_{\gm}>> \frac{\I}{\I}\\
@V\wt{\eta}VV @V[\eta]VV\\
\Fl_{\gm,\P} @>\pr_{\gm,\P}>> \frac{\P}{\P},
\end{CD}
\]
thus the assertion follows from the Springer theory (\re{verspr} (c)) and the base change isomorphism.
\end{Emp}

\begin{Emp} \label{E:remasf}
{\bf A morphism.} Let $\gm\in LG^{rss}(\fq)$ be as in \re{aplasf}. For every $Y\subset Y'$ in $\T$ and $\P\in \Par$, we set
$\Fl^{Y';Y}_{\gm,\P}:=\Fl^{Y'}_{\gm,\P}\sm \Fl^{Y}_{\gm,\P}$.

For every $\P\subset\Q$ in $\Par$, the projection $\wt{\eta}:\Fl^{Y'}_{\gm,\P}\to \Fl^{Y'}_{\gm,\Q}$ is proper and satisfies
$\wt{\eta}(\Fl^{Y}_{\gm,\P})\subset \Fl^{Y}_{\gm,\Q}$. Thus we have an open embedding $\wt{\eta}^{-1}(\Fl^{Y';Y}_{\gm,\Q})\hra \Fl^{Y';Y}_{\gm,\P}$,
which together with a map $\wt{\eta}^*\wt{\dt}_{\Q^+}\to \wt{\dt}_{\P^+}$ from \re{aplasf} (b) induces a morphism
\begin{equation} \label{Eq:morph}
R\Gm_c(\Fl^{Y';Y}_{\gm,\Q},\wt{\dt}_{\Q^+})\overset{\wt{\eta}^*}{\lra} R\Gm_c(\wt{\eta}^{-1}(\Fl^{Y';Y}_{\gm,\Q}),\wt{\eta}^*\wt{\dt}_{\Q^+})
\to R\Gm_c(\Fl^{Y';Y}_{\gm,\P},\wt{\dt}_{\P^+}).
\end{equation}
\end{Emp}

\begin{Emp} \label{E:homol}
{\bf Cochain complexes.} Let $Sh(k)$ be the abelian category of \'etale $\qlbar$-sheaves on $\Spec k$, let $\Ch^*(Sh(k))$ be the category of cochain complexes over $Sh(k)$, let $Sh_c(k)\subset Sh(k)$ be the subcategory of constructible, that is finite-dimensional, $\qlbar$-modules, and let $\Ch^{*,b}_c(Sh(k))\subset \Ch^*(Sh(k))$ be a subcategory of bounded complexes with constructible cohomologies.

Fix $i\in\B{Z}$, and let $h_{i}:\Ch^*(Sh(k))\to Sh(k)$ be the $(-i)$-th cohomology functor.

(a) Consider functor $C_{i,\star}^{\star}(\gm):=h_i\circ C^{\star}_{\star}(\gm):\T\times\Par\to Sh(k)$, where $C^{\star}_{\star}(\gm)$ was constructed in \re{aplasf} (b). This functor sends $\P\in \Par$ and $Y\in \T$ to \\
$C^Y_{i,\P}(\gm)=H^{-i}(\Fl^Y_{\gm,\P}, \B{D}(\wt{\dt}_{\P^+}))\cong H_i(\Fl^Y_{\gm,\P}, \wt{\dt}_{\P^+})$.
By functoriality, we have morphisms $f^Y_{\P,\Q}:C^Y_{i,\P}(\gm)\to C^Y_{i,\Q}(\gm)$ for all $\P\subset\Q$ in $\Par$
and $Y\in\T$.

(b) We choose a bijection between $\wt{\Dt}$ and $[r]:=\{0,\ldots,r\}$. For every
$J\subsetneq\wt{\Dt}=[r]$  and $m\in [r]\sm J$, we define $\sgn(J,m):=(-1)^{|j\in [r]\sm J,j<m|}$.

(c) Consider functor $C_{i}^{\star}(\gm):\T\to \Ch^*(Sh(k))$, which associates to each $Y\in\T$ a cochain complex
\[
C^Y_i(\gm):=(0\to C^{Y,-r}_{i}(\gm)\to \ldots \to C^{Y,0}_{i}(\gm)\to 0),
\]
where $C^{Y,-j}_{i}(\gm):=\oplus_{J\subset\wt{\Dt},|J|=r-j}C^Y_{i,\P_J}(\gm)$ and the differentials
$C^{Y,-j}_{i}(\gm)\to C^{Y,-j+1}_{i}(\gm)$ are the direct sums of maps
$\sgn(J,m)f^Y_{J,J\cup\{m\}}:C^Y_{i,\P_J}(\gm)\to C^Y_{i,\P_{J\cup m}}(\gm)$, taken over subsets $J\subset \wt{\Dt}$ with  $|J|=r-j$ and $m\in [r]\sm J$. Notice that our choice of signs in (b) implies that each $C^Y_i(\gm)$ is indeed a complex.

(d) Consider functor $C_{i,\star}(\gm):=\indlim_{Y\in\T} C_{i,\star}^{Y}(\gm)\cong h_i\circ  C_{\star}(\gm):\Par\to Sh(k)$. Explicitly,
$C_{i,\P}(\gm)=H_i(\Fl_{\gm},\wt{\dt}_{{P}^+})$, in particular, $C_{i,\I}(\gm)=H_i(\Fl_{\gm})$.
By \re{aplasf} (d), each $C_{i,\I}(\gm)$ is equipped with a $W_{\P}$-action, and the morphism
$C_{i,\I}(\gm)\to C_{i,\P}(\gm)$ induces an isomorphism $C_{i,\I}(\gm)_{W_\P,\sgn}\cong C_{i,\P}(\gm)$.

(e) Set $C_{i}(\gm):=\indlim_{Y\in\T} C^{\star}_{i}(\gm)\in \Ch^*(Sh(k))$. Explicitly, $C_{i}(\gm)$
is a complex
\[
C_i(\gm):=(0\to C^{-r}_{i}(\gm)\to \ldots \to C^0_{i}(\gm)\to 0),
\]
where $C_i^{-j}(\gm):=\oplus_{J\subset\wt{\Dt},|J|=r-j}C_{i,\P_J}(\gm)$, and the differentials are defined as in (c).
\end{Emp}

\begin{Lem} \label{L:hom}
We have a natural isomorphism $\Tor_j^{\wt{W}}(H_i(\Fl_{\gm}),\sgn)\cong H^{-j}(C_i(\gm))$ for each $i,j$. Therefore $C_i(\gm)\in \Ch^{*,b}_c(Sh(k))$ for all $i$.
\end{Lem}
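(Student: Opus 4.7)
The plan is to realize $C_i(\gm)$ as the complex computing $\Tor_\bullet^{\wt W}(H_i(\Fl_\gm),\sgn)$ via an explicit flat resolution of the sign representation built from the Coxeter complex (apartment) of $\wt W$. Since $G$ is simple and simply connected, the apartment $\B{A}\cong\B{R}^r$ of $\wt W$ is contractible, and its standard $\wt W$-equivariant simplicial decomposition has codimension-$k$ faces parametrized by cosets $wW_J$ with $J\subsetneq\wt{\Dt}$, $|J|=k$. Each such face is fixed pointwise by its stabilizer $W_J$ (each simple reflection $s_j$, $j\in J$, fixes the wall $H_j$ containing the face), so $W_J$ acts trivially on the orientation line. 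The augmented cellular chain complex of $\B{A}$ thus provides an exact sequence of $\wt W$-modules
\[
0\to\bigoplus_{|J|=0}\qlbar[\wt W/W_J]\to\cdots\to\bigoplus_{|J|=r}\qlbar[\wt W/W_J]\to\qlbar\to 0,
\]
and tensoring with $\sgn$, combined with the projection formula $\sgn\otimes\qlbar[\wt W/W_J]\cong\Ind_{W_J}^{\wt W}\sgn$ (since $\sgn|_{W_J}$ is the sign of $W_J$), yields an exact complex
\[
P_\bullet:\ 0\to P_{-r}\to\cdots\to P_0\to\sgn\to 0,\qquad P_{-j}=\bigoplus_{|J|=r-j}\Ind_{W_J}^{\wt W}\sgn,
\]
in which each $P_{-j}$ is flat over $\qlbar[\wt W]$ because every $W_J$ is finite and $\mathrm{char}\,\qlbar=0$.

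Applying $H_i(\Fl_\gm)\otimes_{\wt W}(-)$ to $P_\bullet$, Shapiro's lemma (together with the vanishing of $\Tor_{>0}^{W_J}(-,\sgn)$ in characteristic zero) gives $H_i(\Fl_\gm)\otimes_{\wt W}\Ind_{W_J}^{\wt W}\sgn\cong H_i(\Fl_\gm)_{W_J,\sgn}$, and the latter is identified with $C_{i,\P_J}(\gm)$ by taking $H^{-i}$ of the base-change isomorphism of \re{aplasf} (d), using the exactness of $(W_J,\sgn)$-coinvariants over $\qlbar$. Hence the underlying graded object of $H_i(\Fl_\gm)\otimes_{\wt W}P_\bullet$ coincides with $C_i^{\bullet}(\gm)$. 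To match the differentials, orient each face $wW_J$ of $\B{A}$ via the ordering on its vertex set $\{v_m:m\in\wt{\Dt}\sm J\}$ inherited from the bijection $\wt{\Dt}\cong[r]$ of \re{homol} (b); the cellular boundary of the ordered simplex then reads $\partial(wW_J)=\sum_{m\in\wt{\Dt}\sm J}\sgn(J,m)\,wW_{J\cup\{m\}}$. Naturality of the further-quotient maps identifies the induced component $H_i(\Fl_\gm)_{W_J,\sgn}\to H_i(\Fl_\gm)_{W_{J\cup\{m\}},\sgn}$, in the colimit over $Y\in\T$, with $\sgn(J,m)\,f^Y_{\P_J,\P_{J\cup\{m\}}}$, which is precisely the differential of $C_i(\gm)$. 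Hence $H^{-j}(C_i(\gm))\cong\Tor_j^{\wt W}(H_i(\Fl_\gm),\sgn)$.

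Finally, $C_i(\gm)$ lives in degrees $[-r,0]$, hence is bounded; by \rco{fg} and the argument recalled in \re{remfd} (a), each $\Tor_j^{\wt W}(H_i(\Fl_\gm),\sgn)$ is finite-dimensional, so $C_i(\gm)\in\Ch_c^{*,b}(Sh(k))$. The main obstacle is the sign verification in the middle step: one must confirm that the cellular boundary sign, computed with vertex ordering induced by $\wt{\Dt}\cong[r]$, matches exactly the combinatorial sign $\sgn(J,m)$ of \re{homol} (b), and that this matching survives passage through Shapiro's lemma and the colimit over $\T$. Every other ingredient is standard: contractibility of the apartment, Shapiro's lemma, and the exactness of coinvariants under finite groups in characteristic zero.
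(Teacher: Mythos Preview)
Your proposal is correct and follows essentially the same approach as the paper: build the apartment chain complex as a resolution and identify the resulting complex with $C_i(\gm)$ via \re{homol}~(d). The only cosmetic difference is that the paper resolves the trivial module $\qlbar$ by $D^{-j}=\bigoplus_{|J|=r-j}\qlbar[\wt W/W_{\P_J}]$ and then tensors with $V=H_i(\Fl_\gm)\otimes\sgn$ (using $\Tor_j^{\wt W}(M,\sgn)\cong H_j(\wt W,M\otimes\sgn)$), whereas you twist the resolution by $\sgn$ first and tensor with $H_i(\Fl_\gm)$; these are the same computation packaged differently.
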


\begin{proof}
For every $J\subset\wt{\Dt}$ and $m\in[r]\sm J$, we define the map
\[
g_{\P,m}:\qlbar[\wt{W}/W_{\P_J}]\to \qlbar[\wt{W}/W_{\P_{J\cup\{m\}}}]
\]
to be $\sgn(J,m)$ times the natural projection.

Consider the cochain complex $D_{\star}$ of $\qlbar[\wt{W}]$-modules
$0\to D^{-r}\to\ldots \to D^0\to 0$, where $D^{-j}=\oplus_{\P,\rk\P=r-j}\qlbar[\wt{W}/W_{\P}]$, and the differentials are the direct sum of the $g_{\P,m}$'s.

Note that $D_{\star}$ is a resolution of the trivial module $\qlbar$. Indeed, this follows from the fact that $D_{\star}$ is the simplicial chain complex of the affine space $\La\otimes_{\B{Z}}\B{R}$, triangulated by alcoves. Moreover, each $\qlbar[\wt{W}/W_{\P}]=\qlbar[\wt{W}]_{W_{\P}}$ is a direct summand of the free $\qlbar[\wt{W}]$-module $\qlbar[\wt{W}]$, thus $D_{\star}$ is a projective resolution of $\qlbar$.

By the definition of homology, for every $\qlbar[\wt{W}]$-module $V$, we have an isomorphism
$H_j(\wt{W},V)\cong H_j(D_{\star}\otimes_{\qlbar[\wt{W}]}V)$.
Since $\qlbar[\wt{W}/W_{\P}]\otimes_{\qlbar[\wt{W}]}V\cong V_{W_{\P}}$,  the tensor product
$D_{\star}\otimes_{\qlbar[\wt{W}]}V$ is simply the complex with terms
$\oplus_{\P,\rk\P=r-j}V_{W_{\P}}$.

 We claim that if $V=H_i(\Fl_{\gm})\otimes\sgn$, then the complex $D_{\star}\otimes_{\qlbar[\wt{W}]}V$ is isomorphic to $C_i(\gm)$.
 Indeed, for each $\P\in\Par$, we have a natural isomorphism $V_{W_{\P}}\cong H_i(\Fl_{\gm})_{W_{\P},\sgn}\cong(C_{i,\I})_{W_{\P},\sgn}\cong C_{i,\P}(\gm)$ (see \re{homol} (d)), and for every $\P\subset\Q$ the induced map $V_{W_{\P}}\to V_{W_{\Q}}$ is the map $f_{\P,\Q}$ from \re{homol}
(a). This implies the first assertion. The second assertion follows from the first one and \re{fg} (b).
\end{proof}

\begin{Emp}
{\bf Notation.}
(a) We denote by $Sh^{\Fr}(k)$, $D^{\Fr}(k)$ etc. the categories of Weil (Frobenius equivariant) objects.
Using the sheaf-function correspondence, to every $\lan A\ran\in K_0(D^{\Fr}(k))=K_0(Sh^{\Fr}(k))$
we can associate $[A]:=\Tr(\Fr,\lan A\ran)\in\qlbar$.

(b) Notice that since  $\gm\in G(F)$, all objects, defined in \re{aplasf} and \re{homol} are Frobenius equivariant. Using \rl{hom}, we have
$C_i(\gm),C_i^Y(\gm)\in \Ch^{*,b}_c(Sh^{\Fr}(k))$ for all $i$ and $Y\in\T$. In particular, we can form  $\lan C^Y_i(\gm) \ran\in K_0(D^{\Fr}(k))$ and
\[
\lan C_i(\gm) \ran=\sum_{j=0}^r(-1)^j\lan h_j(C_i(\gm))\ran\in K_0(D^{\Fr}(k)).\]
\end{Emp}

\begin{Emp} \label{E:rem}
{\bf Remark.} For every $\P\in \Par$ and every locally closed subscheme $Y\subset \Fl$, defined over $\fq$, let $\wt{A}_{\P}^Y\in D^{\Fr}(LG)$ be the unique object such that $\wt{A}_{\P}^Y\otimes\mu^{\I^+}=A_{\P}^Y$ (see \re{tensor}), and let $\wt{A}_{\P}^Y(\gm)\in D^{\Fr}(k)$ be the $*$-pullback of $\wt{A}_{\P}^Y$ to $\{\gm\}$. Arguing as in \re{pflconst}, and using the equality $\mu^{\P^+}=\mu^{\I^+}[2n_{\P}](n_{\P})$ of Haar measures, we get that $\wt{A}^Y_{\P}(\gm)=R\Gm_c(\Fl^Y_{\gm,\P},\wt{\dt}_{\P^+})\cong\B{D}(C_{\P}^Y(\gm))$.
\end{Emp}

\begin{Lem} \label{L:trfr}
(a) For every $i\in\B{Z}$, we have an equality
\[ [C_i(\gm)]=\sum_{j}(-1)^j\Tr(\Fr,\Tor_j^{\wt{W}}(H_i(\Fl_{\gm}),\sgn)).
\]

(b) For every $Y\in\T$ sufficiently large, we have an equality
$\phi_{z^0}(\gm)=\sum_i (-1)^i[C^Y_i(\gm)]$.
\end{Lem}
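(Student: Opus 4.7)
Part (a) is immediate from \rl{hom}: since $C_i(\gm) \in \Ch^{*,b}_c(Sh^{\Fr}(k))$, one has
\[
[C_i(\gm)] = \sum_j (-1)^j \Tr(\Fr, H^{-j}(C_i(\gm))) = \sum_j (-1)^j \Tr(\Fr, \Tor_j^{\wt{W}}(H_i(\Fl_{\gm}), \sgn)),
\]
the second equality being precisely the identification of \rl{hom}.

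For part (b) my plan is to first translate the stabilized $K$-group identity of \rco{stab} into a classical convolution identity in $\C{H}(G(F))$, then to extract pointwise densities at $\gm$ using the local constancy provided by \rt{rss} and \rl{const}, and finally to reassemble the alternating sum over $\Par$ into the cochain complex $C^Y_i(\gm)$. Concretely, \rt{conjunit} gives $z^0 = [A]$, so $\nu_{z^0} = \nu_{[A]}$; combining \rco{stab} (applied to $\C{X} = \dt_{\I^+_n}$) with the ring homomorphisms of \re{shfun1} (c) and \re{shfun2} (c), and using the characterization of $\nu_{[A]}$ in \re{sbc} (b), one obtains
\[
\nu_{z^0} \ast \dt_{I^+_n} = [A^Y] \ast \dt_{I^+_n} \qquad \text{in } \C{H}(G(F))
\]
for every $Y \in \T$ sufficiently large (depending on $n$). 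Writing $A^Y = \sum_\P (-1)^{r-\rk\P} A^Y_\P$ and $A^Y_\P = \wt{A}^Y_\P \otimes \mu^{\I^+}$ (see \re{rem}), the measure $[A^Y]$ has density $\sum_\P (-1)^{r-\rk\P}[\wt{A}^Y_\P]$ with respect to $\mu^{I^+}$. By \rt{rss} and \rl{const}, $n$ can be chosen large enough that $\phi_{z^0}$ and all the (finitely many nonzero for fixed $Y$) functions $[\wt{A}^Y_\P]$ are constant on $\gm I^+_n$. A direct computation of right-convolution by $\dt_{I^+_n}$ --- which simply averages a density over right cosets of $I^+_n$ --- then extracts from the above identity the pointwise equality
\[
\phi_{z^0}(\gm) = \sum_{\P \in \Par} (-1)^{r - \rk \P} \Tr(\Fr, \wt{A}^Y_\P(\gm)).
\]

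To finish, I would use \re{rem} to identify $\wt{A}^Y_\P(\gm) \cong \B{D}(C^Y_\P(\gm))$ in $D^{\Fr}(k)$; Verdier duality at a point preserves Frobenius trace (with the paper's conventions for the sheaf-function correspondence), so $\Tr(\Fr, \wt{A}^Y_\P(\gm)) = [C^Y_\P(\gm)] = \sum_i (-1)^i [C^Y_{i,\P}(\gm)]$. Substituting and interchanging the order of summation gives
\[
\phi_{z^0}(\gm) = \sum_i (-1)^i \sum_{j=0}^r (-1)^j \sum_{|J|=r-j} [C^Y_{i, \P_J}(\gm)] = \sum_i (-1)^i \sum_j (-1)^j [C^{Y,-j}_i(\gm)] = \sum_i (-1)^i [C^Y_i(\gm)],
\]
which is (b). The most delicate step is the density extraction: one must simultaneously enforce constancy on a single coset $\gm I^+_n$ for $\phi_{z^0}$ and for the finitely many nonzero $[\wt{A}^Y_\P]$, and carefully track the two Haar normalizations $\mu^{I^+}$ and $\mu^{I^+_n}$ under right-convolution. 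Everything else is either invocation of results already established in Sections 4--5 or formal manipulation of Euler characteristics and signs.
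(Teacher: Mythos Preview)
Your proof is correct and follows the same approach as the paper. Part (a) is identical. For part (b), the paper's proof simply asserts that ``by construction and \rt{conjunit}'' one has $\phi_{z^0}(\gm)=\sum_{\P}(-1)^{r-\rk\P}[\wt{A}^Y_{\P}(\gm)]$ for $Y$ sufficiently large; you have unpacked what ``by construction'' means via the convolution identity and density extraction, which is exactly the content hidden in that phrase. The remaining steps --- the identification $\wt{A}^Y_\P(\gm)\cong R\Gm(\Fl^Y_{\gm,\P},\wt{\dt}_{\P^+})$ from \re{rem}, the equality of Frobenius traces under Verdier duality at a point, and the reassembly of the alternating sum over $\Par$ into $[C^Y_i(\gm)]$ --- are the same in both arguments.
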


\begin{proof}
(a) follows immediately from \rl{hom}.

(b) For every $Y\in\T$ and $\P\in \Par$, let  $\wt{A}_{\P}^Y(\gm)\in D^{\Fr}(k)$ be as in \re{rem} and let $[\wt{A}_{\P}^Y(\gm)]$ be the corresponding element of $\qlbar$. By construction and \rt{conjunit}, for each sufficiently large $Y\in\T$, we have an equality  $\phi_{z^0}(\gm)=\sum_{\P}(-1)^{r-\rk\P}[\wt{A}^Y_{\P}(\gm)]$.

On the other hand, each $[C^Y_i(\gm)]$ equals $\sum_{\P}(-1)^{r-\rk\P} [H_i(\Fl^Y_{\gm,\P},\wt{\dt}_{\P^+})]$.
Therefore it remains to show that $[\wt{A}^Y_{\P}(\gm)]=\sum_i (-1)^i [H_i(\Fl^Y_{\gm,\P},\wt{\dt}_{\P^+})]$.
Using \re{rem}, we get $\wt{A}^Y_{\P}(\gm)\cong R\Gm(\Fl^Y_{\gm,\P},\wt{\dt}_{\P^+})$, hence the assertion follows from
the fact that $H_i(\Fl^Y_{\gm,\P},\wt{\dt}_{\P^+})$ is the dual of $H^i(\Fl^Y_{\gm,\P},\wt{\dt}_{\P^+})$, thus
$[H_i(\Fl^Y_{\gm,\P},\wt{\dt}_{\P^+})]=[H^i(\Fl^Y_{\gm,\P},\wt{\dt}_{\P^+})]$.
\end{proof}

\begin{Cor} \label{C:reduction}
In order to prove \rt{tf}, it is enough to show that for every sufficiently large $Y\in\T$ we have an equality in $K_0(D^{\Fr}(k))$
\begin{equation} \label{Eq:eq}
\sum_i (-1)^i\lan C_i(\gm) \ran=\sum_i (-1)^i\lan C^Y_i(\gm) \ran.
\end{equation}
\end{Cor}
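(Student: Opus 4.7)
The plan is to derive \rco{reduction} as a direct assembly of the two parts of \rl{trfr}, combined with the standard fact that the Frobenius trace descends to a well-defined additive map on Grothendieck groups. Since \rl{trfr} already rewrites both the target quantity $\phi_{z^0}(\gm)$ and the alternating sum in \form{equal} in terms of Frobenius traces of the classes $\lan C_i^Y(\gm)\ran$ and $\lan C_i(\gm)\ran$, the only remaining content of \rco{reduction} is to bridge these two evaluations by means of an equality in $K_0(D^{\Fr}(k))$.

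Concretely, I would first observe that since $D^{\Fr}(k)=D^b(Sh^{\Fr}(k))$ has finite-dimensional cohomologies (by \re{fg}(b) and \rl{hom}), the sheaf--function correspondence gives a well-defined additive map $\Tr(\Fr,-):K_0(D^{\Fr}(k))\to\qlbar$, additive on distinguished triangles and hence on alternating sums of classes of complexes. Assuming \form{eq}, pick $Y\in\T$ large enough so that both \form{eq} and the conclusion of \rl{trfr}(b) hold (this is possible since $\T$ is filtered and both constraints bound $Y$ only from below). Applying $\Tr(\Fr,-)$ to \form{eq} then yields
\[
\sum_i(-1)^i[C_i(\gm)]\;=\;\sum_i(-1)^i[C_i^Y(\gm)].
\]

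By \rl{trfr}(b), the right-hand side equals $\phi_{z^0}(\gm)$. By \rl{trfr}(a) applied degree by degree, the left-hand side equals
\[
\sum_i(-1)^i\!\sum_j(-1)^j\Tr(\Fr,\Tor_j^{\wt{W}}(H_i(\Fl_{\gm}),\sgn))\;=\;\sum_{i,j}(-1)^{i+j}\Tr(\Fr,\Tor_j^{\wt{W}}(H_i(\Fl_{\gm}),\sgn)),
\]
which is precisely the right-hand side of \form{equal}. This proves \rt{tf} under the hypothesis \form{eq}, establishing the corollary.

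No real obstacle arises here; the corollary is purely a bookkeeping reduction, packaging the content of \rl{trfr} so that the subsequent sections need only verify the single $K$-theoretic identity \form{eq}. The substantive work — showing that the alternating sum $\sum_i(-1)^i\lan C_i^Y(\gm)\ran$ stabilizes to $\sum_i(-1)^i\lan C_i(\gm)\ran$ as $Y$ grows in $\T$ — is where the geometry of the $A^Y_{\P}$'s and the stabilization results of \rss{stab} must be brought to bear, and that will be the genuine content of the remaining arguments in Section \ref{S:tf}.
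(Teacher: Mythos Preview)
Your proof is correct and follows essentially the same approach as the paper: apply the trace map $[\cdot]=\Tr(\Fr,-)$ to \form{eq}, then invoke \rl{trfr}(b) to identify the right-hand side with $\phi_{z^0}(\gm)$ and \rl{trfr}(a) to identify the left-hand side with the double alternating sum in \form{equal}. The paper's version is more terse, but the logical content is identical.
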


\begin{proof}
The equation \form{eq} would imply the equality
\[
\sum_i (-1)^i[C_i(\gm)]=\sum_i (-1)^i[C^Y_i(\gm)]
\]
in $\qlbar$, whose right hand side equals
$\phi_{z^0}(\gm)$ (by \rl{trfr} (b)), and left hand side equals
$\sum_{i,j}(-1)^{i+j}\Tr(\Fr,\Tor_j^{\wt{W}}(H_i(\Fl_{\gm}),\sgn))$ (by \rl{trfr} (a)).
\end{proof}

\subsection{Proof of the theorem} \label{SS:compl}
In this subsection we prove \rt{tf}, assuming \rcl{functor}. Abusing the notation, we omit superscript  $\Fr$, so we will simply write
$Sh(k),D(k),\C{D}(k)$ etc. instead of  $Sh^{\Fr}(k),D^{\Fr}(k),\C{D}^{\Fr}(k)$, respectively.

\begin{Emp} \label{E:obs}
{\bf Observation.}
Let $[1]$ be the partially ordered set $\{0<1\}$, let $[1]^n$ be its $n$'th power, let $\ov{1}\in [1]^n$ be the maximal element, and set $([1]^n)':=[1]^n-\{\ov{1}\}$. Then
\[
([1]^n)'=(\{0\}\times [1]^{n-1})\bigsqcup_{(\{0\}\times([1]^{n-1})')}([1]\times ([1]^{n-1})').
\]

Let $\C{D}$ be a stable $\infty$-category, and $F:([1]^n)'\to \C{D}$ a functor of $\infty$-categories. Then we have an  equivalence
\[
\left(\colim_{\ov{x}\in ([1]^n)'}F(\ov{x})\right)\cong F(0,\ov{1})\bigsqcup_{\colim_{\ov{x}\in\{0\}\times([1]^{n-1})'}F(\ov{x})}\colim_{\ov{x}\in \{1\}\times ([1]^{n-1})'}F(\ov{x}).
\]
\end{Emp}

\begin{Lem} \label{L:colim}
Let $\C{D}$ be a stable $\infty$-category, and let $F:([1]^n)'\to \C{D}$ be a functor of $\infty$-categories such that
$F(0,\ov{1})\cong 0$, and for every $\ov{x}\in ([1]^{n-1})'$ the induced map
$F(0,\ov{x})\to F(1,\ov{x})$ is an equivalence.
Then $\colim_{\ov{x}\in ([1]^n)'}F(\ov{x})\cong 0$.
\end{Lem}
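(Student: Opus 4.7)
The plan is to apply the decomposition from \re{obs} directly, reducing the claim to the fact that in a stable $\infty$-category the pushout of an equivalence is an equivalence. First, by \re{obs}, we have a natural equivalence
\[
\colim_{\ov{x}\in ([1]^n)'}F(\ov{x})\;\cong\; F(0,\ov{1})\bigsqcup_{\colim_{\ov{x}\in\{0\}\times([1]^{n-1})'}F(\ov{x})}\colim_{\ov{x}\in \{1\}\times ([1]^{n-1})'}F(\ov{x}).
\]
The hypothesis $F(0,\ov{1})\cong 0$ takes care of the first factor, so the task reduces to identifying the two colimits indexed by $\{0\}\times([1]^{n-1})'$ and $\{1\}\times([1]^{n-1})'$.

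Next, I would view the assignment $\ov{x}\mapsto F(0,\ov{x})$ and $\ov{x}\mapsto F(1,\ov{x})$ as two functors $([1]^{n-1})'\to \C{D}$ (using the evident identifications $\{i\}\times([1]^{n-1})'\cong ([1]^{n-1})'$), related by a natural transformation induced from the structure maps of $F$. By the second assumption, this natural transformation is a pointwise equivalence on $([1]^{n-1})'$. Since colimits in any $\infty$-category preserve pointwise equivalences of diagrams, the induced map
\[
\colim_{\ov{x}\in\{0\}\times([1]^{n-1})'}F(\ov{x})\;\lra\;\colim_{\ov{x}\in \{1\}\times ([1]^{n-1})'}F(\ov{x})
\]
is an equivalence.

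Finally, call this common colimit $A$. Then the pushout above becomes $F(0,\ov{1})\sqcup_A A \cong 0\sqcup_A A$, where the map $A\to A$ is an equivalence. Since pushouts in an $\infty$-category preserve equivalences along a given leg, the map $0\to 0\sqcup_A A$ is itself an equivalence, giving the desired conclusion $\colim_{\ov{x}\in([1]^n)'}F(\ov{x})\cong 0$. No step here is a genuine obstacle; the only point requiring mild care is checking that the two sub-colimits really are related by an objectwise equivalence of diagrams on $([1]^{n-1})'$, i.e.\ that the structure map of $F$ in the first coordinate provides the required natural transformation — but this is immediate from functoriality of $F$.
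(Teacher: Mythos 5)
Your proof is correct and follows essentially the same route as the paper's: apply the decomposition from \re{obs}, use $F(0,\ov{1})\cong 0$, and then observe that the objectwise equivalence $F(0,\cdot)\to F(1,\cdot)$ in $\C{D}^{([1]^{n-1})'}$ induces an equivalence on colimits. Your final step, invoking that pushouts preserve equivalences along one leg, is just a slightly more spelled-out version of the paper's implicit conclusion that the resulting pushout $0\sqcup_A A$ is zero.
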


\begin{proof}
Using \re{obs} and the assumption $F(0,\ov{1})\cong 0$,
we have to show that the map $\colim_{\ov{x}\in([1]^{n-1})'}F(0,\ov{x})\to\colim_{\ov{x}\in([1]^{n-1})'}F(1,\ov{x})$ is an equivalence. It is enough to show that the map $F(0,\cdot)\to F(1,\cdot)$ in $\C{D}^{([1]^{(n-1)})'}$ is an equivalence. But this follows from the assumption that $F(0,\ov{x})\to F(1,\ov{x})$ is an equivalence for all $\ov{x}$.
\end{proof}

\begin{Emp} \label{E:cone}
{\bf Notation.} Let $A$ be an abelian category, let $\Ch^*(A)$ be the category of cochain complexes in $A$, and let $\C{D}(A)$ be
the corresponding derived $\infty$-category.  Then we have a natural functor of $\infty$-categories $\Ch^*(A)\to\C{D}(A)$,
which is a bijection on objects. In particular, every functor $F:J\to\Ch^*(A)$ of ordinary categories
gives rise to a functor $F:J\to\C{D}(A)$ of $\infty$-categories.

(a) Note that if $J$ is filtered, then the inductive limit $\indlim_J F\in\Ch^*(A)$ represents the (homotopy) colimit $\colim_J F\in\C{D}(A)$.

(b) For every $\ov{x}=(x_1,\ldots,x_n)\in [1]^n$, set $|\ov{x}|=\sum_{i} x_i$. For every $i=1,\ldots, n$, let $e_i\in[1]^n$ be the element $(\dt_{1,i},\ldots\dt_{n,i})$. For $\ov{x}$ and $i$ such that $x_i=0$, we set $\sgn(\ov{x},i)=(-1)^{|j<i, x_j=0|}$.

(c) Let $F:([1]^n)'\to \Ch^*(A)$ be a functor.
Consider the cochain complex $\wt{F}$ in $\Ch^*(A)$, defined as follow. As a graded module, $\wt{F}=\oplus_{\ov{x}\in  ([1]^n)'}F(\ov{x})[n-1-|\ov{x}|]$,
where $[\cdot]$ is the cohomological shift. Define the differential $d_{\wt{F}}:\wt{F}\to\wt{F}[1]$ to be the direct sum of differentials of the $F(\ov{x})[n-1-|\ov{x}|]$'s and the sum of \[
\sgn(\ov{x},i)F_{\ov{x},\ov{x}+e_i}:F(\ov{x})[n-1-|\ov{x}|]\to F(\ov{x}+e_i)[n-1-|\ov{x}|],
\] taken over all
$\ov{x}$ and $i$ such that $x_i=0$.
\end{Emp}

\begin{Lem} \label{L:cone}
In the notation of \re{cone} (c), the cochain complex $\wt{F}\in \Ch^*(A)$ represents the (homotopy) colimit
$\colim_{([1]^n)'}F\in \C{D}(A)$.
\end{Lem}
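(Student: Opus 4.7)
I plan to prove the lemma by induction on $n$, using the pushout decomposition of Observation \re{obs} together with the standard description of homotopy pushouts in $\C{D}(A)$ as mapping cones of cochain complexes. For $n=1$, $([1]^{1})'=\{0\}$ and $\wt{F}=F(0)[0]=F(0)$, while $\colim_{\{0\}}F=F(0)$, so the base case is immediate.

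For the inductive step, I would split $\ov{x}=(x_{1},\ov{y})\in [1]\times[1]^{n-1}$ and set $G_{i}:=F(i,\cdot)|_{([1]^{n-1})'}:([1]^{n-1})'\to \Ch^{*}(A)$ for $i=0,1$. Observation \re{obs}, together with the fact that $\colim_{\{0\}\times[1]^{n-1}}F\cong F(0,\ov{1})$ (since $\ov{1}$ is terminal in $[1]^{n-1}$), yields
\[
\colim_{([1]^{n})'}F \;\cong\; F(0,\ov{1})\bigsqcup_{\colim_{([1]^{n-1})'}G_{0}}\colim_{([1]^{n-1})'}G_{1}
\]
in $\C{D}(A)$. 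The inductive hypothesis identifies the two outer colimits with $\wt{G_{0}}$ and $\wt{G_{1}}$, and the homotopy pushout of $B\leftarrow A\to C$ in $\C{D}(A)$ is represented in $\Ch^{*}(A)$ by the mapping cone $B\oplus C\oplus A[1]$ with the standard differential.

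Unpacking the inductive formulas for $\wt{G_{i}}$ and using $\wt{G_{0}}[1]=\bigoplus_{\ov{y}\in([1]^{n-1})'}F(0,\ov{y})[n-1-|\ov{y}|]$, the underlying graded object of the mapping cone becomes
\[
\bigoplus_{\ov{y}\in[1]^{n-1}}F(0,\ov{y})[n-1-|\ov{y}|]\;\oplus\;\bigoplus_{\ov{y}\in([1]^{n-1})'}F(1,\ov{y})[n-2-|\ov{y}|],
\]
which coincides with $\wt{F}$ as a graded module (the term $\ov{y}=\ov{1}$ in the first sum being supplied by $F(0,\ov{1})$). The internal differentials of $\wt{G_{0}}[1]$ and $\wt{G_{1}}$ recover the components of $d_{\wt{F}}$ corresponding to coordinates $i>1$, while the connecting maps of the cone -- namely $F(0,\ov{y})\to F(1,\ov{y})$ for $\ov{y}\in([1]^{n-1})'$ and $F(0,\ov{y})\to F(0,\ov{1})$ for $\ov{y}\neq\ov{1}$ -- produce the remaining $i=1$ components.

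The main obstacle I expect is the sign bookkeeping. One must verify that the prescription $\sgn(\ov{x},i)=(-1)^{|j<i,\,x_{j}=0|}$ of \re{cone}(b) simultaneously encodes (i)~the sign flip $d_{\wt{G_{0}}[1]}=-d_{\wt{G_{0}}}$ from the cohomological shift, (ii)~the sign $+1$ on the new mapping-cone edges in the first coordinate, and (iii)~the inherited signs of $d_{\wt{G_{i}}}$ on the remaining coordinates. Concretely, for $\ov{x}=(x_{1},\ov{y})$ and $i>1$ one has $\sgn(\ov{x},i)=(-1)^{1-x_{1}}\sgn(\ov{y},i-1)$, and $\sgn(\ov{x},1)=1$, which is precisely what the mapping-cone construction produces. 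This is a direct but slightly tedious inductive check, and it is the reason the signs in \re{cone}(b) are defined as they are.
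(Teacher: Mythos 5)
Your argument is exactly the paper's proof: induct on $n$ using the pushout decomposition of Observation \re{obs}, represent the homotopy pushout by the mapping cone $B_0\oplus B_1\oplus A[1]$ with the stated differential, and check that the graded pieces and the signs $\sgn(\ov{x},i)$ match $\wt{F}$. One small bookkeeping slip in your next-to-last paragraph: the connecting map $-f_0[1]:\wt{G_{0}}[1]\to F(0,\ov{1})$ supplies the transitions $F(0,\ov{1}-e_j)\to F(0,\ov{1})$, which are $i=j+1>1$ components of $d_{\wt{F}}$ and which the internal differential of $\wt{G_{0}}[1]$ cannot produce (since $(0,\ov{1})\notin\{0\}\times([1]^{n-1})'$), while the $i=1$ components all come from $f_1[1]$ alone; this does not affect your sign analysis in the final paragraph, which is correct.
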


\begin{proof}
Recall that a (homotopy) pushout  $B_0\sqcup_A B_1$ of the diagram $B_0\overset{f_0}{\lla} A\overset{f_1}{\lra}B_1$ is equivalent to the pushout of the diagram
$0\lla A\overset{(-f_0,f_1)}{\lra}B_0\oplus B_1$, thus it is represented by the cone of
$(-f_0,f_1):A\to B_0\oplus B_1$. In other words, it is represented by a complex $B_0\oplus B_1\oplus A[1]$ with differential $d_{B_0}+d_{B_1}+d_{A[1]}-f_0[1]+f_1[1]$.

From this the assertion follows from observation \re{obs} by induction on $n$.
Indeed, for $i=0,1$, let $F|_i:([1]^{n-1})'\to\Ch^*(Sh(k))$ be the functor $F|_i(\ov{x})=F(i,\ov{x})$.
Then, by the induction hypothesis and \re{obs}, we have to show that the
homotopy pushout $F(0,\bar{1})\sqcup_{\wt{F|_0}} \wt{F|_1}$ is represented by $\wt{F}$.
But this follows from the observation in the beginning of the proof.
\end{proof}

\begin{Emp} \label{E:equivc}
{\bf Truncations.}
(a)  For every $-\infty\leq n\leq m\leq\infty$ we denote by $\C{D}^{[n,m]}(A)\subset\C{D}(A)$ the full
$\infty$-subcategory, whose objects are all $\C{F}$ in the homotopy category $D(A)$ such that $h^i(\C{F})=0$ for all $i>m$ and $i<n$, where
$h^i$ denotes the $i$'s cohomology. We also set $\C{D}^{[0]}(A):=\C{D}^{[0,0]}(A)\subset\C{D}(A)$.

(b) For $i\in\B{Z}$ we have truncation functors $\tau^{\leq i}: \C{D}(A)\to \C{D}^{\leq i}(A):=\C{D}^{(-\infty,i]}(A)$
and $\tau^{\geq i}: \C{D}(A)\to \C{D}^{\geq i}(A):=\C{D}^{[i,\infty)}(A)$ (see \cite[1.2.1.7]{Lur2} but note that we use here a cohomological notation
rather than homological). Moreover, we have a natural equivalence $\tau^{\geq i}\circ \tau^{\leq i}[i]\cong \tau^{\geq i}\circ \tau^{\leq i}[i]:\C{D}(A)\to \C{D}^{[0]}(A)$ (see \cite[Prop 1.2.1.10]{Lur2}), and we denote the resulting functor by $h^i$.

(c) Let $\C{C}\subset \C{D}(A)$ be a full $\infty$-subcategory, stable under translations, cofibers and truncation functors $\tau^{\leq i}$ and
$\tau^{\geq i}$. Then $\C{C}$ is a stable  $\infty$-category (see \cite[Lem 1.1.3.3]{Lur2}). Consider full $\infty$-subcategories
$\C{C}^{[n,m]}:=\C{C}\cap \C{D}^{[n,m]}(A)\subset \C{C}$ and $\C{C}^{[0]}:=\C{C}\cap \C{D}^{[0]}(A)\subset \C{C}$. Then
$\tau^{\leq i},\tau^{\geq i}$ and $h^i$ defines functors   $\tau^{\leq i}:\C{C}\to \C{C}^{\leq i},\tau^{\geq i}:\C{C}\to \C{C}^{\geq i}$ and
$h^i:\C{C}\to \C{C}^{[0]}$.

(d) {\bf Basic example.} Let $A=Sh(k)$ and $\C{C}=\C{D}(k)=\C{D}^b_c(Sh(k))\subset\C{D}(Sh(k))$. Then $\C{C}$ satisfies all the assumptions of (c).
\end{Emp}



\begin{Lem} \label{L:cohom}
Let $L$ be an ordinary category, $n\leq m$ be integers, and let $\C{C}\subset \C{D}(A)$ be a full $\infty$-subcategory satisfying
assumptions of \re{equivc} (c).

Let $F:L\to \C{C}^{[n,m]}$ be a functor of $\infty$-categories such that
for each $i\in\B{Z}$ the composition $h^i\circ F:L\to\C{C}^{[0]}\subset \C{C}$ has a colimit
$\colim_L (h^i\circ F)\in \C{C}$. Then there exists a colimit $\colim_L F\in \C{C}$, and we have an equality in $K_0(\C{C})$
\begin{equation} \label{Eq:kgroups}
\lan \colim_L F \ran=\sum_{i=n}^m(-1)^i \lan\colim_L (h^i\circ F)\ran.
\end{equation}
\end{Lem}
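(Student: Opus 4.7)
The plan is to induct on $m-n$.

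Base case $m=n$: a cohomological shift reduces to $n=m=0$, where $F$ factors through $\C{C}^{[0]}$. Then $h^0\circ F \cong F$ canonically, the colimit $\colim_L F$ exists by hypothesis on $h^0\circ F$, and the claimed identity $\lan\colim_L F\ran = \lan\colim_L h^0\circ F\ran$ is tautological.

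Inductive step: assumption \re{equivc} (c) ensures $\tau^{\leq k},\tau^{\geq k},h^k$ restrict to functors on $\C{C}$, so the standard truncation triangle in $\C{D}(A)$ restricts to a cofiber/fiber sequence in $\C{C}$. Applied objectwise to $F$, this yields a fiber sequence of $\infty$-functors $L \to \C{C}$,
$$\tau^{\leq m-1}\circ F \;\longrightarrow\; F \;\longrightarrow\; (h^m\circ F)[-m],$$
whose left term lands in $\C{C}^{[n,m-1]}$ with cohomologies $h^i\circ F$ for $i\leq m-1$ and zero otherwise, and whose right term lands in $\C{C}^{[m,m]}$ with $h^m$-cohomology equal to $h^m\circ F$. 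Both therefore satisfy the colimit hypothesis of the lemma; by the induction hypothesis applied to $\tau^{\leq m-1}\circ F$ we obtain
$$\lan \colim_L \tau^{\leq m-1}F\ran = \sum_{i=n}^{m-1}(-1)^i\lan\colim_L h^i\circ F\ran,$$
while $\colim_L(h^m\circ F)[-m] \cong (\colim_L h^m\circ F)[-m]$ has class $(-1)^m\lan\colim_L h^m\circ F\ran$ in $K_0(\C{C})$.

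Since colimits commute with colimits in any $\infty$-category, and cofibers agree with shifted fibers in a stable $\infty$-category, applying $\colim_L$ to the displayed fiber sequence yields a fiber sequence in $\C{C}$
$$\colim_L \tau^{\leq m-1}F \;\longrightarrow\; \colim_L F \;\longrightarrow\; (\colim_L h^m\circ F)[-m],$$
which simultaneously establishes the existence of $\colim_L F\in\C{C}$ and gives the $K$-theoretic additivity that closes the induction. The main technical point is the functoriality at the $\infty$-categorical level of the construction $X\mapsto (\tau^{\leq m-1}X \to X \to h^m(X)[-m])$, i.e.\ lifting it to a natural transformation of $\infty$-functors; this is built into the restricted t-structure on $\C{C}$ supplied by \re{equivc} (c), and everything else is formal manipulation in a stable $\infty$-category with a bounded t-structure.
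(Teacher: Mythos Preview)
Your proof is correct and follows essentially the same approach as the paper: induction on $m-n$, with the inductive step driven by the truncation fiber sequence $\tau^{\leq m-1}\circ F \to F \to (h^m\circ F)[-m]$ in $\C{C}^L$, then passing to colimits using that $\C{C}$ is stable. The only cosmetic difference is that the paper rotates the triangle to write $F$ explicitly as a cofiber before taking $\colim_L$, whereas you invoke ``colimits commute with colimits'' directly; these are equivalent moves in a stable $\infty$-category.
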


\begin{proof}
The proof goes by induction on $m-n$. If $m-n=0$, then $n=m$, hence $F=(h^n\circ F)[-n]$,
thus $\colim_L F=(\colim_L (h^n\circ F))[-n]$, and the assertion is clear.

Assume now that $m>n$. Then, by the induction hypothesis, we can assume that the lemma holds for
$\tau^{\leq m-1}\circ F:L\to\C{C}^{[n,m-1]}$. In other words, there exists $\colim_L(\tau^{\leq m-1}\circ F)\in\C{C}$,
which satisfies
\begin{equation} \label{Eq:kgroups2}
\lan \colim_L (\tau^{\leq m-1}\circ F) \ran=\sum_{i=n}^{m-1}(-1)^i \lan\colim_L (h^i\circ F)\ran.
\end{equation}

Since $\tau^{\geq m}\circ F=(h^m\circ F)[-m]$, we have a fiber sequence
\[
\tau^{\leq m-1}\circ F\to F\to (h^m\circ F)[-m]
\]
in $\C{C}^L$ (see \cite[Rem. 1.2.1.8]{Lur2}). Thus $F$ is a cofiber of $(h^m\circ F)[-m-1]\to \tau^{\leq m-1}\circ F$,
hence $\colim_L F\in\C{C}$ is a cofiber of  $\colim_L(h^m\circ F)[-m-1]\to \colim_L(\tau^{\leq m-1}\circ F)$ (compare  \cite[Prop. 1.1.4.1]{Lur2}). Since $\C{C}$ is stable, we conclude that $\colim_L F\in \C{C}$ exists and satisfies
\[
\lan\colim_L F\ran=\lan\colim_L (\tau^{\leq m-1}\circ F)\ran+(-1)^m\lan\colim_L(h^m\circ F)\ran.
\]
Thus equality \form{kgroups} follows from \form{kgroups2}.
\end{proof}

\begin{Emp} \label{E:discrete}
{\bf Discrete $\infty$-categories.}
(a) We call an $\infty$-category $X$ {\em discrete}, if for every two objects $x,y\in \Ob X$ the mapping space
$\map_X(x,y)$ satisfies $\pi_j(\map_X(x,y))=0$ for every $j>0$. Note that $X$ is discrete if and only if the natural functor
$X\to \Ho X$ from $X$ to its homotopy category is an equivalence of $\infty$-categories.

(b) Assume that $X$ is a full $\infty$-subcategory of a stable $\infty$-category $Y$. Then
\[
\pi_j(\map_X(x,y))\cong \pi_0(\map_Y(x[j],y))=\Ext^{-j}_{\Ho Y}(x,y)
\]
(see \cite[remark after 1.1.2.8, and 1.1.2.17]{Lur2}). Therefore $X$ is discrete, if and only if $\Ext^{-j}_{\Ho Y}(x,y)=0$
for every $x,y\in \Ob X$ and $j>0$. In particular, the full subcategory $\C{D}^{[0]}(A)\subset\C{D}(A)$
is discrete, thus the natural functor $\C{D}^{[0]}(A)\to\Ho \C{D}^{[0]}(A)=A$ is an equivalence.
\end{Emp}

\begin{Emp} \label{E:geomlift}
{\bf A geometric lift.}
(a) Let $C^\star_\star(\gm):\T\times\Par\to D(k)$ be the functor, constructed in \re{aplasf} (b). By a {\em lift} of $C^\star_\star(\gm)$,
we mean a functor of $\infty$-categories $\C{C}^\star_\star(\gm):\T\times\Par\to\C{D}(k)$ such that the induced functor on homotopy categories is   $C^\star_\star(\gm)$.

(b) Let $\C{C}^\star_\star(\gm)$ be any lift of $C^\star_\star(\gm)$. For $Y\in\T$, we denote by $\C{C}_{\star}^Y(\gm):\Par\to \C{D}(k)$
the restriction of $\C{C}^\star_\star(\gm)$  to $Y\in\T$. For every $Y\subset Y'$ in $\T$, we have a morphism $\C{C}_{\star}^Y(\gm)\to\C{C}_{\star}^{Y'}(\gm)$,
defined uniquely up to an equivalence.

(c) Let $\C{C}_{\star}^{Y';Y}(\gm)\in \C{D}(k)^{\Par}$ be the cofiber of $\C{C}^Y_{\star}(\gm)\to\C{C}^{Y'}_{\star}(\gm)$.
Then $\C{C}_{\star}^{Y';Y}(\gm)$ defines a functor $C_{\star}^{Y';Y}(\gm):\Par\to D(k)$ of usual categories.
In particular, for every $\P\subset\Q$ in $\Par$, we have a well-defined morphism ${C}_{\P}^{Y';Y}(\gm)\to{C}_{\Q}^{Y';Y}(\gm)$ in the homotopy category $D(k)$.

(d) Since $\C{C}^{Y}_{\P}(\gm)=\B{D}(R\Gm(\Fl_{\gm,\P}^Y,\wt{\dt}_{\P^+}))$, and $\C{C}^{Y';Y}_{\P}(\gm)$ is a cofiber of
$\C{C}^{Y}_{\P}(\gm)\to \C{C}^{Y'}_{\P}(\gm)$, we have a natural isomorphism ${C}^{Y';Y}_{\P}(\gm)\cong \B{D}(R\Gm_c(\Fl_{\gm,\P}^{Y';Y},\wt{\dt}_{\P^+}))$ (see \re{remasf}).

(e) We call a lift  $\C{C}^\star_\star(\gm)$ of $C^\star_\star(\gm)$ {\em geometric}, if for every $Y\subset Y'$ in $\T$ and every
$\P\subset \Q$ in $\Par$, the isomorphism of (d) identifies the morphism ${C}_{\P}^{Y';Y}(\gm)\to{C}_{\Q}^{Y';Y}(\gm)$ constructed in (c)
with the Verdier dual of the morphism \form{morph} from \re{remasf}.

(f) For each $Y\in\T$, we set $\C{C}^Y(\gm):=\colim_{\P\in\Par}\C{C}_{\star}^Y(\gm)\in \C{D}(k)$.
The the morphism from (b) induces the morphism  $\C{C}^Y(\gm)\to\C{C}^{Y'}(\gm)$, defined up to an equivalence.
\end{Emp}

The following claim will be proven in the next subsection.

\begin{Cl} \label{C:functor}
There exists a geometric lift $\C{C}^\star_\star(\gm)$ of $C^\star_\star(\gm)$.
\end{Cl}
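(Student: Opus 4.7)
The plan is to construct $\C{C}^\star_\star(\gm)$ as a composition of three $\infty$-functors: an underlying scheme-valued functor $\T \times \Par \to \AISch_k$, a compatible family of sheaves $\wt{\dt}_{\P^+}$ organized into an $\infty$-functor, and the $\infty$-categorical $R\Gm(\cdot, \B{D}(-))$, which is provided for free by the six-functor formalism.

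The first ingredient is straightforward: the assignment $(Y,\P) \mapsto \Fl^Y_{\gm,\P}$ is already a strict $1$-functor, with closed embeddings $\Fl^Y_{\gm,\P} \hra \Fl^{Y'}_{\gm,\P}$ for $Y \subset Y'$, projections $\wt{\eta}: \Fl^Y_{\gm,\P} \to \Fl^Y_{\gm,\Q}$ for $\P \hra \Q$, and strictly commuting mixed squares, so it lifts trivially to an $\infty$-functor. For the second ingredient, I would first construct a compatible family of objects $\ov{\dt}_{\P^+} \in \C{D}(\frac{\P}{\P})$ together with the structure maps $[\eta]^*\ov{\dt}_{\Q^+} \to \ov{\dt}_{\P^+}$ from \re{verspr}~(a), lifted to the $\infty$-categorical level. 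The key technical input here is the Ext-vanishing
\[
\Ext^{-j}([\eta]^*\ov{\dt}_{\Q^+}, \ov{\dt}_{\P^+}) = 0 \quad \text{for all } j>0,
\]
established in \re{verspr}~(b), together with the $1$-dimensionality of the zeroth Ext group. By general $\infty$-categorical principles (cf.\ \re{discrete}), this vanishing makes the relevant mapping spaces discrete, so the space of higher-coherence data extending the $1$-functor to a coherent $\infty$-functor on $\Par$ is contractible, and the lift exists and is essentially unique. Pulling back along the evident $\infty$-functor sending $\P \mapsto \frac{\P}{\P}$ via the maps $\pr_{\gm,\P}: \Fl_{\gm,\P} \to \frac{\P}{\P}$ (organized compatibly in $\P$) then yields $\wt{\dt}_{\P^+}$ as an $\infty$-categorical sheaf on $\Fl_{\gm,\P}$ functorially in $\P$, and further restricting to $\Fl^Y_{\gm,\P}$ gives the full functoriality in $(Y,\P)$.

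Finally, applying Verdier duality and $R\Gm$ --- both $\infty$-functors via the six-functor formalism --- produces $\C{C}^\star_\star(\gm): \T \times \Par \to \C{D}(k)$ whose underlying $1$-functor is $C^\star_\star(\gm)$ by construction. The geometricity condition \re{geomlift}~(e) is verified as follows: the cofiber of $\C{C}^Y_\star(\gm) \to \C{C}^{Y'}_\star(\gm)$ is, by exactness of $R\Gm \circ \B{D}$ in the $\infty$-setting and the excision triangle for the closed embedding $\Fl^Y_{\gm,\P} \hra \Fl^{Y'}_{\gm,\P}$, canonically identified with $\B{D}(R\Gm_c(\Fl^{Y';Y}_{\gm,\P}, \wt{\dt}_{\P^+}))$, and the induced morphism ${C}^{Y';Y}_\P(\gm) \to {C}^{Y';Y}_\Q(\gm)$ is identified with the Verdier dual of \form{morph} by unwinding the adjunctions $\wt{\eta}^* \dashv \wt{\eta}_*$. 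The main obstacle is the second step: choosing a suitable ambient stable $\infty$-category hosting all the $\ov{\dt}_{\P^+}$'s, which a priori live on different stacks $\frac{\P}{\P}$, and rigorously translating the Ext-vanishing into discreteness of the relevant mapping $\infty$-groupoids so that no higher-coherence data need be supplied by hand.
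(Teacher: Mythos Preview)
Your proposal is correct and follows essentially the same route as the paper: the paper resolves your ``main obstacle'' by constructing the coCartesian fibration $\C{D}(\ASt_k,*)\to\ASt_k^{op}$ and taking the full $\infty$-subcategory on the objects $(\frac{\P}{\P},\ov{\dt}_{\P^+})$, which the Ext-vanishing of \re{verspr}~(b) makes discrete exactly as you anticipate. The geometricity verification in the paper is a more detailed five-step diagram chase through unit/counit maps and base-change isomorphisms, but your excision-plus-adjunction sketch is the right outline.
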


\begin{Emp}
{\bf Remark.} We do not know whether a geometric lift in our sense is unique.
\end{Emp}

\begin{Prop} \label{P:stab}
Let $\C{C}^\star_\star(\gm)$ be a geometric lift of $C^\star_\star(\gm)$. Then
for every sufficiently large $Y\in\T$ and every $Y'\supset Y$ in $\T$, the map $\C{C}^Y(\gm)\to\C{C}^{Y'}(\gm)$ (see \re{geomlift} (f)) is an equivalence.
\end{Prop}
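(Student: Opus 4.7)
The plan is to show that the cofiber $\C{C}^{Y';Y}(\gm):=\colim_{\P\in\Par}\C{C}^{Y';Y}_\star(\gm)$ vanishes in $\C{D}(k)$ for $Y\supset Y_0$ and $Y'\supset Y$ in $\T$, where $Y_0$ is specified below. Combined with the cofiber sequence $\C{C}^Y(\gm)\to\C{C}^{Y'}(\gm)\to\C{C}^{Y';Y}(\gm)$ obtained from \re{geomlift}(c) via commutation of homotopy colimits and cofibers in $\C{D}(k)$, this yields the proposition. Concatenating the cofiber sequences for intermediate subschemes $Y\subset Y''\subset Y'$, the vanishing reduces to the case $Y'=Y\cup Y_w$ for a single $\I$-orbit $Y_w$ open in $Y'$.

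I define $Y_0$ as follows. Since $\gm$ is topologically unipotent, after a $G(F)$-conjugation it lies in $\I_{n_0}^+$ for some $n_0\geq 0$. Set $Y_0:=Y(\I_{n_0}^+)\in\T$, which is a finite union of $\I$-orbit closures by \rl{fin}(a). Using a linear extension of the Bruhat order, any $Y\supset Y_0$ is built from $Y_0$ by successively adjoining $\I$-orbits $Y_w$ that are each open in the current $Y$. I proceed by induction along this enumeration; for the inductive step, I have $Y\supset Y_0\cup\Fl^{<w}$ with $Y_w$ the added orbit, so $w\notin S(\I_{n_0}^+)$ and there exists a simple affine root $\al\in\wt{\Dt}$ with $w(\al)\in\I_{n_0}^+$. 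Note $\al\in J_w$ and $w\neq 1$.

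The vanishing of $\C{C}^{Y\cup Y_w;Y}(\gm)$ now follows from \rl{colim}. Identify $\Par$ with $([1]^{r+1})'$ by characteristic vectors, placing the $\al$-coordinate first, and let $F(J):=\C{C}^{Y\cup Y_w;Y}_{\P_J}(\gm)$. I verify two properties: (i) $F(J)\simeq 0$ whenever $J\not\subset J_w$, since any $\beta\in J\sm J_w$ satisfies $w(\beta)<0$, so $\I w\P_J=\I ws_\beta\P_J$ with $ws_\beta<w$; by the inductive hypothesis $Y_{ws_\beta}\subset Y$, forcing $(Y\cup Y_w)_{\P_J}=Y_{\P_J}$ and thus $\Fl^{Y\cup Y_w;Y}_{\gm,\P_J}=\emptyset$. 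In particular $F(\wt{\Dt}\sm\al)\simeq 0$, since $w\neq 1$ gives $\wt{\Dt}\sm\al\not\subset J_w$. (ii) The transition $F(J)\to F(J\cup\al)$ is an equivalence for every $J\subsetneq\wt{\Dt}\sm\al$: trivially when $J\not\subset J_w$ (both sides are zero), and for $J\subset J_w\sm\al$ this is the geometric incarnation of \rl{isom}, obtained by pulling back the adjunction unit $[\eta]^*\ov{\dt}_{\P_{J\cup\al}^+}\to\ov{\dt}_{\P_J^+}$ of \re{verspr}(a) along $\pr_{\gm,\P_J}$, using that the induced projection $\wt{\eta}:\Fl^{Y_w}_{\gm,\P_J}\to\Fl^{Y_w}_{\gm,\P_{J\cup\al}}$ is a pro-unipotent fibration. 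By \rcl{functor}, the geometric lift $\C{C}^\star_\star(\gm)$ realizes this equivalence as the functorial transition map, so \rl{colim} applies.

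The main obstacle is the rigorous verification of (ii): that $\wt{\eta}$ is indeed pro-unipotent, yielding $\wt{\eta}_!\wt{\dt}_{\P_J^+}\simeq\wt{\dt}_{\P_{J\cup\al}^+}$ in $\C{D}(\Fl^{Y_w}_{\gm,\P_{J\cup\al}})$, and that this equivalence coincides with the map coming from \rcl{functor}. Concretely, in coordinates $g=iwu$ on $Y_w$ with $i\in\I_w$ and $u$ running over $\P_J^+/\P_{J\cup\al}^+$, the algebraic identity \form{stab} $\P_J^+=\P_{J\cup\al}^+\cdot(\P_J^+\cap\un{w}^{-1}\I_{n_0}^+\un{w})$ together with the hypotheses $\gm\in\I_{n_0}^+$ and $w(\al)\in\I_{n_0}^+$ should identify the fibers of $\wt{\eta}$ with pro-unipotent groups, translating the convolution argument in \rl{isom}'s proof into the required fibration statement on the affine Springer locus.
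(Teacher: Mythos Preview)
Your overall architecture matches the paper's proof: reduce to the case $Y'\sm Y=Y_w$ for a single open orbit, take the cofiber $\C{C}^{Y';Y}_{\star}(\gm)\in\C{D}(k)^{\Par}$, identify $\Par$ with $([1]^{r+1})'$ placing the $\al$-coordinate first, and apply \rl{colim}. Your treatment of case (i) (vanishing when $J\not\subset J_w$) is also the same as the paper's.

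The gap is in step (ii), and it is a real one. Your threshold $Y_0=Y(\I_{n_0}^+)$ is chosen merely so that $\gm\in\I_{n_0}^+$; the paper instead takes $n$ from \rl{const}, which guarantees that $i^*_{\gm,n}(A^{Y}_{\P})$ is \emph{constant} on $\gm\I_n^+$ for every $\P$ and every $\I$-invariant locally closed $Y$. This constancy is the mechanism that converts the convolution isomorphism of \rl{isom}, which concerns $A^{Y_w}_{\P_{J}}\ast\dt_{\I_n^+}$, into an isomorphism of stalks $\wt{A}^{Y_w}_{\P_{J\cup\al}}(\gm)\isom\wt{A}^{Y_w}_{\P_J}(\gm)$, i.e.\ into the equivalence \form{can}. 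The identity \form{stab} by itself only yields the convolution statement; it does not give the pointwise statement at $\gm$.

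Your proposed substitute, that $\wt{\eta}:\Fl^{Y_w}_{\gm,\P_J}\to\Fl^{Y_w}_{\gm,\P_{J\cup\al}}$ is a pro-unipotent fibration, is false as stated. Over a point $[g_0]$ with $\gm_0:=g_0^{-1}\gm g_0\in\P_{J\cup\al}$, the fiber is $\{p\in\P_{J\cup\al}/\P_J:\ p^{-1}\gm_0 p\in\P_J\}$, a finite-type Springer fiber for the image of $\gm_0$ in $L_{\P_{J\cup\al}}$; it is neither an affine space nor acyclic in general. Moreover, the restriction of $\wt{\dt}_{\P_J^+}$ to such a fiber is supported on a further Springer-type locus, so $\wt{\eta}_!\wt{\dt}_{\P_J^+}\simeq\wt{\dt}_{\P_{J\cup\al}^+}$ does not follow. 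In short, the passage from \rl{isom} to the stalkwise equivalence genuinely requires \rl{const}; you should replace your $n_0$ by the $n$ of \rl{const} and argue, as the paper does, that the geometric-lift hypothesis identifies the transition map with the Verdier dual of \form{morph}, then that \form{equiv1} reduces to \form{can}, and finally that \form{can} coincides with the stalk at $\gm$ of the isomorphism $A^{Y_w}_{\P_{J\cup\al}}\ast\dt_{\I_n^+}\isom A^{Y_w}_{\P_J}\ast\dt_{\I_n^+}$ thanks to the constancy from \rl{const}.
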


\begin{proof}
Choose $n\in\B{Z}_{>0}$ as in \rl{const}. We claim that for every $Y'\supset Y\supset Y(\I_n^+)$ in $\T$ (see notation \re{not} (e)), the map
$\C{C}^Y(\gm)\to\C{C}^{Y'}(\gm)$ is an equivalence. By the induction on the number of $\I$-orbits in $Y'\sm Y$, we can assume that $Y'\sm Y=Y_w=Iw$, and $w\notin S(\I_n^+)$.
Let $\C{C}^{Y';Y}(\gm)$ be the cofiber of $\C{C}^Y(\gm)\to\C{C}^{Y'}(\gm)$. We want to show that  $\C{C}^{Y';Y}(\gm)\cong 0$.

Let $\C{C}_{\star}^{Y';Y}(\gm)\in \C{D}(k)^{\Par}$ be the cofiber of
$\C{C}^Y_{\star}(\gm)\to\C{C}^{Y'}_{\star}(\gm)$. Since cofibers commute with colimits,
we have  $\C{C}^{Y';Y}(\gm)\cong\colim_{\P\in\Par} \C{C}^{Y';Y}_{\P}(\gm)$. Thus it remains to show that
$\colim_{\P\in\Par} \C{C}^{Y';Y}_{\P}(\gm)\cong 0$.

Since $w\notin S(\I_n^+)$, there exists $\al\in\wt{\Dt}$ such that $w(\al)\in \I_n^+$.
Fix a bijection $\wt{\Dt}\isom\{1,\ldots, r+1\}$, which maps $\al$ to $r+1$, and the
induced bijection $\Par\isom ([1]^{r+1})'$. It remans to show that
the functor $([1]^{r+1})'\to\C{D}(k)$, corresponding to $\C{C}^{Y';Y}_{\star}(\gm)$,
satisfies  the assumptions of \rl{colim}. In other words, we have to show that
$\C{C}^{Y';Y}_{\P_{\wt{\Dt}\sm\al}}(\gm)\cong 0$, and that for every
$J\subsetneq\wt{\Dt}\sm\al$ the map
\begin{equation} \label{Eq:equiv}
\C{C}^{Y';Y}_{\P_J}(\gm)\to \C{C}^{Y';Y}_{\P_{J\cup\al}}(\gm)
\end{equation}
is an equivalence.

For every $J'\nsubseteq J_w$, we have $Y_{\P_{J'}}=Y'_{\P_{J'}}$, hence $\Fl_{\gm,\P_{J'}}^{Y';Y}=\emptyset$, thus $\C{C}^{Y';Y}_{\P_{J'}}(\gm)\cong 0$ (see \re{geomlift} (d)). This implies that $\C{C}^{Y';Y}_{\P_{\wt{\Dt}\sm\al}}(\gm)\cong 0$
(because $\wt{\Dt}\sm \al\nsubseteq J_w$) and that the map \form{equiv} is an equivalence when $J\nsubseteq J_w\sm\al$. Assume now that $J\subset J_w\sm\al$.

Since a lift $\C{C}^{\star}_{\star}(\gm)$ is geometric,
the map \form{equiv} is the Verdier dual of the map
\begin{equation} \label{Eq:equiv1}
R\Gm_c(\Fl_{\gm,\P_{J\cup\al}}^{Y';Y},\wt{\dt}_{\P_{J\cup\al}^+})\to R\Gm_c(\Fl_{\gm,\P_J}^{Y';Y},\wt{\dt}_{\P_J^+}),
\end{equation}
constructed in \re{remasf}. It remains to show that the map \form{equiv1} is an equivalence.

Since $J\subset J_w\sm\al$,  the projection $\wt{\eta}:\Fl^{Y'}_{\gm,\P_J}\to \Fl^{Y'}_{\gm,\P_{J\cup\al}}$ from \re{remasf} satisfies
$\wt{\eta}^{-1}(\Fl_{\gm,\P_{J\cup\al}}^{Y';Y})=\Fl_{\gm,\P_J}^{Y';Y}$, and the pullback map
\begin{equation*} \label{Eq:eta}
\wt{\eta}^*:R\Gm_c(\Fl_{\gm,\P_{J\cup\al}}^{Y';Y},\wt{\dt}_{\P_{J\cup\al}^+})\to R\Gm_c(\Fl_{\gm,\P_J}^{Y';Y},\wt{\eta}^*\wt{\dt}_{\P_{J\cup\al}^+})
\end{equation*}
is an equivalence. Hence it remains to show that the morphism
\begin{equation} \label{Eq:can}
R\Gm_c(\Fl_{\gm,\P_J}^{Y';Y},\wt{\eta}^*\wt{\dt}_{\P_{J\cup\al}^+})\to R\Gm_c(\Fl_{\gm,\P_J}^{Y';Y},\wt{\dt}_{\P_J^+}),
\end{equation}
induced by the morphism  ${\dt}_{\P_{J\cup\al}^+}\to{\dt}_{\P_{J}^+}$ from \re{consind} (b), is an equivalence.

As in the proof of \rt{rss}, we get equivalences $i^*_{\gm,n}(A^{Y_w}_{\P_{J}}\ast\dt_{\I_n^+})\cong  i^*_{\gm,n}(A^{Y_w}_{\P_{J}})$ and $i^*_{\gm,n}(A^{Y_w}_{\P_{J\cup\al}})\cong i^*_{\gm,n}(A^{Y_w}_{\P_{J\cup\al}}\ast\dt_{\I_n^+})$. Therefore,
by \rl{isom}, the morphism ${\dt}_{\P_{J\cup\al}^+}\to{\dt}_{\P_{J}^+}$ from \re{consind} (b)
induces an equivalence $i^*_{\gm,n}(A^{Y_w}_{\P_{J\cup\al}})\isom i^*_{\gm,n}(A^{Y_w}_{\P_{J}})$,
hence an equivalence $\wt{A}^{Y_w}_{\P_{J\cup\al}}(\gm)\isom \wt{A}^{Y_w}_{\P_{J}}(\gm)$ (see \re{rem}).

Finally, using equality $\Fl_{\gm,\P_{J}}^{Y';Y}=\Fl_{\gm,\P_{J}}^{Y_w}$, and arguing as in \re{rem}, the last equivalence coincides with \form{can}.
\end{proof}

\begin{Emp} \label{E:outline}
\begin{proof}[Proof of \rt{tf}]
By \rco{reduction}, we have to show equality \form{eq}.

Let  $\C{C}^\star_\star(\gm)$ be a geometric lift of ${C}^\star_\star(\gm)$. Since the dimension of $\Fl_{\gm}$ is finite, the dimensions of
$\{\Fl^Y_{\gm,\P}\}_{Y,\P}$ are bounded. Thus there exist $n\leq m$ such that the image of $\C{C}^\star_\star(\gm)$ lies in $\C{D}^{[n,m]}(k)$. We are going to apply \rl{cohom}.

By construction, the equivalence $\C{D}^{[0]}(k)\to Sh_c(k)$ from \re{discrete} (b) identifies functor $h^{-i}\circ \C{C}^\star_\star(\gm): \T\times\Par\to\C{D}^{[0]}(k)$ with $C^\star_{i,\star}(\gm):\T\times\Par\to Sh(k)$. Therefore it follows from \rl{cone} that  $C^\star_{i}(\gm):\T\to\Ch^{*,b}_c(Sh(k))\subset\Ch^*(Sh(k))$ represents
$\colim_{\P\in \Par} (h^{-i}\circ \C{C}^\star_\P(\gm)):\T\to \C{D}(k)$.

Next, by \rl{hom}, the inductive limit
$C_{i}(\gm)=\indlim_Y C^Y_{i}(\gm)\in \Ch^*(Sh(k))$ belongs $\Ch^{*,b}_c(Sh(k))$.
Since $\T$ is filtered, we conclude from \re{cone} (a) that $C_{i}(\gm)$
 represents
$\colim_Y C^Y_{i}(\gm)=\colim_ Y\colim_{\P}(h^{-i}\circ \C{C}^Y_\P(\gm))\in\C{D}(k)$.

Thus, by \rl{cohom}, there exists a colimit $\C{C}(\gm):=\colim_Y \colim_{\P}\C{C}^{Y}_{\P}(\gm)\in \C{D}(k)$, and we have an equality in
$K_0(\C{D}(k))$
\begin{equation} \label{Eq:1}
\lan\C{C}(\gm)\ran=\sum_i (-1)^i \lan C_i(\gm)\ran.
\end{equation}
Similarly, applying the same argument to the functor $\C{C}^Y_\star(\gm):\Par\to\C{D}^{[n,m]}(k)$
we conclude from \rl{cohom} that for every $Y\in\T$ we have an equality
\begin{equation} \label{Eq:2}
\lan\C{C}^Y(\gm)\ran=\sum_i (-1)^i \lan C^Y_i(\gm)\ran.
\end{equation}

Recall that $\C{C}(\gm)\cong\colim_{Y\in\T} \C{C}^Y(\gm)$, the transition maps
$\C{C}^Y(\gm)\to \C{C}^{Y'}(\gm)$ are equivalences for each sufficiently large $Y$ (by \rp{stab}),
and $\T$ is filtered. Therefore the map $\C{C}^Y(\gm)\to \C{C}(\gm)$ is an equivalence for each sufficiently large $Y\in\T$.
Hence $\lan\C{C}^Y(\gm)\ran=\lan\C{C}(\gm)\ran$, thus equality \form{eq} follows from \form{1} and \form{2}.
\end{proof}
\end{Emp}

\subsection{Construction of a geometric lift}
In this subsection we construct a geometric lift $\C{C}_{\star}^{\star}(\gm)$ of ${C}_{\star}^{\star}(\gm)$.
As in subsection \ref{SS:compl}, we omit $\Fr$ from the notation.

\begin{Emp} \label{E:dec}
{\bf Decomposition of functor $C^{\star}_{\star}(\gm)$.} First we decompose $C^{\star}_{\star}(\gm)$ as a composition of four simpler functors.

(a) Let $\ASt_k$ be the category of admissible stacks over $k$ (see \re{stacks}). Denote by
$D(\ASt_k,*)$ category over $\ASt_k^{op}$, whose objects are pairs $(X\in \ASt_k,\C{A}\in D(X))$, and for every
$(X,\C{A}),(X',\C{A}')\in \Ob D(\ASt_k,*)$, the set of morphisms $(X,\C{A})\to (X',\C{A}')$ is the disjoint union $\sqcup_{f\in\Hom(X',X)}\Hom_{D(X')}(f^*\C{A},\C{A}')$. We also denote by $D(\Var_k,*)$ the full subcategory of
$D(\ASt_k,*)$, whose objects are pairs $(X,\C{A})$ with $X\in \Var_k$.

(b) Let $\ASt_k^{[1]}$ be the category of morphisms in $\ASt_k$, thus $\ASt_k^{[1]}\times_{\ASt_k}D(\ASt_k,*)^{op}$
(resp. $\Var_k\times_{\ASt_k}\ASt_k^{[1]}\times_{\ASt_k}D(\ASt_k,*)^{op}$) is the category of pairs $(f,\C{A})$, where $f:X\to Y$ is a morphism in $\ASt_k$ (resp. with $X\in \Var_k$) and $\C{A}\in D(Y)$.

We have a natural functor
$\ASt_k^{[1]}\times_{\ASt_k}D(\ASt_k,*)^{op}\to D(\ASt_k,*)^{op}$, which maps a pair $(f:X\to Y, \C{A})$ to
$(X,f^*\C{A})$. This functor restricts to a functor
\begin{equation} \label{Eq:(b)}
\Var_k\times_{\ASt_k}\ASt_k^{[1]}\times_{\ASt_k}D(\ASt_k,*)^{op}\to D(\Var_k,*)^{op}.
\end{equation}

(c)  Denote by $R\Gm$ the functor $D(\Var_k,*)\to D(k)$, which sends a pair $(X,\C{A})$ to $R\Gm(X,\C{A})$ and sends a morphism
$f:X'\to X$, $\phi:f^*\C{A}\to\C{A}'$ to the composition
\[R\Gm(X,\C{A})\overset{f^*}{\lra} R\Gm(X',f^*\C{A})\overset{\phi}{\lra} R\Gm(X',\C{A}').\]

(d) Consider a functor $\Par\to D(\ASt_k,*)^{op}$, which maps $\P$ to the pair $(\frac{\P}{\P},\ov{\dt}_{\P^+})$,
and maps an embedding $\eta:\P\hra\Q$ to the pair $([\eta]:\frac{\P}{\P}\to \frac{\Q}{\Q},[\eta]^*\ov{\dt}_{\Q^+}\to \ov{\dt}_{\P^+})$, defined in \re{verspr} (a). This functor induces the functor
\begin{equation} \label{Eq:(d)}
\T\times \Par\to \Var_k\times_{\ASt_k}\ASt_k^{[1]}\times_{\ASt_k}D(\ASt_k,*)^{op},
\end{equation}
which maps $(Y,\P)$ to the pair $(\pr_{\gm,\P}:\Fl_{\gm,\P}^Y\to \frac{\P}{\P},\ov{\dt}_{\P^+})$.

(e) Finally, let
$\B{D}:D(k)^{op}\to D(k)$ be the Verdier duality functor $\C{F}\mapsto \B{D}\C{F}$. Then functor $C^{\star}_{\star}(\gm)$
decomposes as a composition
\[
\T\times \Par\overset{\form{(d)}}{\lra} \Var_k\times_{\ASt_k}\ASt_k^{[1]}\times_{\ASt_k}D(\ASt_k,*)^{op}\overset{\form{(b)}}{\lra} D(\Var_k,*)^{op}\overset{R\Gm}{\to} D(k)^{op}\overset{\B{D}}{\to} D(k).
\]
\end{Emp}

\begin{Emp} \label{E:constr}
{\bf Main construction.}
Let $\Cat_{\infty}$ be the $\infty$-category of (small) $\infty$-categories (see, for example, \cite[3.1]{Lur}).
We claim that there exists a "natural" functor of $\infty$-categories $F:\ASt_k^{op}\to \Cat_{\infty}$ such that $F(X)=\C{D}(X)$ for all
$X\in \ASt_k$ and $F(f)=f^*$ for all $f:X\to Y$. Though this fact seems  to be well-known to specialists, we sketch this construction for the convenience of the reader. The construction consists of two main parts.

{\bf Part 1.} Let $L/\ql$ be a finite extension, $m\subset\B{O}_L$ be the maximal ideal, and  $n\in\B{N}$. We claim that there exists a natural functor of $\infty$-categories $F'_{L,n}:\Var_k^{op}\to \Cat_{\infty}$ such that
$F'_{L,n}(X)=\C{D}^b_c(X,\C{O}_L/m^n)$ for all $X\in \Var_k$, and $F'_{L,n}(f)=f^*$ for all $f:X\to Y$.

By \cite[3.2]{Lur}, to define a functor $F'_{L,n}$ one needs to construct a coCartesian fibration $\C{D}^b_c(\Var_k,\C{O}_L/m^n)\to \Var_k^{op}$.
For each $X\in \Var_k$, let $\Ch^{*,b}_c(X,\C{O}_L/m^n)$ be the category of bounded cochain complexes of $\C{O}_L/m^n$-modules
over $X$ with constructible cohomologies. Recall that $\C{D}^b_c(X,\C{O}_L/m^n)$ is by definition is the $\infty$-category, obtained
by the $\infty$-localization of the $\infty$-category $\Ch^{*,b}_c(X,\C{O}_L/m^n)$ by quasi-isomorphisms.

To construct $\C{D}^b_c(\Var_k,\C{O}_L/m^n)$, we first consider category $C:=\Ch^{*,b}_c(\Var_k,\C{O}_L/m^n)$ over $\Var_k^{op}$,
defined in the same way as $D(\Var_k,*)$ (see \re{dec} (a)), where the derived category $D(X)$ is replaced by  $\Ch^{*,b}_c(X,\C{O}_L/m^n)$.
Next, we denote by $W$ be the set of morphisms $(f:X'\to X,\phi:f^*\C{A}\to\C{A}')$ in $C$, such that $f$ is an isomorphism, and $\phi$ is a quasi-isomorphism. We denote by  $\C{D}^b_c(\Var_k,\C{O}_L/m^n)$ the $\infty$-category, obtained
by the $\infty$-localization of $C$ by $W$. For example, in the model of complete Segal spaces, this localization is simply the fibrant
replacement of the Rezk nerve $N(C,W)$ (see \cite{Re}).

{\bf Part 2.} The rest of the construction is a formal consequence of the fact that the $\infty$-category $\Cat_{\infty}$ has all small limits and colimits. We divide it in five steps.

(I) We claim that $F'_{L,n}$ naturally extends to the functor $F_{L,n}:(\Art_k)^{op}\to \Cat_{\infty}$ (see \re{stacks}), and $F_{L,n}(X)=\C{D}^b_c(X,\C{O}_L/m^n)$ for each $X\in \Art_k$. Explicitly,
\[
F_{L,n}(X)=\lim_{(V\to X)\in \Var_k/X}F'_{L,n}(V)\in \Cat_{\infty}.
\]
Formally, $F_{L,n}$ is the right Kan extension of $F'_{L,n}$.

(II) Functors $\{F_{L,n}\}_n$ form a projective system, therefore we can form a limit
$F_{\C{O}_L}:=\lim_n F_{L,n}:(\Art_k)^{op}\to \Cat_{\infty}$. In particular, $F_{\C{O}_L}(X)=\C{D}^b_c(X,\C{O}_L)$ for each $X\in \Art_k$.

(III) Functor $F_{\C{O}_L}$ naturally defines a functor $F_{L}:=F_{\C{O}_L}[\frac{1}{l}]:(\Art_k)^{op}\to \Cat_{\infty}$ such that $F_{L}(X)=\C{D}^b_c(X,L)$.
Namely, the multiplication by $l$ induces an endomorphism of $F_{\C{O}_L}$, so we can set
$F_{L}:=\colim(F_{\C{O}_L}\overset{\cdot l}{\lra}F_{\C{O}_L}\overset{\cdot l}{\lra}\ldots)$.

(IV) Functors $\{F_{L}\}_L$ form an inductive system, so we can form a colimit
\[
\wt{F}:=\colim_L F_{L}:(\Art_k)^{op}\to \Cat_{\infty}.
\]
 In particular, we have $\wt{F}(X)=\C{D}(X)$ for each $X\in \Art_k$.

(V) Finally, functor $\wt{F}:(\Art_k)^{op}\to \Cat_{\infty}$ extends to a functor $F:\ASt_k^{op}\to \Cat_{\infty}$ such that
$F(X)=\C{D}(X):=\colim_{(V\to X)\in (X/\Art_k)^{op}}\C{D}(V)$. Explicitly, $F$ is the left Kan extension of $\wt{F}$.
\end{Emp}

\begin{Emp}
{\bf Remark.} To construct functor $F:\ASt_k\to \Cat_{\infty}$, we used a classical way of defining derived categories of constructible $\qlbar$-sheaves.
Alternatively, we could use a recent approach of Bhatt--Scholze \cite{BS}, which would make the process shorter.
\end{Emp}


To prove \rcl{functor}, we are first going to construct a lift $\C{C}_{\star}^{\star}(\gm)$ of ${C}_{\star}^{\star}(\gm)$ (see \re{lift}) and then to show that this lift is geometric (see \rcl{geom}).

\begin{Emp} \label{E:lift}
{\bf Construction of a lift.} To construct of a lift $\C{C}_{\star}^{\star}(\gm)$ of ${C}_{\star}^{\star}(\gm)$, we are going to lift all categories and functors defined in \re{dec} to $\infty$-categories.

(a) Let $\C{D}(\ASt_k,*)\to \ASt_k^{op}$ be the coCartesian fibration, corresponding to the functor  $\ASt_k^{op}\to \Cat_{\infty}$, constructed in \re{constr}. By construction, objects of $\C{D}(\ASt_k,*)$ are pairs
$(X\in \ASt_k, \C{A}\in \C{D}(X))$, and for every $(X,\C{A}),(X',\C{A}')\in \Ob\C{D}(\ASt_k,*)$, the mapping space
$\map_{\C{D}(\ASt_k,*)}((X,\C{A}),(X',\C{A}'))$ is $\sqcup_{f:X'\to X}\map_{\C{D}(X')}(f^*\C{A},\C{A}')$.
In particular, the homotopy category of $\C{D}(\ASt_k,*)$ is $D(\ASt_k,*)$.

(b) We claim that there exists a functor
\begin{equation} \label{Eq:b}
\ASt_k^{[1]}\times_{\ASt_k}\C{D}(\ASt_k,*)^{op}\to \C{D}(\ASt_k,*)^{op},
\end{equation}
which maps $(f:X\to Y,\C{A})$ to $(X,f^*\C{A})$ and lifts  the functor from \re{dec} (b).

Let $\frak{S}$ be the $\infty$-category of spaces (see, for example, \cite{KV}). By the Yoneda lemma, it is enough to construct a functor
\begin{equation} \label{Eq:F}
(\ASt_k^{[1]}\times_{\ASt_k}\C{D}(\ASt_k,*)^{op})\times\C{D}(\ASt_k,*)\to\frak{S},
\end{equation}
which sends  a pair
$(f:X\to Y,\C{A}), (Z,\C{B})$ to $\sqcup_{g:Z\to X}\map_{\C{D}(Z)}(g^*f^*\C{A},\C{B})$.
Note that $\Hom_{\ASt_k}(Z,X)\cong\Hom_{\ASt_k^{[1]}}(\Id_Z, f)$, thus
\[
\sqcup_{g:Z\to X}\map_{\C{D}(Z)}(g^*f^*\C{A},\C{B})\cong\map_{(\ASt^{op}_k)^{[1]}\times_{\ASt^{op}_k}\C{D}(\ASt_k,*)}((f,\C{A}),(\Id_Z,\C{B})),
\]
so we can define the map \form{F} to be the composition of the diagonal embedding
\[\C{D}(\ASt_k,*)\to (\ASt_k^{op})^{[1]}\times_{\ASt_k^{op}}\C{D}(\ASt_k,*):(Z,\C{B})\mapsto (\Id_Z,\C{B})\]
and the evaluation map
\[
\map(\cdot,\cdot): ((\ASt^{op}_k)^{[1]}\times_{\ASt_k^{op}}\C{D}(\ASt_k,*))^{op}\times((\ASt^{op}_k)^{[1]}\times_{\ASt_k^{op}}\C{D}(\ASt_k,*))
\to \frak{S}.
\]

(c) Let $\C{D}(\Var_k,*)\subset \C{D}(\ASt_k,*)$ be the full subcategory, whose homotopy category is
$D(\Var_k,*)\subset D(\ASt_k,*)$. Then the functor \form{b} induces a functor of $\infty$-categories
\[
F_1:\Var_k\times_{\ASt_k}\ASt_k^{[1]}\times_{\ASt_k}\C{D}(\ASt_k,*)^{op}\to \C{D}(\Var_k,*)^{op}.
\]

(d) We claim that there is a functor of $\infty$-categories $R\Gm:\C{D}(\Var_k,*)\to \C{D}(k)$, which maps a pair $(X,\C{A})$ to $R\Gm(X,\C{A})$
and lifts the functor from \re{dec} (c). By the Yoneda lemma, we have to construct a functor
\begin{equation} \label{Eq:F'}
\C{D}(k)^{op}\times\C{D}(\Var_k,*)\to\frak{S},
\end{equation}
which maps a pair $(\C{F},(X,\C{A}))$ to $\map_{\C{D}(k)}(\C{F},R\Gm(X,\C{A}))$. Let $p:X\to\Spec k$ be the projection. Then
by adjointness we have natural equivalences
\[
\map_{\C{D}(k)}(\C{F},R\Gm(X,\C{A}))\cong \map_{\C{D}(X)}(p^*\C{F},\C{A})\cong\map_{\C{D}(\Var_k,*)}((\Spec k,\C{F}),(X,\C{A})),
\]
thus we can define \form{F'} to be a composition of the embedding
\[\C{D}(k)^{op}\hra \C{D}(\Var_k,*)^{op}:\C{F}\mapsto (\Spec k,\C{F})\]
and the evaluation map $\C{D}(\Var_k,*)^{op}\times \C{D}(\Var_k,*)\to\frak{S}$.

(e) Consider the full $\infty$-subcategory $A\subset \C{D}(\ASt_k,*)$ with objects $\{(\frac{\P}{\P},\ov{\dt}_{\P^+})\}_{\P\in \Par}$.
By \re{verspr} (b), we know that $\Ext^{-j}([\eta]^*\ov{\dt}_{\Q^+},\ov{\dt}_{\P^+})=0$
for every $j>0$ and every embedding $\eta:\P\hra\Q$ in $\Par$. Thus $A$ is discrete by \re{discrete} (b), hence the natural morphism $A\to \Ho A$ is an equivalence by \re{discrete} (a).
Therefore the functor of categories $\Par\to (\Ho A)^{op}\subset D(\ASt_k,*)^{op}$ from \re{dec} (d) naturally lifts to a functor
of $\infty$-categories $\Par\to A^{op}\subset \C{D}(\ASt_k,*)^{op}$. In particular, it gives rise to a functor
\[
F_2:\T\times \Par\to \Var_k\times_{\ASt_k}\ASt_k^{[1]}\times_{\ASt_k}\C{D}(\ASt_k,*)^{op},
\]
lifting \re{dec} (d).

(f)  Let $\B{D}:\C{D}(k)^{op}\to \C{D}(k)$ be the Verdier duality functor, and take $\C{C}^{\star}_{\star}(\gm)$ to be the composition $\B{D}\circ R\Gm\circ F_1\circ F_2$.
By construction, it is a lift of $C_{\star}^{\star}(\gm)$.
\end{Emp}

\begin{Emp} \label{E:obs}
{\bf Observations.}
(a) Recall that in \re{lift} (d) we constructed a functor of $\infty$-categories $R\Gm:\C{D}(\Var_k)\to\C{D}(k):(X,\C{A})\mapsto R\Gm(X,\C{A})$. By the same arguments, for every $Y\in \Var_k$, one can construct a functor of $\infty$-categories
\[(p_Y)_*:\C{D}(\Var_k)\times_{\Var^{op}_k}(\Var_k/Y)^{op} \to \C{D}(Y):(f:X\to Y,\C{A})\mapsto f_*\C{A}\]
such that the composition
\[
\C{D}(\Var_k)^{op}\times_{\Var_k}(\Var_k/Y)\to  \C{D}(\Var_k)^{op}\overset{R\Gm}{\lra}\C{D}(k)^{op}
\]
naturally decomposes as $R\Gm(Y,\cdot)\circ(p_Y)_*$.

(b) Let $X\in \Var_k$, $i:Z\hra X$ be a closed embedding, and $j:U\hra X$ be an open embedding such that $j(U)=X\sm i(Z)$. We claim that the natural composition  $j_!j^*\to\Id_{\C{D}(X)}\to i_*i^*$ is a fiber sequence in $\C{D}(X)^{\C{D}(X)}$.

This formally follows from the easy fact that for every $\C{F}\in\C{D}(X)$ and a geometric point $\ov{x}$ of $X$ the induced sequence $(j_!j^*\C{F})_{\ov{x}}\to\C{F}_{\ov{x}}\to (i_*i^*\C{F})_{\ov{x}}$ is a fiber sequence.

Namely, let $\C{A}$ be the fiber of the unit map $\Id_{\C{D}(X)}\to i_*i^*$  in $\C{D}(X)^{\C{D}(X)}$. Then $\C{A}(\C{F})$ is the fiber of $\C{F}\to i_*i^*\C{F}$ for every $\C{F}\in\C{D}(X)$,
hence $\C{A}(\C{F})_{\ov{x}}$ is the fiber of $\C{F}_{\ov{x}}\to (i_*i^*\C{F})_{\ov{x}}$ for every  geometric point $\ov{x}$ of $X$.

Since the composition $j_!j^*\to\Id_{\C{D}(X)}\to i_*i^*$ is the zero morphism, it induces a canonical
morphism $\nu:j_!j^*\to\C{A}$, and it remains to show that $\nu$ is an equivalence.

Note that $\nu$ induces a morphism  $\nu(\C{F})_{\ov{x}}:(j_!j^*\C{F})_{\ov{x}}\to\C{A}(\C{F})_{\ov{x}}$ for all $\C{F}$ and $\ov{x}$.
 As both $(j_!j^*\C{F})_{\ov{x}}\to\C{F}_{\ov{x}}\to (i_*i^*\C{F})_{\ov{x}}$ and $\C{A}(\C{F})_{\ov{x}}\to\C{F}_{\ov{x}}\to (i_*i^*\C{F})_{\ov{x}}$ are fiber sequences, we conclude that  $\nu(\C{F})_{\ov{x}}$ is an equivalence for all $\C{F}$ and $\ov{x}$. Hence $\nu(\C{F}):j_!j^*\C{F}\to\C{A}(\C{F})$ is an equivalence for all $\C{F}$, thus $\nu$ is an equivalence.
\end{Emp}

\begin{Cl} \label{C:geom}
The lift $\C{C}^{\star}_{\star}(\gm)$, constructed in \re{lift}, is geometric.
\end{Cl}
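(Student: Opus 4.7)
The plan is to verify that the cofiber construction implicit in $\C{C}^\star_\star(\gm)$ implements the classical compactly supported cohomology with its natural covariance in $\P \in \Par$. The proof proceeds in three steps.

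First, I would identify the cofiber $\C{C}^{Y';Y}_{\P}(\gm)$ explicitly. For each $\P \in \Par$ and $Y \subset Y'$ in $\T$, let $a_{\P}: \Fl^Y_{\gm,\P} \hra \Fl^{Y'}_{\gm,\P}$ be the closed embedding and $j_{\P}: \Fl^{Y';Y}_{\gm,\P} \hra \Fl^{Y'}_{\gm,\P}$ the open complement. Observation \re{obs} (b) supplies a fiber sequence $(j_{\P})_!(j_{\P})^* \to \Id \to (a_{\P})_*(a_{\P})^*$ in the functor $\infty$-category $\C{D}(\Fl^{Y'}_{\gm,\P})^{\C{D}(\Fl^{Y'}_{\gm,\P})}$. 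Evaluating at $\wt{\dt}_{\P^+}$ and applying $R\Gm(\Fl^{Y'}_{\gm,\P}, -)$ (which preserves fiber sequences) yields a fiber sequence
\[
R\Gm_c(\Fl^{Y';Y}_{\gm,\P}, \wt{\dt}_{\P^+}) \to R\Gm(\Fl^{Y'}_{\gm,\P}, \wt{\dt}_{\P^+}) \to R\Gm(\Fl^Y_{\gm,\P}, \wt{\dt}_{\P^+})
\]
in $\C{D}(k)$. Since Verdier duality is an equivalence $\C{D}(k)^{op} \isom \C{D}(k)$ of stable $\infty$-categories sending fiber sequences to cofiber sequences, this produces a natural equivalence $\C{C}^{Y';Y}_{\P}(\gm) \cong \B{D}(R\Gm_c(\Fl^{Y';Y}_{\gm,\P}, \wt{\dt}_{\P^+}))$ as required in \re{geomlift} (d).

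Second, I would trace through the functoriality in $\P$. The morphism $\C{C}^{Y';Y}_{\P}(\gm) \to \C{C}^{Y';Y}_{\Q}(\gm)$ is obtained, by the universality of cofibers, from the commutative square in $\C{D}(k)$ with rows $\C{C}^{Y}_{\P}(\gm) \to \C{C}^{Y'}_{\P}(\gm)$ and $\C{C}^{Y}_{\Q}(\gm) \to \C{C}^{Y'}_{\Q}(\gm)$ and vertical arrows induced by the embedding $\eta: \P \hra \Q$. Under Verdier duality, this square is the one with rows given by the restriction maps in non-compactly supported cohomology and with vertical arrows given, via the identifications in \re{lift}, by the composition $\wt{\eta}^* \circ (\text{counit } \wt{\eta}^*\wt{\dt}_{\Q^+} \to \wt{\dt}_{\P^+})$ arising from the morphism $[\eta]^*\ov{\dt}_{\Q^+} \to \ov{\dt}_{\P^+}$ of \re{verspr} (a). Taking horizontal fibers gives a canonical morphism $R\Gm_c(\Fl^{Y';Y}_{\gm,\Q}, \wt{\dt}_{\Q^+}) \to R\Gm_c(\Fl^{Y';Y}_{\gm,\P}, \wt{\dt}_{\P^+})$, and the task reduces to identifying it with \form{morph}.

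Third, the identification uses proper base change. The morphism $\wt{\eta}: \Fl^{Y'}_{\gm,\P} \to \Fl^{Y'}_{\gm,\Q}$ is proper, being a pullback of the projection $\Fl_{\P} \to \Fl_{\Q}$. Hence $\wt{\eta}^*(j_{\Q})_! \cong (j')_! \wt{\eta}_0^*$, where $j': \wt{\eta}^{-1}(\Fl^{Y';Y}_{\gm,\Q}) \hra \Fl^{Y'}_{\gm,\P}$ is the open embedding and $\wt{\eta}_0$ is the restriction of $\wt{\eta}$ to this open subscheme. Pulling back the fiber sequence for $\Q$ by $\wt{\eta}^*$, composing with the counit $\wt{\eta}^*\wt{\dt}_{\Q^+} \to \wt{\dt}_{\P^+}$, and combining with the open embedding $j'$, produces exactly the two-step composition of pullback and extension-by-zero that defines \form{morph} in \re{remasf}.

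The main obstacle will be ensuring that all of these identifications are coherent at the $\infty$-categorical level, not merely on homotopy categories. The key is that the fiber-sequence assertion of \re{obs} (b) is compatible with the coCartesian fibration $\C{D}(\ASt_k, *) \to \ASt_k^{op}$ underlying the construction of $\C{C}^\star_\star(\gm)$ in \re{lift}, when passed through the functor $F_1$ and the global sections functor. This amounts to an instance of proper base change and the open-closed recollement in the $\infty$-categorical six-functor formalism of \cite{LZ1, LZ2}, applied relatively in the functor categories indexed by $\T \times \Par$, and one must check that the natural transformations $\wt{\eta}^*\wt{\dt}_{\Q^+} \to \wt{\dt}_{\P^+}$ used in \re{lift} coincide coherently with those arising from the counit discussed in \re{verspr}.
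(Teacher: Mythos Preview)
Your approach is essentially the same as the paper's: identify the cofiber via the open--closed fiber sequence, dualize, and check via base change that the induced map on fibers matches \form{morph}. Your Steps~1 and~2 correspond closely to the paper's Steps~1 and~2.

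The difference lies in how the final identification is carried out. In your Step~3 you assert that pulling back the $\Q$-sequence by $\wt{\eta}^*$, composing with $\wt{\eta}^*\wt{\dt}_{\Q^+}\to\wt{\dt}_{\P^+}$, and extending by zero along $j'$ ``produces exactly'' \form{morph}. This is the desired conclusion, but it is not yet a proof: the map on fibers is \emph{induced} by a commutative square, and you must verify that this induced map coincides with the recipe you describe. The paper handles this via an intermediate reduction you omit (its Step~3): it factors $R\Gm$ as $R\Gm(\Fl^{Y'}_{\gm,\Q},\cdot)\circ(p_{\Fl^{Y'}_{\gm,\Q}})_*$, lifting the entire square from $\C{D}(k)$ to $\C{D}(\Fl^{Y'}_{\gm,\Q})$. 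There the square becomes the explicit diagram \form{e} built from unit maps $\Id\to i_*i^*$, the adjoint $\iota:\wt{\dt}_{\Q^+}\to\wt{\eta}_*\wt{\dt}_{\P^+}$, and a base change isomorphism; commutativity of the inner squares is then a direct check, and \re{obs}~(b) identifies the fiber with \form{map}. Without this lift, your base change argument lacks a diagram against which to verify the claimed identification.

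Your closing paragraph about $\infty$-categorical coherence is largely unnecessary: the definition of ``geometric'' in \re{geomlift}~(e) asks only for an identification of morphisms in the homotopy category $D(k)$, so once the square is correctly set up the check is ordinary. The real work is writing down that square explicitly, which is what the paper's factorization accomplishes.
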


\begin{proof}
We divide the proof into steps.

 {\bf Step 1.} First, we reformulate the assertion in more concrete terms. In the notation of \re{lift}, consider the functor  $\wt{\C{C}}^{\star}_{\star}(\gm):=R\Gm\circ F_1\circ F_2:\T\times\Par\to\C{D}(k)^{op}$.
In other words, $\wt{\C{C}}^{\star}_{\star}(\gm)=\B{D}\circ{\C{C}}^{\star}_{\star}(\gm)$, thus
$\wt{\C{C}}^{Y}_{\P}(\gm)=R\Gm(\Fl_{\gm,\P}^Y,\wt{\dt}_{\P^+})$ for each $Y\in\T$ and $\P\in\Par$.

We fix $Y\subset Y'$ in $\T$ and $\P\subset\Q$ in $\Par$ as in \re{geomlift} (e), set $Y_0:=Y'$, $Y_1:=Y$, $\P_0:=\Q$, $\P_1:=P$, and consider functor $\epsilon:[1]\times[1]\to\T^{op}\times\Par^{op}:(i,j)\mapsto(Y_i,\P_j)$.

Let $\C{E}:[1]\times[1]\to\C{D}(k)$ be the composition   $\wt{\C{C}}^{\star}_{\star}(\gm)^{op}\circ\epsilon$, and let $\C{E}'$ be the corresponding functor
$[1]\to\C{D}(k)^{[1]}:i\mapsto [j\mapsto \C{E}(i,j)]$. Then $\C{E}'$ is a morphism in the $\infty$-category $\C{D}(k)^{[1]}$, thus we can consider its fiber $\Fib\C{E}'\in \C{D}(k)^{[1]}$.

 Thus $\Fib\C{E}'$ is a morphism in the $\infty$-category $\C{D}(k)$, and, by definition, the assertion that $\C{C}^{\star}_{\star}(\gm)$ is geometric means that the induced  morphism  $[\Fib\C{E}']$ in $D(k)$ is naturally isomorphic to the morphism \form{morph} from \re{remasf}.

{\bf Step 2.} Next, we rewrite the morphism \form{morph} in a more convenient form.
In the notation of \re{remasf}, we consider a commutative diagram
\begin{equation*} \label{Eq:open}
\begin{CD}
\wt{\eta}^{-1}(\Fl_{\gm,\P_0}^{Y_0;Y_1}) @>j''>> \Fl_{\gm,\P_1}^{Y_0;Y_1} @>j_1>>\Fl_{\gm,\P_1}^{Y_0} \\
@V\wt{\eta}'VV @VVV @V\wt{\eta}VV\\
\Fl_{\gm,\P_0}^{Y_0;Y_1} @= \Fl_{\gm,\P_0}^{Y_0;Y_1} @>j_0>>  \Fl_{\gm,\P_0}^{Y_0},
\end{CD}
\end{equation*}
and set $j':=j_1\circ j'':\wt{\eta}^{-1}(\Fl_{\gm,\P_0}^{Y_0;Y_1})\hra \Fl_{\gm,\P_1}^{Y_0}$.  Consider the composition
\begin{equation} \label{Eq:map}
j_{0!}j_0^*\wt{\dt}_{\P_0^+}\overset{\iota}{\lra} j_{0!}j_0^*\wt{\eta}_*\wt{\dt}_{\P_1^+}\overset{BC}{\lra}  j_{0!}\wt{\eta}'_* j'^*\wt{\dt}_{\P_1^+}=\wt{\eta}_*j'_{!}j'^*\wt{\dt}_{\P_1^+}\overset{c}{\lra}\wt{\eta}_*j_{1!}j^*_1\wt{\dt}_{\P_1^+}
\end{equation}
in $D(\Fl_{\gm,\P_0}^{Y_0})$, where $\iota$ is induced by the morphism $\iota:\wt{\dt}_{\P_0^+}\to \wt{\eta}_*\wt{\dt}_{\P_1^+}$, adjoint to the map $\wt{\eta}^*\wt{\dt}_{\P_0^+}\to\wt{\dt}_{\P_1^+}$ from \re{aplasf} (b), $BC$ stands for the base change isomorphism, and $c$ is induced by the counit map $j'_{!}j'^*\cong j_{1!}j''_{!}j''^*j_1^*\to j_{1!}j^*_1$.

Then the morphism \form{morph} from \re{remasf} is naturally isomorphic to the morphism
$R\Gm(\Fl_{\gm,\P_0}^{Y_0},j_{0!}j_0^*\wt{\dt}_{\P_0^+})\to R\Gm(\Fl_{\gm,\P_0}^{Y_0},\wt{\eta}_*j_{1!}j^*_1\wt{\dt}_{\P_1^+})$, induced by \form{map}.

{\bf Step 3.} Now, we reduce the problem to a question about an isomorphism of morphisms in $D(\Fl_{\gm,\P_0}^{Y_0})$. Note that the composition \[
F_1\circ F_2\circ \epsilon:[1]\times[1]\to \C{D}(\Var_k):(i,j)\mapsto (\Fl_{\gm,\P_j}^{Y_i},\wt{\dt}_{\P_j^+})
\]
 naturally lifts to a functor
 $F_3:[1]\times[1]\to \C{D}(\Var_k)\times_{\Var^{op}_k}(\Var_k/\Fl_{\gm,\P_0}^{Y_0})^{op}$. Then, by \re{obs} (a), functor $\C{E}$ from Step 1  decomposes
 as a composition of
 \[
 \wt{\C{E}}:=(p_{\Fl_{\gm,\P_0}^{Y_0}})_*\circ F_3:[1]\times[1]\to \C{D}(\Fl_{\gm,\P_0}^{Y_0})
 \]
 and $R\Gm(\Fl_{\gm,\P_0}^{Y_0},\cdot):\C{D}(\Fl_{\gm,\P_0}^{Y_0})\to\C{D}(k)$.

As in Step 1, $\wt{\C{E}}$ gives rise to the morphism $\wt{\C{E}}'$ in the $\infty$-category $\C{D}(\Fl_{\gm,\P_0}^{Y_0})^{[1]}$ such that
$\C{E}'\cong R\Gm(\Fl_{\gm,\P_0}^{Y_0},\cdot)\circ \wt{\C{E}}'$. Hence, since functor $R\Gm(\Fl_{\gm,\P_0}^{Y_0},\cdot)$ commutes with all limits, we have  $\Fib\C{E}'\cong R\Gm(\Fl_{\gm,\P_0}^{Y_0},\cdot)\circ \Fib\wt{\C{E}}'$. Therefore, by Step 2, it suffices to
show that the induced morphism $[\Fib\wt{\C{E}}']$ in the homotopy category $D(\Fl_{\gm,\P_0}^{Y_0})$ is naturally isomorphic to the map \form{map}.

{\bf Step 4.} Consider commutative diagram
\begin{equation*} \label{Eq:closed}
\begin{CD}
\Fl_{\gm,\P_1}^{Y_1} @>i''>> \wt{\eta}^{-1}(\Fl_{\gm,\P_0}^{Y_1}) @>i'>>\Fl_{\gm,\P_1}^{Y_0} \\
@VVV @VVV @V\wt{\eta}VV\\
\Fl_{\gm,\P_0}^{Y_1} @= \Fl_{\gm,\P_0}^{Y_1} @>i_0>>  \Fl_{\gm,\P_0}^{Y_0},
\end{CD}
\end{equation*}
and set $i_1:=i'\circ i'':\Fl_{\gm,\P_1}^{Y_1}\hra \Fl_{\gm,\P_1}^{Y_0}$.

By construction, the functor  $\wt{\C{E}}:[1]\times[1]\to \C{D}(\Fl_{\gm,\P_0}^{Y_0})$ is represented by a commutative square in
$\C{D}(\Fl_{\gm,\P_0}^{Y_0})$, corresponding to the exterior square of the diagram
\begin{equation} \label{Eq:e}
\begin{CD}
\wt{\dt}_{\P_0^+} @>\iota>> \wt{\eta}_*\wt{\dt}_{\P_1^+} @=\wt{\eta}_*\wt{\dt}_{\P_1^+} @=\wt{\eta}_*\wt{\dt}_{\P_1^+}\\
 @VuVV @VuVV @VuVV @VuVV \\
i_{0*}i_0^*\wt{\dt}_{\P_0^+} @>\iota>> i_{0*}i_0^*\wt{\eta}_*\wt{\dt}_{\P_1^+} @>BC >>\wt{\eta}_*i'_{*}i'^*\wt{\dt}_{\P_1^+} @>u'>>\wt{\eta}_*i_{1*}i^*_1\wt{\dt}_{\P_1^+},
\end{CD}
\end{equation}
where $u$ stands for the unit map, $BC$ and $\iota$ have the same meaning as in \form{map}, and $u'$ is induced by the unit map $i'_{*}i'^*\to i'_{*}i''_{*}i''^*i'^*\cong i_{1*}i^*_1$.

Therefore, morphism $\wt{\C{E}}'$ in $\C{D}(\Fl_{\gm,\P_0}^{Y_0})^{[1]}$ can be viewed as a morphism from
the top row in \form{e} to the bottom one.

{\bf Step 5.} Note that the left inner square of \form{e} is commutative by functoriality, while the remaining inner squares are commutative  by a straightforward diagram chase. Hence, by \re{obs} (b), the fiber  $[\Fib \wt{\C{E}}']$ is naturally isomorphic to the map \form{map}. By Step 3, this completes the proof of the claim.
\end{proof}

\end{document}